\definecolor{darkblue}{rgb}{0,0,0.4} 
\numberwithin{equation}{section}
\newtheorem*{rep@theorem}{\rep@title}
\newcommand{\newreptheorem}[2]{%
\newenvironment{rep#1}[1]{%
 \def\rep@title{#2 \ref{##1}}%
 \begin{rep@theorem}}%
 {\end{rep@theorem}}}
\newtheorem{thm}{Theorem}
\newtheorem{theorem}[thm]{Theorem}
\newtheorem{lemma}[equation]{Lemma}
\newtheorem{corollary}[equation]{Corollary}               
\newtheorem{proposition}[equation]{Proposition}
\theoremstyle{definition}
\newtheorem{definition}[equation]{Definition}
\newtheorem{construction}[equation]{Construction}
\newtheorem{notation}[equation]{Notation}
\theoremstyle{remark}     
\newtheorem{remark}[equation]{Remark}
\newtheorem{example}[equation]{Example}
\newtheorem{observation}[equation]{Observation}
\newtheorem{convention}[equation]{Convention}
\numberwithin{figure}{section}
\numberwithin{table}{section}
\newcommand{\Appendix}[1]{Appendix~\ref{app:#1}}
\newcommand{\Section}[1]{Section~\ref{sec:#1}}
\newcommand{\Lemma}[1]{Lemma~\ref{lem:#1}}
\newcommand{\Theorem}[1]{Theorem~\ref{thm:#1}}
\newcommand{\Definition}[1]{Definition~\ref{def:#1}}
\newcommand{\Remark}[1]{Remark~\ref{rem:#1}}
\newcommand{\Figure}[1]{Figure~\ref{fig:#1}}
\newcommand{\Corollary}[1]{Corollary~\ref{cor:#1}}
\newcommand{\Proposition}[1]{Proposition~\ref{prop:#1}}
\newcommand{\Example}[1]{Example~\ref{exam:#1}}
\newcommand{\Table}[1]{Table~\ref{table:#1}}
\newcommand{\Construction}[1]{Construction~\ref{construction:#1}}
\newcommand{\Observation}[1]{Observation~\ref{obs:#1}}
\newcommand{\Convention}[1]{Convention~\ref{conv:#1}}
\newcommand{\Equation}[2][{}]{Equation#1~(\ref{eq:#2})}
\newcommand{\Formula}[2][{}]{Formula#1~(\ref{eq:#2})}
\newcommand{\R}{\mathbb{R}}
\newcommand{\Z}{\mathbb{Z}}
\newcommand{\F}{\mathbb{F}}
\newcommand{\N}{\mathbb{N}}
\newcommand{\QQ}{\mathbb{Q}}
\newcommand{\mc}{\mathcal}
\newcommand{\mb}{\mathbb}
\newcommand{\bm}{\mathbf}
\newcommand{\wh}{\widehat}
\newcommand{\wt}{\widetilde}
\newcommand{\ol}{\overline}
\newcommand{\ul}{\underline}
\newcommand{\del}{\partial}
\newcommand{\sbs}{\subset}
\newcommand{\sm}{\setminus}
\renewcommand{\emptyset}{\varnothing}
\newcommand{\interior}{\mathring}
\newcommand{\smas}{\wedge}
\newcommand{\card}[1]{\left\vert{#1}\right\vert}
\newcommand{\Set}[2]{\{#1\mid #2\}}
\newcommand{\si}{\sigma}
\newcommand{\ep}{\epsilon}
\newcommand{\from}{\colon}
\newcommand{\into}{\hookrightarrow}
\newcommand{\onto}{\twoheadrightarrow}
\newcommand{\set}[2]{\{#1\mid#2\}}
\renewcommand{\th}{^{\text{th}}}
\newcommand{\st}{^{\text{st}}}
\DeclareMathOperator{\Ob}{Ob}
\DeclareMathOperator{\Id}{Id}
\DeclareMathOperator{\Sq}{Sq}
\DeclareMathOperator{\Hom}{Hom}
\newcommand{\Kh}{\mathit{Kh}}
\newcommand{\KA}[1]{\mathbb{H}^1}
\newcommand{\rKh}{\widetilde{\Kh}}
\newcommand{\KhCx}{\mc{C}_{\mathit{Kh}}}
\newcommand{\rKhCx}{\widetilde{\mathcal{C}}_{\mathit{Kh}}}
\newcommand{\Cx}{\mathcal{C}}
\newcommand{\Cat}{\mathscr{C}}
\newcommand{\Dat}{\mathscr{D}}
\newcommand{\Eat}{\mathscr{E}}
\newcommand{\Funky}{\mathfrak{f}}
\newcommand{\Realize}[2][{}]{|#2|_{#1}}
\newcommand{\CRealize}[2][{}]{\|#2\|_{#1}}
\newcommand{\Codim}[1]{\langle#1\rangle}
\newcommand{\FlowCat}{\mathscr{C}}
\newcommand{\CubeFlowCat}[1]{\mathscr{C}_C(#1)}
\newcommand{\KhFlowCat}{\mathscr{C}_K}
\newcommand{\Moduli}{\mathcal{M}}
\newcommand{\op}{\mathrm{op}}
\newcommand{\ob}[1]{\mathbf{#1}}
\newcommand{\gr}{\mathrm{gr}}
\newcommand{\FlowCatSpace}[1]{\mathcal{X}(#1)}
\newcommand{\KhSpace}{\mathcal{X}_\mathit{Kh}}
\newcommand{\rKhSpace}{\widetilde{\mathcal{X}}_\mathit{Kh}}
\newcommand{\Cell}[2][{}]{\mathcal{C}_{#1}(#2)}
\newcommand{\TupV}{\mathbf}
\newcommand{\CubeCat}[1]{\underline{2}^{#1}}
\newcommand{\diff}{\delta}
\newcommand{\diffKh}{\delta_{\Kh}}
\newcommand{\vect}{\vec}
\newcommand{\Permu}[1]{{\Pi}_{#1}}
\newcommand{\permu}[1]{{\mathcal{S}}_{#1}}
\newcommand{\Filt}{\mathcal{F}}
\newcommand{\BNcx}{\mc{C}_{\mathit{BN}}}
\newcommand{\co}{\colon}
\newcommand{\bdy}{\partial}
\newcommand{\RR}{\R}
\newcommand{\pt}{\mathrm{pt}}
\newcommand{\NN}{\N}
\newcommand{\ZZ}{\mathbb{Z}}
\DeclareMathOperator{\image}{im}
\newcommand{\mathcenter}[1]{\vcenter{\hbox{$#1$}}}
\newcommand*{\defeq}{\mathrel{\vcenter{\baselineskip0.5ex \lineskiplimit0pt
                     \hbox{\scriptsize.}\hbox{\scriptsize.}}}%
                     =}
\newcommand{\SWdual}[1]{#1^\vee}
\newcommand{\mirror}[1]{m(#1)}
\newcommand{\cellC}[1]{C^{\mathit{cell}}_{#1}}
\newcommand{\cocellC}[1]{C_{\mathit{cell}}^{#1}}
\DeclareMathOperator{\Cone}{Cone}
\newcommand{\Precone}{\widetilde}
\newcommand{\BurnsideCat}{\mathscr{B}}
\newcommand{\CCat}[1]{\CubeCat{#1}}
\newcommand{\thic}[1]{\widehat{#1}}
\newcommand{\thicf}[2][{}]{\widehat{#2}_{#1}}
\newcommand{\othplus}{\dagger}
\newcommand{\SpDiag}[2][{}]{\widetilde{#2}_{#1}^+}
\newcommand{\SpDiagOth}[2][{}]{\widetilde{#2}_{#1}^\othplus}
\newcommand{\SpRefine}[2][{}]{\widetilde{#2}_{#1}}
\newcommand{\BSpaces}{\mathsf{Top}_\bullet}
\newcommand{\Spectra}{\mathscr{S}}
\newcommand{\CW}{\mathsf{CW}_\bullet}
\newcommand{\CWSpectra}{\mathscr{CW}}
\newcommand{\Sets}{\mathsf{Sets}}
\newcommand{\PermuCat}{\mathsf{Permu}}
\newcommand{\ArrowCat}{\mathsf{Arr}}
\newcommand{\SphereS}{\mathbb{S}} 
\newcommand{\Map}{\mathit{Map}}
\newcommand{\Cob}{\mathsf{Cob}}
\newcommand{\emb}{\mathrm{emb}}
\newcommand{\hyModCat}{\text{-}\mathsf{Mod}}
\newcommand{\DeltaInj}{\Delta_{\mathit{inj}}}
\newcommand{\olDeltaInj}{\overline{\Delta}_{\mathit{inj}}}
\newcommand{\KhFunc}{F_{\Kh}}
\newcommand{\rKhFunc}{F_{\rKh}}
\newcommand{\HKK}{M}
\newcommand{\HKKa}{\mathit{HKK}}
\newcommand{\HKKfun}{F_{\mathit{HKK}}}
\DeclareMathOperator{\hocolim}{hocolim}
\newcommand{\wtv}[1]{\stackrel{\raisebox{1pt}{\scriptsize$\rightsquigarrow$}}{\smash{#1}\vphantom{a}}} 
\newcommand{\wtvdag}[1]{{\wtv{#1}}^{\raisebox{-2.5pt}{\scriptsize$\othplus$}}}
\newcommand{\bul}[1]{\ul{\bm{#1}}}
\newcommand{\cmorph}[2]{\varphi_{#1,#2}}
\begin{document}

 
\title{Khovanov homotopy type, Burnside category, and products}

\author{Tyler Lawson}
\thanks{TL was supported by NSF Grant DMS-1206008.}
\email{\href{mailto:tlawson@math.umn.edu}{tlawson@math.umn.edu}}
\address{Department of Mathematics, University of Minnesota, Minneapolis, MN 55455}

\author{Robert Lipshitz}
\thanks{RL was supported by NSF Grant DMS-1149800.}
\email{\href{mailto:lipshitz@uoregon.edu}{lipshitz@uoregon.edu}}
\address{Department of Mathematics, University of Oregon, Eugene, OR 97403}

\author{Sucharit Sarkar}
\thanks{SS was supported by NSF Grant DMS-1350037.}
\email{\href{mailto:sucharit@math.ucla.edu}{sucharit@math.ucla.edu}}
\address{Department of Mathematics, University of California, Los Angeles, CA 90095}

\subjclass[2010]{\href{http://www.ams.org/mathscinet/search/mscdoc.html?code=57M25,55P42}{57M25,
    55P42}}

\keywords{}

\date{\today}

\begin{abstract}
  In this paper, we give a new construction of a Khovanov stable homotopy
  type, or spectrum. We show that this construction gives a space stably homotopy
  equivalent to the Khovanov spectra constructed
  in~\cite{RS-khovanov} and~\cite{HKK-Kh-htpy} and, as a corollary,
  that those two constructions give equivalent spectra. We show that
  the construction behaves well with respect to disjoint unions,
  connected sums and mirrors, verifying several conjectures
  from~\cite{RS-khovanov}. Finally, combining these results with
  computations from~\cite{RS-steenrod} and the refined $s$-invariant
  from~\cite{RS-s-invariant} we obtain new results about the slice
  genera of certain knots.
\end{abstract}

\maketitle

\tableofcontents


\section{Introduction}
In a sequence of revolutionary papers in the 1980s, Floer introduced a
family of invariants in low-dimensional and symplectic topology,
now known as Floer
homologies~\cite{Floer-top-instanton,Floer-top-Lag,Floer-top-unregularized}. While
the definitions of these invariants appear to be a semi-infinite
dimensional version of Morse homology, unlike classical (finite- or
infinite-dimensional) Morse homology, Floer homologies were not
apparently isomorphic to the singular homologies of any natural space. In the
1990s, under appropriate hypotheses Cohen-Jones-Segal proposed a
construction of a stable homotopy type whose singular homology was
Floer homology~\cite{CJS-gauge-floerhomotopy}. Their construction
builds a CW complex cell-by-cell, using a version of the
Pontrjagin-Thom construction for manifolds with corners to define the
attaching maps; the input data to this Pontrjagin-Thom construction is
what Cohen-Jones-Segal call a \emph{framed flow category}. Analytic
difficulties mean that their proposal has not yet been carried out
rigorously, though Manolescu constructed a stable homotopy refinement
of the Seiberg-Witten Floer homology of rational homology 3-spheres by
other techniques~\cite{Man-gauge-swspectrum}
(compare~\cite{LM:SW-is-SW}).

Around the turn of the millennium, in a seminal paper Khovanov gave a
bigraded homology theory whose graded Euler characteristic is the
Jones polynomial~\cite{Kho-kh-categorification}. The definition of
this \emph{Khovanov homology} is purely combinatorial, though
equivalent or conjecturally equivalent definitions have been given
using Floer homology~\cite{SS-kh-sympl} (see
also~\cite{As-kh-symp-iso}) and algebraic
geometry~\cite{CK-kh-sheaves1}.

Inspired by Cohen-Jones-Segal's work and the relation with Floer homology,
in~\cite{RS-khovanov} we introduced a stable homotopy refinement of
Khovanov homology. That is, to
each link diagram $K$ we associated a spectrum
$\KhSpace(K)$, with the following properties:
\begin{enumerate}
\item\label{item:khsp-first} The spectrum $\KhSpace(K)$ is a finite CW
  spectrum, that is, a formal desuspension of a finite CW complex.
\item The spectrum $\KhSpace(K)$ comes with a wedge-sum decomposition
  $\KhSpace(K)=\bigvee_j \KhSpace^j(K)$.
\item For each $j$, the cellular cochain complex of $\KhSpace^j(K)$ is
  isomorphic to the Khovanov complex $\KhCx^{*,j}(K)$ in quantum
  grading $j$, via an isomorphism taking the standard generators for
  $\cocellC{*}(\KhSpace^j(K))$ to the standard generators for
  $\KhCx^{*,j}(K)$.
\item\label{item:khsp-last} For each $j$, the stable homotopy type of
  $\KhSpace^j(K)$ is an invariant of the isotopy class of the link
  represented by the diagram $K$.
\end{enumerate}
There is also a reduced version $\rKhSpace(K)$ of $\KhSpace(K)$, which
satisfies properties~(\ref{item:khsp-first})--(\ref{item:khsp-last})
with $\KhCx$ replaced by the reduced Khovanov complex $\rKhCx$. The
constructions of $\KhSpace(K)$ and $\rKhSpace(K)$ involve defining a
framed flow category combinatorially, and then applying
Cohen-Jones-Segal's Pontrjagin-Thom construction.

Largely independently, inspired by their homotopy-theoretic investigations of topological and conformal
field theories, Hu-Kriz-Kriz gave another
construction of a Khovanov stable homotopy type with the same basic
properties~\cite{HKK-Kh-htpy}. Roughly, they turn the Khovanov cube into a functor
from the cube category to the Burnside category of finite sets and
correspondences. They then apply the Elmendorf-Mandell infinite loop
space machine to obtain a functor from the cube to symmetric spectra,
and then take an iterated mapping cone 
to obtain a spectrum.

These refinements give extra structure to the Khovanov homology
groups. Because the Khovanov homology groups of $K$ are cohomology
groups of $\KhSpace(K)$, the mod-$p$ cohomology groups possess
Steenrod power operations such as the squares $\Sq^n$
at $p=2$. It also allows one to apply generalized comohology theories, such as
topological $K$-theory, Morava $K$-theory,
and stable homotopy groups.
Each of these has its own
Atiyah-Hirzebruch spectral sequence determining a potentially new
collection of knot invariants, and maps in the stable homotopy
category give natural transformations relating these different invariants.
In the
philosophically related case of Seiberg-Witten Floer theory, for
instance, equivariant $KO$-theory has been used to obtain interesting
low-dimensional applications~\cite{Man-gauge-KO,Lin-gauge-KO}.

One goal of the present paper is to study the behavior of $\KhSpace(K)$ under
disjoint unions and connected sums of links. In particular, we will
prove:
\begin{theorem}\label{thm:disjoint-union}\cite[Conjecture~10.3]{RS-khovanov}
  Let $L_1$ and $L_2$ be links, and $L_1\amalg L_2$ their disjoint union. Then
  \begin{equation}\label{eq:Kh-disjoint}
  \KhSpace^j(L_1\amalg L_2)\simeq \bigvee_{j_1+j_2=j}\KhSpace^{j_1}(L_1)\smas\KhSpace^{j_2}(L_2).
  \end{equation}
  Moreover, if we fix a basepoint $p$ in $L_1$, not at a crossing, and
  consider the corresponding basepoint for $L_1\amalg L_2$, then
  \begin{equation}\label{eq:rKh-disjoint}
  \rKhSpace^j(L_1\amalg L_2)\simeq \bigvee_{j_1+j_2=j}\rKhSpace^{j_1}(L_1)\smas\KhSpace^{j_2}(L_2).
  \end{equation}
\end{theorem}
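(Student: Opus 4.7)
The plan is to refine the well-known chain-level identity $\KhCx(L_1\amalg L_2)\cong\KhCx(L_1)\otimes\KhCx(L_2)$ to the level of Burnside functors, and then invoke a Künneth-type statement for their spatial realization. The disjoint union $L_1\amalg L_2$ has $n_1+n_2$ crossings, and its cube of resolutions factors canonically as $\CCat{n_1+n_2}\cong\CCat{n_1}\times\CCat{n_2}$. At a vertex $(v_1,v_2)$, the complete resolution of $L_1\amalg L_2$ is the disjoint union of the resolutions of $L_i$ at $v_i$, so the set of standard Khovanov generators factors as a product of sets and the quantum grading splits additively as $j=j_1+j_2$. Because each crossing lies in exactly one of the two links, every edge map in the cube is a merge or split cobordism occurring in a single factor, acting as the identity on the other.

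The first task is to promote this observation to an identification of Burnside functors at each quantum grading,
\[
\KhFunc^j(L_1\amalg L_2)\cong\bigsqcup_{j_1+j_2=j}\KhFunc^{j_1}(L_1)\times\KhFunc^{j_2}(L_2),
\]
where the external product $F_1\times F_2\co\CCat{n_1}\times\CCat{n_2}\to\BurnsideCat$ assigns $F_1(v_1)\times F_2(v_2)$ to a vertex and the evident product of spans to an edge. This is mostly an unpacking of definitions; the real content is that the $2$-morphisms on $2$-dimensional faces of the cube (the coherences built from ladybug-type matchings) factor correctly. For a $2$-face involving two crossings of the same link, the coherence is inherited from that factor; for a mixed $2$-face, the two edge-spans come from disjoint circles and the square commutes strictly, which matches the canonical product $2$-morphism.

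The main step is a Künneth-type equivalence for realization: for strictly unitary Burnside $2$-functors $F_k\co\CCat{n_k}\to\BurnsideCat$ one should have
\[
|F_1\times F_2|\simeq |F_1|\wedge |F_2|.
\]
I would establish this at the level of spatial refinements, working in the factored Euclidean cube $\Euclid{}{n_1+n_2}\cong\Euclid{}{n_1}\times\Euclid{}{n_2}$: an external product of spatial refinements of the $F_i$ determines a spatial refinement of $F_1\times F_2$, and the corresponding Pontryagin--Thom collapse identifies its realization with the smash product. The principal obstacle is that spatial refinements are not canonical, so one must show that any refinement of $F_1\times F_2$ is homotopic to a product refinement. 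This is a standard contractibility argument for the space of refinements, but one has to set up the parametrized moduli spaces with enough care that the product structure is manifest; I expect this to be the technical heart of the proof. Granted the Künneth equivalence, \eqref{eq:Kh-disjoint} follows immediately from the decomposition of the Burnside functor by distributing the wedge sum over $j_1+j_2=j$. For \eqref{eq:rKh-disjoint}, the basepoint $p$ on $L_1$ picks out one circle in every resolution of $L_1$ and no circle in any resolution of $L_2$, so reducing at $p$ modifies only the $L_1$-factor of the product Burnside functor; the same argument then yields the asymmetric smash of $\rKhSpace^{j_1}(L_1)$ with $\KhSpace^{j_2}(L_2)$.
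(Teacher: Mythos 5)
Your proposal is correct and follows the paper's overall architecture: the combinatorial first step is exactly the paper's Proposition~\ref{prop:Kh-func-prod} (identify $\KhFunc^j(L_1\amalg L_2)$ with $\coprod_{j_1+j_2=j}\KhFunc^{j_1}(L_1)\times\KhFunc^{j_2}(L_2)$, with the only delicate point being that ladybug-type $2$-faces always involve two crossings of the same link), and the reduced statement is handled the same way. Where you diverge is in how you prove the K\"unneth equivalence $\Realize{F_1\times F_2}\simeq\Realize{F_1}\smas\Realize{F_2}$: you propose to work with spacial refinements (box maps) in a factored cube of boxes and a Pontryagin--Thom-style identification, with the technical heart being that an arbitrary refinement of $F_1\times F_2$ is homotopic to a product refinement. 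The paper instead proves this K\"unneth statement (Proposition~\ref{prop:realize-product}) using the choice-free thickened-diagram model: the thickening functor takes products of Burnside functors to smash products of diagrams up to a connectivity-increasing equivalence (Lemma~\ref{lem:thicken-product}), and homotopy colimits commute with smash products and wedges, so the product and disjoint-union formulas become essentially formal, with no need to compare refinements at all. Your route is viable --- the homotopy you worry about is supplied by the paper's own uniqueness-up-to-homotopy of spacial refinements (Proposition~\ref{prop:box-refinement-exist-unique}) together with the invariance of the resulting homotopy colimit, since the smash of two refinements is visibly a refinement of the product functor --- but you would still need to handle the added basepoint objects ($\CCat{n}_+$ versus $\CCat{n}_\othplus$, via a cofinality argument as in Lemma~\ref{lem:bigger-plus}) and the commutation of $\hocolim$ with smash, which is the same formal input the paper uses. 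In short: your approach is more geometric and explicit, the paper's is more formal and avoids all choices; both yield the theorem once the functor-level decomposition is in place.
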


\begin{theorem}\label{thm:connect-sum}\cite[Conjecture~10.4]{RS-khovanov}
  Let $L_1$ and $L_2$ be based links and $L_1\# L_2$ the connected sum
  of $L_1$ and $L_2$, where we take the connected sum near the
  basepoints. Then
  \begin{equation}\label{eq:rKh-conn-sum}
  \rKhSpace^j(L_1\# L_2)\simeq \bigvee_{j_1+j_2=j}\rKhSpace^{j_1}(L_1)\smas\rKhSpace^{j_2}(L_2).
  \end{equation}
\end{theorem}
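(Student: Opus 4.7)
The plan is to parallel the strategy of \Theorem{disjoint-union} by directly identifying the Burnside 2-functor underlying $\rKhSpace^j(L_1\# L_2)$ with a wedge sum of box products of the Burnside functors for $\rKhSpace^{j_1}(L_1)$ and $\rKhSpace^{j_2}(L_2)$, and then invoking the same principle that the spatial refinement of a box product is the smash product of the refinements.

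First I would set up the cubical identification. The crossings of $L_1\# L_2$ partition as those of $L_1$ and those of $L_2$, so the resolution cube factors as $\{0,1\}^{c(L_1)}\times\{0,1\}^{c(L_2)}$. At a vertex $v=(v_1,v_2)$, the resolution of $L_1\# L_2$ is obtained from the disjoint-union resolution of $L_1$ at $v_1$ and $L_2$ at $v_2$ by replacing the two short arcs through $p_1,p_2$ with two parallel arcs in the connect-sum region. The effect on circles is to merge the basepoint circle of $L_1$ at $v_1$ with that of $L_2$ at $v_2$ into the unique basepoint circle of $L_1\# L_2$ at $v$; all other circles are untouched. Hence the non-basepoint circles biject with the disjoint union of the non-basepoint circles of $L_1$ at $v_1$ and of $L_2$ at $v_2$, and the reduced Khovanov generators at $v$ in quantum grading $j$ biject with pairs of reduced generators of $L_1$ at $v_1$ and $L_2$ at $v_2$ with gradings summing to $j$ (modulo the standard connect-sum grading shift). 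Each edge of the product cube flips one crossing, which lies in exactly one of $L_1,L_2$; the associated saddle is supported in the corresponding portion of the diagram, and the Burnside correspondence for $L_1\# L_2$ factors as that for $L_i$ times the identity on generators of the other factor---with the understanding that a saddle touching the basepoint circle of $L_i$ is also a saddle touching the fused basepoint circle of $L_1\# L_2$, which the reduced construction handles identically.

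Next one must check the 2-dimensional data: each 2-face of the cube is either internal to $L_1$, internal to $L_2$, or ``mixed''. Mixed 2-faces tautologically factor as a product of edge correspondences since their two saddles have disjoint support, while for internal 2-faces the ladybug matchings depend only on the combinatorics inside the corresponding factor $L_i$, so they reduce to the ladybug matchings for $L_i$. Sign assignments are consistent once the crossings of $L_1\# L_2$ are ordered with all $L_1$-crossings before all $L_2$-crossings. Combining these gives a natural equivalence of Burnside 2-functors
\[
\rKhFunc^j(L_1\# L_2)\simeq\bigsqcup_{j_1+j_2=j}\rKhFunc^{j_1}(L_1)\boxtimes\rKhFunc^{j_2}(L_2),
\]
and applying the same box-product-to-smash-product realization used in the proof of \Theorem{disjoint-union} yields \Equation{rKh-conn-sum}. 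The main obstacle is this last step's input at the level of 2-faces, in particular verifying that the ladybug matchings on internal 2-faces whose saddles involve the basepoint circle are unchanged by the Step 2 identification; this reduces to the fact that a saddle internal to $L_i$ cannot distinguish the basepoint circle of $L_i$ from the fused basepoint circle of $L_1\# L_2$, so the matching data is literally the same on both sides.
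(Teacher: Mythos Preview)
Your proposal is correct and follows essentially the same approach as the paper: the paper establishes the natural isomorphism $\rKhFunc^j(L_1\# L_2)\cong\coprod_{j_1+j_2=j}\rKhFunc^{j_1}(L_1)\times\rKhFunc^{j_2}(L_2)$ in \Proposition{Kh-func-prod} (treating the connect-sum case as ``similar'' to the disjoint-union case it spells out), and then applies \Lemma{iso-realize-same}, \Proposition{realize-product}, and \Proposition{realize-disjoint-union}. Your write-up supplies more of the combinatorial detail the paper elides; one small remark is that your final concern about ladybugs involving the basepoint circle is moot in the reduced setting, since the ladybug circle is labeled $x_+$ and hence is never the pointed circle.
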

(We also compute the unreduced Khovanov spectrum of a connected
sum~\cite[Conjecture~10.6]{RS-khovanov}, as \Theorem{unred-con-sum}.)

These theorems, though unsurprising themselves, have some interesting
corollaries:

\begin{corollary}\label{cor:Kh-squares}
  For any $n$ there exists an $n$-component link $L_n$ so that the operation
  \[
    \Sq^n\co \Kh^{i,j}(L_n)\to \Kh^{i+n,j}(L_n)
  \]
  is non-zero, for some $i,j\in\ZZ$. Similarly, there exists a knot $K_n$ so that the operation
  \[
  \Sq^n\co \rKh^{i,j}(K_n)\to \rKh^{i+n,j}(K_n)
  \]
  is non-zero, for some $i,j\in\ZZ$. Further, for this knot, the operation 
  \[
  \Sq^n\co \Kh^{i,j}(K_n)\to \Kh^{i+n,j}(K_n)
  \]
  is also non-zero for some $i,j\in\ZZ$.
\end{corollary}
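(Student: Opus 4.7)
The plan is to build $L_n$ and $K_n$ as iterated disjoint unions and connected sums, respectively, of a small stock of base examples, and then to propagate non-triviality of the Steenrod operation through the smash-product decompositions supplied by Theorems~\ref{thm:disjoint-union} and~\ref{thm:connect-sum}.

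\textbf{Base cases.} The paper~\cite{RS-steenrod} exhibits knots $J_2$ and $J_2^{\mathit{red}}$ on whose mod $2$ unreduced and reduced Khovanov homology, respectively, the operation $\Sq^2$ acts non-trivially. The operation $\Sq^1$ is the Bockstein for the coefficient sequence $0 \to \ZZ/2 \to \ZZ/4 \to \ZZ/2 \to 0$, so any knot whose integral (reduced) Khovanov homology carries $\ZZ/2$-torsion yields a non-trivial $\Sq^1$; the trefoil suffices for the unreduced case and a torus knot such as $T(3,5)$ for the reduced case. Let $J_1$ and $J_1^{\mathit{red}}$ be knots so chosen.

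\textbf{Smash propagation.} Let $X_1, \dots, X_k$ be based CW-spectra and let $x_i \in \widetilde{H}^*(X_i; \F_2)$ satisfy $\Sq^{n_i}(x_i) \neq 0$. By Künneth with $\F_2$-coefficients,
\[
\widetilde{H}^*(X_1 \smas \cdots \smas X_k; \F_2) \cong \widetilde{H}^*(X_1; \F_2) \otimes \cdots \otimes \widetilde{H}^*(X_k; \F_2),
\]
and the Cartan formula gives, with $n = n_1 + \cdots + n_k$,
\[
\Sq^n(x_1 \otimes \cdots \otimes x_k) = \sum_{i_1 + \cdots + i_k = n} \Sq^{i_1}(x_1) \otimes \cdots \otimes \Sq^{i_k}(x_k).
\]
Summands indexed by distinct tuples $(i_1, \dots, i_k)$ lie in distinct multi-homogeneous pieces of the tensor product and cannot cancel; the $(n_1, \dots, n_k)$-summand is non-zero by construction, so $\Sq^n(x_1 \otimes \cdots \otimes x_k) \neq 0$.

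\textbf{Assembly.} Given $n \geq 1$, write $n = 2a + b$ with $b \in \{0, 1\}$. Let $L_n$ be the disjoint union of $a$ copies of $J_2$ and $b$ copies of $J_1$, and let $K_n$ be the connected sum of $a$ copies of $J_2^{\mathit{red}}$ and $b$ copies of $J_1^{\mathit{red}}$. Iterating Theorem~\ref{thm:disjoint-union} (respectively Theorem~\ref{thm:connect-sum}) realises, at the total quantum grading equal to the sum of the quantum gradings of the chosen base classes, the smash product of the relevant wedge summands of the factor spectra as a wedge summand of $\KhSpace(L_n)$ (respectively $\rKhSpace(K_n)$). The preceding step then exhibits a non-trivial $\Sq^n$ on that summand, and hence on the whole spectrum. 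The main obstacle is really just the two base cases: smash-product propagation is purely formal, and the substantive input is the non-triviality of $\Sq^2$ from~\cite{RS-steenrod} together with well-known $\ZZ/2$-torsion computations for $\Sq^1$.
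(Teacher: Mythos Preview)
Your strategy---assembling via Theorems~\ref{thm:disjoint-union} and~\ref{thm:connect-sum} and propagating non-triviality through the Cartan formula---is exactly the paper's. The differences are in the choice of building blocks and in how cancellation among Cartan terms is handled. The paper uses only $\Sq^1$ blocks: for links it takes $n$ disjoint copies of the left-handed trefoil, whose Khovanov spectrum has a $\Sigma^{-4}\mathbb{RP}^2$ wedge summand, so that $\KhSpace(L_n)$ contains $\Sigma^{-4n}(\mathbb{RP}^2)^{\smas n}$; for knots it takes the $n$-fold connected sum of $K=15^n_{41127}$, for which a computation in~\cite{Shu-kh-patterns} yields a class $\alpha$ in reduced Khovanov homology with $\Sq^1\alpha\neq 0$ and $\Sq^i\alpha=0$ for $i>1$ (the latter for degree reasons, since $\rKh^{*,0}(K;\ZZ)$ is supported only in homological degrees $-2$ and $0$). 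In both cases the vanishing of the higher squares leaves exactly one Cartan term. Your multi-grading observation---that distinct Cartan summands land in distinct pieces of the K\"unneth $\ZZ^k$-grading and hence cannot cancel---is correct and neatly removes the need for any such vanishing, at the price of a more elaborate stock of base examples.

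The one concrete gap is your assertion that $T(3,5)$ has $\ZZ/2$-torsion in its \emph{reduced} integral Khovanov homology. Torsion in reduced Khovanov homology is much scarcer than in the unreduced theory, and this is not a standard fact about small torus knots; note that the paper reaches for a specific 15-crossing knot and a cited computation precisely to supply this input. You should either substitute a knot for which reduced $\ZZ/2$-torsion is documented (e.g.\ the paper's $15^n_{41127}$), or---more economically---drop the $\Sq^2$ blocks entirely and run your multi-grading argument with $n$ copies of a single $\Sq^1$ block in each case, as the paper does.
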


\captionsetup[subfloat]{width=0.28\textwidth}
\begin{figure}
\subfloat[$(9_{42},1)$]{\includegraphics[width=0.28\textwidth]{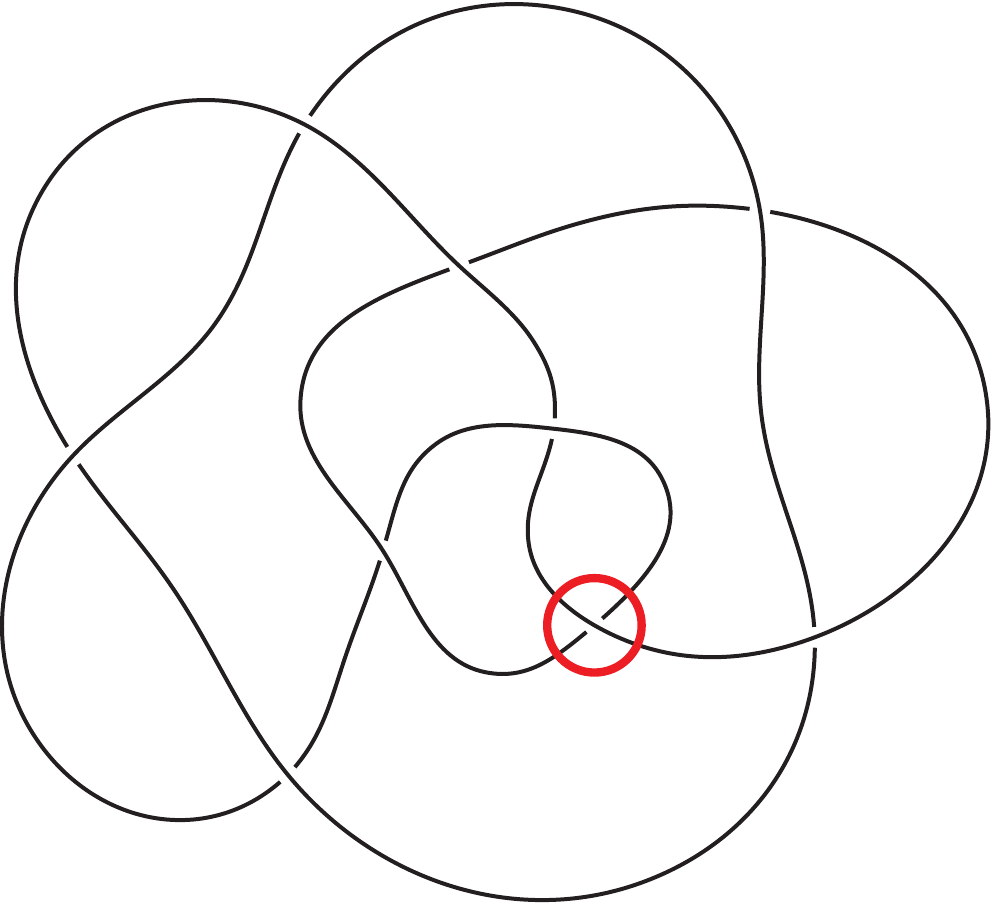}}
\hspace{0.01\textwidth}
\subfloat[$(10_{136},1)$]{\includegraphics[width=0.28\textwidth]{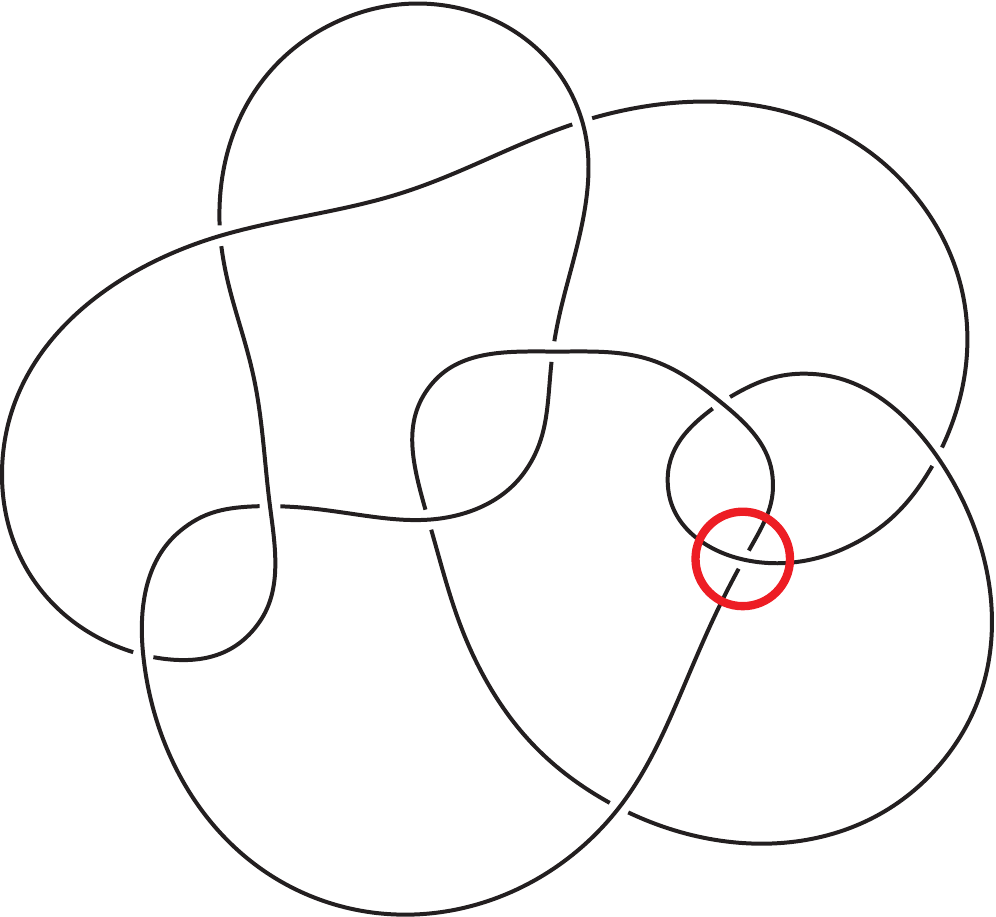}}
\hspace{0.01\textwidth}
\subfloat[$(m(11^n_{19}),2)$]{\includegraphics[width=0.28\textwidth]{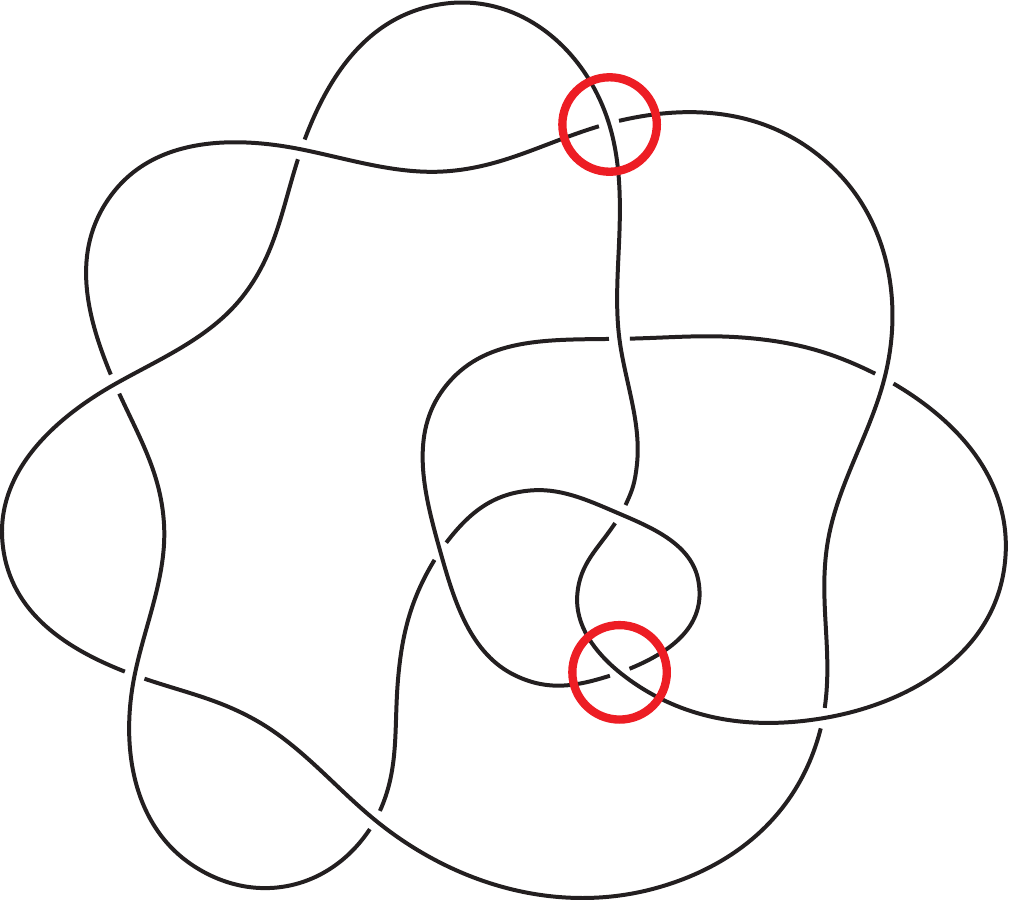}}\\
\subfloat[$(m(11^n_{20}),1)$]{\includegraphics[width=0.28\textwidth]{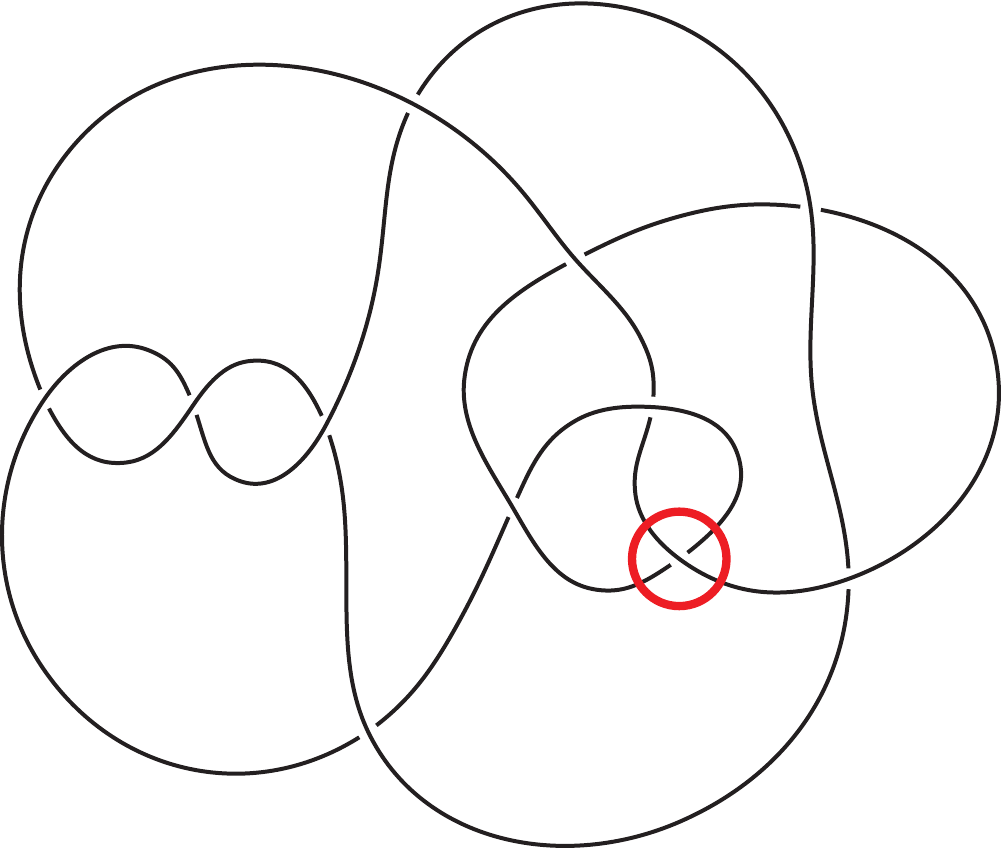}}
\hspace{0.01\textwidth}
\subfloat[$(11^n_{70},2)$]{\includegraphics[width=0.28\textwidth]{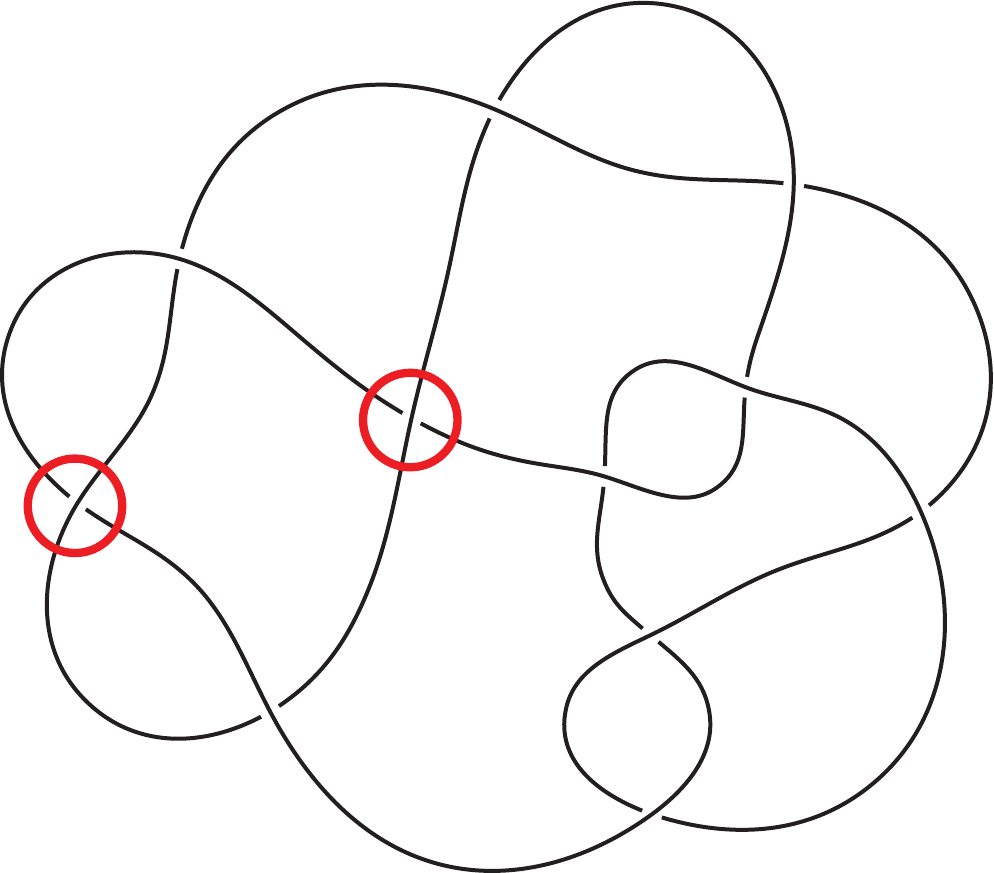}}
\hspace{0.01\textwidth}
\subfloat[$(11^n_{96},1)$]{\includegraphics[width=0.28\textwidth]{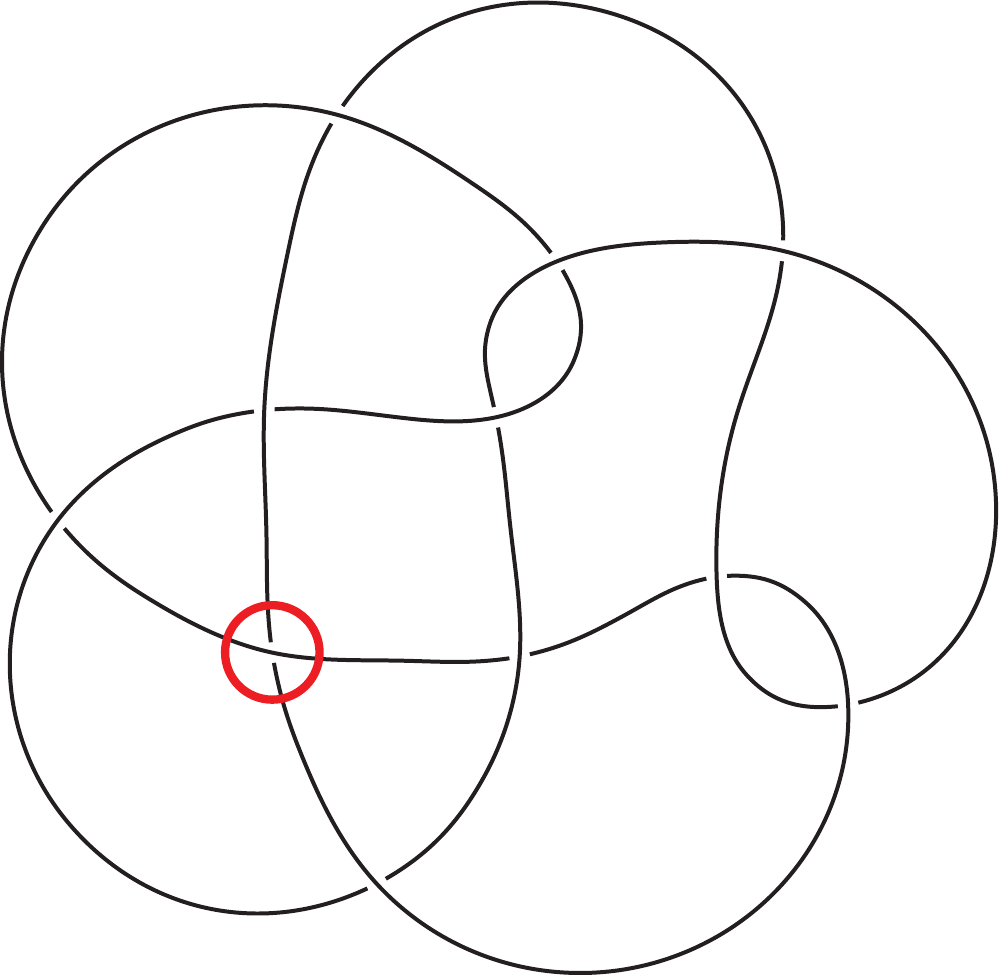}}
\caption{\textbf{The knots appearing in \Corollary{top-appl}.} We have labeled
  the knot $K$ by the pair $(K,g_4(K))$. The value of $g_4(K)$ is
  extracted from Knotinfo \cite{KIN-kh-knotinfo}; the knot diagrams
  have been produced using Knotilus \cite{KIT-kh-knotilus}. Crossings
  giving a minimal unknotting for each knot are circled.}\label{fig:knot-diagrams}
\end{figure}

\begin{corollary}\label{cor:top-appl}
  Let $K$ be one of the knots $9_{42}$, $10_{136}$, $m(11^n_{19})$,
  $m(11^n_{20})$, $11^n_{70}$, or $11^n_{96}$.  (Here $m$ denotes the
  mirror.)  Let $L$ be a knot which is the closure of a positive
  braid. Letting $g_4$ denote the four-ball genus, we have
  \[
  g_4(K\#L)=g_4(K)+g_4(L).
  \]
\end{corollary}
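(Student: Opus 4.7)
The plan is to combine the connected sum formula for the reduced Khovanov spectrum (\Theorem{connect-sum}) with the refined $s$-invariant of~\cite{RS-s-invariant} and the Steenrod-square calculations of~\cite{RS-steenrod}. I will abbreviate the refined invariant by $s_{\Sq^n}$.

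The starting point is a refinement of Rasmussen's slice-genus bound: in addition to $g_4(K)\geq s(K)/2$, the non-triviality of a Steenrod square $\Sq^n$ in a suitable bigrading of $\rKh(K)$ yields the bound $g_4(K)\geq s_{\Sq^n}(K)/2$, with $s_{\Sq^n}(K)\geq s(K)+2$. For each of the six knots $K$ listed, the computations of~\cite{RS-steenrod} exhibit a non-trivial $\Sq^n$ (with $n=1$ or $n=2$) acting on the reduced Khovanov spectrum in the relevant bigrading near $s(K)$, so that $s_{\Sq^n}(K)=s(K)+2=2g_4(K)$. On the other hand, for $L$ the closure of a positive braid, Rasmussen's inequality is sharp: $s(L)=2g_4(L)$, and the canonical Lee generator $y\in\rKh^{0,s(L)}(L)$ is the unique non-zero class in its quantum grading.

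The core step is to propagate the non-triviality of $\Sq^n$ from $\rKh(K)$ to $\rKh(K\#L)$. By \Theorem{connect-sum},
\[
\rKhSpace^j(K\#L)\simeq\bigvee_{j_1+j_2=j}\rKhSpace^{j_1}(K)\smas\rKhSpace^{j_2}(L).
\]
The K\"unneth isomorphism and the Cartan formula give $\Sq^n(x\otimes y)=\sum_{p+q=n}\Sq^p(x)\otimes\Sq^q(y)$ on each tensor factor arising from this smash decomposition. Taking $x\in\rKh(K)$ with $\Sq^n(x)\neq 0$ and $y$ as above, only the term $\Sq^n(x)\otimes y$ survives, provided $\Sq^q(y)=0$ for $0<q\leq n$. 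Since Steenrod squares preserve quantum grading, this reduces to the assertion that $\rKh^{*,s(L)}(L)$ is concentrated in a single homological degree, which is a standard feature of Khovanov homology of positive braid closures. Hence $\Sq^n$ acts non-trivially on $\rKh(K\#L)$ in the required bigrading, and therefore $s_{\Sq^n}(K\#L)\geq s(K)+s(L)+2=2g_4(K)+2g_4(L)$.

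Combining the bounds yields $g_4(K\#L)\geq s_{\Sq^n}(K\#L)/2\geq g_4(K)+g_4(L)$, and the reverse inequality is the classical sub-additivity of the slice genus (obtained by boundary-connected-summing minimal slice surfaces). The principal obstacle I anticipate is the propagation step: it requires simultaneously \Theorem{connect-sum}, the compatibility of the Steenrod square with the resulting smash product (via the Cartan formula), and the vanishing $\Sq^q(y)=0$ for $q>0$ on the positive-braid side. The last item is the only place in the argument that genuinely uses the positive-braid hypothesis on $L$.
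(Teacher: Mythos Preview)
Your overall strategy---propagate the non-triviality of a Steenrod square from $K$ to $K\#L$ via a product formula and the Cartan formula, then invoke the refined $s$-invariant---is the same as the paper's. However, there are two substantive differences between your execution and the paper's, one of which is a genuine gap.

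First, a structural difference: the paper works with \emph{unreduced} Khovanov homology and \Theorem{disjoint-union}, not the reduced theory and \Theorem{connect-sum}. The reason is that the refined invariant $s^{\Sq^2}_+$ of \cite{RS-s-invariant} is defined via the filtered Bar-Natan complex $\BNcx$ (see the configuration in \Formula{def-refined-s}). The condition $s^{\Sq^2}_+(K)=s_K+2$ is not merely that $\Sq^2$ is nonzero in some bigrading near $s_K$; it requires that the \emph{image} of $\Sq^2$ in $\Kh^{0,s_K-1}$ contain the images of two classes $a,b\in H_0(\Filt_{s_K-1}\BNcx)$ that generate $H_0(\BNcx)$. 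Your sentence ``Hence $\Sq^n$ acts non-trivially on $\rKh(K\#L)$ in the required bigrading, and therefore $s_{\Sq^n}(K\#L)\geq s(K)+s(L)+2$'' hides exactly this point: you would need to know that your class $x\otimes y$ (or rather $\Sq^n(x)\otimes y$) is the image of a Bar-Natan generator of $K\#L$, and \Theorem{connect-sum} gives you a smash product formula for the Khovanov \emph{spectrum} (hence for the associated graded), not for the filtered Bar-Natan complex. The paper handles this by working on $K\amalg L$, where $\BNcx(K\amalg L)\cong\BNcx(K)\otimes\BNcx(L)$ as filtered complexes, constructing the required configuration there (this is \Lemma{connect-sum-s}), and then pushing it forward along the saddle cobordism $K\amalg L\to K\#L$, whose effect on Bar-Natan generators is explicitly known.

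Second, two smaller points. The paper uses only $s^{\Sq^2}_+$; the $\Sq^1$ appearing in \Lemma{connect-sum-s} is there to control the Cartan cross-term $\Sq^1(\wt a)\otimes\Sq^1(\wh c)$, not because an $\Sq^1$-refined invariant is invoked. And the vanishing you call ``a standard feature of Khovanov homology of positive braid closures'' is in fact proved in the paper by an explicit chain-level computation showing $\Kh^{*,s_L+1}(L;\ZZ)\cong\ZZ$ concentrated in homological degree $0$; this is not quite the same as the statement for $\rKh^{*,s(L)}(L)$ that you assert, and in any case it is the content rather than a black box.
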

\begin{remark}
  There is some confusion for nomenclature of knots regarding
  mirrors. See \Figure{knot-diagrams} for our convention, and compare
  with the
  discussion around \Table{invariants}. The value of $g_4(K)$ can be
  extracted from \Figure{knot-diagrams} or \Table{invariants}; and
  $g_4(L)$ equals the genus of the Seifert surface obtained by
  applying Seifert's algorithm to the positive braid closure knot
  diagram for $L$~\cite[Theorem 4]{Ras-kh-slice}.
\end{remark}

\Corollary{top-appl} is not implied by computations of Rasmussen's
$s$-invariant or the Heegaard Floer $\tau$-invariant or the signature;
see \Table{invariants}. At least for $9_{42}$, the result is not
implied by the Heegaard Floer concordance invariant $\Upsilon$; the
Heegaard Floer $d$ invariant of $+1$
surgery~\cite{Krcatovich-personal}; or Hom-Rasmussen-Wu's
$\nu^+$-invariant~\cite{Ras-hf-knotcomplements,HW-hf-nu-invt,Hom-personal}.
(These observations are not entirely independent.)

The construction of $\KhSpace(K)$ from~\cite{RS-khovanov} uses
Cohen-Jones-Segal's notion of flow categories. 
To give a proof of Theorems~\ref{thm:disjoint-union} and~\ref{thm:connect-sum}
in this language seems tedious at best: it involves understanding the
combinatorics of (broken) Morse flows on product manifolds, which turns out to
be rather intricate.

Fortunately, as noted above, Hu-Kriz-Kriz gave another construction of
a Khovanov stable homotopy type, and from their construction the
behavior under connected sums and disjoint unions is clear.
%
In this paper, we describe three additional constructions:
\begin{enumerate}
\item A framing-free reformulation of the construction
  from~\cite{RS-khovanov} for a special family of flow categories,
  called cubical flow categories,
  in \Section{cubical}.
\item A version similar to the construction from~\cite{HKK-Kh-htpy} as
  a homotopy colimit, but using the thickened cube instead of the
  Elmendorf-Mandell machinery, in \Section{realize-functor}.
\item An intermediate object between the two
  in \Section{smaller-cube}, using little $k$-cubes.
\end{enumerate}
We then prove:
\begin{theorem}\label{thm:Kh-htpy-agrees}
  The Khovanov stable homotopy types constructed
  in \Section{cubical}, \Section{realize-functor},
  and \Section{smaller-cube}, the
  Khovanov stable homotopy type constructed in~\cite{HKK-Kh-htpy}, and
  the Khovanov stable homotopy type constructed in~\cite{RS-khovanov}
  are all stably homotopy equivalent.
\end{theorem}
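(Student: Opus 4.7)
The plan is to establish a chain of stable homotopy equivalences among the five constructions, using the three new constructions of this paper as bridges between~\cite{RS-khovanov} and~\cite{HKK-Kh-htpy}. All five constructions take as input essentially the same Khovanov data---a functor from the cube category $\CCat{n}$ to the Burnside category $\BurnsideCat$, built from the cube of resolutions---but differ in how they convert this data into a spectrum; the goal is to show that these conversion procedures all agree up to stable equivalence.

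First I would compare the construction of \Section{cubical} with that of~\cite{RS-khovanov}. The Khovanov flow category of~\cite{RS-khovanov} is cubical, and the Pontryagin--Thom attaching maps used there depend a priori on a neat embedding and framing of the moduli spaces. The point of the framing-free reformulation is that for cubical flow categories the moduli spaces have sufficiently simple topology (disjoint unions of permutohedra) that the relevant space of framings is connected; this lets one check directly that the attaching maps of the CW complex produced in \Section{cubical} agree, up to cellular homotopy, with those produced by the Cohen--Jones--Segal realization, giving an equivalence of CW complexes in each quantum grading $j$.

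Next I would handle the comparisons among the three new constructions. Each is built as a model of the homotopy colimit of the same cube-shaped diagram of spectra obtained from the Burnside functor. The comparison amounts to identifying three models of the same hocolim: the thickened-cube model (\Section{realize-functor}), the little-$k$-cubes model (\Section{smaller-cube}), and the explicit CW model (\Section{cubical}). Standard cofibrancy arguments together with the classical equivalence between little cubes and thickened cubes give the required zig-zags: the little-cubes model maps naturally to each of the other two, by including a little cube into the ambient cube (for the thickened cube model) and by reading off the Pontryagin--Thom collapse data from configurations of little cubes (for the cubical model).

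Finally I would compare \Section{realize-functor} with~\cite{HKK-Kh-htpy}. Hu--Kriz--Kriz apply the Elmendorf--Mandell machine to the Burnside functor and take a hocolim over an augmented cube, while \Section{realize-functor} performs the hocolim using the thickened cube directly; both compute the spectrum-level hocolim of the same cube-of-spectra diagram, so the comparison reduces to a standard equivalence between infinite loop space machines and cubical models of hocolim. The main obstacle is the first step: bridging the framed-flow-category language of~\cite{RS-khovanov} with the Burnside-functor/hocolim language of the three new constructions. Once the Pontryagin--Thom attaching maps have been identified concretely for cubical flow categories, the remaining comparisons are essentially formal homotopy-colimit bookkeeping.
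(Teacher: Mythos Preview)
Your high-level plan matches the paper exactly: the theorem is proved by a chain of four pairwise equivalences
\[
\text{\cite{RS-khovanov}} \;\simeq\; \text{\Section{cubical}} \;\simeq\; \text{\Section{smaller-cube}} \;\simeq\; \text{\Section{realize-functor}} \;\simeq\; \text{\cite{HKK-Kh-htpy}},
\]
and the paper states this immediately after the theorem, pointing to Theorems~\ref{thm:cubical-CJS-realization}, \ref{thm:box-equals-cubical}, \ref{thm:smaller-diag}, and~\ref{thm:agree-with-HKK} for the four links.

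Where your proposal is weak is in the sketches of the individual links, which are either inaccurate or too vague to count as arguments. For \Section{cubical} versus~\cite{RS-khovanov}, the paper does not argue via connectedness of a space of framings; it fixes a neat embedding $\jmath$ of the cube flow category $\CubeFlowCat{n}$, observes that $\ol{\jmath}\circ\iota$ is then a neat embedding of $\FlowCat$ in the Cohen--Jones--Segal sense, and writes down an explicit quotient map between the two CW complexes sending cells to cells by degree~$\pm 1$. For \Section{smaller-cube} versus \Section{realize-functor}, there is no ``classical equivalence between little cubes and thickened cubes'' being invoked; the paper introduces the arrow category $\ArrowCat(\CCat{n})$ as an intermediary, lifts $F$ to $\vec{F}$ on it, and uses homotopy cofinality of the functors $A_\othplus$ and $B_\othplus$ together with a box-map comparison (\Lemma{box-to-thic}) between wedges and wedges-of-products. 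For \Section{realize-functor} versus~\cite{HKK-Kh-htpy}, the comparison is not a one-line appeal to equivalence of infinite loop space machines; the paper runs a six-step zig-zag through intermediate diagrams in $\PermuCat$ and $\Spectra$, using properties~\ref{item:EM-object-gives-point}--\ref{item:EM-sets-to-sphere} of the Elmendorf--Mandell functor $K$ together with the arrow-category trick again.

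In short: you have the skeleton right, but the flesh you put on it would not survive scrutiny. None of the four links is as ``formal'' as you suggest, and the specific mechanisms you name (framing-space connectedness, cofibrancy, machine comparison) are not the ones actually used.
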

This theorem is proved in parts:
\begin{itemize}
\item \Theorem{cubical-CJS-realization} asserts that the cubical flow
  category realization (\Section{cubical}) agrees with the
  Cohen-Jones-Segal construction used in~\cite{RS-khovanov}.
\item \Theorem{box-equals-cubical} asserts that the cubical flow
  category realization (\Section{cubical}) agrees with the little
  $k$-cubes realization (\Section{smaller-cube}).
\item \Theorem{smaller-diag} asserts that the little $k$-cubes
  realization (\Section{smaller-cube}) agrees with the thickening construction
  (\Section{realize-functor}).
\item \Theorem{agree-with-HKK} asserts that the thickening construction (\Section{realize-functor}) agrees with the construction
  in~\cite{HKK-Kh-htpy}.
\end{itemize}

Theorems~\ref{thm:disjoint-union} and~\ref{thm:connect-sum} follow
easily. \Corollary{Kh-squares} follows immediately from these theorems
and computations in~\cite{RS-steenrod}.  We also obtain, via a
TQFT-style argument, that the Khovanov homotopy type of the mirror
knot $m(K)$ is the Spanier-Whitehead dual to the Khovanov homotopy
type of $K$ (\Theorem{mirror}). Finally, \Corollary{top-appl} follows
from these results, the refined $s$ invariant
in~\cite{RS-s-invariant}, the computations in~\cite{RS-steenrod}, and
a brief further argument.

This paper is organized as follows. We collect some basic notation in \Section{basic-notation}. \Section{review} has background on the cube category and the Khovanov construction, the latter of which is not needed again until \Section{Kh-htpy} except in some examples. Other background appears at the beginning of the section in which it is first used.
In \Section{cubical}, after recalling the basics of flow categories, we introduce a special
class of them, cubical flow categories, which live over the
cube, and give a reformulation of the Cohen-Jones-Segal realization
for this class of flow categories (``cubical realization''). (The
Khovanov flow category of~\cite{RS-khovanov} is a cubical flow
category; this is a crucial tool in its construction.)
In \Section{realize-functor} we show that cubical flow categories are
equivalent to $2$-functors from the cube to the Burnside category, and
give a different, choice-free way to realize such a
functor. In \Section{smaller-cube} we give a smaller but less
canonical way to realize a $2$-functor from the cube to the Burnside
category, and prove the two ways to realize such a functor are
equivalent. \Section{cubical-to-cubes} shows that the realization from
\Section{smaller-cube} agrees with the cubical realization from 
\Section{cubical}. \Section{Kh-htpy} is a brief interlude to summarize
these results and recall the Khovanov homotopy type. \Section{HKK}
shows that these realizations agree with the
Hu-Kriz-Kriz-construction~\cite{HKK-Kh-htpy}.

In Sections~\ref{sec:Kh-disj-union}--\ref{sec:applications} we use
these reformulated realizations to prove new properties of the
Khovanov homotopy type. The realizations of the product (smash
product) and disjoint union (wedge sum) of functors from the cube to
the Burnside category have properties as one would
expect; \Section{Kh-disj-union} uses these properties to study the
Khovanov homotopy type of a disjoint union and connected sum,
Theorems~\ref{thm:disjoint-union},~\ref{thm:connect-sum},
and~\ref{thm:unred-con-sum}, and verifies
\Corollary{Kh-squares}. \Section{mirror} uses a TQFT-style argument
suggested by the referee for~\cite{RS-steenrod} to deduce a formula
for the Khovanov homotopy type of a mirror, verifying another
conjecture from~\cite{RS-khovanov}. Finally, \Section{applications}
gives an additivity property for the refined $s$ invariant introduced
in~\cite{RS-s-invariant} and obtains \Corollary{top-appl}.

\Appendix{flow-chart} has a flow chart of how the different sections
depend on each other, so a reader only interested in a particular result can
choose the most efficient path to it.

\begin{remark}
  It may be interesting to compare the homotopy colimit definition of
  the Khovanov homotopy type with~\cite{ET-kh-spectrum}; but see also~\cite{ELST-trivial}.
\end{remark}

\textit{Acknowledgments.} We thank Brent Everitt, Jennifer Hom, David Krcatovich,
Peter Ozsv\'ath and Paul Turner for interesting conversations. We also thank the
referee for~\cite{RS-steenrod} for suggesting the argument
in \Section{mirror}. RL thanks Princeton University for its
hospitality; most of this research was completed while he visited
there. We thank Michael Willis for corrections to an early version of
this manuscript. We also thank the contributors to $n$Lab: while we have
cited published references for the relevant background, $n$Lab has
often been helpful in finding those references. Finally, we thank the referees
for their helpful comments.

\subsection{Basic notation}\label{sec:basic-notation}
The ``cube'' $\{0,1\}^n$ will appear in a number of contexts in this
paper, as will some auxiliary notions related to it:
\begin{itemize}
\item There is a partial order on $\{0,1\}^n$ defined by $v\geq w$ if
  $v$ is obtained from $w$ by replacing some $0$'s by $1$'s.  Define
  $v>w$ if $v\geq w$ and $v\neq w$, and define $\leq$ and $<$ in the
  corresponding ways. The maximum and minimum elements under
  this partial order are denoted $\vec{1}$ and $\vec{0}$,
  respectively.
\item We denote the Manhattan (or $\ell^1$) norm on $\{0,1\}^n$ by
  $\card{v}=\sum_{i=1}^n v_i$. 
\item A \emph{sign assignment} $s$ on the cube is the following: for
  every $u>v$ with $\card{u}-\card{v}=1$, we associate an element
  $s_{u,v}\in\F_2$ such that for any $u>w$ with $\card{u}-\card{w}=2$,
  we have
  \[
  \sum_{\substack{v\\u>v>w}}(s_{u,v}+s_{v,w})=1.
  \]
\end{itemize}

A number of categories will appear in this paper:
\begin{itemize}
\item The category $\Sets$ of finite sets and set maps.
\item The Burnside (2-)category $\BurnsideCat$ of sets, correspondences, and
  isomorphisms of correspondences (see \Section{Burnside}).
\item The cube category $\CCat{n}=\bigl\{1\longrightarrow 0\bigr\}^n$
  (see \Section{cube}).
\item The category $\BSpaces$ of (well) based topological spaces.
\item The subcategory $\CW$ of $\BSpaces$ generated by based CW
  complexes and cellular maps.
\item The category $\Spectra$ of spectra. For concreteness, we can
  take the category of symmetric spectra in topological spaces
  \cite{MMSS-diagram-spectra}.
\item The category $R\hyModCat$ of $R$-modules.
\item The category $\PermuCat$ of permutative categories
  (see \Section{elmendorff-mandell-machine}).
\item The category $\Cob_{\emb}^{1+1}$ of oriented $1$-manifolds
  embedded in $S^2$ and oriented cobordisms embedded in $[0,1]\times
  S^2$ (see \Section{two-functors-same}).
\end{itemize}

Some other notation:
\begin{itemize}
\item Let $\RR_+=[0,\infty)$.
\item Let $\NN=\{0,1,2,\dots\}$ denote the set of non-negative integers.
\end{itemize}

\section{Review of Khovanov homology}\label{sec:review}

\subsection{The cube category}\label{sec:cube}
Let $\CCat{1}$ denote the category with two objects, denoted $0$ and
$1$, and a single non-identity morphism, from $1$ to $0$:
\[
\CCat{1}=\bigl\{1\longrightarrow 0\bigr\}.
\]
For $n\in\ZZ$, $n>1$ let $\CCat{n}=\CCat{1}\times\CCat{n-1}$. That is,
$\CCat{n}$ is the small category with object set $\{0,1\}^n$. Given objects
$v,w\in\{0,1\}^n$ the morphism set $\Hom(v,w)$ is empty unless $v\geq
w$ in the partial order induced by the relation $1>0$ on $\CCat{1}$. 
If $v\geq w$ the set $\Hom(v,w)$ has a single element, which
we will denote $\cmorph{v}{w}$.

The reader is warned that many authors refer to the opposite category
$(\CCat{n})^\op$ as the cube category. (Our choice is made to agree with the
grading conventions on Khovanov homology~\cite{Kho-kh-categorification} and the
Khovanov stable homotopy type~\cite{RS-khovanov}.)

Often, we will view $\CCat{n}$ as a $2$-category with no
non-identity $2$-morphisms. That is, for $f,g\in\Hom(v,w)$, we define
$\Hom(f,g)$ to be empty unless $f=g$ and to have a single element if
$f=g$.

\subsection{The Khovanov construction}\label{sec:khovanov-basic}
The material in the rest of this section is used in
Examples~\ref{exam:KhFlowCat} and~\ref{exam:KhFunc} and
\Remark{no-Lee}, and then is not used again until
Section~\ref{sec:Kh-htpy}.

Khovanov homology, and several of its
generalizations, are all constructed from the cube of
resolutions of a link diagram. Let $K$ be a link diagram in $S^2$ with
$n$ crossings, numbered $c_1,\dots,c_n$. Each of these crossings can
be resolved locally in two different ways, called the $0$-resolution
and the $1$-resolution (see, for instance,
\cite[Figure~14]{Kho-kh-categorification}). Therefore, to each
$v\in\{0,1\}^n$ there is an associated complete resolution $\mc{P}(v)$
obtained by replacing the crossing $c_i$ by its $0$-resolution if
$v_i=0$ and its $1$-resolution if $v_i=1$.
The complete resolution $\mc{P}(v)$ consists
of a collection of disjoint circles in $S^2$.

\begin{figure}
\begin{overpic}[width=0.95\textwidth]{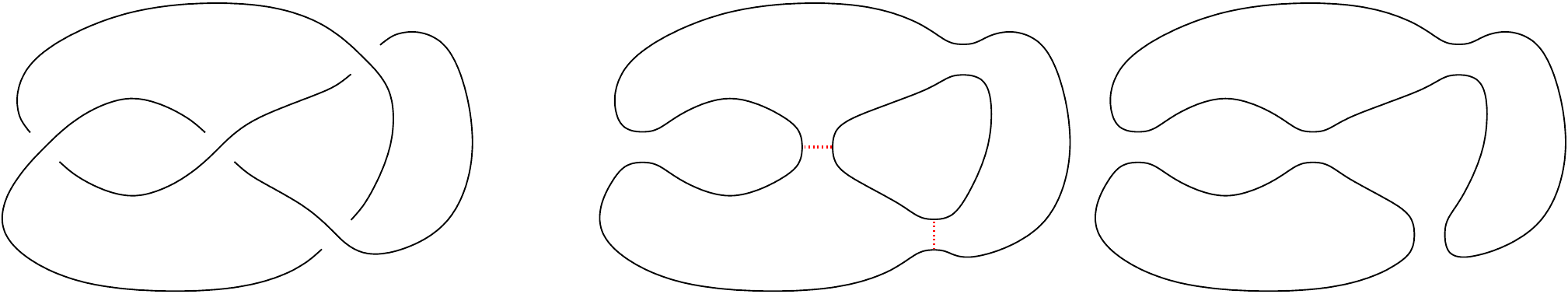}
\put(2.5,6){$c_1$}
\put(13,6){$c_2$}
\put(20.5,5.5){$c_3$}
\put(21.5,11.5){$c_4$}
\put(45,3){$\mc{P}(v)$}
\put(78.5,3){$\mc{P}(u)$}
\end{overpic}
\caption{\textbf{Some resolutions of a knot diagram.} Left: A knot
  diagram for the figure-eight knot with crossings
  $c_1,\dots,c_4$. Right: The complete resolutions at the vertices
  $v=(1,0,0,0)$ and $u=(1,1,1,0)$, along with the embedded $1$-handle
  cores representing the embedded cobordism between $\mc{P}(u)$ and
  $\mc{P}(v)$.}\label{fig:khovanov-basic}
\end{figure}

For any $u\geq v\in\{0,1\}^n$, there is a cobordism embedded in
$[0,1]\times S^2$ that connects the resolutions $\mc{P}(u)$ and
$\mc{P}(v)$: to obtain $\mc{P}(u)$, one attaches embedded $1$-handles
to $\mc{P}(v)$ with cores certain arcs near the crossings $c_i$
for all $i$ with $u_i>v_i$. These arcs are illustrated in \Figure{khovanov-basic}. (See also
\cite[Figure~18]{Kho-kh-categorification}. This is also the first
step in Bar-Natan's ``picture world'' approach to Khovanov
homology in~\cite{Bar-kh-tangle-cob}.)  For the special case when
$|u|-|v|=1$, the cobordism either \emph{merges} two circles into one,
or \emph{splits} a single circle into two.

To construct the Khovanov complex, we apply a $(1+1)$-dimensional TQFT
to this cube of cobordisms to obtain a commutative cube of abelian
groups. Specifically, consider the rank-2 Frobenius algebras over
$\ZZ[h,t]$ with basis $\{x_+,x_-\}$ and multiplication and
comultiplication given by
\begin{gather*}
x_+\otimes x_+\stackrel{m}{\mapsto} x_+\qquad x_+\otimes x_-\stackrel{m}{\mapsto} x_-\qquad
x_-\otimes x_+\stackrel{m}{\mapsto} x_-\qquad x_-\otimes x_-\stackrel{m}{\mapsto} hx_-+tx_+ \\
x_+\stackrel{S}{\mapsto} x_+\otimes x_-+ x_-\otimes x_+-hx_+\otimes
x_+\qquad x_-\stackrel{S}{\mapsto} x_-\otimes x_-+tx_+\otimes x_+,
\end{gather*}
and the corresponding $(1+1)$-dimensional TQFT.  Applying this TQFT to
the cube of resolutions of $K$ gives a commutative cube $A\co
(\CCat{n})^{\op}\to \ZZ[h,t]\hyModCat$.
Explicitly, given a vertex $v$, a \emph{Khovanov generator
  over $v$} is a labeling of the circles in $\mc{P}(v)$ by elements of
the set $\{x_+,x_-\}$. The module $A(v)$ is freely generated by the
set of Khovanov generators $F(v)$ over $v$. For an edge of the cube
$\cmorph{u}{v}$ (where $u>v$ and $|u|-|v|=1$), $A(\cmorph{u}{v})\from A(v)\to
A(u)$ is the multiplication or comultiplication above, depending on
whether the cobordism from $\mc{P}(v)$ to $\mc{P}(u)$ is a merge or split, respectively.

\begin{definition}\label{def:KhCx-diffKh}
  Fix a sign assignment $s$ on the cube (in the sense
  of \Section{basic-notation}). The chain complex $\Cx(K)$ is the
  totalization of $A$ with respect to $s$. That is, the chain group is
  defined to be $\Cx(K)=\oplus_{v\in\{0,1\}^n}A(v)$ and the
  differential $\diff\from\Cx(K)\to\Cx(K)$ is defined by stipulating
  the component $\diff_{u,v}$ of $\diff$ that maps from $A(v)$ to
  $A(u)$ to be
  \[
  \diff_{u,v}=\begin{cases}
    (-1)^{s_{u,v}}A(\cmorph{u}{v}^\op)&\text{if $u>v$ and $|u|-|v|=1$,}\\
    0&\text{otherwise.}
  \end{cases}
  \]

  The \emph{Khovanov chain complex} $(\KhCx(K),\diffKh)$ is the
  specialization $h=t=0$. The homology of $\KhCx(K)$ is the
  \emph{Khovanov homology} $\Kh(K)$.
\end{definition}
(The Khovanov complex was introduced by
Khovanov~\cite{Kho-kh-categorification}. The specializations
$(h,t)=(0,1)$ and $(h,t)=(1,0)$ were studied by
Lee~\cite{Lee-kh-endomorphism} and Bar-Natan~\cite{Bar-kh-tangle-cob},
respectively; the case of general $(h,t)$ was studied by
Khovanov~\cite{Kho-kh-Frobenius}, Naot~\cite{Nao-kh-universal}, and
others.)

The homological grading of the summand $A(v)\subseteq \Cx(K)$ is
$|v|-n_-$, where $n_-$ denotes the number of negative crossings in the
link diagram.  There is additionally an internal grading, called the
\emph{quantum grading}, that persists throughout; the quantum grading
of any Khovanov generator in $F(v)$ is
 \[
 n-3n_-+|v|+\#\{\text{circles in $\mc{P}(v)$ labeled $x_+$}\}-\#\{\text{circles in $\mc{P}(v)$ labeled $x_-$}\}.
 \]
 The quantum gradings of the formal variables $(h,t)$ are
 $(-2,-4)$.

 In the presence of a basepoint on the link diagram, and after setting
 $t=0$, there is also a reduced theory. In the reduced theory, for any
 $v\in\{0,1\}^n$, we only consider the Khovanov generators in $F(v)$
 that label the pointed circle in $\mc{P}(v)$ as $x_-$, and shift
 quantum gradings up by $1$. The reduced Khovanov chain complex and
 the reduced Khovanov homology are denoted $\rKhCx$ and $\rKh$,
 respectively. (Alternatively, we could have defined a reduced theory
 by setting $t=0$ and only considering the Khovanov generators that
 label the pointed circle as $x_+$, and shifting quantum gradings down
 by $1$. When $h=t=0$, these two reduced theories are canonically
 isomorphic; see also~\cite{Wig-kh-bn}.)

 The Khovanov homology is an invariant of the link, not just of the link
 diagram, and the reduced Khovanov homology is an invariant of the pointed link.
 More generally, the chain homotopy type of the chain complex $\Cx(K)$ over
 $\Z[h,t]$ is a link invariant as well.

\section{Cubical flow categories}\label{sec:cubical}
The Khovanov stable homotopy type is defined using an auxiliary
object, the Khovanov flow category. We review flow categories
in \Section{flow-cat}, after first recalling some notions related to
manifolds with corners, on which flow categories are based.  The
Khovanov flow category from~\cite{RS-khovanov} is defined as a kind of
cover of another flow category, the cube flow category
(\Definition{cube-flow-cat}), which in turn is based on permutohedra,
a family of polytopes reviewed in \Section{permutohedra}. After
reviewing the cube flow category and some of its basic properties
in \Section{cube-flow-cat}, new material begins in \Section{cubical-flow-cat},
where we abstract the notion of a cover of the cube flow category, into a cubical flow
category (\Definition{cubical}). In \Section{cubical-neat} we give a
slightly different notion of neat embeddings for cubical flow
categories. Using this notion, in \Section{cubical-realize} we give a
realization procedure for a cubical flow category, the cubical
realization. In \Section{cubical-CJS-same} we show that the cubical
realization and the Cohen-Jones-Segal realization (reviewed
in \Section{flow-cat}) give homotopy equivalent spaces.

 \subsection{Manifolds with corners and \texorpdfstring{$\Codim{n}$}{<n>}-manifolds}\label{sec:manifold-corners}
The original construction of the Khovanov stable homotopy type
from~\cite{RS-khovanov} relies on Cohen-Jones-Segal's notion of flow
categories~\cite{CJS-gauge-floerhomotopy}, which in turn
uses a particular notion of manifolds with corners, called
$\Codim{n}$-manifolds~\cite{Jan-top-o(n)manifolds,Lau-top-cobordismcorners}. For
the reader's convenience, we review the relevant definitions here.

\begin{definition}
  A \emph{$k$-dimensional manifold with corners} is a topological
  space $X$ together with an atlas
  $\{(U_\alpha,\phi_\alpha\co U_\alpha\to (\RR_+)^k)\}$ modeled on
  open subsets of $(\RR_+)^k$, so that the transition functions are
  smooth. Given a point $x$ in a chart $(U,\phi)$ let $c(x)$ be the
  number of coordinates in $\phi(x)$ which are $0$; $c(x)$ is
  independent of the choice of chart. The \emph{codimension-$i$
    boundary} of $X$ is $\{x\in X\mid c(x)= i\}$. A $k$-dimensional
  manifold with corners $X$ has a well-defined tangent space $TX$,
  which is an $\RR^k$-plane bundle; a \emph{Riemannian metric} on $X$
  means a Riemannian metric on $TX$.
\end{definition}

\begin{definition}
  A \emph{facet} of $X$ is the closure of a connected component of the
  codimension-$1$ boundary of $X$. (If $X$ is a polytope then this
  agrees with the usual definition of facets.) A \emph{multifacet} of
  $X$ is a (possibly empty) union of disjoint facets of $X$.  A
  manifold with corners $X$ is a \emph{multifaceted manifold} if every
  $x\in X$ belongs to exactly $c(x)$ facets of $X$. An
  \emph{$\Codim{n}$-manifold} is a multifaceted manifold $X$ along
  with an ordered $n$-tuple $(\del_1X,\dots,\del_nX)$ of multifacets
  of $X$ such that: $\bigcup_i\bdy_iX=\bdy X$; and for all distinct
  $i,j$, $\del_iX\cap\del_jX$ is a multifacet of both $\del_iX$ and
  $\del_jX$. (The number $n$ need not be the dimension of $X$.)
  See Laures~\cite{Lau-top-cobordismcorners} for more
  details. (Laures uses the terms `connected
  face', `face', and `manifold with faces' for `facet', `multifacet',
  and `multifaceted manifold', respectively. We have changed the
  terminology since `face' means something different for polytopes
  in \Section{permutohedra}.) Given a $\Codim{n}$-manifold $X$ and a
  vector $v\in\{0,1\}^n$ let $X(v)=\bigcap_{v_i=0}\bdy_iX$, with the
  convention that $X(\vec{1})=X$.
\end{definition}

\begin{example}
  An $n$-gon (polygon with $n$ sides) is a multifaceted manifold if
  $n>1$, while a $1$-gon (disk with one corner on the boundary) is a
  manifold with corners but not a multifaceted manifold.  Only the
  $2n$ gons can be made into $\Codim{2}$-manifolds, though
  $(2n+1)$-gons can be viewed as $2$-dimensional
  $\Codim{3}$-manifolds.  Of the Platonic solids only the tetrahedron,
  cube, and dodecahedron are manifolds with corners, and all three are
  multifaceted manifolds. The cube can be made into a
  $\Codim{3}$-manifold by defining $\bdy_1X$ to be the front and back
  facets, $\bdy_2X$ to be the top and bottom facets, and $\bdy_3X$ to
  be the left and right facets. The tetrahedron and dodecahedron cannot 
  be given the structure of $\Codim{3}$-manifolds, although both
  can be made into $\Codim{4}$-manifolds.  An even more fundamental
  example is $(\RR_+)^n$ itself, which is a $\Codim{n}$-manifold by
  setting $\bdy_i(\RR_+)^n=\{v\in \RR_+^n\mid v_i=0\}$. Similarly,
  $\RR^N\times (\RR_+^n)$ is an $(n+N)$-dimensional
  $\Codim{n}$-manifold.
\end{example}

\begin{construction}
  Given an $\Codim{n}$-manifold $X$ and an $\Codim{m}$-manifold $Y$,
  the product $X\times Y$ inherits the structure of a
  $\Codim{n+m}$-manifold, by declaring
  \[
    \bdy_i(X\times Y)=
    \begin{cases}
      (\bdy_iX)\times Y & 1\leq i\leq n\\
      X\times (\bdy_{i-n}Y) & n+1\leq i\leq n+m.
    \end{cases}
  \]
\end{construction}

We end this subsection with the definition of neat
embeddings.
\begin{definition}
  Consider $\Codim{n}$-manifolds $X$ and $Y$ and fix a Riemannian
  metric on $Y$. A \emph{neat embedding} of $X$ into $Y$ is a smooth
  map $f\co X\to Y$ so that:
  \begin{itemize}
  \item $f^{-1}(Y(v))=X(v)$ for all $v\in\{0,1\}^n$.
  \item $f|_{X(v)}\co X(v)\to Y(v)$ is an embedding for each
    $v\in \{0,1\}^n$.
  \item For all $w<v\in\{0,1\}^n$, $f(X(v))$ is perpendicular to
    $Y(w)$ with respect to the Riemannian metric on $Y$, and in
    particular is transverse to $Y(w)$.
  \end{itemize}
\end{definition}

\subsection{Flow categories}\label{sec:flow-cat}
Next we recall some notions
about flow categories, from Cohen-Jones-Segal~\cite{CJS-gauge-floerhomotopy} (see
also~\cite[Section 3]{RS-khovanov}), partly to fix terminology for
this paper.

\begin{definition}\label{def:flow-cat}
  A \emph{flow category} $\FlowCat$ is a topological category whose
  objects $\Ob(\FlowCat)$ form a discrete space, equipped with a
  grading function $\gr\from\Ob(\FlowCat)\to\Z$, and whose morphism spaces
  satisfy the following conditions:
  \begin{enumerate}[label=(FC-\arabic*),ref=(FC-\arabic*)]
  \item\label{item:FC-id} For any $x\in\Ob(\FlowCat)$,
    $\Hom(x,x)=\{\Id\}$.
  \item\label{item:FC-mflds} For distinct $x,y\in\Ob(\FlowCat)$ with
    $\gr(x)-\gr(y)=k$, $\Hom(x,y)$ is a (possibly empty) compact
    $(k-1)$-dimensional $\Codim{k-1}$-manifold; and %
  \item\label{item:FC-3} The composition maps combine to produce a diffeomorphism of
    $\Codim{k-2}$-manifolds
    \[
    \coprod_{\substack{z\in\Ob(\FlowCat)\setminus\{x,y\}\\\gr(z)-\gr(y)=i}}
    \Hom(z,y)\times\Hom(x,z)\cong\del_i\Hom(x,y).
    \]
  \end{enumerate}
\end{definition}
(In~\cite{CJS-gauge-floerhomotopy}, it is not required that the space
of objects be discrete. In Morse theory, this corresponds to allowing
Morse-Bott functions.)

The identity morphisms in a flow category are somewhat special, and it
is often convenient to ignore them. So, for objects $x,y$ in
$\FlowCat$, the \emph{moduli space from $x$ to $y$}, $\Moduli(x,y)$,
is defined to be $\Hom(x,y)$ if $x\neq y$, and empty if $x=y$. (In
Morse theory, this corresponds to the moduli space of non-constant
downwards gradient flows from $x$ to $y$.)

For any flow category $\FlowCat$, let $\Sigma^k\FlowCat$ denote the
flow category obtained by increasing the gradings of each object by
$k$.

\begin{definition}\label{def:flow-cat-neat-embed}
  For each integer $i$, fix an integer $D_i\geq 0$, and let $\TupV{D}$
  denote this sequence. A \emph{neat embedding} of a flow category
  $\FlowCat$ relative to $\TupV{D}$ is a collection $\jmath_{x,y}$ of
  neat embeddings (with the standard Riemannian metric on the target space)
\[
\jmath_{x,y}\from\Moduli(x,y)\into \R^{D_{\gr(y)}}\times\R_+\times\R^{D_{\gr(y)+1}}\times\R_+\times\dots\times\R_+\times\R^{D_{\gr(x)-1}}
\]
of $\Codim{\gr(x)-\gr(y)-1}$-manifolds for all $x,y\in\Ob(\FlowCat)$, subject to the following:
\begin{enumerate}
\item\label{item:CJS-neat-embed} For all integers $i,j$, 
  \[
  \!\!\!\coprod_{\substack{x,y\\\gr(x)=i,\gr(y)=j}}\!\!\!\jmath_{x,y}\from\!\!\!\coprod_{\substack{x,y\\\gr(x)=i,\gr(y)=j}}\!\!\!\Moduli(x,y)\to\R^{D_{j}}\times\R_+\times\dots\times\R_+\times\R^{D_{i-1}}
  \]
  is a neat embedding of $\Codim{i-j-1}$-manifolds.
\item\label{item:CJS-neat-embed-coherent} For all $x,y,z\in\Ob(\FlowCat)$, and all points $(q,p)\in\Moduli(y,z)\times\Moduli(x,y)$
  \[
  \jmath_{x,z}(q\circ p)=(\jmath_{y,z}(q),0,\jmath_{x,y}(p)).
  \]
\end{enumerate}
A \emph{coherent framing} for a neat embedding $\jmath$ is a collection
of framings of the normal bundles $\nu_{\jmath_{x,y}}$ of $\jmath_{x,y}$
for all $x,y\in\Ob(\FlowCat)$, such that for all
$x,y,z\in\Ob(\FlowCat)$, the product framing of
$\nu_{\jmath_{y,z}}\times\nu_{\jmath_{x,y}}$ equals the pullback framing
of $\circ^*\nu_{\jmath_{x,z}}$, where $\circ$ denotes composition.
\end{definition}

\begin{definition}[{\cite[Definition~3.21]{RS-khovanov}}]\label{def:flow-cat-chain-complex}
  A \emph{framed flow category} is a flow category
  $\FlowCat$, along with a coherent framing for some
  neat embedding of $\Cat$ (relative to some $\TupV{D}$).

  For a framed flow category, there is an \emph{associated cochain
    complex} $C^*(\FlowCat)$, defined as follows.  The $n\th$ chain
  group $C^n$ is the $\Z$-module freely generated by the objects of
  $\FlowCat$ of grading $n$. The differential $\diff$ is of degree
  one. For $x,y\in\Ob(\FlowCat)$ with $\gr(x)-\gr(y)=1$, the
  coefficient $\langle\diff y,x\rangle$ of $x$ in $\delta(y)$ is the number of points in
  $\Moduli(x,y)$, counted with sign.  We say a framed flow
  category \emph{refines} its associated chain complex.
\end{definition}

\begin{remark}
  In order to define the associated chain complex, one only needs
  the framing of the $0$-dimensional moduli spaces; and in order to
  check that $\delta^2=0$, one only needs to ensure
  that the framing extends to the $1$-dimensional moduli spaces.
\end{remark}

To a framed flow category, Cohen-Jones-Segal associate a based CW
complex $\Realize{\FlowCat}$ whose cells (except the basepoint)
correspond to the objects of the flow
category~\cite{CJS-gauge-floerhomotopy}. The following formulation of
the Cohen-Jones-Segal construction is described in more detail in
\cite[Definition~3.24]{RS-khovanov}.

\begin{definition}\label{def:CJS-realization}
  Let $\FlowCat$ be a framed flow category with a neat embedding
  $\jmath$ relative to some $\TupV{D}$, and assume all objects of
  $\FlowCat$ have grading in $[B,A]$ for some fixed $A,B\in\Z$. Using
  the framing of $\nu_{\jmath_{x,y}}$, extend $\jmath_{x,y}$ to a map
  \[
  \ol{\jmath}_{x,y}\from\Moduli(x,y)\times[-\delta,\delta]^{D_{\gr(y)}+\dots+D_{\gr(x)-1}}\to\R^{D_{\gr(y)}}\times\R_+\times\dots\times\R_+\times\R^{D_{\gr(x)-1}}.
  \]
  Choose $\delta$ small enough and $T$ large enough so that the map
  $
  \coprod_{x,y\mid \gr(x)=i,\gr(y)=j}\ol{\jmath}_{x,y}
  $
  is an embedding into
  $
  (-T,T)^{D_{j}}\times[0,T)\times\dots\times[0,T)\times(-T,T)^{D_{i-1}}
  $
  for all integers $i,j$. In the based CW complex $\Realize{\FlowCat}$, the
  cell associated to an object $x$ of grading $m$ is
  \[
  \Cell{x}=\prod_{i=B}^{m-1}\Bigl([0,T]\times[-T,T]^{D_i}\Bigr)\times[-\delta,\delta]^{D_m+\dots+D_{A-1}}.
  \]
  For any other object $y$ with $\gr(y)=n<m$, the embedding
  $\ol{\jmath}_{x,y}$ identifies $\Cell{y}\times\Moduli(x,y)$ with the
  following subset of $\bdy\Cell{x}$,
  \[
  \Cell[y]{x}=\prod_{i=B}^{n-1}\Bigl([0,T]\times[-T,T]^{D_i}\Bigr)\times\{0\}\times\image(\ol{\jmath}_{x,y})\times[-\delta,\delta]^{D_m+\dots+D_{A-1}}.
  \]
  The attaching map for $\Cell{x}$ sends
  $\Cell[y]{x}\cong\Cell{y}\times\Moduli(x,y)$ via the projection map
  to $\Cell{y}$, and sends $\bdy\Cell{x}\setminus\bigcup_y\Cell[y]{x}$
  to the basepoint.
\end{definition}

\begin{lemma}
  \cite[Lemma~3.25]{RS-khovanov} \Definition{CJS-realization} defines a CW
  complex, whose reduced cellular cochain complex is isomorphic (after
  shifting the gradings down by $D_B+\dots+D_{A-1}-B$) to the chain complex
  associated to the framed flow category $\FlowCat$ from
  \Definition{flow-cat-chain-complex}. The isomorphism sends cells
  of $\Realize{\FlowCat}$ to the corresponding objects of
  $\FlowCat$. 
\end{lemma}

\begin{definition}
  The \emph{Cohen-Jones-Segal realization of $\FlowCat$} is the formal
  (de)suspension
  $\Sigma^{B-D_B-\dots-D_{A-1}}\Realize{\FlowCat}$.
\end{definition}

\subsection{Permutohedra}\label{sec:permutohedra}
We will be interested in a particular family of flow categories, in
which the moduli spaces are unions of permutohedra. So, we recall some
basic facts about permutohedra.

Before starting, let us fix some notations about polytopes, mostly
following Ziegler~\cite{Zie-top-polytopes}. Let $P\subset\R^n$ be a polytope
or an $\mc{H}$-polyhedron (as described in
\cite[Definition~0.1]{Zie-top-polytopes}). If there is an affine
half-space of $\R^n$ which contains $P$, then the intersection of its
boundary with $P$ is called a \emph{face} of $P$; and if $\dim(P)=d$,
then we declare the entire polytope $P$ to be its unique
$d$-dimensional face.  This gives a CW complex structure on $P$, with
the cells being the nonempty faces. The faces of dimension $0$, $1$, and $(d-1)$
are called vertices, edges, and facets, respectively. A
$d$-dimensional polytope is called \emph{simple} if every vertex is
contained in exactly $d$ facets; simple polytopes are multifaceted
manifolds.

For $\si\in \permu{n}$ a permutation, let
$v_{\si}=(\si^{-1}(1),\dots,\si^{-1}(n))\in\R^n$. The \emph{$(n-1)$-dimensional permutohedron}
$\Permu{n-1}$ is the convex hull in $\R^n$ of the $n!$ points
$v_{\si}$~\cite[Example~0.10]{Zie-top-polytopes}. The permutohedron $\Permu{n-1}$ lies in the
affine subspace
$\mb{A}^{n-1}\defeq\Set{(x_1,\dots,x_n)\in\R^n}{\sum_ix_i=n(n+1)/2}$
of $\R^n$. As its name suggests,
 $\Permu{n-1}$ is $(n-1)$-dimensional, and the $v_\sigma$ are its vertices.

For each non-empty, proper subset $S$ of $\{1,\dots,n\}$ of cardinality
say $k$, let $H_S\sbs\mb{A}^{n-1}\sbs\R^n$ be the half-space
$\Set{(x_1,\dots,x_n)\in\mb{A}^{n-1}}{\sum_{i\in S}x_i\geq
  k(k+1)/2}$. The permutohedron $\Permu{n-1}$ can also be defined as
the intersection of the $2^n-2$ half-spaces $H_S$. In fact, the facets of $\Permu{n-1}$ are exactly the
$F_S\defeq\Permu{n-1}\cap\bdy H_S$.

The facets $F_S$ are identified with products of lower-dimensional
permutohedra:
  \begin{lemma}\label{lem:permu-facet}
    Let $a_1<a_2<\dots<a_{k}$ be the elements in $S$, and let
    $b_1<b_2<\dots<b_{n-k}$ be the elements in $\{1,2,\dots,n\}\sm
    S$. Then the map $f_S\co \R^n\to\R^{k}\times\R^{n-k}$
    \[
    f_S(x_1,\dots,x_n)=((x_{a_1},\dots,x_{a_k}),(x_{b_1}-k,\dots,x_{b_{n-k}}-k))
    \]
    identifies the facet $F_S\subset\R^n$ with
    $\Permu{k-1}\times\Permu{n-1-k}\subset\R^{k}\times\R^{n-k}$.
  \end{lemma}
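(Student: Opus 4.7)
The plan is to establish the claimed identification by matching vertex sets and using the affinity of $f_S$. First, I would identify the vertices of $F_S$ explicitly. A vertex $v_\sigma$ of $\Permu{n-1}$ lies in $F_S = \Permu{n-1} \cap \bdy H_S$ if and only if $\sum_{i \in S} \sigma^{-1}(i) = k(k+1)/2$. Since $\sigma^{-1}$ restricted to $S$ is an injection into $\{1,\dots,n\}$ whose image has cardinality $k$, and since the minimum value of the sum of $k$ distinct positive integers from $\{1,\dots,n\}$ is $1+2+\cdots+k = k(k+1)/2$, attaining this minimum forces $\{\sigma^{-1}(i) : i \in S\} = \{1,\dots,k\}$ and correspondingly $\{\sigma^{-1}(i) : i \in \{1,\dots,n\}\sm S\} = \{k+1,\dots,n\}$.

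Next, I would verify that $f_S$ induces a bijection between the vertex set of $F_S$ and the vertex set of $\Permu{k-1} \times \Permu{n-1-k}$. For such a $v_\sigma$, the first block $(\sigma^{-1}(a_1), \dots, \sigma^{-1}(a_k))$ is a permutation of $\{1,\dots,k\}$, and the second block $(\sigma^{-1}(b_1)-k, \dots, \sigma^{-1}(b_{n-k})-k)$ is a permutation of $\{1,\dots,n-k\}$; these are vertices of the two factors. Conversely, given $\tau \in \permu{k}$ and $\rho \in \permu{n-k}$, defining $\sigma$ by $\sigma^{-1}(a_j) = \tau^{-1}(j)$ and $\sigma^{-1}(b_j) = \rho^{-1}(j) + k$ yields a unique vertex of $F_S$ whose image is $(v_\tau, v_\rho)$.

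Finally, I would conclude using convexity. The map $f_S\from\R^n \to \R^k \times \R^{n-k}$ is an affine bijection (a coordinate rearrangement composed with a translation on the last $n-k$ coordinates), hence in particular a diffeomorphism of the ambient Euclidean spaces. Since $F_S$ is a convex polytope, it equals the convex hull of its vertex set; because $f_S$ is affine, $f_S(F_S)$ equals the convex hull of the images of those vertices, which by the preceding paragraph is precisely the vertex set of $\Permu{k-1} \times \Permu{n-1-k}$. Therefore $f_S(F_S) = \Permu{k-1} \times \Permu{n-1-k}$, and the restriction of $f_S$ is the desired identification.

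I do not expect any serious obstacle: the argument is essentially combinatorial. The only point requiring care is pinpointing the vertex set of $F_S$, which rests on the elementary extremal fact that $\{1,\dots,k\}$ is the unique $k$-element subset of $\{1,\dots,n\}$ with minimum sum. A sanity check on dimensions, namely $\dim F_S = n-2 = (k-1) + (n-1-k)$, is automatic from the product structure on the target.
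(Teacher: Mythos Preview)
Your proposal is correct and follows essentially the same approach as the paper: identify the vertices of $F_S$ as those $v_\sigma$ with $\{\sigma^{-1}(i):i\in S\}=\{1,\dots,k\}$, check that $f_S$ bijects these with the vertices of $\Permu{k-1}\times\Permu{n-1-k}$, and conclude via affinity and convex hulls. The paper's proof is simply a terser version of yours, asserting ``it suffices'' where you spell out the convex-hull justification.
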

  \begin{proof}
    It suffices to show that $f_S$ takes the vertices of $F_S\subset
    \Permu{n-1}$ to the vertices of $\Permu{k-1}\times
    \Permu{n-1-k}$. The vertices of $F_S$ are the points
    $(x_1,\dots,x_n)$ so that
    $\{x_{a_1},\dots,x_{a_k}\}=\{1,\dots,k\}$ and
    $\{x_{b_1},\dots,x_{b_{n-k}}\}=\{k+1,\dots,n\}$. It is
    immediate that $f_S$ takes these vertices bijectively to the
    vertices of $\Permu{k-1}\times\Permu{n-1-k}$.
  \end{proof}


  The permutohedron $\Permu{n-1}$ is simple, i.e., each vertex lies in
  exactly $n-1$ facets: $v_{\si}$ lies in the facet $F_{\{\si(1),\dots,\si(k)\}}$
  for each $1\leq k<n$, and no others. Therefore, every $d$-dimensional face belongs to
  exactly $n-1-d$ facets; and the subsets corresponding to those
  facets are nested. Hence, $d$-dimensional faces correspond to sequences of
  $n-1-d$ nested proper, non-empty subsets of $\{1,\dots,n\}$. Further:
  \begin{lemma}\label{lem:permu-face}
    The space $\Permu{n-1}$ can be treated as an
    $\Codim{n-1}$-manifold by declaring
    \[
    \bdy_i\Permu{n-1}=\bigcup_{\substack{S\\\card{S}=i}}F_S
    \]
    for $1\leq i\leq n-1$.
  \end{lemma}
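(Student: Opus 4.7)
The plan is to verify the three conditions in the definition of a $\Codim{n-1}$-manifold directly, using the combinatorial description of faces as chains of nested non-empty proper subsets of $\{1,\dots,n\}$ and the product decomposition from \Lemma{permu-facet}.

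First, I would observe that $\Permu{n-1}$ is a multifaceted manifold. Since it is simple, every point $x$ lies in a unique face of lowest dimension, which has codimension $c(x)$ and is therefore contained in exactly $c(x)$ facets; these are precisely the facets through $x$. Hence $\Permu{n-1}$ satisfies the definition of a multifaceted manifold with respect to its facets $\{F_S \mid \emptyset \subsetneq S \subsetneq \{1,\dots,n\}\}$.

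Next, I would check that each $\bdy_i \Permu{n-1}$ as defined is a multifacet, i.e.\ a disjoint union of facets. If $S\neq S'$ with $|S|=|S'|=i$ satisfied $F_S\cap F_{S'}\neq\emptyset$, then any point in the intersection would lie in a face of $\Permu{n-1}$ contained in both $F_S$ and $F_{S'}$; such a face corresponds to a chain of nested subsets that includes both $S$ and $S'$, forcing $S$ and $S'$ to be nested. Equal cardinality then gives $S=S'$, a contradiction. Condition $\bigcup_i \bdy_i\Permu{n-1}=\bdy\Permu{n-1}$ follows because $\bdy\Permu{n-1}$ is the union of all $F_S$ and we have partitioned the index set of facets by cardinality.

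The remaining, and main, step is to verify that for distinct $i,j$ the intersection $\bdy_i\Permu{n-1}\cap\bdy_j\Permu{n-1}$ is a multifacet of both $\bdy_i\Permu{n-1}$ and $\bdy_j\Permu{n-1}$. Assume without loss of generality $i<j$. By the nesting argument above, a point in $F_S\cap F_{S'}$ with $|S|=i$ and $|S'|=j$ forces $S\subsetneq S'$, so
\[
\bdy_i\Permu{n-1}\cap\bdy_j\Permu{n-1}=\bigsqcup_{\substack{|S|=i,\,|S'|=j\\ S\subsetneq S'}} F_S\cap F_{S'},
\]
the disjointness coming from the same nesting-and-cardinality argument. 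Under the identification $f_S\co F_S\cong\Permu{i-1}\times\Permu{n-1-i}$ of \Lemma{permu-facet}, I would show that $F_S\cap F_{S'}$ is carried to $\Permu{i-1}\times F_T$, where $T\subset\{1,\dots,n-i\}$ is the set of indices of elements of $S'\setminus S$ inside the complement of $S$. The verification is a short computation: the defining hyperplane $\sum_{\ell\in S'}x_\ell=j(j+1)/2$ restricts on $F_S$ to $\sum_{k\in T} y_k=(j-i)(j-i+1)/2$, which is exactly the hyperplane defining the facet $F_T$ of $\Permu{n-1-i}$. Thus each $F_S\cap F_{S'}$ is a facet of $F_S$, so $\bdy_i\Permu{n-1}\cap\bdy_j\Permu{n-1}$ is a disjoint union of facets of the components of $\bdy_i\Permu{n-1}$; symmetrically, it is a multifacet of $\bdy_j\Permu{n-1}$.

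The only non-routine step is the coordinate computation establishing the identification $F_S\cap F_{S'}\cong\Permu{i-1}\times F_T$; everything else is formal once the nested-subset description of faces of $\Permu{n-1}$ is in hand.
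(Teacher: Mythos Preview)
Your proof is correct and follows essentially the same route as the paper's: both verify the $\Codim{n-1}$-manifold axioms directly, using simplicity for the multifaceted condition and the product identification of \Lemma{permu-facet} to show that $F_S\cap\bdy_j\Permu{n-1}$ is a multifacet of $F_S$. The only cosmetic differences are that the paper establishes disjointness of $F_S$ and $F_T$ for $|S|=|T|$ via a vertex argument rather than the nested-chain description, and it packages the last step as $F_S\cap\bdy_j\Permu{n-1}=\Permu{i-1}\times(\bdy_{j-i}\Permu{n-1-i})$ (for $i<j$) rather than treating one $F_{S'}$ at a time.
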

  \begin{proof}
    We must check:
    \begin{enumerate}
    \item\label{item:permu-face-1} Every point $x$ belongs to $c(x)$ facets.
    \item\label{item:permu-face-2} Each $\bdy_i\Permu{n-1}$ is a
      multifacet, that is, a disjoint union of facets.
    \item\label{item:permu-face-3} $\bigcup_i\bdy_i\Permu{n-1}=\bdy\Permu{n-1}$.
    \item\label{item:permu-face-4} For each $i\neq j$,
      $\bdy_i\Permu{n-1}\cap \bdy_j\Permu{n-1}$ is a multifacet of
      $\bdy_i\Permu{n-1}$ (and $\bdy_j\Permu{n-1}$).
    \end{enumerate}
    
    Point~(\ref{item:permu-face-1}) follows from the fact that
    $\Permu{n-1}$ is a simple polyhedron.

    For point~(\ref{item:permu-face-2}), we claim that if $|S|=|T|=i$
    and $S\neq T$ then $F_S\cap F_T=\emptyset$; it follows that
    $\bdy_i\Permu{n-1}$ is the disjoint union of the facets $F_S$
    (with $|S|=i$).  But if $v_{\si}$ is a vertex in
    $F_S\cap F_T$, then 
    \begin{align*}
    &\phantom{\Longleftrightarrow}\sum_{j\in S} \si^{-1}(j)=\sum_{j\in T}
    \si^{-1}(j)=i(i+1)/2\\
    &\Longleftrightarrow\set{\si^{-1}(j)}{j\in
      S}=\set{\si^{-1}(j)}{j\in T}=\{1,\dots,i\}\\
    &\Longleftrightarrow S=T=\{\si(1),\dots,\si(i)\}.
    \end{align*}

    Point~(\ref{item:permu-face-3}) is immediate from the definitions.
    
    For point~(\ref{item:permu-face-4}), suppose that $|S|=i$. After
    identifying $F_S$ with $\Permu{i-1}\times\Permu{n-i-1}$ using
    \Lemma{permu-facet}, we get
    \[
    F_S\cap \bdy_j\Permu{n-1}=
    \begin{cases}
     \Permu{i-1}\times(\bdy_{j-i}\Permu{n-i-1}) & i<j\\
     (\bdy_j\Permu{i-1})\times \Permu{n-i-1}  & i>j.
    \end{cases}
    \]
    Therefore, $F_S\cap \bdy_j\Permu{n-1}$ is a disjoint union of
    facets of $F_S\cong\Permu{i-1}\times\Permu{n-i-1}$.  Since the
    $F_S$ for $|S|=i$ are disjoint, $\bdy_i\Permu{n-1}\cap
    \bdy_j\Permu{n-1}$ is a disjoint union of facets of
    $\bdy_i\Permu{n-1}$ as well.
  \end{proof}
  
  We will also use the following well-known cubical complex structure
  on $\Permu{n-1}$. For any permutation $\si\in\permu{n}$, let
  $C_{\si}$ be the convex hull of the barycenters of all the faces
  that contain the vertex $v_{\si}$.
\begin{lemma}\label{lem:permu-cubical-complex}
  Each $C_{\si}$ is combinatorially equivalent to an $(n-1)$-dimensional cube,
  and these cubes form a cubical complex subdivision of $\Permu{n-1}$.
\end{lemma}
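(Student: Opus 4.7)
The plan is to establish the standard cubical subdivision of a simple polytope via its vertices, specialized here to the permutohedron. Since $\Permu{n-1}$ is simple and $v_\sigma$ lies on exactly the $n-1$ facets $F_{\{\sigma(1),\dots,\sigma(k)\}}$ for $1\leq k<n$, the faces containing $v_\sigma$ form a Boolean lattice, bijecting with subsets $T\subseteq\{1,\dots,n-1\}$ via
\[
T\mapsto F_\sigma(T):=\bigcap_{k\in T}F_{\{\sigma(1),\dots,\sigma(k)\}},
\]
with $F_\sigma(\emptyset)=\Permu{n-1}$ and $F_\sigma(\{1,\dots,n-1\})=\{v_\sigma\}$. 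Writing $b_\sigma(T)$ for the barycenter of $F_\sigma(T)$, the set $C_\sigma$ is the convex hull of the $2^{n-1}$ points $b_\sigma(T)$.

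To show $C_\sigma$ is combinatorially an $(n-1)$-cube, I would induct on $n$. The base case $n=2$ is clear since $\Permu{1}$ is a segment and each $C_\sigma$ is half of it. For the inductive step, I would identify the facets of $C_\sigma$ in complementary pairs indexed by $k\in\{1,\dots,n-1\}$: one facet is $\mathrm{conv}\{b_\sigma(T):k\in T\}$, the other is $\mathrm{conv}\{b_\sigma(T):k\notin T\}$. The first lives inside $F_{\{\sigma(1),\dots,\sigma(k)\}}\cong\Permu{k-1}\times\Permu{n-k-1}$ via \Lemma{permu-facet}, and the inductive hypothesis applied to each factor identifies it with a product of two cubes, hence an $(n-2)$-cube. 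The second is a cone-like face, handled by an analogous induction after projecting away the barycenter of $\Permu{n-1}$. Matching these pairs of parallel $(n-2)$-cubical facets gives $C_\sigma$ the face lattice of the $(n-1)$-cube.

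For the subdivision statement, I would verify (i) $\bigcup_\sigma C_\sigma=\Permu{n-1}$ and (ii) $C_\sigma\cap C_\tau$ is a common face of each. For (i), the closed star of $v_\sigma$ in the barycentric subdivision of $\Permu{n-1}$ is contained in $C_\sigma$, since every chain $F_0\subsetneq F_1\subsetneq\cdots\subsetneq F_j$ of faces with $v_\sigma\in F_0$ has its convex hull of barycenters inside $C_\sigma$; as every point of $\Permu{n-1}$ lies in the closed star of some vertex, the $C_\sigma$ cover. For (ii), I would argue that $C_\sigma\cap C_\tau$ equals the convex hull $D_{\sigma,\tau}$ of those $b(F)$ with $v_\sigma,v_\tau\in F$: the inclusion $D_{\sigma,\tau}\subseteq C_\sigma\cap C_\tau$ is clear, and $D_{\sigma,\tau}$ is a face (a subcube corresponding to a subinterval of the Boolean lattice) of each $C_\sigma$ and $C_\tau$.

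The main obstacle is the reverse inclusion $C_\sigma\cap C_\tau\subseteq D_{\sigma,\tau}$. For this I would use the supporting hyperplanes $\bdy H_S$ of the facets of $\Permu{n-1}$: for any facet $F_S$ containing $v_\sigma$ but not $v_\tau$, the linear functional defining $H_S$ attains its $\Permu{n-1}$-minimum exactly on $F_S$, so every barycenter $b_\sigma(T)$ with $F_S\supseteq F_\sigma(T)$ lies on $\bdy H_S$ while every $b_\tau(T')$ in $C_\tau$ lies strictly inside $H_S$ unless $F_S\supseteq F_\tau(T')$; convex combinations then force any point of $C_\sigma\cap C_\tau$ to be a convex combination of barycenters of faces containing both $v_\sigma$ and $v_\tau$, which is exactly $D_{\sigma,\tau}$.
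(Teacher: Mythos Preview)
Your approach differs from the paper's: you attempt an induction on $n$, while the paper gives a direct argument by writing $C_\sigma$ as the intersection of two transverse simplicial cones---the \emph{facet cone} $\mathit{FC}_\sigma=\bigcap_{S:v_\sigma\in F_S}H_S$ (apex at $v_\sigma$) and the \emph{vertex cone} $\mathit{VC}_\sigma$ of the normal fan (apex at the barycenter of $\Permu{n-1}$, with edges dropped perpendicularly to the facets through $v_\sigma$). Transversality of these two $(n-1)$-simplicial cones immediately gives a combinatorial cube, and the two types of facets of $C_\sigma$ (boundary facets versus internal facets shared with an adjacent $C_{\sigma'}$) correspond exactly to facets of $\mathit{FC}_\sigma$ and of $\mathit{VC}_\sigma$.

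Your induction has a genuine gap at precisely the point the paper's normal-cone argument is doing work. For the sets $\mathrm{conv}\{b_\sigma(T):k\in T\}$ you correctly use the supporting hyperplane $\bdy H_{\{\sigma(1),\dots,\sigma(k)\}}$ and the product decomposition from \Lemma{permu-facet}. But for the complementary sets $\mathrm{conv}\{b_\sigma(T):k\notin T\}$ you offer only ``a cone-like face, handled by an analogous induction after projecting away the barycenter of $\Permu{n-1}$.'' This does not work: those barycenters do not lie in any lower-dimensional permutohedron, so there is nothing to induct on, and you have not exhibited a supporting hyperplane showing this convex hull is even a face of $C_\sigma$. (The correct hyperplane is a facet of the normal cone $\mathit{VC}_\sigma$---equivalently, the perpendicular bisector of the edge from $v_\sigma$ to the adjacent vertex $v_{\sigma\cdot(k\ k{+}1)}$---but establishing that all the relevant barycenters lie on it is essentially the paper's argument.) Without these facets, you cannot conclude $C_\sigma$ is a cube, and your later intersection argument also breaks down: the final sentence asserts that the hyperplane inequalities ``force any point of $C_\sigma\cap C_\tau$ to be a convex combination of barycenters of faces containing both $v_\sigma$ and $v_\tau$,'' but a single strict inequality $\ell_S(p)>\min$ coming from $p\in C_\tau$ does not constrain which $b_\sigma(T)$ appear in an expression for $p\in C_\sigma$; you need the full face structure of $C_\sigma$ (including the $\{k\notin T\}$ facets) to make that deduction.
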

\begin{proof}
  We present the proof from Ovchinnikov~\cite[Section~3]{Ovc-top-weak-order-complexes}, for which he cites Ziegler.
  Consider the following intersection of half-spaces
  \[
  \mathit{FC}_{\si}=\bigcap_{S\mid v_{\si}\in F_S} H_S.
  \]
  The space $\mathit{FC}_{\si}$ is an $(n-1)$-dimensional cone with cone point $v_{\si}$, with $n-1$ facets
  (corresponding to the facets of $\Permu{n-1}$ that contain
  $v_{\si}$), and therefore is a simplicial cone.  (For comparison
  with~\cite[Section~3]{Ovc-top-weak-order-complexes},
  $\mathit{FC}_{\si}$ is the dual of the facet cone of the dual of
  $v_{\si}$ (in the face fan of the dual of $\Permu{n-1}$).)

  Next, consider the vertex cone of $v_{\si}$ (in the normal fan of
  $\Permu{n-1}$), $\mathit{VC}_{\si}$. By definition, the cone point of $\mathit{VC}_{\si}$ is the
  barycenter of $\Permu{n-1}$, and the edges of $\mathit{VC}_{\si}$ are obtained by dropping
  perpendiculars from the barycenter to the facets of $\Permu{n-1}$
  that contain $v_{\si}$. The cone $\mathit{VC}_{\si}$ is an
  $(n-1)$-dimensional cone with $(n-1)$ edges, and therefore,
  $\mathit{VC}_{\si}$ is a simplicial cone. The $d$-dimensional faces
  of $\mathit{VC}_{\si}$ correspond to the $(n-1-d)$-dimensional faces
  of $\Permu{n-1}$ that contain $v_{\si}$. Given corresponding faces
  $f_{\mathit{VC}}$ of $\mathit{VC}_{\si}$ and $f$ of $\Permu{n-1}$,
  $f_{\mathit{VC}}$ is perpendicular to $f$ and passes through the
  barycenter of $f$.

  Therefore, $C_{\si}$ is the intersection of the two simplicial cones
  $\mathit{FC}_{\si}$ and $\mathit{VC}_{\si}$. Since the edges of
  $\mathit{VC}_{\si}$ pass through the interiors of the facets of
  $\mathit{FC}_{\si}$, $\mathit{VC}_{\si}$ and $\mathit{FC}_{\si}$
  intersect transversely, and therefore, $\mathit{VC}_{\si}\cap
  \mathit{FC}_{\si}$ is combinatorially equivalent to a cube.

  The facets of $C_{\si}$ are of two types: the ones contained in
  $\mathit{FC}_{\si}$, which are not identified with the facets of any
  other cube and lie in the boundary of $\Permu{n-1}$; and the ones
  contained in $\mathit{VC}_{\si}$, which are identified with facets
  of other cubes and lie in the interior of $\Permu{n-1}$. Indeed the
  facets of the latter type correspond to the edges $e$ of $\Permu{n-1}$
  that contain $v_{\si}$: the facet corresponding to $e$ is formed by taking the convex hull
  of the barycenters of all the faces of $\Permu{n-1}$ that contain
  $e$. With these identifications, it is clear that these cubes
  $C_{\si}$ come together to form a cubical subdivision of the
  permutohedron.
\end{proof}

\subsection{The cube flow category}\label{sec:cube-flow-cat}
We start by recalling the cube flow category
from~\cite[Definition~4.1]{RS-khovanov}. There
we gave a definition in terms of Morse flows on $[0,1]^{n}$. Here, we
give a more directly combinatorial definition in terms of
permutohedra (see also \Remark{cube-is-cube}):
\begin{definition}\label{def:cube-flow-cat}
  The objects of the cube flow category $\CubeFlowCat{n}$ are the same as
  the objects of the cube category $\CCat{n}$, i.e., tuples
  $u=(u_1,\dots,u_n)\in\{0,1\}^n$. The grading on the objects is
  defined by $\gr(u)=\card{u}=\sum_i u_i$. 
  
  The space $\Moduli(u,v)$ is defined to be empty unless $u> v$. If
  $u>v$ and $\gr(u)-\gr(v)=k>0$ then we define
  $\Moduli(u,v)=\Permu{k-1}$, the $(k-1)$-dimensional permutohedron. 
  Note that, by \Lemma{permu-face},
  $\Moduli(u,v)$ is a $\Codim{k-1}$-manifold.

  The composition map $\Moduli(v,w)\times \Moduli(u,v)\to
  \Moduli(u,w)$ is defined as follows. Assume that $u>v>w$,
  $\gr(u)-\gr(v)=k$ and $\gr(v)-\gr(w)=l$. Suppose that
  $u_{a_1}=\dots=u_{a_{k+l}}=1$ and $w_{a_1}=\dots=w_{a_{k+l}}=0$
  (i.e., the $a_i$ are the coordinates in which $u$ and $w$ differ),
  where $a_1<a_2<\dots<a_{k+l}$. Let $S$ be the subset of
  $\{1,\dots,k+l\}$ such that $v_{a_s}=1$ for $s\in S$. (The set $S$
  has cardinality $l$.)  By \Lemma{permu-facet}, there is a
  corresponding facet $F_S\subset \Permu{k+l-1}$, and $F_S$ is
  identified with $\Permu{l-1}\times\Permu{k-1}=\Moduli(v,w)\times
  \Moduli(u,v)$. The composition map is the corresponding inclusion
  map $\Moduli(v,w)\times\Moduli(u,v)=F_S\into \Moduli(u,w)$.
\end{definition}

\begin{lemma}\label{lem:cube-is-flow-cat} 
  \Definition{cube-flow-cat} defines a flow category.
\end{lemma}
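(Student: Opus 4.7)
The plan is to verify the three axioms \ref{item:FC-id}, \ref{item:FC-mflds}, and \ref{item:FC-3} of \Definition{flow-cat} directly from the definition. Axiom \ref{item:FC-id} holds by construction: we stipulated $\Moduli(u,u)=\emptyset$, so $\Hom(u,u)=\{\Id\}$. Axiom \ref{item:FC-mflds} follows immediately from \Lemma{permu-face}: for $u>v$ with $\gr(u)-\gr(v)=k$, we set $\Moduli(u,v)=\Permu{k-1}$, and this is a compact $(k-1)$-dimensional $\Codim{k-1}$-manifold with the prescribed facet structure.

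The real content is \ref{item:FC-3}. Fix $u>w$ with $\gr(u)-\gr(w)=n$, and let $a_1<a_2<\dots<a_n$ be the coordinates in which $u$ and $w$ differ. The intermediate vertices $v$ with $u>v>w$ are in natural bijection with the proper non-empty subsets $S\subseteq\{1,\dots,n\}$, via $v\leftrightarrow S=\{s\mid v_{a_s}=1\}$; moreover this bijection restricts to a bijection between $\{v\mid \gr(v)-\gr(w)=i\}$ and $\{S\mid |S|=i\}$. By \Lemma{permu-face},
\[
\del_i\Moduli(u,w)=\del_i\Permu{n-1}=\coprod_{|S|=i}F_S,
\]
where the union is disjoint since the facets $F_S$ with $|S|=i$ are pairwise disjoint (established in the proof of \Lemma{permu-face}). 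By \Lemma{permu-facet}, each $F_S$ is identified (via the explicit affine map $f_S$) with $\Permu{i-1}\times\Permu{n-i-1}=\Moduli(v,w)\times\Moduli(u,v)$. By construction, this identification agrees with the composition map of $\CubeFlowCat{n}$, so \ref{item:FC-3} is satisfied.

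There is no genuine obstacle; the proof is pure bookkeeping once \Lemma{permu-facet} and \Lemma{permu-face} are in hand. The only step warranting care is checking that the bijection between intermediate vertices and subsets of cardinality $i$ meshes with the $F_S\cong \Permu{i-1}\times\Permu{n-i-1}$ decomposition in the correct factor order, i.e.\ that the first factor in \Lemma{permu-facet} corresponds to $\Moduli(v,w)$ (the "lower" moduli space) and the second to $\Moduli(u,v)$. This is exactly how the composition was defined in \Definition{cube-flow-cat}, so it goes through on the nose.
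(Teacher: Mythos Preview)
Your verification of \ref{item:FC-id}, \ref{item:FC-mflds}, and \ref{item:FC-3} is fine and matches the paper's treatment. However, you have omitted the main point: a flow category is first of all a \emph{category}, so you must check that composition is associative. Nowhere do you verify that for $u>v>w>x$ the two ways of composing $\Moduli(w,x)\times\Moduli(v,w)\times\Moduli(u,v)\to\Moduli(u,x)$ agree. This is not automatic: the composition maps are defined via the affine identifications $f_S$ of \Lemma{permu-facet}, and one must check that these identifications are compatible under iteration. The paper devotes most of its proof to exactly this, rewriting $\Moduli(u,v)\subset\prod_{i\mid u_i>v_i}\R$ (indexed by the differing coordinates rather than by $\{1,\dots,k\}$) so that each composition becomes the map $+(\vec{0},\vec{l})$ followed by a coordinate reshuffle; associativity then reduces to the commutativity of a concrete diagram of affine maps.

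So your proof has a genuine gap. The fix is not hard but is not pure bookkeeping either: the point is that the subset $S$ used to define composition $\Moduli(v,w)\times\Moduli(u,v)\to\Moduli(u,w)$ depends on how the coordinates of $u,w$ are ordered, and when you compose twice these orderings interact. The coordinate-indexed viewpoint the paper adopts makes this interaction transparent.
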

\begin{proof}
  Conditions~\ref{item:FC-id} and~\ref{item:FC-mflds} of
  \Definition{flow-cat} are immediate from the definitions and
  \Lemma{permu-face}. For Condition~\ref{item:FC-3}, it is enough to
  recall from \Lemma{permu-face} that 
  $\bdy_l\Permu{k+l-1}=\bigcup_{\card{S}=l}F_S.$

  Finally, we need to check that this defines a category, or in other
  words, that composition is associative. Towards this end, for any
  $u>v$ with $\gr(u)-\gr(v)=k>0$, it is convenient to treat
  $\Moduli(u,v)=\Permu{k-1}$ as a subset of
  $ 
  \prod_{i\mid u_i>v_i}\R$ 
  instead of $\R^k$,
  where the two ambient spaces are identified by linearly ordering
  the coordinates $a_1<a_2<\dots<a_k$ in which $u$ and $v$
  differ. With this viewpoint, for $u>v>w$, with $\gr(u)-\gr(v)=k$
  and $\gr(v)-\gr(w)=l$, the composition map
  $\Permu{l-1}\times\Permu{k-1}\to\Permu{k+l-1}$ is induced from the map
  \[
    \prod_{i\mid v_i>w_i}\!\!\!\R\times\!\!\!\!\prod_{i\mid u_i>v_i}\!\!\!\R
    \ \stackrel{+(\vec{0},\vec{l})}{\relbar\joinrel\relbar\joinrel\longrightarrow}
    \prod_{i\mid v_i>w_i}\!\!\!\R\times\!\!\!\!\prod_{i\mid
      u_i>v_i}\!\!\!\R\ \stackrel{\cong}{\to}\prod_{i\mid u_i>w_i}\!\!\!\R,
    \]
  where the first map adds $l$ to each of the coordinates of
  $
  \prod_{i\mid u_i>v_i}\R$, and the second map is coordinate-wise
  identification. (See also \Lemma{permu-facet}.) Now, given
  $u>v>w>x$, with $\gr(u)-\gr(v)=k$, $\gr(v)-\gr(w)=l$ and
  $\gr(w)-\gr(x)=m$, the corresponding double compositions are:
  \[
  \xymatrixcolsep{4em}
  \xymatrix{
    \displaystyle\prod_{i\mid w_i>x_i}\!\!\!\RR\times\!\!\!\! \displaystyle\prod_{i\mid v_i>w_i}\!\!\!\RR\times\!\!\!\! \displaystyle\prod_{i\mid u_i>v_i}\!\!\!\RR
    \ar[r]^--{+(\vec{0},\vec{0},\vec{l})}\ar[d]^-{+(\vec{0},\vec{m},\vec{0})} & 
    \displaystyle\prod_{i\mid w_i>x_i}\!\!\!\RR\times\!\!\!\! \displaystyle\prod_{i\mid u_i>w_i}\!\!\!\RR\ar[d]^-{+(\vec{0},\vec{m)}} \\
    \displaystyle\prod_{i\mid v_i>x_i}\!\!\!\RR\times\!\!\!\! \displaystyle\prod_{i\mid u_i>v_i}\!\!\!\RR\ar[r]^-{+(\vec{0},\vec{l}+\vec{m})} & 
    \displaystyle\prod_{i\mid u_i>x_i}\!\!\!\RR,
  }
  \]
  where we have suppressed the reshuffling of factors from the notation.
  So, both compositions are given by
  $+(\vec{0},\vec{m},\vec{l}+\vec{m})$ (and the same reshuffling of
  factors).
\end{proof}

\begin{remark}\label{rem:cube-is-cube}
  We have not shown that the definition of the cube flow category from
  \Definition{cube-flow-cat} agrees with the Morse-theoretic
  definition from~\cite[Definition~4.1]{RS-khovanov}, as doing so
  would seem to require a nontrivial digression about the smooth
  structures on moduli spaces of Morse flows. For the purposes of this
  paper (and future work), note that the combinatorial definition used
  here works just as well for the construction of the
  Khovanov homotopy type in~\cite{RS-khovanov}, and all the results
  stated in \cite{RS-khovanov, RS-steenrod, RS-s-invariant}
  remains true with this combinatorial definition. To be more
  precise, the only statements that involve the moduli spaces on the
  cube flow category coming the Morse flows are
  \cite[Lemmas~4.2--4.3]{RS-khovanov}, which are immediate for the
  combinatorial definition. Therefore, when we talk about the Khovanov
  homotopy type in this paper (and future work), we mean the homotopy
  type defined using the cube flow category from
  \Definition{cube-flow-cat}.
\end{remark}

Before moving on to the definition of cubical flow categories
in \Section{cubical-flow-cat}, we digress a little to study the
cubical complex structure from \Lemma{permu-cubical-complex} on the
permutohedron $\Moduli(u,v)$.
\begin{definition}\label{def:cube-moduli-cubical-complex}
  For $u>v$ in $\{0,1\}^n$, define the space
  \[
  M_{u,v}=\Big(\coprod_{\substack{m\in\{1,\dots,|u|-|v|\}\\ u=u^0>\dots>u^m=v}}[0,1]^{m-1}\Big)/\sim
  \]
  where, for each chain $u=u^0>\dots>u^m=v$, each $1\leq i\leq m-1$,
  and each point $(t_1,\dots,t_{m-2})$ in the cube $[0,1]^{m-2}$
  corresponding to the chain $u^0>\dots>u^{i-1}>u^{i+1}>\dots>u^m$ the
  equivalence relation $\sim$ identifies
  \[
  (t_1,\dots,t_{m-2})_{u=u^0>\dots>u^{i-1}>u^{i+1}>\dots>u^m=v}\sim (t_1,\dots,t_{i-1},1,t_{i},\dots,t_{m-2})_{u=u^0>\dots>u^m=v}.
  \]

  For $u>v>w$ in $\{0,1\}^n$, define a map
  $M_{v,w}\times M_{u,v}\to M_{u,w}$ by:
  \begin{equation*}
    \begin{split}
      ((t_1,\dots,t_{\ell-1})_{v=v^0>\dots>v^{\ell}=w},&(s_1,\dots,s_{m-1})_{u=u^0>\dots>u^m=v})\\
      &\mapsto (s_1,\dots,s_{m-1},0,t_1,\dots,t_{\ell-1})_{u=u^0>\dots>u^m=v=v^0>\dots,v^{\ell}=w}.
    \end{split}
  \end{equation*}
\end{definition}

\begin{figure}
  \centering
  \begin{tikzpicture}[scale=2.5]
    \begin{scope}
      \foreach \i [count=\c from 0] in {60,120,...,360}{
        \coordinate (h\c) at (\i:1cm);}
      
      \node[anchor=south] at (h0) {$(1,2,3)$};
      \node[anchor=south] at (h1) {$(2,1,3)$};
      \node[anchor=east] at (h2) {$(2,3,1)$};
      \node[anchor=north] at (h3) {$(3,2,1)$};
      \node[anchor=north] at (h4) {$(3,1,2)$};
      \node[anchor=west] at (h5) {$(1,3,2)$};
      
      \draw[thick] (h0)--(h1)--(h2)--(h3)--(h4)--(h5)--cycle;
    \end{scope}
    \begin{scope}[xshift=3cm]
      \foreach \i [count=\c from 0] in {60,120,...,420}{
        \coordinate (h\c) at (\i:1cm);}
      
      \coordinate (z) at (0,0);

      \foreach \d [count=\c from 0] in {1,2,...,6}{
        \coordinate (m\c) at ($(h\c)!0.5!(h\d)$);
        \draw[thick] (h\c) -- (h\d);
        \draw[thick] (z) -- (m\c);
      }

      \node[fill=white,inner sep=0,outer sep=0,opacity=0.5,text opacity=1] at (z) {\tiny $(\ )$};
     
    \foreach \i/\a in {0/110,1/010,2/011,3/001,4/101,5/100}{
      \node[fill=white,inner sep=0,outer sep=0,opacity=0.5,text opacity=1] at ($(m\i)!0.5!(z)$) {\tiny $(\a)$};}

      \foreach \i/\a/\b in {0/110/100,1/110/010,2/011/010,3/011/001,4/101/001,5/101/100}{
      \node at ($(h\i)!0.3!(z)$) {\tiny $(\a,\b)$};}

    \end{scope}
  \end{tikzpicture}
  \caption{\textbf{A cubical decomposition of the permutohedron.} Left: The $2$-dimensional permutohedron $\Moduli(111,000)$ with vertices labeled by the corresponding permutations of $(1,2,3)$. Right: The corresponding cubical complex $M_{111,000}$, where we have labeled the cube corresponding to a sequence $111=u^0>\dots>u^m=000$ by $(u^1,\dots,u^{m-1})$.}
  \label{fig:tie-fighter}
\end{figure}

\begin{lemma}\label{lem:cube-moduli-cubical-complex}
  There are homeomorphisms $h_{u,v}\co
  M_{u,v}\stackrel{\cong}{\longrightarrow}\Moduli(u,v)$ 
  from the spaces of \Definition{cube-moduli-cubical-complex} to the
  permutohedra $\Moduli(u,v)$ so that:
  \begin{enumerate}
  \item\label{item:cube-coprod-same} The $h_{u,v}$ identify the cubes
    in the definition of $M_{u,v}$ with the cubes in the cubical
    complex structure on $\Moduli(u,v)$ from
    \Lemma{permu-cubical-complex} which are not entirely contained in
    $\bdy \Moduli(u,v)$.
  \item\label{item:cube-coprod-bdy} The $h_{u,v}$ identify the points in the
    cubes for $M_{u,v}$ where some coordinate is $0$ with the points in
    $\bdy\Moduli(u,v)$.
  \item\label{item:cube-coprod-commute} For any $u>v>w$, the
    following diagram commutes:
    \[
    \xymatrixcolsep{4em}
    \xymatrix{
      M_{v,w}\times M_{u,v}\ar[r]^-{\cong}_-{h_{v,w}\times h_{u,v}}\ar[d]&\Moduli(v,w)\times\Moduli(u,v)\ar[d]\\
      M_{u,w}\ar[r]^-{\cong}_-{h_{u,w}}&\Moduli(u,w).
    }
    \]
    Here, the left vertical
    arrow is the map from \Definition{cube-moduli-cubical-complex},
    and the right vertical arrow is the composition in
    $\CubeFlowCat{n}$.
  \end{enumerate}
\end{lemma}

(See Figure~\ref{fig:tie-fighter}.)

\begin{proof}
  The chain $c=\{u=u^0>\dots >u^m=v\}$ in $\{0,1\}^n$ corresponds to
  the $(|u|-|v|-m)$-dimensional face
  $F_c=\Moduli(u^{m-1},u^m)\times\dots\times\Moduli(u^0,u^1)$ of the
  permutohedron $\Moduli(u,v)$. If we take the convex hull of the
  barycenters of all the faces of the permutohedron that contain
  $F_c$, we get an $(m-1)$-dimensional cube $C_c$ which appears in the
  cubical complex structure from \Lemma{permu-cubical-complex}.
  (If $F_c$ is a vertex of the permutohedron, or equivalently if $c$ is a maximal chain, then the cube $C_c$ is one of the $C_\sigma$ from \Lemma{permu-cubical-complex}.)
  We will identify $C_c$ with the cube
  $[0,1]^{m-1}$ corresponding to $c$ in $M_{u,v}$.

  Let $t_1,\dots,t_{m-1}$ be the coordinates of $[0,1]^{m-1}$. As a
  first step, we identify the vertices of $C_c$ and $[0,1]^{m-1}$. A
  vertex of $C_c$ corresponds to a barycenter of some face containing
  $F_c$, which in turn corresponds to some subchain $c'$ of $c$; the
  corresponding vertex in $[0,1]^{m-1}$ has $t_i=0$ if $u^i\in c'$,
  and has $t_i=1$ otherwise. The identification on the vertices leads
  to our desired identification as follows. Construct the simplicial
  complex subdivision of $C_c$ (respectively, $[0,1]^{m-1}$) by
  joining every face to the barycenter of $\Moduli(u,v)$
  (respectively, the vertex $\vec{1}\in[0,1]^{m-1}$), and extend the
  identification linearly over each simplex.

  It is fairly straightforward to check that such identifications
  induce a cubical complex homeomorphism between the cubical complex
  $M_{u,v}$ and the cubical complex structure on $\Moduli(u,v)$, and
  these homeomorphisms satisfy the conditions of the lemma. Further
  details are left to the reader.
\end{proof}

\subsection{Definition of a cubical flow category}\label{sec:cubical-flow-cat}

\begin{definition}\label{def:cubical}
  A \emph{cubical flow category} is a flow category $\FlowCat$
  equipped with a grading-preserving functor $\Funky\co
  \Sigma^k\FlowCat\to \CubeFlowCat{n}$ for some $k\in\ZZ,n\in\NN$ so
  that for each $x,y\in\Ob(\FlowCat)$, $\Funky\co \Moduli(x,y)\to
  \Moduli(\Funky(x),\Funky(y))$ is a (trivial) covering
  map.
\end{definition}

Thus, if $\Funky\co \FlowCat\to\CubeFlowCat{n}$ is a cubical flow
category and $x,y\in\Ob(\FlowCat)$ then $\Hom(x,y)$ is empty unless
$\Funky(x)\geq \Funky(y)$. If $x=y$, then $\Hom(x,y)=\{\Id\}$, and if
$\Funky(x)=\Funky(y)$ but $x\neq y$ then $\Hom(x,y)$ is empty. If
$\Funky(x)>\Funky(y)$, then the moduli space
$\Moduli_{\FlowCat}(x,y)=\Hom(x,y)$ is a (possibly empty) disjoint
union of permutohedra.

\begin{convention}
  Sometimes we suppress the grading information if it is inessential
  to the discussion, and drop the grading shift $\Sigma^k$ from the
  notation.
\end{convention}

A framing of the cube flow category $\CubeFlowCat{n}$ in the sense
of \Definition{flow-cat-chain-complex} induces a sign assignment $s$
on the cube (see \Section{basic-notation}) as follows: for $u>v$ with $\card{u}-\card{v}=1$,
$s_{u,v}=0$ if the point $\Moduli(u,v)$ is framed positively, and
$s_{u,v}=1$ otherwise. Furthermore, every sign assignment on the cube
is induced from an essentially unique framing of $\CubeFlowCat{n}$;
see~\cite[Section~4.2]{RS-khovanov}. The chain
complex associated to $\CubeFlowCat{n}$, framed according to some sign
assignment $s$, is defined as follows. The chain group is generated by the vertices of the cube $\{0,1\}^n$, and differential is given by
\[
\diff(v)=\sum_{\substack{u>v\\\card{u}-\card{v}=1}}(-1)^{s_{u,v}}u.
\]
This is an acyclic chain complex, and is often referred to as the \emph{cube chain complex}.

Furthermore, if $(\FlowCat,\Funky\from\FlowCat\to\CubeFlowCat{n})$ is
a cubical flow category, then any sign assignment $s$ on the cube
induces a framing of the $0$-dimensional moduli spaces in $\FlowCat$
as well: all the points in $\Moduli_{\FlowCat}(x,y)$ are framed
positively if $s_{\Funky(x),\Funky(y)}=0$, and otherwise, all the points
in $\Moduli_{\FlowCat}(x,y)$ are framed negatively. The pullback of
the (essentially unique) framing on $\CubeFlowCat{n}$ inducing $s$
produces an essentially canonical extension of this framing to the
entire cubical flow category $\FlowCat$. The chain complex associated
to $\FlowCat$ for this framing has the following differential: for
$x,y\in\Ob(\FlowCat)$ with $\gr(x)-\gr(y)=1$, the coefficient of $x$
in $\diff y$ is
\[ 
\langle\diff(y),x\rangle=
\begin{cases}
  (-1)^{s_{\Funky(x),\Funky(y)}}&\text{$\Funky(x)>\Funky(y)$}\\
  0&\text{otherwise.}
\end{cases} 
\]
This chain complex only depends on the sign assignment $s$
and not the entire framing of $\CubeFlowCat{n}$.

Even though the definition of cubical flow categories seems fairly
restrictive, there are many examples:
\begin{example}\label{exam:simplicial-cubical}
  Given any (finite) simplicial complex $S_\bullet$ with vertices
  $v_1,\dots,v_n$, there is a corresponding cubical flow category
  $(\FlowCat,\Funky\co \Sigma\FlowCat\to \CubeFlowCat{n})$, defined as
  follows. The objects of $\FlowCat$ correspond to the simplices of
  $S_\bullet$, which in turn can be viewed as non-empty subsets of
  $\{v_1,\dots,v_n\}$. Given an object in $\FlowCat$, corresponding to
  a subset $T\subseteq \{v_1,\dots,v_n\}$, define $\Funky(T)$ to be
  the vector in $\{0,1\}^n$ whose $i\th$ coordinate is $1$ if and only
  if $v_i\in T$. Note that the map $\Funky$ is injective on
  objects. Let $\FlowCat$ be the full subcategory of $\CubeFlowCat{n}$
  generated by the objects in the image of $\Funky$. The chain complex
  associated to $\FlowCat$ is isomorphic to the simplicial chain
  complex for $S_\bullet$.

  (One can think of the category $\FlowCat$ as coming from choosing a
  Morse function on each simplex in $S_\bullet$ with a unique interior
  maximum and no other interior critical points, and so that these
  Morse functions are compatible under restriction. The moduli spaces
  in $\FlowCat$ are then the corresponding Morse moduli spaces.)
\end{example}

\begin{example}\label{exam:KhFlowCat}
  The Khovanov flow category~\cite[Definition~5.3]{RS-khovanov}
  $\KhFlowCat(K)$ associated to a link diagram $K$ (with $n$ crossings
  $c_1,\dots,c_n$) is, by construction, a cubical flow category.  For
  any $v\in\{0,1\}^n$, the subset of $\Ob(\KhFlowCat(K))$ that maps to
  $v$ are precisely the Khovanov generators over $v$ (as defined
  in \Section{khovanov-basic}):
  \[
  \Funky^{-1}(v)=F(v).
  \]
  For any $u>v\in\{0,1\}^n$ with $|u|-|v|=1$, and any
  $x\in\Funky^{-1}(u)=F(u)$, $y\in\Funky^{-1}(v)=F(v)$, the moduli
  space is
  \[
  \Moduli_{\KhFlowCat(K)}(x,y)=\begin{cases}\{\pt\}&\text{if $x$ appears in $\diffKh(y)$,}\\\emptyset&\text{otherwise.}\end{cases}
  \]
  Therefore, the chain complex associated to $\KhFlowCat(K)$ is isomorphic to the Khovanov chain complex $\KhCx(K)$ (from \Definition{KhCx-diffKh}).
\end{example}

\subsection{Cubical neat embeddings}\label{sec:cubical-neat}
Consider the cube flow category $\CubeFlowCat{n}$, and fix a tuple
$\TupV{d}=(d_0,\dots,d_{n-1})\in\NN^{n}$ and a real number $R>0$. For any $u>v$ in
$\Ob(\CubeFlowCat{n})=\{0,1\}^n$, let
\[
E_{u,v}=\Biggl[\prod_{i=|v|}^{|u|-1}(-R,R)^{d_i}\Biggr]\times
\Moduli_{\CubeFlowCat{n}}(u,v).
\]
Equip $E_{u,v}$ with the Riemannian metric induced from the standard
metric on the Euclidean space after viewing the permutohedron
$\Moduli_{\CubeFlowCat{n}}(u,v)$ as a polyhedron in
$\R^{|u|-|v|}$. For any $u>v>w$ in $\Ob(\CubeFlowCat{n})$, there is a
map $E_{v,w}\times E_{u,v}\to E_{u,w}$ given by:
\begin{align*}
  E_{v,w}\times
  E_{u,v}&=\prod_{i=|w|}^{|v|-1}(-R,R)^{d_i}\times
  \Moduli_{\CubeFlowCat{n}}(v,w)\times
  \prod_{i=|v|}^{|u|-1}(-R,R)^{d_i}\times
  \Moduli_{\CubeFlowCat{n}}(u,v)\\
  &\cong\prod_{i=|w|}^{|u|-1}(-R,R)^{d_i}\times
  \Moduli_{\CubeFlowCat{n}}(v,w)\times
  \Moduli_{\CubeFlowCat{n}}(u,v)\\
  &\stackrel{\Id\times\circ}{\lhook\joinrel\relbar\joinrel\relbar\joinrel\rightarrow} \prod_{i=|w|}^{|u|-1}(-R,R)^{d_i}\times
  \Moduli_{\CubeFlowCat{n}}(u,w).
\end{align*}

\begin{definition}\label{def:cube-cat-neat-embed}
  Fix a cubical flow category
  $(\FlowCat,\Funky\from\Sigma^k\FlowCat\to\CubeFlowCat{n})$.  A
  \emph{cubical neat embedding} $\iota$ of $(\FlowCat,\Funky)$ (or, more
  succinctly, of $\FlowCat$) relative to a tuple
  $\TupV{d}=(d_0,\dots,d_{n-1})\in\NN^n$ consists of neat embeddings
  \[
  \iota_{x,y}\from \Moduli_{\FlowCat}(x,y)\into E_{\Funky(x),\Funky(y)},
  \]
  such that:
  \begin{enumerate}
  \item\label{item:trivial-covering} For each $x,y\in\Ob(\FlowCat)$, the following diagram commutes:
    \[
    \xymatrix{
      \Moduli_{\FlowCat}(x,y)\ar[r]^{\iota_{x,y}}\ar[dr]_{\Funky} & E_{\Funky(x),\Funky(y)}\ar[d]^{\text{projection}}\\
      & \Moduli_{\CubeFlowCat{n}}(\Funky(x),\Funky(y)).
    }
    \]
  \item\label{item:induced-coprod-map} For each $u,v\in\Ob(\CubeFlowCat{n})$, the induced map
    \[
    \!\!\coprod_{\substack{x,y\\\Funky(x)=u,\Funky(y)=v}}\!\!\iota_{x,y}\from
    \!\!\coprod_{\substack{x,y\\\Funky(x)=u,\Funky(y)=v}}\!\!\Moduli_{\FlowCat}(x,y)\to E_{u,v}
    \]
    is a neat embedding.
  \item\label{item:neat-embed-commute} For each $x,y,z\in\Ob(\FlowCat)$, the following diagram commutes:
    \[
    \xymatrix{
      \Moduli_{\FlowCat}(y,z)\times\Moduli_{\FlowCat}(x,y)\ar[r]\ar[d]&
      \Moduli_{\FlowCat}(x,z)\ar[d]\\
      E_{\Funky(y),\Funky(z)}\times E_{\Funky(x),\Funky(y)}\ar[r]& E_{\Funky(x),\Funky(z)}.
    }
    \]
  \end{enumerate}
\end{definition}

In order to construct the cubical realization, we need to
extend these embeddings $\iota_{x,y}$ to maps 
\[
\ol{\iota}_{x,y}\from \Biggl[\prod_{i=|\Funky(y)|}^{|\Funky(x)|-1}[-\ep,\ep]^{d_i}\Biggr]\times\Moduli_{\FlowCat}(x,y)\to E_{\Funky(x),\Funky(y)},
\]
for some $\epsilon>0$,
so that that the analogue of the diagram from
Condition~(\ref{item:neat-embed-commute}) still commutes, and the extension of the
map from Condition~(\ref{item:induced-coprod-map}) is
still an embedding. One way to choose such a family of extensions would be to
coherently frame the normal bundles of the embeddings
$\iota_{x,y}$ (in a similar sense to \Definition{flow-cat-neat-embed})
and then use the construction from
\Definition{CJS-realization}. Instead, we will use the following
explicit extension.

For any $x,y\in\Ob(\FlowCat)$, let $u$ and $v$ denote $\Funky(x)$ and
$\Funky(y)$, respectively, and let $\pi^R_{u,v}$ and $\pi^M_{u,v}$
denote the projections of
$\Biggl[\displaystyle\prod_{i=|v|}^{|u|-1}(-R,R)^{d_i}\Biggr]\times\Moduli_{\CubeFlowCat{n}}(u,v)$
onto the two factors. Given sufficiently small $\ep>0$, extend the
embedding $\iota_{x,y}$ to a map
\begin{equation}\label{eq:extend-iota}
  \begin{split}
    \ol{\iota}_{x,y}\co \Biggl[\prod_{i=|v|}^{|u|-1}[-\ep,\ep]^{d_i}\Biggr]\times\Moduli_{\FlowCat}(x,y)&\to E_{u,v}=\Biggl[\prod_{i=|v|}^{|u|-1}(-R,R)^{d_i}\Biggr]\times
    \Moduli_{\CubeFlowCat{n}}(u,v)\\
    (a,\gamma)&\stackrel{\ol{\iota}_{x,y}}{\mapsto}
    (a+\pi^R_{u,v}\iota_{x,y}(\gamma),\pi^M_{u,v}\iota_{x,y}(\gamma)).
  \end{split}
\end{equation}
The definition of $\ol\iota$ ensures that the analogue of the diagram
from Condition~(\ref{item:neat-embed-commute}) still commutes. By condition~\ref{item:trivial-covering}, $\iota(\Moduli_{\FlowCat}(x,y)$ is transverse to the fibers $\left[\prod_{i=|v|}^{|u|-1}(-R,R)^{d_i}\right]\times\{\gamma\}$. So, for
$\ep$ sufficiently small, the extension of the map from
Condition~(\ref{item:induced-coprod-map}) is still an embedding. We make this a requirement on $\epsilon$:
\begin{convention}\label{conv:eps-small}
  When talking about extensions $\ol{\iota}_{x,y}$ of cubical neat
  embeddings, we will always assume that $\epsilon$ is chosen to be
  small in the sense that the induced map 
    \[
    \!\!\coprod_{\substack{x,y\\\Funky(x)=u,\Funky(y)=v}}\!\!\ol{\iota}_{x,y}\from
    \!\!\coprod_{\substack{x,y\\\Funky(x)=u,\Funky(y)=v}}\Biggl[\prod_{i=|v|}^{|u|-1}[-\ep,\ep]^{d_i}\Biggr]\times\Moduli_{\FlowCat}(x,y)\to E_{u,v}
    \]
    is an embedding.
\end{convention}

\begin{remark}\label{rem:explicit-isotopic-implicit}
  In~\cite{RS-khovanov}, we used a framing of the normal bundles,
  rather than the kind of explicit extension above, to trivialize
  tubular neighborhoods.  When identifying the cubical realization
  with the Cohen-Jones-Segal realization
  (see \Section{cubical-CJS-same}), we will need an isotopy between
  these two trivializations.  

  Let
  $V_0 = \prod_{i=|v|}^{|u|-1}\RR^{d_i}\times \{0\}\subset
  (TE_{u,v})|_{\iota_{x,y}(\Moduli_{\FlowCat}(x,y))}$
  and let $V_1$ 
  be the normal bundle to $\iota_{x,y}\Moduli_{\FlowCat}(x,y)$. 
  %
  Since $\pi^M_{u,v}\circ\iota_{x,y}$ is a covering map, the
  projection $d\pi^R_{u,v}\co V_1\to V_0$ is an isomorphism. For $t\in
  [0,1]$ let $\pi_t\co V_1\to
  (TE_{u,v})|_{\iota_{x,y}(\Moduli_{\FlowCat}(x,y))}$ 
  be $\pi_t = t\Id + (1-t)d\pi^R_{u,v}$
  and let $V_t=\pi_t(V_1)$. The $V_t$ are a 1-parameter family of subbundles
  connecting $V_0$ to $V_1$, and each $V_t$ is a complement to
  $T(\iota_{x,y}\Moduli_{\FlowCat}(x,y))$.

  The bundle $V_0$ is a trivial bundle, and in particular framed.
  Pushing forward this framing by $\pi_t$ gives a framing of each
  $V_t$.  Exponentiating these framings gives a $1$-parameter family
  of extensions $\ol{\iota}_{x,y}^t$ of $\iota_{x,y}$, each satisfying the analogue of
  Condition~(\ref{item:neat-embed-commute}). The framing
  $\ol{\iota}_{x,y}^0$ is our explicit extension $\ol{\iota}_{x,y}$
  from Equation~\eqref{eq:extend-iota}, and $\ol{\iota}_{x,y}^1$ is an
  extension coming from coherently framing the normal bundles of
  $\iota_{x,y}$.  Since each $V_t$ is a complement to
  $T(\iota_{x,y}\Moduli_{\FlowCat}(x,y))$, each $\ol{\iota}_{x,y}^t$
  satisfies the analog of Condition~(\ref{item:induced-coprod-map})
  for some $\ep_t$; compactness allows us to choose a uniform $\ep$
  for which each $\ol{\iota}_{x,y}^t$ satisfies the analog of
  Condition~(\ref{item:induced-coprod-map}). This produces the
  required isotopy between the extension from
  Equation~\eqref{eq:extend-iota} and an extension coming from some
  framing of the normal bundle.
\end{remark}

\subsection{Cubical realization}\label{sec:cubical-realize}
\begin{definition}\label{def:cubical-realize}
  Fix a cubical neat embedding $\iota$ of a cubical flow category
  $(\FlowCat,\Funky\co \Sigma^k\FlowCat\to \CubeFlowCat{n})$, relative
  to a tuple $\TupV{d}=(d_0,\dots,d_{n-1})\in\NN^d$, and fix $\epsilon>0$ satisfying \Convention{eps-small}. We build a CW complex
  $\CRealize{\FlowCat}={\CRealize{\FlowCat}}_{\Funky,\iota}$ from this
  data as follows:
  \begin{itemize}
  \item The CW complex has one cell for each
    $x\in\Ob(\FlowCat)$. Letting $u$ denote $\Funky(x)$, this cell is
    given by
    \[
    \Cell{x} = \prod_{i=0}^{|u|-1}[-R,R]^{d_i}\times\prod_{i=|u|}^{n-1}[-\ep,\ep]^{d_i}\times\Precone\Moduli_{\CubeFlowCat{n}}(u,\vect{0}),
    \]
    where $\Precone\Moduli_{\CubeFlowCat{n}}(u,\vect{0})$ is defined
    to be $[0,1]\times \Moduli_{\CubeFlowCat{n}}(u,\vect{0})$ if
    $u\neq\vec{0}$, or the point $\{0\}$ if $u=\vect{0}$.
  \item For any $x,y\in\Ob(\FlowCat)$ with $\Funky(x)=u>\Funky(y)=v$, the cubical neat
    embedding $\iota$ furnishes an embedding
    \begin{align*}
      \Cell{y}&\times \Moduli_{\FlowCat}(x,y)\\
      &=
      \prod_{i=0}^{|v|-1}[-R,R]^{d_i}\times\prod_{i=|v|}^{n-1}[-\ep,\ep]^{d_i}
      \times \Precone\Moduli_{\CubeFlowCat{n}}(v,\vect{0})\times
      \Moduli_{\FlowCat}(x,y)\\
      &\cong\prod_{i=0}^{|v|-1}[-R,R]^{d_i}\times\prod_{i=|u|}^{n-1}[-\ep,\ep]^{d_i}
      \times \Precone\Moduli_{\CubeFlowCat{n}}(v,\vect{0})\times
      \Bigl(\prod_{i=|v|}^{|u|-1}[-\ep,\ep]^{d_i}\times
      \Moduli_{\FlowCat}(x,y)\Bigr)\\
      &\stackrel{\Id\times\ol{\iota}_{x,y}}{\lhook\joinrel\relbar\joinrel\relbar\joinrel\rightarrow}\prod_{i=0}^{|v|-1}[-R,R]^{d_i}\times\prod_{i=|u|}^{n-1}[-\ep,\ep]^{d_i}\times\Precone\Moduli_{\CubeFlowCat{n}}(v,\vect{0})\times\Bigl(\prod_{i=|v|}^{|u|-1}[-R,R]^{d_i}\times\Moduli_{\CubeFlowCat{n}}(u,v)\Bigr)\\
      &\cong\prod_{i=0}^{|u|-1}[-R,R]^{d_i}\times\prod_{i=|u|}^{n-1}[-\ep,\ep]^{d_i}\times\Precone\Moduli_{\CubeFlowCat{n}}(v,\vect{0})\times\Moduli_{\CubeFlowCat{n}}(u,v)\\
      &\into\prod_{i=0}^{|u|-1}[-R,R]^{d_i}\times\prod_{i=|u|}^{n-1}[-\ep,\ep]^{d_i}\times\bdy(\Precone\Moduli_{\CubeFlowCat{n}}(u,\vect{0}))\\
      &\subset\bdy\Cell{x}.
    \end{align*}
    Here, the second inclusion comes from the composition map if
    $v\neq\vect{0}$, or the inclusion $\{0\}\into[0,1]$ if
    $v=\vect{0}$.  Let $\Cell[y]{x}\subset \bdy\Cell{x}$ denote the
    image of this embedding.
  \item The attaching map for $\Cell{x}$ sends
    $\Cell[y]{x}\cong\Cell{y}\times \Moduli_{\FlowCat}(x,y)$ by the
    projection map to $\Cell{y}$ and sends the complement of
    $\cup_y\Cell[y]{x}$ in $\bdy\Cell{x}$ to the basepoint.
  \end{itemize}
  The \emph{cubical realization} of $(\FlowCat,\Funky)$ is defined to be the formal desuspension
  $\FlowCatSpace{\FlowCat}=\Sigma^{-(k+d_0+\cdots+d_{n-1})}\CRealize{\FlowCat}$.
  (The desuspension ensures that the gradings of the objects in $\FlowCat$ agree
  with the dimensions of the corresponding cells in
  $\FlowCatSpace{\FlowCat}$.)
\end{definition}

\begin{lemma}
  The attaching maps in the cubical realization are well-defined.
\end{lemma}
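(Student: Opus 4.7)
The plan is to check two things: (a) $\bigcup_y \Cell[y]{x}$ is a closed subset of $\bdy\Cell{x}$, so that declaring its complement to map to the basepoint yields a continuous map; and (b) on each overlap $\Cell[y]{x}\cap\Cell[z]{x}$ the two projection rules are consistent. Part~(a) is automatic: each $\Cell[y]{x}$ is a product of compact spaces, and there are only finitely many objects $y$ with $\Funky(y)<\Funky(x)$, so the union is compact (hence closed) in the Hausdorff space $\bdy\Cell{x}$.

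For part~(b), I would split into cases depending on the relationship between $\Funky(y)$ and $\Funky(z)$ in $\{0,1\}^n$. If $\Funky(y)=\Funky(z)$ with $y\neq z$, then \Part{induced-coprod-map} of \Definition{cube-cat-neat-embed} together with \Convention{eps-small} forces $\ol{\iota}_{x,y}$ and $\ol{\iota}_{x,z}$ to have disjoint images, so $\Cell[y]{x}\cap\Cell[z]{x}=\emptyset$. If $\Funky(y)$ and $\Funky(z)$ are incomparable elements of $\{0,1\}^n$, then in the final factor $\Precone\Moduli_{\CubeFlowCat{n}}(\Funky(x),\vect{0})$ of $\bdy\Cell{x}$ the two cells factor through distinct facets of the permutohedron $\Moduli_{\CubeFlowCat{n}}(\Funky(x),\vect{0})$ indexed by incomparable subsets. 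By the vertex description in the proof of \Lemma{permu-face}, two facets indexed by incomparable subsets share no vertex, and since a face of a polytope is determined by its vertex set, these facets are disjoint. Hence $\Cell[y]{x}\cap\Cell[z]{x}=\emptyset$ in this case too.

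The substantive case is $\Funky(y)>\Funky(z)$ (the case $\Funky(z)>\Funky(y)$ is symmetric). Here the intersection is nonempty, and I would identify $\Cell[y]{x}\cap\Cell[z]{x}$ with $\Cell{z}\times\Moduli_{\FlowCat}(y,z)\times\Moduli_{\FlowCat}(x,y)$ via two different parametrizations: one coming from $\Cell[y]{x}\cong\Cell{y}\times\Moduli_{\FlowCat}(x,y)$ together with the inclusion $\Cell[z]{y}\cong\Cell{z}\times\Moduli_{\FlowCat}(y,z)\hookrightarrow\bdy\Cell{y}$, and the other coming from $\Cell[z]{x}\cong\Cell{z}\times\Moduli_{\FlowCat}(x,z)$ together with the composition inclusion $\Moduli_{\FlowCat}(y,z)\times\Moduli_{\FlowCat}(x,y)\hookrightarrow\bdy\Moduli_{\FlowCat}(x,z)$. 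The two attaching rules then both project onto the first factor $\Cell{z}$, so they agree on the overlap.

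The main obstacle is verifying that these two parametrizations describe the same subspace of $\bdy\Cell{x}$ with the same identification to the product $\Cell{z}\times\Moduli_{\FlowCat}(y,z)\times\Moduli_{\FlowCat}(x,y)$. This requires unpacking the explicit extensions $\ol{\iota}$ from \Equation{extend-iota}, invoking \Part{neat-embed-commute} of \Definition{cube-cat-neat-embed} (strict compatibility of the $\iota_{x,y}$ under composition), the associativity of the permutohedron composition established in the proof of \Lemma{cube-is-flow-cat}, and the iteration of facet identifications via \Lemma{permu-facet}. Each ingredient is already in place; the real work is careful bookkeeping of how the $(-R,R)^{d_i}$, $[-\ep,\ep]^{d_i}$, and permutohedron factors get reshuffled when one passes from $\Cell{x}$ down through $\Cell{y}$ to $\Cell{z}$.
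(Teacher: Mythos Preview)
Your plan follows the same route as the paper's proof, which also reduces the check (after citing \cite[Lemma~3.25]{RS-khovanov} for the framework) to identifying $\Cell[y]{x}\cap\Cell[z]{x}$ with $\Cell{z}\times\Moduli_{\FlowCat}(y,z)\times\Moduli_{\FlowCat}(x,y)$ (the paper names this subspace $\Cell[y,z]{x}$) and verifying that the two projection rules agree. Your case split on the relative position of $\Funky(y)$ and $\Funky(z)$ is actually more thorough than the paper's, which simply remarks that when $u,v,w$ are not totally ordered the cells are disjoint and leaves it at that.

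One ingredient is missing from your checklist for the comparable case, however: you will need \Part{induced-coprod-map} of \Definition{cube-cat-neat-embed} (with \Convention{eps-small}) there too, not only in the equal-$\Funky$ case. The inclusion $\Cell[y,z]{x}\subseteq\Cell[y]{x}\cap\Cell[z]{x}$ follows from \Part{neat-embed-commute} alone, as you outline. But the reverse inclusion does not: a point $p\in\Cell[z]{x}$ whose $\Moduli_{\FlowCat}(x,z)$-coordinate lies on the facet indexed by $v=\Funky(y)$ a priori only factors as a composition through \emph{some} $y'$ with $\Funky(y')=v$. The paper pins down $y'=y$ by observing that the relevant $\prod[-R,R]^{d_i}$-coordinates of $p$ then lie in $\image(\ol{\iota}_{x,y})\cap\image(\ol{\iota}_{x,y'})$, and invoking the disjointness of these images for $y\neq y'$. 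Without this step your two parametrizations do describe a common subspace $\Cell[y,z]{x}$, but you have not shown that it exhausts the intersection $\Cell[y]{x}\cap\Cell[z]{x}$.
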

\begin{proof}
  As in the proof of
  \cite[Lemma~3.25]{RS-khovanov},
  we must show that for any $x,y,z\in\Ob(\FlowCat)$ with $\gr(x)> \gr(y)>\gr(z)$,
  the dashed arrow in the following diagram exists
  such that the diagram commutes.
  \[
  \xymatrix{
    \Cell[z]{x}\cap \Cell[y]{x} \ar@{^{(}->}[r]\ar@{^{(}->}[d] \ar@{-->}[ddrr]  & \Cell[y]{x} \ar[r] & \Cell{y}\\
    \Cell[z]{x} \ar[d]& & \del\Cell{y}\ar@{^{(}->}[u]\\
    \Cell{z} & & \Cell[z]{y} \ar[ll] \ar@{^{(}->}[u]
  }
  \]

  Let $u,v,w$ denote $\Funky(x),\Funky(y),\Funky(z)$, respectively. We
  may assume $u>v>w$; otherwise, it is not hard to verify that
  $\Cell[z]{x}$ and $\Cell[y]{x}$ are disjoint. In a similar vein to
  \Definition{cubical-realize}, let $\Cell[y,z]{x}\subset\bdy\Cell{x}$
  be the image of the following embedding:
  \begin{align*}
    \Cell{z}&\times\Moduli_{\FlowCat}(y,z)\times \Moduli_{\FlowCat}(x,y)\\
    &=
    \prod_{i=0}^{|w|-1}[-R,R]^{d_i}\times\prod_{i=|w|}^{n-1}[-\ep,\ep]^{d_i}
    \times \Precone\Moduli_{\CubeFlowCat{n}}(w,\vect{0})\times
    \Moduli_{\FlowCat}(y,z)\times\Moduli_{\FlowCat}(x,y)\\
    &\cong\prod_{i=0}^{|w|-1}[-R,R]^{d_i}\times\prod_{i=|u|}^{n-1}[-\ep,\ep]^{d_i}
    \times \Precone\Moduli_{\CubeFlowCat{n}}(w,\vect{0})\times
    \Bigl(\prod_{i=|w|}^{|v|-1}[-\ep,\ep]^{d_i}\times
    \Moduli_{\FlowCat}(y,z)\Bigr)\\
    &\qquad\qquad{}\times
    \Bigl(\prod_{i=|v|}^{|u|-1}[-\ep,\ep]^{d_i}\times
    \Moduli_{\FlowCat}(x,y)\Bigr)\\
    &\stackrel{\Id\times\ol{\iota}_{y,z}\times\ol{\iota}_{x,y}}{\lhook\joinrel\relbar\joinrel\relbar\joinrel\relbar\joinrel\relbar\joinrel\relbar\joinrel\rightarrow}\prod_{i=0}^{|w|-1}[-R,R]^{d_i}\times\prod_{i=|u|}^{n-1}[-\ep,\ep]^{d_i}\times\Precone\Moduli_{\CubeFlowCat{n}}(w,\vect{0})\\
    &\qquad\qquad\qquad{}\times\Bigl(\prod_{i=|w|}^{|v|-1}[-R,R]^{d_i}\times\Moduli_{\CubeFlowCat{n}}(v,w)\Bigr)\times\Bigl(\prod_{i=|v|}^{|u|-1}[-R,R]^{d_i}\times\Moduli_{\CubeFlowCat{n}}(u,v)\Bigr)\\
    &\cong\prod_{i=0}^{|u|-1}[-R,R]^{d_i}\times\prod_{i=|u|}^{n-1}[-\ep,\ep]^{d_i}\times\Precone\Moduli_{\CubeFlowCat{n}}(w,\vect{0})\times\Moduli_{\CubeFlowCat{n}}(v,w)\times\Moduli_{\CubeFlowCat{n}}(u,v)\\
    &\into\prod_{i=0}^{|u|-1}[-R,R]^{d_i}\times\prod_{i=|u|}^{n-1}[-\ep,\ep]^{d_i}\times\bdy(\Precone\Moduli_{\CubeFlowCat{n}}(u,\vect{0}))\\
    &\subset\bdy\Cell{x}.
  \end{align*}
  (As in \Definition{cubical-realize}, the second inclusion usually
  comes from the composition map; the only special case is if
  $w=\vect{0}$, when it comes partly from the inclusion
  $\{0\}\into[0,1]$.)

  We claim that $\Cell[y,z]{x}=\Cell[z]{x}\cap\Cell[y]{x}$. The
  direction $\Cell[y,z]{x}\subseteq\Cell[z]{x}\cap\Cell[y]{x}$ is
  immediate since the inclusion
  \[
  \Cell{z}\times\Moduli_{\FlowCat}(y,z)\times
  \Moduli_{\FlowCat}(x,y)\stackrel{\cong}{\longrightarrow}\Cell[y,z]{x}\subset\bdy\Cell{x}
  \]
  factors in both of the following ways (using Condition~(\ref{item:neat-embed-commute}) of \Definition{cube-cat-neat-embed}):
  \begin{align*}
    \Cell{z}\times\Moduli_{\FlowCat}(y,z)\times\Moduli_{\FlowCat}(x,y)&\stackrel{\Id\times\circ}{\relbar\joinrel\longrightarrow}\Cell{z}\times\Moduli_{\FlowCat}(x,z)\stackrel{\cong}{\longrightarrow}\Cell[z]{x}\subset\bdy\Cell{x}\\
    \Cell{z}\times\Moduli_{\FlowCat}(y,z)\times\Moduli_{\FlowCat}(x,y)&\stackrel{\cong}{\longrightarrow}\Cell[z]{y}\times\Moduli_{\FlowCat}(x,y)\into\Cell{y}\times\Moduli_{\FlowCat}(x,y)\stackrel{\cong}{\longrightarrow}\Cell[y]{x}\subset\bdy\Cell{x}.
  \end{align*}

  The other direction requires more work. We
  will abuse notation slightly and identify points with their images under
  the composition map in $\CubeFlowCat{n}$. View any point
  $p\in\Cell[z]{x}\cap\Cell[y]{x}$ as a point $(p_1,p_2,p_3,p_4)$ in
  the following subspace of $\bdy\Cell{x}$:
  \[
  \Bigl(\prod_{i=0}^{|w|-1}[-R,R]^{d_i}\times\prod_{i=|u|}^{n-1}[-\ep,\ep]^{d_i}\Bigr)
  \times\prod_{i=|w|}^{|v|-1}[-R,R]^{d_i} \times\prod_{i=|v|}^{|u|-1}[-R,R]^{d_i}\times\bdy(\Precone\Moduli_{\CubeFlowCat{n}}(u,\vect{0})).
  \]
  For $p$ to lie in $\Cell[z]{x}$, $p_4$ must lie in the
  subspace
  \[
  \Precone\Moduli_{\CubeFlowCat{n}}(w,\vect{0})\times
  \Moduli_{\CubeFlowCat{n}}(u,w);
  \]
  similarly, for $p$ to lie in $\Cell[y]{x}$, $p_4$ must lie in
  the subspace
  \[
  \Precone\Moduli_{\CubeFlowCat{n}}(v,\vect{0})\times
  \Moduli_{\CubeFlowCat{n}}(u,v).
  \]
  Therefore, $p_4$ must lie in the subspace
  $\Precone\Moduli_{\CubeFlowCat{n}}(w,\vect{0})\times\Moduli_{\CubeFlowCat{n}}(v,w)\times
  \Moduli_{\CubeFlowCat{n}}(u,v)$.
  Write
  $p_4$ also in component form as $(p_{4,1},p_{4,2},p_{4,3})$. Since $p$
  lies in $\Cell[y]{x}$, we know $(p_3,p_{4,3})\in\image(\ol{\iota}_{x,y})$, and
  since $p$ lies in $\Cell[z]{x}$, we know
  $(p_2,p_3,p_{4,2},p_{4,3})\in\image(\ol{\iota}_{x,z})$. Moreover, since $\iota$ is a
  cubical neat embedding (\Definition{cube-cat-neat-embed}),
  \[
  \image(\ol{\iota}_{x,z})\cap\Bigl(\prod_{i=|w|}^{|v|-1}[-R,R]^{d_i}\times\Moduli_{\CubeFlowCat{n}}(v,w)\times\prod_{i=|v|}^{|u|-1}[-R,R]^{d_i}\times
  \Moduli_{\CubeFlowCat{n}}(u,v)\Bigr)=\image\Bigl(\coprod_{y'\mid\Funky(y')=v}\ol{\iota}_{y',z}
  \times\ol{\iota}_{x,y'}\Bigr),
  \]
  and therefore, there exists $y'$ with $\Funky(y')=v$ such that
  $(p_3,p_{4,3})\in\image(\ol{\iota}_{x,y'})$ and
  $(p_2,p_{4,2})\in\image(\ol{\iota}_{y',z})$. Condition~(\ref{item:induced-coprod-map})
  from \Definition{cube-cat-neat-embed} (but with $\ol{\iota}$ instead
  of $\iota$) ensures that $y'=y$, and then it is straightforward to
  see that $p$ lies in $\Cell[y,z]{x}$. This completes the proof that
  $\Cell[y,z]{x}=\Cell[z]{x}\cap\Cell[y]{x}$.

  Define the dashed arrow from
  $\Cell[z]{x}\cap\Cell[y]{x}=\Cell[y,z]{x}\cong\Cell{z}\times\Moduli_{\FlowCat}(y,z)\times
  \Moduli_{\FlowCat}(x,y)$ to
  $\Cell[z]{y}\cong\Cell{z}\times\Moduli_{\FlowCat}(y,z)$ to be the
  projection map. From the definition of $\Cell[y,z]{x}$, it is easy
  to verify that the resulting diagram commutes.
\end{proof}


\begin{proposition}\label{prop:cubical-realize-stabilize}
  Up to stable homotopy equivalence, the cubical realization is
  independent of the choice of tuple $\TupV{d}$. More precisely, let
  $\iota$ be a cubical neat embedding relative to
  $\TupV{d}=(d_0,\dots,d_{n-1})$ and let $\CRealize{\FlowCat}$ be the
  cubical realization corresponding to $\iota$.  Fix a
  tuple $\TupV{d'}=(d'_0,\dots,d'_{n-1})$ with $d'_i\geq d_i$ for all
  $i$. There is an induced cubical neat embedding of $\FlowCat$
  relative to $\TupV{d'}$, gotten by identifying the space $E_{u,v}$
  for $\iota$ with the subspace
  \[
  \prod_{i=|v|}^{|u|-1}(-R,R)^{d_i}\times\{0\}^{d'_i-d_i}\times\Moduli_{\CubeFlowCat{n}}(u,v)
  \]
  of the space $E'_{u,v}$ for $\iota'$.  Let
  $\CRealize{\FlowCat}^{\prime}$ be the cubical
  realization corresponding to $\iota'$ and let
  $N=\card{\TupV{d'}}-\card{\TupV{d}}=\sum_{i=0}^{n-1}d'_i-\sum_{i=0}^{n-1}d_i$. Then
  there is a homotopy equivalence
  \[
  \Sigma^{N}{\CRealize{\FlowCat}}\simeq {\CRealize{\FlowCat}}^{\prime},
  \]
  taking cells to the corresponding cells.
\end{proposition}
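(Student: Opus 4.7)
The plan is to argue by induction on $N$, reducing to the base case $N=1$. A general $\TupV{d}'$ with $d'_i\geq d_i$ for all $i$ can be reached from $\TupV{d}$ by a sequence of $N$ increments, each adding one to a single coordinate; iterating the base case along this sequence then yields a cellular homotopy equivalence $\Sigma^N\CRealize{\FlowCat}\simeq\CRealize{\FlowCat}'$ sending cells to corresponding cells.

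For the base case where $d'_i=d_i+1$ and $d'_j=d_j$ for $j\neq i$, direct inspection of \Definition{cubical-realize} shows that for each $x\in\Ob(\FlowCat)$ with $u=\Funky(x)$,
\[
\Cell{x}'=\Cell{x}\times I_i^u,\qquad\text{where }I_i^u=\begin{cases}[-R,R]&\text{if }i<|u|,\\ [-\ep,\ep]&\text{if }i\geq|u|.\end{cases}
\]
Since either interval is homeomorphic to $[-1,1]$ taking boundary to boundary, the quotient $\Cell{x}'/\bdy\Cell{x}'$ is canonically the reduced suspension of $\Cell{x}/\bdy\Cell{x}$. To promote this cell-by-cell identification to a cellular homotopy equivalence $\Sigma\CRealize{\FlowCat}\simeq\CRealize{\FlowCat}'$, the main task is to compare attaching maps.

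For $x>y$ in $\Ob(\FlowCat)$ with $v=\Funky(y)$, the attaching region $\Cell[y]{x}'\cong\Cell{y}'\times\Moduli_{\FlowCat}(x,y)$ is embedded in $\bdy\Cell{x}'$ via the map built from $\ol{\iota}'_{x,y}$. Because the extra coordinate of $\iota'_{x,y}$ is identically zero, on this coordinate the embedding is the identity $[-R,R]\to[-R,R]$ when $i<|v|$, the identity $[-\ep,\ep]\to[-\ep,\ep]$ when $i\geq|u|$, and the natural inclusion $[-\ep,\ep]\hookrightarrow[-R,R]$ when $|v|\leq i<|u|$. In the first two cases the attaching map for $\Cell{x}'$ is literally the reduced suspension of the attaching map for $\Cell{x}$. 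In the third case a straight-line rescaling of the extra coordinate (fixing $0$) gives a homotopy from the inclusion $[-\ep,\ep]\hookrightarrow[-R,R]$ to the identity $[-R,R]\to[-R,R]$, extending to a homotopy between the actual attaching map of $\Cell{x}'$ and the suspension of the attaching map of $\Cell{x}$.

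Inductively up the skeleta, these rescalings assemble into the desired cellular homotopy equivalence. The main obstacle is this third case; the key point is that because the extra coordinate's only role in the attaching is the interval factor itself, the scale mismatch between $[-\ep,\ep]$ and $[-R,R]$ can be absorbed into a coherent family of rescaling homotopies without disturbing the rest of the attaching data. All other parts of the argument are bookkeeping once the inductive reduction to $N=1$ is in place.
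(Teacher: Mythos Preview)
Your approach is correct and is essentially a reconstruction of the argument the paper invokes: the paper's own proof is simply the one-line reference ``The proof is the same as Case~(3) in the proof of \cite[Lemma~3.27]{RS-khovanov},'' so you are supplying the details that the paper omits. The inductive reduction to $N=1$, the identification $\Cell{x}'=\Cell{x}\times I_i^u$, and the three-case analysis of how the extra coordinate enters the attaching maps are all exactly right.

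One suggestion to sharpen the final step: rather than invoking a ``coherent family of rescaling homotopies'' abstractly, note that because the extra Euclidean coordinate of $\iota'$ is identically $0$, the parameter $\ep$ for that single coordinate can be chosen independently of the $\ep$ used for the remaining coordinates, and Convention~\ref{conv:eps-small} imposes no upper bound on it. Letting this separate parameter $\ep'$ vary continuously from $\ep$ up toward $R$ gives a one-parameter family of CW complexes in which the cells are all canonically identified (via scaling the extra factor) and the attaching maps vary continuously; at $\ep'=\ep$ one obtains $\CRealize{\FlowCat}'$, while as $\ep'\to R$ the third case collapses into the first two and one obtains $\Sigma\CRealize{\FlowCat}$ on the nose. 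This packages your rescaling homotopies into a single continuous deformation of the construction and makes the coherence automatic.
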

\begin{proof}
  The proof is the same as Case~(3) in the proof of
  \cite[Lemma~3.27]{RS-khovanov}.
\end{proof}

One can also show, by following the proofs of
\cite[Lemmas~3.25--3.27]{RS-khovanov}, that the stable homotopy type
of ${\CRealize{\FlowCat}}_{\iota}$ is independent of the cubical
neat embedding $\iota$ and the parameters $R$ and $\ep$. Since this
result also follows from \Theorem{cubical-CJS-realization}, we do not
give further details.

We conclude this section by returning to our simplicial complex example:
\begin{proposition}
  Let $S_\bullet$ be a simplicial complex with $n$ vertices and let
  $(\FlowCat,\Funky\co \Sigma\FlowCat\to\CubeFlowCat{n})$ be the
  corresponding cubical flow category, as in
  \Example{simplicial-cubical}. Let ${\Realize{S_\bullet}}_+$ denote
  the disjoint union of the geometric realization of $S_\bullet$ and a
  basepoint.  Then there is a stable homotopy equivalence
  \[
  \FlowCatSpace{\FlowCat}\simeq {\Realize{S_\bullet}}_+.
  \]
  Moreover, the stable homotopy equivalence may be chosen so
  that it induces an isomorphism between the simplicial cochain
  complex of $S_\bullet$ and the reduced cellular cochain complex of
  $\FlowCatSpace{\FlowCat}$ (with the CW complex structure from
  \Definition{cubical-realize}).
\end{proposition}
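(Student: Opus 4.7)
Take the cubical neat embedding of $\FlowCat$ with $\TupV{d}=\vec 0$, so that each cell $\Cell{x_T}$ of $\CRealize{\FlowCat}$ equals $[0,1]\times\Permu{|T|-1}$, a topological $|T|$-disk. An argument parallel to \cite[Lemma~3.25]{RS-khovanov} shows that the reduced cellular cochain complex of $\CRealize{\FlowCat}$ (after the grading shift from the desuspension) is the chain complex of the framed flow category $\FlowCat$, which by \Example{simplicial-cubical} is canonically isomorphic to the simplicial cochain complex of $S_\bullet$. This gives the ``moreover'' clause once the underlying stable equivalence is produced.

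For the equivalence I would proceed by induction on the number of simplices in $S_\bullet$. The base case $S_\bullet=\emptyset$ is trivial. For the inductive step, write $S_\bullet=S_\bullet'\cup\{T\}$ with $T$ a top-dimensional simplex of dimension $d-1$ (so all proper faces of $T$ lie in $S_\bullet'$), and let $\FlowCat'$ denote the corresponding sub-flow-category. Then $\CRealize{\FlowCat}$ is obtained from $\CRealize{\FlowCat'}$ by attaching the single $d$-disk $\Cell{x_T}$: the apex $\{0\}\times\Permu{d-1}$ and base $\{1\}\times\Permu{d-1}$ of the cone collapse to the basepoint, while the sides $[0,1]\times\partial\Permu{d-1}$ map into lower cells of $\CRealize{\FlowCat'}$ through the permutohedral composition of \Definition{cube-flow-cat}. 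Since all of $\bdy\Cell{x_T}$ lands inside $\CRealize{\FlowCat'}$, we have $\CRealize{\FlowCat}/\CRealize{\FlowCat'}\cong S^d$, and after desuspension a cofiber sequence
\[
\FlowCatSpace{\FlowCat'}\longrightarrow\FlowCatSpace{\FlowCat}\longrightarrow S^{d-1}.
\]
The analogous cofiber sequence $|S_\bullet'|_+\to|S_\bullet|_+\to S^{d-1}$ arises from attaching $\Delta^T$ to $|S_\bullet'|$ along $\bdy\Delta^T$.

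Combined with the inductive equivalence $\FlowCatSpace{\FlowCat'}\simeq|S_\bullet'|_+$, the inductive step reduces to showing that the two stable attaching maps $S^{d-2}\to\FlowCatSpace{\FlowCat'}\simeq|S_\bullet'|_+$ agree up to homotopy. To produce the homotopy I would exploit the cubical complex subdivision of the permutohedron from \Lemma{permu-cubical-complex}: the $d!$ top cubes $C_\si\subset\Permu{d-1}$ are in canonical bijection with the top simplices of the barycentric subdivision of $\Delta^{d-1}$, and collapsing each $C_\si$ onto its corresponding simplex assembles into a surjection $r\from\Permu{d-1}\to\Delta^{d-1}$ which on each facet $F_S\cong\Permu{|S|-1}\times\Permu{d-|S|-1}$ collapses the $\Permu{d-|S|-1}$ factor and embeds $\Permu{|S|-1}$ into the corresponding subsimplex. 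Applying $\Id\times r$ on $[0,1]\times\Permu{d-1}$ cell-by-cell, coherently with the inductively constructed maps on $\CRealize{\FlowCat'}$, produces a cellular map $\CRealize{\FlowCat}\to\Sigma(|S_\bullet|_+)$ whose induced map on reduced cellular chains is the standard isomorphism; this is then a stable equivalence by the Whitehead theorem for spectra.

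The main obstacle is verifying that the collapse $r$ intertwines the permutohedral composition $\Moduli(v,\vec 0)\times\Moduli(u,v)\to\Moduli(u,\vec 0)$ with the simplicial face inclusions across successive inductive stages. Concretely, one must check that $r$ sends each facet $F_S=\Moduli(v,\vec 0)\times\Moduli(u,v)$ to the appropriate subface of the barycentric subdivision of $\Delta^{d-1}$, with the extra $\Permu{|u|-|v|-1}$ factor collapsed by $r$, so that the inductive construction is coherent and the chain-level isomorphism is preserved at every stage. This is a direct combinatorial computation using \Lemma{permu-facet} and the explicit description of the cubical structure in \Lemma{permu-cubical-complex}.
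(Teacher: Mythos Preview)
The paper does not actually prove this proposition; it states ``We leave the proof as a (somewhat involved) exercise to the reader.'' So there is no argument to compare against, and your sketch should be judged on its own merits.

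Your overall strategy is sound and is essentially the natural one: with $\TupV{d}=\vec 0$ the cells are literally $[0,1]\times\Permu{|T|-1}$, and one tries to build a cellular map $\CRealize{\FlowCat}\to\Sigma|S_\bullet|_+$ by applying $\Id\times r$ on each cell for a suitable collapse $r\co\Permu{k-1}\to\Delta^{k-1}$. Two comments.

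First, the induction on the number of simplices is a red herring. Once the maps $r_k$ exist with the required facet--face compatibility, the map $\CRealize{\FlowCat}\to\Sigma|S_\bullet|_+$ is defined all at once (cell by cell) and one simply checks it is well-defined on overlaps; no induction on $S_\bullet$ is needed. As written, your inductive step implicitly assumes the equivalence on $\CRealize{\FlowCat'}$ is already given by $\Id\times r$, which is what you are constructing.

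Second, the key point---which you correctly flag---is the existence and degree of the maps $r_k$. Your proposed construction via the cubical subdivision is vague; it is cleaner to build $r_k$ by induction on $k$. Assume $r_j$ is defined for $j<k$ and satisfies $r_j|_{F_S}=r_{|S|}\circ\mathrm{proj}$ for each facet $F_S\cong\Permu{|S|-1}\times\Permu{j-|S|-1}$, landing in the face $\Delta^S\subset\Delta^{j-1}$. Define $r_k$ on $\bdy\Permu{k-1}$ by this same formula; one checks (using nestedness of facets and the inductive hypothesis) that this is consistent on intersections $F_S\cap F_{S'}$. Since $\Delta^{k-1}$ is contractible, $r_k$ extends over the interior. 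For the degree, observe that on $\bdy\Permu{k-1}\to\bdy\Delta^{k-1}$ the preimage of an interior point of a top face $\Delta^{\{1,\dots,k-1\}}$ meets only $F_{\{1,\dots,k-1\}}$, where by induction $r_{k-1}$ has degree~$1$; hence the boundary map has degree~$1$ and one may choose the extension to have degree~$1$ as well. This is the verification your last paragraph gestures at, and once done the Whitehead argument finishes the proof.
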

We leave the proof as a (somewhat involved) exercise to the reader.

\subsection{Cubical realization agrees with the Cohen-Jones-Segal realization}\label{sec:cubical-CJS-same}

The main aim of this subsection is to prove the following.
\begin{theorem}\label{thm:cubical-CJS-realization}
  Let $(\FlowCat,\Funky\from\FlowCat\to\CubeFlowCat{n})$ be a cubical
  flow category. For any cubical neat embedding $\iota$ relative to
  any tuple $\TupV{d}=(d_0,\dots,d_{n-1})$, and parameters $R,\ep$,
  the cubical realization $\FlowCatSpace{\FlowCat}$ (from
  \Definition{cubical-realize}) is stably homotopy equivalent to the
  Cohen-Jones-Segal realization (from \Section{flow-cat}) of
  $\FlowCat$ with framing induced from some framing of
  $\CubeFlowCat{n}$. Furthermore, the stable homotopy equivalence
  sends cells to corresponding cells by degree $\pm 1$ maps.
\end{theorem}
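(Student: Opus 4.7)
The plan is to convert the cubical neat embedding $\iota$ into a genuine Cohen--Jones--Segal neat embedding with a coherent framing induced from a framing of $\CubeFlowCat{n}$, and then directly match the resulting CW structures cell-by-cell.

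\textbf{First}, I would use Remark~\ref{rem:explicit-isotopic-implicit} to replace the explicit extension $\ol{\iota}_{x,y}$ from Equation~\eqref{eq:extend-iota} with the extension $\ol{\iota}_{x,y}^1$ coming from a coherent framing of the normal bundle of $\iota$. The one-parameter family of extensions $\ol{\iota}_{x,y}^t$ produced in that remark satisfies the analogue of Condition~(\ref{item:neat-embed-commute}) of Definition~\ref{def:cube-cat-neat-embed} throughout, so running the construction of Definition~\ref{def:cubical-realize} in families yields a homotopy equivalence between the corresponding cubical realizations that sends each cell $\Cell{x}$ to $\Cell{x}$. So we may assume $\iota$ is equipped with a coherent framing and the extension is framing-based.

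\textbf{Second}, I would produce a CJS-style neat embedding $\jmath$ of $\FlowCat$ (in the sense of Definition~\ref{def:flow-cat-neat-embed}) by postcomposing $\iota_{x,y}$ with a fixed neat embedding of each permutohedron $\Moduli_{\CubeFlowCat{n}}(u,v) = \Permu{|u|-|v|-1}$ into a product $\R^{N_{|v|}} \times \R_+ \times \R^{N_{|v|+1}} \times \R_+ \times \cdots \times \R_+ \times \R^{N_{|u|-1}}$. Such embeddings can be chosen consistently under the face identifications of Lemma~\ref{lem:permu-facet}; this is essentially a coherent choice of framed neat embedding of the cube flow category $\CubeFlowCat{n}$, as constructed in Section~4.2 of~\cite{RS-khovanov}. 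Combining this with $\iota$ and pulling back the framing along the covering map $\Funky$ yields a framed CJS neat embedding of $\FlowCat$, with framing induced from a framing of $\CubeFlowCat{n}$, exactly as required by the theorem statement.

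\textbf{Third (main step)}, I would compare the CJS realization $\Realize{\FlowCat}$ of this framed embedding to the cubical realization $\CRealize{\FlowCat}$. After stabilizing via Proposition~\ref{prop:cubical-realize-stabilize} so that all Euclidean tuples match, the two realizations have cells of the same dimension indexed by the same objects, and they agree on the $[-R,R]^{d_i}$ and $[-\ep,\ep]^{d_i}$ factors. The only structural difference between $\Cell{x}$ in the two constructions is the ``cube factor'': cubical realization uses $\Precone\Moduli_{\CubeFlowCat{n}}(u,\vec{0}) = [0,1] \times \Permu{|u|-1}$, while the CJS cell uses $\prod_{i=0}^{|u|-1}([0,T] \times [-T,T]^{D_i})$. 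I would use the cubical subdivision of $\Permu{|u|-1}$ from Lemma~\ref{lem:permu-cubical-complex}, together with the inductive identification provided by Lemma~\ref{lem:cube-moduli-cubical-complex}, to define an explicit cellular map between the two realizations that is a homeomorphism on each cell. The compatibility of the cubical subdivision with composition in $\CubeFlowCat{n}$, recorded in Lemma~\ref{lem:cube-moduli-cubical-complex}(\ref{item:cube-coprod-commute}), is exactly what is needed to check that attaching maps match up under this identification.

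\textbf{Main obstacle:} The hardest part will be the bookkeeping in Step 3. Although the cellular identification between $\Precone\Permu{|u|-1}$ and a standard cube is natural from Lemma~\ref{lem:cube-moduli-cubical-complex}, verifying that it intertwines the attaching maps for \emph{all} pairs $x,y\in\Ob(\FlowCat)$ with $\Funky(x) > \Funky(y)$ requires an inductive argument on $|u|-|v|$, threading the subdivisions of facet permutohedra through the composition structure. The fact that $\FlowCat$ is only a cover of $\CubeFlowCat{n}$ rather than equal to it means the identification must be performed fiberwise over the subdivisions of the permutohedra, and one must confirm that the framings pulled back along $\Funky$ interact correctly with the subdivision-based identification, so that the resulting cellular map respects signs and gives maps of degree $\pm 1$ on each cell as required.
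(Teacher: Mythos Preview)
Your Steps 1 and 2 are essentially what the paper does: compose $\iota$ with a framed neat embedding $\jmath$ of $\CubeFlowCat{n}$ to obtain a framed CJS neat embedding $\ol{\jmath}\circ\iota$ of $\FlowCat$, and invoke Remark~\ref{rem:explicit-isotopic-implicit} to reconcile the explicit extension $\ol{\iota}$ with the framing-based one.

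The gap is in Step 3. The lemmas you invoke (Lemma~\ref{lem:permu-cubical-complex} and Lemma~\ref{lem:cube-moduli-cubical-complex}) are not the right tools here; they are used later in the paper (Section~\ref{sec:cubical-to-cubes}) for an entirely different comparison. The cubical subdivision decomposes $\Permu{|u|-1}$ into $|u|!$ small cubes, but it does not furnish a face-structure-preserving homeomorphism between $[0,1]\times\Permu{|u|-1}$ and the single box $\prod_{i=0}^{|u|-1}[0,T]$ appearing in the CJS cell. More to the point, the CJS attaching maps are governed by the images of $\ol{\jmath}_{x,y}$, and these images have nothing to do with the cubical subdivision; there is no reason for a subdivision-based identification to intertwine them.

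What the paper actually does is a Pontryagin--Thom-style collapse. The map $\ol{\jmath}_{u,\vec{0}}$ is a \emph{codimension-zero} embedding of $\Moduli_{\CubeFlowCat{n}}(u,\vec{0})\times\prod_{i=0}^{|u|-1}[-R,R]^{d_i}$ into the CJS box $\prod_{i=1}^{|u|-1}[0,T]\times\prod_{i=0}^{|u|-1}[-T,T]^{d_i}$; after tacking on the $[0,1]$ and $[-\ep,\ep]$ factors this embeds the cubical cell $\Cell{x}'$ as a codimension-zero submanifold of the CJS cell $\Cell{x}$. The comparison map $\Realize{\FlowCat}\to\CRealize{\FlowCat}$ collapses the complement of this image to the basepoint. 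This is a degree $\pm1$ map on each cell but \emph{not} a homeomorphism. The verification that attaching maps match then reduces directly to the coherence condition~(\ref{item:CJS-neat-embed-coherent}) for $\ol{\jmath}$, with no subdivision combinatorics needed.
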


\begin{proof}
  Using \Proposition{cubical-realize-stabilize}, we may assume that
  the $d_i$'s are sufficiently large so that there is a neat embedding
  $\jmath$ of the cube flow category $\CubeFlowCat{n}$ relative to
  $\TupV{D}=(\ldots,0,\ldots,0,d_0,\ldots,\allowbreak d_{n-1},0,\ldots,0\ldots)$
  in the sense of \Definition{flow-cat-neat-embed}. Fix some framing
  of the cube flow category $\CubeFlowCat{n}$, and construct the
  extension $\ol{\jmath}$ as in \Definition{CJS-realization}. Let
  $\delta$ and $T$ be the corresponding parameters. After scaling if
  necessary, we may assume $\delta=R$, and after increasing $T$ if
  necessary, we may assume $T\geq 1$.

  Note that $\ol{\jmath}\circ\iota$ is a neat embedding (in the
  sense of \Definition{flow-cat-neat-embed}) of our flow category
  $\FlowCat$ relative to $\TupV{D}$; it has an extension
  $\ol{\jmath}\circ\ol{\iota}$ (again, as in
  \Definition{CJS-realization}) with respect to the parameters $\ep$ and $T$,
  and $\ol{\jmath}\circ \ol{\iota}$ is isotopic to the extension coming from the framing of
  $\FlowCat$ induced from the framing of $\CubeFlowCat{n}$; see also
  \Remark{explicit-isotopic-implicit}.

  Let $\CRealize{\FlowCat}$ be the CW complex constructed from the
  cubical neat embedding $\iota$ and its extension $\ol{\iota}$ with
  respect to the parameters $\ep$ and $R$; and let $\Realize{\FlowCat}$ be
  the CW complex constructed from the neat embedding
  $\ol{\jmath}\circ\iota$ and its extension
  $\ol{\jmath}\circ\ol{\iota}$ with respect to the parameters
  $\ep$ and $T$. Cells of both $\CRealize{\FlowCat}$ and $\Realize{\FlowCat}$ correspond to objects of 
  $\FlowCat$, and hence to each other. 
  We will produce a quotient map from $\Realize{\FlowCat}$ to
  $\CRealize{\FlowCat}$ which will send each cell via a degree $\pm 1$
  map to the corresponding cell. It follows that the quotient map is a
  stable homotopy equivalence.

  For any $x\in\Ob(\FlowCat)$ with
  $\Funky(x)=u\in\Ob(\CubeFlowCat{n})$, the cell associated to $x$ in
  $\CRealize{\FlowCat}$ is
  \[
  \Cell{x}'=
  \begin{cases}
    \prod_{i=0}^{|u|-1}[-R,R]^{d_i}\times\prod_{i=|u|}^{n-1}[-\ep,\ep]^{d_i}\times[0,1]\times\Moduli_{\CubeFlowCat{n}}(u,\vect{0})&\text{if $u\neq\vec{0}$,}\\
    \prod_{i=0}^{n-1}[-\ep,\ep]^{d_i}\times\{0\}&\text{if $u=\vec{0}$.}
  \end{cases}
  \]
  while the cell associated to $x$ in $\Realize{\FlowCat}$ is
  \[
  \Cell{x}=\prod_{i=0}^{|u|-1}[-T,T]^{d_i}\times\prod_{i=|u|}^{n-1}[-\ep,\ep]^{d_i}\times\prod_{i=0}^{|u|-1}[0,T].
  \]
  Let $[\Cell{x}]$ (respectively $[\Cell{x}']$) denote the image of
  $\Cell{x}$ (respectively $\Cell{x}'$) in $\Realize{\FlowCat}$
  (respectively $\CRealize{\FlowCat}$). We will define a map
  $\Cell{x}\to[\Cell{x}']$ and check that this induces a well-defined
  map $[\Cell{x}]\to[\Cell{x}']$.

  If $u=\vec{0}$, identify
  $\Cell{x}'\cong\prod_{i=0}^{n-1}[-\ep,\ep]^{d_i}\cong\Cell{x}$. If
  $u\neq \vec{0}$, the embedding
  \begin{align*}
  \ol{\jmath}_{u,\vec{0}}\from\Moduli_{\CubeFlowCat{n}}(u,\vec{0})\times\prod_{i=0}^{|u|-1}[-R,R]^{d_i}&\into
  (-T,T)^{d_0}\times[0,T)\times\dots\times[0,T)\times(-T,T)^{d_{|u|-1}}\\
  &\cong\prod_{i=1}^{|u|-1}[0,T)\times\prod_{i=0}^{|u|-1}(-T,T)^{d_i}
  \end{align*}
  induces the following codimension-zero embedding of $\Cell{x}'$ into
  $\Cell{x}$:
  \begin{align*}
    \Cell{x}'&=\prod_{i=0}^{|u|-1}[-R,R]^{d_i}\times\prod_{i=|u|}^{n-1}[-\ep,\ep]^{d_i}\times[0,1]\times\Moduli_{\CubeFlowCat{n}}(u,\vect{0})\\
    &\cong\Moduli_{\CubeFlowCat{n}}(u,\vect{0})\times\prod_{i=0}^{|u|-1}[-R,R]^{d_i}\times\prod_{i=|u|}^{n-1}[-\ep,\ep]^{d_i}\times[0,1]\\
    &\stackrel{\ol{\jmath}_{u,\vec{0}}\times\Id}{\lhook\joinrel\relbar\joinrel\relbar\joinrel\relbar\joinrel\rightarrow} \prod_{i=1}^{|u|-1}[0,T]\times\prod_{i=0}^{|u|-1}[-T,T]^{d_i}\times\prod_{i=|u|}^{n-1}[-\ep,\ep]^{d_i}\times[0,1]\\
    &\cong\prod_{i=0}^{|u|-1}[-T,T]^{d_i}\times\prod_{i=|u|}^{n-1}[-\ep,\ep]^{d_i}\times[0,1]\times\prod_{i=1}^{|u|-1}[0,T]\\
    &\into\prod_{i=0}^{|u|-1}[-T,T]^{d_i}\times\prod_{i=|u|}^{n-1}[-\ep,\ep]^{d_i}\times\prod_{i=0}^{|u|-1}[0,T]\\
    &=\Cell{x}.
  \end{align*}
  (Here, the $\cong$ arrows correspond to the obvious reshuffling of
  the factors, and the second inclusion is induced from the inclusion
  $[0,1]\into[0,T]$.)  In either case, map $\Cell{x}$ to $[\Cell{x}']$
  by identifying the image of this embedding with $\Cell{x}'$, and
  quotienting everything else to the basepoint.
  To see that this gives a
  well-defined, continuous map from the CW complex $\Realize{\FlowCat}$ to the CW
  complex $\CRealize{\FlowCat}$, we need to check that for any other
  $y\in\Ob(\FlowCat)$ with $\Funky(y)=v<u$, the following commutes:
  \[
  \xymatrix{
    \Cell{y}'\times\Moduli_{\FlowCat}(x,y)\ar@{^(->}[r]\ar@{^(->}[d]&  \Cell{y}\times\Moduli_{\FlowCat}(x,y)\ar@{^(->}[d]\\
    \Cell{x}'\ar@{^(->}[r]&\Cell{x}.  
  }
  \]
  (The horizontal arrows are induced from the inclusions defined
  above. The right vertical arrow is the inclusion defined in
  \Definition{CJS-realization} for $\ol{\jmath}\circ\ol{\iota}$, while
  the left vertical arrow is the inclusion defined in
  \Definition{cubical-realize} for $\ol{\iota}$.)

  When $v=\vec{0}$, after removing the constant factor of
  $\prod_{i=|u|}^{n-1}[-\ep,\ep]^{d_i}$ and doing some consistent reshuffling,
  the diagram is
  \[
  \xymatrix{
    \{0\}\times\displaystyle\prod_{i=0}^{|u|-1}[-\epsilon,\epsilon]^{d_i}\times \Moduli_{\FlowCat}(x,y)\ar@{^(->}[rr]^-{\cong}\ar@{^(->}[d]^-{(\kappa_1,\ol{\iota}_{x,y})}&& \displaystyle\prod_{i=0}^{|u|-1}[-\epsilon,\epsilon]^{d_i}\times\Moduli_{\FlowCat}(x,y)\ar@{^(->}[d]^-{(\kappa^0_3,\Id)\circ\ol{\jmath}_{u,\vec{0}}\circ\ol{\iota}_{x,y}}\\
    [0,1]\times\displaystyle\prod_{i=0}^{|u|-1}[-R,R]^{d_i}\times\Moduli_{\CubeFlowCat{n}}(u,\vect{0})\ar@{^(->}[rr]^-{(\kappa_2,\ol{\jmath}_{u,\vec{0}})}&&\displaystyle\prod_{i=0}^{|u|-1}[0,T]\times\displaystyle\prod_{i=0}^{|u|-1}[-T,T]^{d_i},  
  }
  \]
  where $\kappa_1$ is the inclusions $\{0\}\into[0,1]$, $\kappa_2$ is
  the inclusion $[0,1]\into[0,T]$, and $\kappa^\ell_3$ (for
  $0\leq\ell<|u|$) is the inclusion
  \[
  \prod_{i=\ell+1}^{|u|-1}[0,T]\cong \{0\}\times
  \prod_{i=\ell+1}^{|u|-1}[0,T]\stackrel{(\kappa_2\circ\kappa_1,\Id)}{\lhook\joinrel\relbar\joinrel\relbar\joinrel\relbar\joinrel\rightarrow}
  \prod_{i=\ell}^{|u|-1}[0,T].\] Therefore, the diagram commutes.

  When $v\neq\vec{0}$, once again after removing the constant factor
  of $\prod_{i=|u|}^{n-1}[-\ep,\ep]^{d_i}$, and some consistent
  reshuffling, the diagram factors as
  \[
  \xymatrix@C=10ex{
    \left[{\begin{split}[0,1]&\times\Moduli_{\CubeFlowCat{n}}(v,\vec{0})\times\displaystyle\prod_{i=0}^{|v|-1}[-R,R]^{d_i}\\&\times\displaystyle\prod_{i=|v|}^{|u|-1}[-\ep,\ep]^{d_i}\times\Moduli_{\FlowCat}(x,y)\end{split}}\right]
    \ar@{^(->}[r]^-{(\kappa_2,\ol{\jmath}_{v,\vec{0}},\Id)}\ar@{^(->}[d]^-{(\Id,\ol{\iota}_{x,y})}
    & 
    \left[{\begin{split}\displaystyle&\prod_{i=0}^{|v|-1}[0,T]\times\displaystyle\prod_{i=0}^{|v|-1}[-T,T]^{d_i}\\\times&\displaystyle\prod_{i=|v|}^{|u|-1}[-\ep,\ep]^{d_i}\times\Moduli_{\FlowCat}(x,y)\end{split}}\right]
    \ar@{^(->}[d]^-{(\Id,\ol{\jmath}_{u,\vec{0}}\circ\ol{\iota}_{x,y})}\\
    \left[{\begin{split}[0,1]&\times\Moduli_{\CubeFlowCat{n}}(v,\vec{0})\times\displaystyle\prod_{i=0}^{|u|-1}[-R,R]^{d_i}\\&\times\Moduli_{\CubeFlowCat{n}}(u,v)\end{split}}\right]
    \ar@{^(->}[r]^-{(\kappa_2,\ol{\jmath}_{v,\vec{0}},\ol{\jmath}_{u,v})}\ar@{^(->}[d]^-{(\Id,\circ)\circ\rho}& 
    \left[\displaystyle\prod_{i=0}^{|v|-1}[0,T]\times\displaystyle\prod_{i=0}^{|u|-1}[-T,T]^{d_i} \times\displaystyle\prod_{i=|v|+1}^{|u|-1}[0,T]\right]
    \ar@{^(->}[d]^-{\rho\circ(\Id,\kappa_3^{|v|})}\\
    \left[[0,1]\times\displaystyle\prod_{i=0}^{|u|-1}[-R,R]^{d_i}\times\Moduli_{\CubeFlowCat{n}}(u,\vect{0})\right]
    \ar@{^(->}[r]^-{(\kappa_2,\ol{\jmath}_{u,\vec{0}})}&
    \left[\displaystyle\prod_{i=0}^{|u|-1}[0,T]\times\displaystyle\prod_{i=0}^{|u|-1}[-T,T]^{d_i}\right],  
  }
  \]
  where $\rho$ throughout denotes some (further) consistent
  reshuffling of factors, and $\kappa_2,\kappa_3^\ell$ from
  before. The top square commutes automatically; the bottom square
  commutes since Condition~(\ref{item:CJS-neat-embed-coherent}) of
  \Definition{flow-cat-neat-embed} holds for $\ol{\jmath}$ as well
  (because the extension was defined via coherent framings of the
  normal bundles of $\jmath$).

  Therefore, we get a well-defined map from $\Realize{\FlowCat}$ to
  $\CRealize{\FlowCat}$ which sends the cell $[\Cell{x}]$ to the
  corresponding cell $[\Cell{x}']$ by a degree $\pm 1$ map. It is easy
  to check that the grading shifts match up, and therefore, we get the
  required stable homotopy equivalence.
\end{proof}

\section{Functors from the cube to the Burnside category and their realizations}\label{sec:realize-functor}
In this section we give a reformulation of cubical flow categories, as
$2$-functors from the cube category to the Burnside category. We then give
a choice-free way of realizing such a functor, in terms of a
thickening construction. (A smaller but choice-dependent way to
realize such a functor is given in \Section{smaller-cube}.)  The proof
that this realization agrees with the cubical realization is deferred
until \Section{cubical-to-cubes}.  The main results of this
section can be summarized in two theorems:

\begin{theorem}\label{thm:cubical-is-cube-fun}
  The data of a cubical flow category is equivalent to the data of a
  strictly unitary, lax $2$-functor from the cube category to the
  Burnside category.
\end{theorem}
This is stated more precisely and proved as
Lemmas~\ref{lem:flow-to-Burn} and~\ref{lem:Burn-gives-flow-cat}.

\begin{theorem}\label{thm:realize-cube-fun}
  Given a functor $F$ from the cube category to the Burnside category
  there is a canonically associated CW spectrum $\Realize{F}$, the
  realization of $F$.
\end{theorem}
This is stated precisely as \Construction{realize-functor} and
\Lemma{iso-realize-same}. In \Section{cubical-to-cubes}, we show that
this construction agrees with the cubical realization (a combination
of Theorems~\ref{thm:smaller-diag}
and~\ref{thm:box-equals-cubical}). A feature of the realization
procedure introduced in this section is that it behaves well with
respect to products (\Proposition{realize-product}).

We start by reviewing the Burnside (2-)category and the data required
to define a functor from the cube to it, in
Section~\ref{sec:Burnside}. We then recall some properties of homotopy
colimits, in Section~\ref{sec:colimit}, before turning to new material
in Section~\ref{sec:cubical-to-burnside}, where we formulate precisely
and prove Theorem~\ref{thm:cubical-is-cube-fun}. The thickening and
realization procedure are given in Sections~\ref{sec:thicken}
and~\ref{sec:subsec-realize}, while invariance under natural isomorphisms of $2$-functors is proved
in \Section{thick-invariance}.
\Section{products} discusses the behavior of this realization
procedure under certain kinds of products.

\subsection{The Burnside category and 2-functors to it}\label{sec:Burnside}
Given sets $X$ and $Y$, a \emph{correspondence} (or
\emph{span}) from $X$ to $Y$ is a set $A$ and maps $s\co A\to X$
and $t\co A\to Y$ (for \emph{source} and \emph{target}). Given a
correspondence $(A,s_A,t_A)$ from $X$ to $Y$ and $(B,s_B,t_B)$ from
$Y$ to $Z$ the \emph{composition} $(B,s_B,t_B)\circ (A,s_A,t_A)$ is
the correspondence $(C,s,t)$ from $X$ to $Z$ given by
\[
C=B\times_Y A=\{(b,a)\in B\times A\mid t(a)=s(b)\}\qquad
s(b,a)=s_A(a)\qquad t(b,a)=t_B(b).
\]
Given correspondences $(A,s_A,t_A)$ and $(B,s_B,t_B)$ from $X$ to $Y$,
a \emph{morphism of correspondences} from $(A,s_A,t_A)$ to
$(B,s_B,t_B)$ is a bijection of sets $f\co A\to B$ which commutes with
the source and target maps, i.e., so that $s_A=s_B\circ f$ and
$t_A=t_B\circ f$. Composition (of set maps) makes the set of
correspondences from $X$ to $Y$ into a category. Further, composition
of correspondences makes $($Sets$,$ Correspondences$,$ Morphisms of
correspondences$)$ into a weak 2-category (bicategory in the language
of~\cite{Benabou-other-bicategories}). By the \emph{Burnside category}
we mean the sub-2-category of finite sets and finite
correspondences. We denote the Burnside category by
$\BurnsideCat$. (More typically, one defines the Burnside category of
a group $G$ in terms of $G$-sets and $G$-equivariant correspondences;
for us, $G$ is the trivial group.)

\begin{remark}
  Some authors refer to our Burnside category as the \emph{Burnside
    $2$-category}, and refer to the $1$-category with objects sets and morphisms
  isomorphism classes of correspondences as the Burnside category.
\end{remark}

We will typically drop the maps $s$ and $t$ from the notation,
referring simply to a correspondence $A$ from $X$ to $Y$.

As mentioned above, the Burnside category is a weak $2$-category: the identity and
associativity axioms only hold up to natural isomorphism. That is,
given a set $X$, the \emph{identity correspondence} of $X$ is simply
the set $X$ itself, with the identity map as source and target
maps. Given another correspondence $A$ from $W$ to $X$ there is a
natural isomorphism
\[
\lambda\co X\times_X A\stackrel{\cong}{\longrightarrow} A
\]
defined by $\lambda(x,a)=a$.  Similarly, given a correspondence $B$
from $X$ to $Y$ there is a natural isomorphism
\[
\rho\co B\times_X X\stackrel{\cong}{\longrightarrow} B
\]
defined by $\rho(b,x)=b$.  Finally, given correspondences $A$
from $W$ to $X$, $B$ from $X$ to $Y$ and $C$ from $Y$ to $Z$ there is
a natural isomorphism
\[
\alpha\co (C\times_Y B)\times_X A\to C\times_Y(B\times_XA)
\]
defined by $\alpha((c,b),a)=(c,(b,a))$.

This distinction between weak and strict $2$-categories may seem superficial
here, but the distinction between weak and strict
$2$-functors, to which we turn next, will be crucial.

%
\begin{definition}\label{def:2-functor}
  Given (weak) $2$-categories $\Cat$ and $\Dat$, a \emph{lax 2-functor} $F\co
  \Cat\to \Dat$ consists of the following data:
  \begin{itemize}
  \item For each object $x\in\Ob(\Cat)$ an object $F(x)\in\Ob(\Dat)$.
  \item For each pair of objects $x,y\in\Ob(\Cat)$ a functor
    $F_{x,y}\co \Hom(x,y)\to \Hom(F(x),F(y))$.
  \item For each object $x\in\Ob(\Cat)$ a 2-morphism $F_{\Id_x}\co
    \Id_{F(x)}\to F_{x,x}(\Id_x)$.
  \item For any three objects $x,y,z\in\Ob(\Cat)$ a natural transformation
    \[
    F_{x,y,z}(\cdot,\cdot)\co F_{y,z}(\cdot)\circ_1 F_{x,y}(\cdot)\to F_{x,z}(\cdot\circ_1\cdot).
    \]
    (Here, both $F_{y,z}(\cdot)\circ_1 F_{x,y}(\cdot)$ and $F_{x,z}(\cdot\circ_1\cdot)$ are functors
    $\Hom(y,z)\times \Hom(x,y)\to \Hom(F(x),F(z))$; $\circ_1$ denotes the
    $1$-composition in $\Cat$ or $\Dat$, not the composition of functors.)
  \end{itemize}
  These data must satisfy the following compatibility conditions:
  \begin{enumerate}[label=(Fn-\arabic*)]
  \item\label{item:str-unit} For each pair of objects $x,y\in
    \Ob(\Cat)$, the following diagrams commute:
    \[
    \xymatrixcolsep{5em}
    \xymatrix{
      \Id_{F(y)}\circ_1 F_{x,y}(\cdot)\ar[r]^-\lambda\ar[d]_{F_{\Id_y}\circ_1 \Id} & F_{x,y}(\cdot) \\
      F_{y,y}(\Id_y)\circ_1 F_{x,y}(\cdot) \ar[r]_-{F_{x,y,y}} &
      F_{x,y}(\Id_y\circ_1 \cdot)\ar[u]_{F_{x,y}(\lambda)}
      }
      \qquad\qquad
    \xymatrix{
      F_{x,y}(\cdot)\circ_1 \Id_{F(x)}\ar[r]^-\rho\ar[d]_{\Id\circ_1 F_{\Id_x}} & F_{x,y}(\cdot) \\
      F_{x,y}(\cdot) \circ_1 F_{x,x}(\Id_x) \ar[r]_-{F_{x,x,y}} &
      F_{x,y}(\cdot \circ_1 \Id_x).\ar[u]_{F_{x,y}(\rho)}
      }
    \]
    (Here, $\lambda$ is the natural isomorphism from $\Id_y\circ_1
    \cdot$ to the identity functor of $\Cat$ or
    $\Dat$, as appropriate, and $\rho$ is the natural isomorphism
    from $\cdot\circ_1 \Id_x$ to the identity functor of $\Cat$ or $\Dat$.)
  \item\label{item:str-assoc} For each quadruple of objects
    $x,y,z,w\in\Ob(\Cat)$, the following diagram commutes:
    \[
    \xymatrix{
      (F_{y,z}(\cdot)\circ_1 F_{x,y}(\cdot))\circ_1 F_{w,x}(\cdot) \ar[r]^{\alpha} \ar[d]_{F_{x,y,z}(\cdot,\cdot)\circ_1\cdot}& F_{y,z}(\cdot)\circ_1(F_{x,y}(\cdot)\circ_1 F_{w,x}(\cdot))\ar[d]^{\cdot\circ_1 F_{w,x,y}(\cdot,\cdot)}\\
      F_{x,z}(\cdot\circ_1\cdot)\circ_1 F_{w,x}(\cdot) \ar[d]_{F_{w,x,z}(\cdot\circ_1\cdot,\cdot)} & F_{y,z}(\cdot)\circ_1 F_{w,y}(\cdot\circ_1\cdot)\ar[d]^{F_{w,y,z}(\cdot,\cdot\circ_1\cdot)}\\
      F_{w,z}((\cdot\circ_1\cdot)\circ_1\cdot)\ar[r]_{F_{w,z}(\alpha)} & F_{w,z}(\cdot\circ_1(\cdot\circ_1\cdot)).
    }
    \]
    (Here, $\alpha$ is the natural isomorphism from
    $((\cdot\circ_1\cdot)\circ_1\cdot)$ to
    $(\cdot\circ_1(\cdot\circ_1\cdot))$, the two different orders of
    triple-compositions, associated to $\Cat$ or $\Dat$, as
    appropriate.)
  \end{enumerate}
\end{definition}
(See, for instance,~\cite[Definition 4.1 and Remark
4.2]{Benabou-other-bicategories}, where lax $2$-functors are called
homomorphisms.)

We will often drop the subscript from $F$: given an element
$f\in\Hom_\Cat(x,y)$ and a lax 2-functor $F\co \Cat\to\Dat$ we will
often write $F(f)$ for $F_{x,y}(f)$.

\begin{definition}\cite[Remark 4.2]{Benabou-other-bicategories}
  We call a lax $2$-functor $F\co \Cat\to\Dat$ \emph{strictly unitary} if
  for all objects $x\in\Ob(\Cat)$, $F_{x,x}(\Id_x)=\Id_{F(x)}$ and
  $F_{\Id_x}$ is the identity 2-morphism.
\end{definition}

As mentioned above, we will be mainly interested in $2$-functors from
$\CCat{n}$ to $\BurnsideCat$, which moreover will be strictly
unitary. In this case, \Definition{2-functor} simplifies substantially:
\begin{lemma}\label{lem:2func-is}
  A strictly unitary, lax $2$-functor $F\co \CCat{n}\to \BurnsideCat$ is
  determined by the following data:
  \begin{itemize}
  \item For each object $v\in\Ob(\CCat{n})=\{0,1\}^n$ the set $X_v=F(v)$.
  \item For each pair of objects $v,w\in\Ob(\CCat{n})$ such that
    $v>w$, a correspondence $A_{v,w}=F(\cmorph{v}{w})$ from $X_v$ to $X_w$.
  \item For each triple of objects $u,v,w\in \Ob(\CCat{n})$ such that
    $u>v>w$, a bijection $F_{u,v,w}\co A_{v,w}\times_{X_v}A_{u,v}\to
    A_{u,w}$.
  \end{itemize}

  These data satisfy the compatibility condition:
  \begin{enumerate}[label=(CF-\arabic*)]
  \item\label{item:CuFunc} For any $u,v,w,x\in\Ob(\CCat{n})$ with $u>v>w>x$, the
    following diagram commutes:
    \[
    \xymatrix@C=15ex{
      A_{w,x}\times_{X_w}A_{v,w}\times_{X_v} A_{u,v} \ar[r]^-{\Id\times F_{u,v,w}}\ar[d]_{F_{v,w,x}\times\Id} & A_{w,x}\times_{X_w} A_{u,w}\ar[d]^{F_{u,w,x}}\\
      A_{v,x}\times_{X_v} A_{u,v}\ar[r]_{F_{u,v,x}} & A_{u,x}.
    }
    \]
    (We have suppressed some non-confusing parentheses.)
  \end{enumerate}

  Moreover, any collection of data satisfying this compatibility
  condition determines a strictly unitary, lax $2$-functor $F\co \CCat{n}\to
  \BurnsideCat$.
\end{lemma}
\begin{proof}
  Since $F$ is strictly unitary, we have $F(\cmorph{v}{v})=\Id_{X_v}$
  (which is $X_v$, viewed as a correspondence from itself to
  itself). If $v\not\geq w$ then $\Hom(v,w)=\emptyset$, so $F(\cmorph{v}{w})$
  is the unique functor from the empty category. Thus, the $F(\cmorph{v}{w})$
  are entirely specified by the correspondences $A_{v,w}=F_{v,w}(\cmorph{v}{w})$
  with $v>w$. Next, the source $\Hom(v,w)\times \Hom(u,v)$ of 
  $F_{u,v,w}$ is nonempty if and only
  if $u\geq v\geq w$, in which case $\Hom(v,w)\times \Hom(u,v)$ 
  consists of the single element $(\cmorph{v}{w},\cmorph{u}{v})$. Since
  $\CCat{n}$ is a strict $2$-category and $F$ is strictly unitary,
  Condition~\ref{item:str-unit} is equivalent to the statement that
  $F_{v,v,w}\co A_{v,w}\times_{X_v} X_{v}\to A_{v,w}$ is the canonical
  isomorphism $\rho$, and $F_{v,w,w}\co X_w\times_{X_w}A_{v,w}\to
  A_{v,w}$ is the canonical isomorphism $\lambda$. So, $F$ is
  determined by the specified data. Condition~\ref{item:str-assoc} is
  equivalent to Condition~\ref{item:CuFunc}; in
  Condition~\ref{item:CuFunc} we have abused notation to identify the
  two sides of the top row of Condition~\ref{item:str-assoc}, and the
  bottom arrow in Condition~\ref{item:str-assoc} is an equality
  because $\CCat{n}$ is a strict $2$-category and $F$ is strictly
  unitary. The result follows.
\end{proof}

\begin{lemma}\label{lem:characterize-functor}
  Up to natural isomorphism, a strictly unitary, lax $2$-functor $F\co
  \CCat{n}\to \BurnsideCat$ is determined by the sets $F(v)$, the
  correspondences $F(\cmorph{v}{w})$ with $v>w$ and $|v|-|w|=1$, and
  the maps $F^{-1}_{u,v',w}\circ F_{u,v,w}\co F(\cmorph{v}{w})\circ
  F(\cmorph{u}{v})\to F(\cmorph{v'}{w})\circ F(\cmorph{u}{v'})$ with $u>v,v'>w$ and
  $|u|-|w|=2$.
\end{lemma}
\begin{proof}
  This follows from the observations that:
  \begin{itemize}
  \item Any morphism in $\CCat{n}$ is a composition of maps associated
    to edges, so, $F(\cmorph{v}{w})$ is determined for all $v\geq w$.
  \item Any two (directed) edge paths from $v$ to $w$ in $\CCat{n}$
    are related by a sequence of swaps across 2-dimensional faces, so
    $F_{u,v,w}$ is determined for all $u\geq v\geq w$.
  \end{itemize}
  Further details are left to the reader.
\end{proof}

\subsection{Homotopy colimits and homotopy coherent diagrams}\label{sec:colimit}
The realization of a $2$-functor $\CCat{n}\to\BurnsideCat$ in this section and
Section~\ref{sec:smaller-cube} will involve an iterated mapping cone, which can
be described as a homotopy colimit. So, we review briefly the notion of homotopy
colimits here.

Given a diagram $F\co\Dat\to\BSpaces$ of based topological spaces, the
\emph{homotopy colimit} of $F$, $\hocolim F=\hocolim_\Dat F$, is
another based topological space. Similarly, if $F\co\Dat\to\Spectra$
is a diagram of spectra then
we can again form the homotopy colimit of $F$, $\hocolim F$, which is
a spectrum. We will give a construction in a slightly more general
setting presently, but first we note some key properties of the
homotopy colimit, all of which hold for both diagrams of spaces and
diagrams of spectra:

\begin{enumerate}[label=(ho-\arabic*)]
\item\label{item:hocolim-nat-trans} Suppose that $F,G\co\Cat\to\BSpaces$ are diagrams and $\eta\co
  F\to G$ is a natural transformation. Then $\eta$ induces a map
  $\hocolim\eta\co \hocolim F\to \hocolim G$. If $\eta(c)$ is a stable
  homotopy equivalence for each $c\in\Ob(\Cat)$ then $\hocolim\eta$ is
  a stable homotopy equivalence as well.
\item\label{item:hocolim-sum} Suppose that $F,G\co \Cat\to\BSpaces$
  are diagrams and $F\vee G\co\Cat\to\BSpaces$ is the diagram obtained
  by taking their wedge sum, $(F\vee G)(v)=F(v)\vee G(v)$. Then
  the natural map $\hocolim{F\vee G} \to \hocolim{F}\vee\hocolim{G}$
  is an equivalence.
\item\label{item:hocolim-prod} Suppose that $F\co \Cat\to\BSpaces$ and
  $G\co\Dat\to\BSpaces$. Then there is an induced functor $F\smas G\co
  \Cat\times\Dat\to \BSpaces$, with $(F\smas G)(v\times w)=F(v)\smas
  G(w)$, and a natural weak equivalence $(\hocolim
  F)\smas(\hocolim G) \to \hocolim(F\smas G)$.

\item\label{item:hocolim-cofinal} Let $G\co \Cat\to\Dat$ be a map of
  diagrams (i.e., a functor between small categories). Given
  $d\in\Ob(\Dat)$, the \emph{undercategory} of $d$ has objects
  $\{(c,f)\mid c\in\Cat, f\co d\to G(c)\}$, and
  $\Hom((c,f),(c',f'))=\{g\co c\to c'\mid f'=G(g)\circ f\}$.  Let
  $d\downarrow G$ denote the undercategory of $d$. The functor $G$ is
  called \emph{homotopy cofinal} if for each $d\in\Ob(\Dat)$,
  $d\downarrow G$ has contractible nerve.

  Now, let $F\co \Dat\to\BSpaces$ be a diagram. Then
  there is an induced functor $F\circ G\co \Cat\to\BSpaces$.
  Suppose that $G$ is homotopy cofinal. Then
  \[
  \hocolim F\circ G\simeq \hocolim F.
  \]
  (In the case of homotopy limits, this is~\cite[Cofinality Theorem
  XI,9.2]{BK-top-book}. The homotopy colimit can be
  characterized by knowing that the mapping space $\Map(\hocolim F,
  Z)$ is equivalent to the homotopy limit of the diagram of spaces
  $\Map(F, Z)$.)
\end{enumerate}

In \Section{smaller-cube}, we will need to talk about homotopy
colimits of diagrams which are only homotopy-commutative, but where
the homotopies are part of the data, and are coherent up to higher
homotopies (also part of the data). The rest of this subsection
focuses on one formulation of this generalization.

We start with an appropriate notion of diagrams:
\begin{definition}\label{def:coherent-diag}
  \cite[Definition 2.3]{Vogt-top-hocolim}
  A \emph{homotopy coherent diagram in $\BSpaces$} consists of:
  \begin{itemize}
  \item A small category $\Cat$.
  \item For each $x\in\Ob(\Cat)$ a space $F(x)\in\BSpaces$.
  \item For each $n\geq 1$ and each sequence 
    \[
    x_0\stackrel{f_1}{\longrightarrow} x_1 \stackrel{f_2}{\longrightarrow}
    \cdots\stackrel{f_n}{\longrightarrow} x_n
    \]
    of composable morphisms in $\Cat$ a continuous map 
    \[
    F(f_n,\dots,f_1)\co [0,1]^{n-1}\times F(x_0)\to F(x_n)
    \]
    with $F(f_n,\dots,f_1)([0,1]^{n-1}\times\{*\})=*$, the basepoint
    in $F(x_n)$. 
  \end{itemize}
  Letting $(t_1,\dots,t_{n-1})$ denote points in $[0,1]^{n-1}$, these
  maps $F$ are required to satisfy the conditions:
  \newlength{\tmpwidth}
  \setlength{\tmpwidth}{\widthof{$f_n$}}
  \begin{multline}\label{eq:coherent-diag}
  F(f_n,\dots,f_1)(t_{1},\dots,t_{n-1})\\
  =
  \begin{cases}
    F(f_n,\dots,f_2)(t_{2},\dots,t_{n-1}) & \makebox[.8\tmpwidth]{$f_1$}=\Id\\
    F(f_n,\dots,f_{i+1},f_{i-1},\dots,f_1)(t_{1},\dots,t_{i-1}\cdot t_{i},\dots,t_{n-1}) & \makebox[.8\tmpwidth]{$f_i$}=\Id,\ 1<i<n\\
    F(f_{n-1},\dots,f_1)(t_{1},\dots,t_{n-2}) & \makebox[.8\tmpwidth]{$f_n$}=\Id\\
    [F(f_n,\dots,f_{i+1})(t_{i+1},\dots,t_{n-1})]\circ [F(f_i,\dots,f_1)(t_{1},\dots,t_{i-1})]& \makebox[.8\tmpwidth]{$t_i$}=0\\
    F(f_n,\dots,f_{i+1}\circ f_i,\dots,f_1)(t_{1},\dots,t_{i-1},t_{i+1},\dots,t_{n-1}) & \makebox[.8\tmpwidth]{$t_i$}=1.
  \end{cases}
  \end{multline}
\end{definition}
(A homotopy coherent diagram is what Vogt~\cite{Vogt-top-hocolim}
calls an $h\Cat$-diagram. We are restricting to the case that his
topological category $\Cat$ is discrete.)

We will abuse notation and denote a homotopy coherent diagram as above
by $F\co\Cat\to\BSpaces$. It will be clear from context when we mean
a commutative diagram or a homotopy coherent diagram.

\begin{example}\label{exam:commutative-strong-homotopy}
  Any commutative diagram $F\co\Cat\to\BSpaces$ can be viewed as a
  homotopy coherent diagram by defining
  \[
  F(f_n,\dots,f_1)(t_{1},\dots,t_{n-1})=F(f_n\circ\cdots\circ f_1).
  \]
\end{example}

\begin{remark}\label{rem:simplicial-cat}
  Associated to an ordinary category $\Cat$, there is a simplicially enriched
  category ${\mathfrak C}[\Cat]$, introduced by
  Leitch~\cite{Leitch-commutative-cube} and further developed by many
  authors
  (e.g. \cite{Cordier-coherent-diagrams,DK-simplicial-localization,Lurie-HTT})
  such that homotopy coherent diagrams $\Cat\to\BSpaces$ are the same
  (up to the replacement of the continuous function
  $t_{i-1} \cdot t_i$ by an equivalent piecewise linear one) as
  simplicial functors from ${\mathfrak C}[\Cat]$ to spaces.
  Lemma~\ref{lem:cube-moduli-cubical-complex} essentially proves that
  the cube flow category $\CubeFlowCat{n}$ is the topological category
  obtained from ${\mathfrak C}[\CCat{n}]$ by taking the realization of
  each morphism set. So, homotopy coherent $\CCat{n}$-diagrams give
  functors out of $\CubeFlowCat{n}$.
\end{remark}

\begin{definition}\cite[Paragraph (5.10)]{Vogt-top-hocolim}\label{def:hocolim}
  Given a homotopy coherent diagram $F\co\Cat\to\BSpaces$, the
  \emph{homotopy colimit} of $F$ is defined by
  \begin{equation}\label{eq:hocolim2}
  \hocolim F = \{*\}\amalg \coprod_{n\geq 0}
  \coprod_{x_0\stackrel{f_1}{\longrightarrow}\cdots\stackrel{f_n}{\longrightarrow}x_n}
  [0,1]^{n}\times F(x_0)/\sim,
  \end{equation}
  where the second coproduct is over $n$-tuples of
  composable morphisms in $\Cat$ and the case $n=0$ corresponds to the
  objects $x_0\in\Ob(\Cat)$. Letting $(t_1,\dots,t_n)$ denote points in
  $[0,1]^n$ and $p$ a point in $F(x_0)$, the equivalence relation $\sim$ is given by
  \setlength{\tmpwidth}{\widthof{$t_n$}}
  \begin{multline*}
  (f_n,\dots,f_1;t_1,\dots,t_n;p)
  \sim
  \begin{cases}
    (f_n,\dots,f_2;t_2,\dots,t_n;p) &  \makebox[\tmpwidth]{$f_1$}=\Id\\
    (f_n,\dots,f_{i+1},f_{i-1},\dots,f_1;t_1,\dots,t_{i-1}t_{i},\dots,t_n;p) & \makebox[\tmpwidth]{$f_i$}=\Id,\ i>1\\
    (f_n,\dots,f_{i+1};t_{i+1},\dots,t_{n};F(f_i,\dots,f_1)(t_{1},\dots,t_{i-1},p)) & \makebox[\tmpwidth]{$t_i$}=0\\
    (f_n,\dots,f_{i+1}\circ f_i,\dots,f_1;t_1,\dots,t_{i-1},t_{i+1},\dots,t_n;p) & \makebox[\tmpwidth]{$t_i$}=1,\ i<n\\
    (f_{n-1},\dots,f_1;t_{1},\dots,t_{n-1};p) & \makebox[\tmpwidth]{$t_n$}=1\\
    * & \makebox[\tmpwidth]{$p$}=*,
  \end{cases}
  \end{multline*}
  where $*$ denotes the basepoint.
\end{definition}

\begin{observation}\label{obs:hocolim-no-id}
  The first three cases in the compatibility
  condition~\eqref{eq:coherent-diag} for a homotopy coherent diagram
  imply that $F$ is determined by its values on sequences of
  non-identity morphisms (and on objects). If one restricts to only
  non-identity morphisms, however, the compatibility condition for $F$
  becomes more complicated. In the special case that $\Cat$ has no
  isomorphisms except for identity maps, however, the compatibility
  condition for sequences $(f_n,\dots,f_1)$ of non-identity morphisms
  is simply the last two cases of \Formula{coherent-diag}.
  
  Similarly, the first two relations in the definition of $\sim$ mean
  that we can write
  \[
  \hocolim F = \{*\}\amalg \coprod_{n\geq 0}
  \coprod_{\substack{x_0\stackrel{f_1}{\longrightarrow}\cdots\stackrel{f_n}{\longrightarrow}x_n\\
    \forall i\in\{1,\dots,n\},\ f_i\neq\Id}}
  [0,1]^{n}\times F(x_0)/\sim',
  \]
  for some equivalence relation $\sim'$, the difference being that we
  consider only non-identity morphisms when $n>0$. The equivalence
  relation $\sim'$ is more complicated than $\sim$. In the special
  case that $\Cat$ has no isomorphisms except for identity maps,
  $\sim'$ is simply given by the last four cases of the definition of
  $\sim$. 

  In the homotopy coherent diagrams and homotopy colimits considered
  in this paper, the categories $\Cat$ will have no non-identity
  isomorphisms, and so we will work with these smaller formulations.
\end{observation}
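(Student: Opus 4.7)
The plan is to work through the defining relations one case at a time, separating the relations that eliminate identity morphisms from those that record compositions or boundary behavior. First I would verify the claim about $F$. Given a sequence $(f_n,\dots,f_1)$ in which some $f_i$ is the identity, exactly one of cases (1), (2), (3) of \Formula{coherent-diag} applies (according to whether $i=1$, $1<i<n$, or $i=n$) and expresses $F(f_n,\dots,f_1)$ in terms of $F$ applied to a strictly shorter sequence, at the cost of collapsing two adjacent $t$-coordinates into their product when $1<i<n$. Iterating on the number of identity morphisms in the sequence, $F$ is determined by its values on sequences of non-identity morphisms and on objects.

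Next I would analyze the compatibility conditions on this restricted data. Case (4) ($t_i=0$) of \Formula{coherent-diag} writes the value of $F$ as a composite of its values on two shorter non-identity sequences, so it remains a well-formed condition after restriction. Case (5) ($t_i=1$) writes the value in terms of $F$ on the sequence $(f_n,\dots,f_{i+1}\circ f_i,\dots,f_1)$, which is a non-identity sequence \emph{provided} $f_{i+1}\circ f_i\neq\Id$. Under the hypothesis that $\Cat$ has no non-identity isomorphisms — and in all applications in this paper the relevant $\Cat$ is actually a poset, so this is automatic — an equation $f_{i+1}\circ f_i=\Id$ would exhibit $f_i$ and $f_{i+1}$ as mutually inverse isomorphisms, forcing them to be identities, contradicting the assumption. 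Thus case (5) also stays within non-identity sequences, and the only remaining compatibility conditions on the restricted $F$ are exactly cases (4) and (5).

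The argument for the homotopy colimit is structurally parallel. The first two cases in the definition of $\sim$ precisely mirror cases (1) and (2) of \Formula{coherent-diag}, and allow any representative $(f_n,\dots,f_1;t_1,\dots,t_n;p)$ containing an identity morphism to be replaced by a representative with a shorter sequence of morphisms (collapsing $t_{i-1},t_i$ into $t_{i-1}t_i$). Iterating, every class in $\hocolim F$ has a representative indexed by a sequence of non-identity morphisms, giving the displayed alternative description of $\hocolim F$. The induced equivalence relation $\sim'$ is the restriction of cases (3)--(6) of $\sim$ to the non-identity indexing set. The only one of these cases that could move outside the non-identity regime is case (4) ($t_i=1$, $i<n$), which inserts the composition $f_{i+1}\circ f_i$; by the same argument as above, the no-non-identity-isomorphism hypothesis rules out this composition being an identity, so $\sim'$ is literally the last four cases of $\sim$.

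The argument is pure bookkeeping, so the main obstacle is not conceptual but organizational: one must check that the no-non-identity-isomorphism hypothesis is used in exactly the right place (the $t_i=1$ composition case) and nowhere else, and verify that no iteration or nesting of the relations secretly forces one to reintroduce the identity-reduction cases. Both checks are immediate once the case analysis is laid out cleanly, and the observation follows.
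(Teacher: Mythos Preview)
The paper states this Observation without proof, so your proposal is the only argument on the table. The overall structure—iterate the identity-reduction cases to cut down to non-identity sequences, then check that the remaining cases stay within that regime—is correct, and the bookkeeping for $F$ and for $\sim$ is sound. There is, however, a genuine gap at the crucial step.

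You assert that $f_{i+1}\circ f_i=\Id$ would exhibit $f_i$ and $f_{i+1}$ as mutually inverse isomorphisms. This is false in general: $g\circ f=\Id$ only makes $f$ a split monomorphism and $g$ a split epimorphism. For a concrete counterexample, take $\Cat$ with objects $a,b$ and non-identity morphisms $f\colon a\to b$, $g\colon b\to a$, $e=f\circ g\colon b\to b$, subject to $g\circ f=\Id_a$ and $e\neq\Id_b$ (so $e^2=e$). This category has no non-identity isomorphisms, yet $(g,f)$ is a sequence of non-identity morphisms whose composite is an identity; case $t_i=1$ therefore leaves the non-identity regime, and one must re-invoke the identity-reduction cases after all.

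Your parenthetical remark is the real fix: in every application in the paper $\Cat$ is a poset, and in a poset $f_{i+1}\circ f_i=\Id_{x_{i-1}}$ forces $x_{i-1}\leq x_i\leq x_{i+1}=x_{i-1}$, hence all three objects coincide and both morphisms are identities. So your argument is correct once one strengthens the hypothesis to ``$\Cat$ is a poset'' (or, more generally, ``no composite of non-identity morphisms is an identity''), which is all the paper needs. The Observation as literally stated appears to share this imprecision; your proof should flag the stronger hypothesis rather than claim the step follows from the absence of non-identity isomorphisms alone.
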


There is a notion of a morphism between homotopy coherent diagrams
($h$-morphisms~\cite[Definition 2.7]{Vogt-top-hocolim}) $F,G\co
\Cat\to \BSpaces$, which relaxes the notion of a morphism (natural
transformation) between diagrams. In particular, a morphism $F\to G$ of
homotopy coherent diagrams includes the data of maps $F(v)\to G(v)$
for each $v\in\Ob(\Cat)$; we call these maps the \emph{underlying
  maps} of the morphism. There is also the notion of a 
(simplicial) homotopy of 
morphisms~\cite[Definition 2.7]{Vogt-top-hocolim}, and hence 
the notion of a homotopy equivalence of
homotopy coherent diagrams. A special case is that any morphism
of homotopy coherent diagrams whose underlying maps are homotopy
equivalences is a homotopy equivalence of 
diagrams~\cite[Proposition 4.6]{Vogt-top-hocolim}. Further:
\begin{proposition}\cite[Theorem 5.12]{Vogt-top-hocolim}\label{prop:hequiv-diags-hocolim}
  If $F,G\co\Cat\to \BSpaces$ are homotopy equivalent diagrams then
  $\hocolim F\simeq \hocolim G$.
\end{proposition}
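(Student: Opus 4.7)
The plan is to build up the statement from two pieces of functoriality. The definition of $\hocolim F$ in \Definition{hocolim} assembles cells indexed by chains of composable morphisms together with the simplices $[0,1]^n$, and the equivalence relation $\sim$ is entirely determined by the higher homotopies $F(f_n,\dots,f_1)$. Thus the construction $\hocolim$ should extend naturally to a functor out of the category of homotopy coherent diagrams with $h$-morphisms, up to (simplicial) homotopy.

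First I would show that an $h$-morphism $\phi\co F\to G$ induces a continuous map $\hocolim\phi\co \hocolim F\to\hocolim G$. Recall that an $h$-morphism consists of maps $\phi(f_n,\dots,f_1)\co [0,1]^n\times F(x_0)\to G(x_n)$, one extra cube coordinate compared to the coherence data of $F$ itself. The natural thing is to send a point represented by $(f_n,\dots,f_1;t_1,\dots,t_n;p)\in [0,1]^n\times F(x_0)$ to the point in $\hocolim G$ represented by $(f_n,\dots,f_1;t_1,\dots,t_n,1;\phi(\Id_{x_n})(p'))$ after reparametrizing an auxiliary cube coordinate; more cleanly, one uses the collar provided by the extra $[0,1]$-coordinate in $\phi$ to glue the $F$-cells onto the corresponding $G$-cells. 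The technical content is checking that the defining equations of an $h$-morphism are precisely what is needed for this assignment to respect $\sim$ on the nose. This is carried out in~\cite{Vogt-top-hocolim}, and the same prescription makes the assignment $\phi \mapsto \hocolim\phi$ respect composition.

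Next I would observe that a simplicial homotopy $\Phi$ between two $h$-morphisms $\phi_0,\phi_1\co F\to G$ is itself an $h$-morphism of an appropriate ``cylinder diagram'' $F\otimes \Delta^1$, and so the previous step produces a map $\hocolim\Phi$ whose restrictions to the two ends give $\hocolim\phi_0$ and $\hocolim\phi_1$. Identifying $\hocolim(F\otimes\Delta^1)$ with $[0,1]\times\hocolim F$ (by the cofinality-type argument in \Part{hocolim-cofinal}, applied to the obvious projection) then shows that simplicially homotopic $h$-morphisms induce honestly homotopic maps on the homotopy colimit.

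Finally, the homotopy equivalence $F\simeq G$ of diagrams, by definition, provides $h$-morphisms $\phi\co F\to G$ and $\psi\co G\to F$ together with simplicial homotopies $\psi\circ\phi\simeq \Id_F$ and $\phi\circ\psi\simeq\Id_G$. Applying the two previous steps, we obtain maps $\hocolim\phi$ and $\hocolim\psi$ whose composites are homotopic to the identity, so $\hocolim\phi$ is the desired homotopy equivalence $\hocolim F\simeq \hocolim G$. The main obstacle is really the first step: verifying that the algebraic identities~\eqref{eq:coherent-diag} defining an $h$-morphism match up exactly with the relations defining $\sim$, so that the prescribed map is well defined on the quotient. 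Once this bookkeeping is in place, the rest is formal, and indeed is carried out in detail by Vogt~\cite{Vogt-top-hocolim}.
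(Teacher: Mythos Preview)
The paper does not give its own proof of this proposition at all: it is simply quoted as \cite[Theorem 5.12]{Vogt-top-hocolim} and used as a black box. Your sketch is a reasonable outline of how Vogt's argument goes, so there is nothing to compare against here beyond noting that you are reconstructing the cited reference rather than something the authors wrote.
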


There is also a rectification result, that any homotopy coherent
diagram can be made coherent:
\begin{proposition}\cite[Theorem 5.6]{Vogt-top-hocolim}
  Given any homotopy coherent diagram $F\co \Cat\to \BSpaces$ there is
  an honest diagram $G\co\Cat\to\BSpaces$ which is homotopy equivalent
  to $F$. 
\end{proposition}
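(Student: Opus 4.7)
The plan is to rectify $F$ via a bar-style construction. For each object $x\in\Ob(\Cat)$, let $\pi_x\co \Cat/x\to\Cat$ be the forgetful functor from the overcategory, and define
\[
G(x)=\hocolim_{\Cat/x}F\circ\pi_x,
\]
where the homotopy colimit on the right is interpreted via the formula of \Definition{hocolim} applied to the homotopy coherent diagram $F\circ\pi_x$ obtained by restricting $F$ to chains of composable morphisms terminating at $x$. Concretely, a point of $G(x)$ is represented by a chain $y_0\to y_1\to\cdots\to y_n\to x$ in $\Cat$ together with parameters $(t_1,\dots,t_n)\in[0,1]^n$ and a point $p\in F(y_0)$, subject to the identifications dictated by the homotopy coherent structure of $F$.

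The construction is strictly functorial: given $g\co x\to x'$ in $\Cat$, the map $G(g)\co G(x)\to G(x')$ just postcomposes the terminal arrow $y_n\to x$ with $g$, and strict associativity of composition in $\Cat$ makes $G\co\Cat\to\BSpaces$ an honest diagram. There is a natural morphism $\eta\co G\to F$ of homotopy coherent diagrams (viewing $G$ as homotopy coherent via \Example{commutative-strong-homotopy}) whose underlying map $\eta_x\co G(x)\to F(x)$ collapses a chain $y_0\xrightarrow{f_1}\cdots\xrightarrow{f_n}y_n\xrightarrow{f}x$ with parameters $(t_1,\dots,t_n)$ and point $p$ by applying the higher coherence map $F(f,f_n,\dots,f_1)$ at those parameters.

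To see each $\eta_x$ is a homotopy equivalence, observe that $\Cat/x$ has the terminal object $(x,\Id_x)$, so the inclusion $\{(x,\Id_x)\}\hookrightarrow\Cat/x$ is homotopy cofinal (for each $(y,f)\in\Cat/x$ the relevant comma category is a point). By the homotopy-coherent analogue of property~\ref{item:hocolim-cofinal}, the hocolim $G(x)$ is then equivalent to the value of $F\circ\pi_x$ at the terminal object, namely $F(x)$, and this equivalence is realized by $\eta_x$. Invoking the result stated immediately above the proposition --- that a morphism of homotopy coherent diagrams whose underlying maps are homotopy equivalences is itself a homotopy equivalence of diagrams --- we conclude that $\eta\co G\to F$ is a homotopy equivalence. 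The main obstacle is the bookkeeping: one must verify that the face/degeneracy identifications defining $G(x)$ are consistent with postcomposition and with the collapse map $\eta_x$, and exhibit the higher coherences of $\eta$ from the structure maps $F(f_n,\dots,f_1)$. These checks are combinatorial but essentially forced once the bar construction is written out.
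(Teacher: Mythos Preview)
The paper does not supply its own proof of this proposition: it simply records the statement as \cite[Theorem 5.6]{Vogt-top-hocolim} and moves on. Your proposal is a reasonable sketch of the bar-construction rectification, which is essentially the approach in Vogt's original paper.

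One small caution about your argument as written: to conclude that $\eta_x$ is an equivalence you invoke the homotopy-coherent analogue of property~\ref{item:hocolim-cofinal}, but in this paper that analogue is obtained (via the Corollary immediately following the proposition) \emph{from} the rectification result you are proving, so appealing to it here is circular. The repair is easy and standard: since $(x,\Id_x)$ is terminal in $\Cat/x$, the inclusion $F(x)\hookrightarrow G(x)$ at that object is an explicit section of $\eta_x$, and one can write down a direct deformation retraction of $G(x)$ onto its image (append the unique arrow to the terminal object and slide the new parameter). With that adjustment your outline matches the standard proof.
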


Finally:
\begin{proposition}\cite[Section 8]{Vogt-top-hocolim}
  If $F\co\Cat\to\BSpaces$ is an honest diagram then the homotopy
  colimits of $F$, viewed as an honest diagram and as a homotopy
  coherent diagram, are homotopy equivalent.
\end{proposition}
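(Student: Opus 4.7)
The plan is to construct a natural comparison map between the two homotopy colimits and show it is a weak equivalence via a skeletal filtration argument. Write $\hocolim^{\mathrm{h}} F$ for the honest homotopy colimit of $F$---concretely, the geometric realization of the Bousfield--Kan simplicial bar construction---and $\hocolim F$ for the cubical coherent homotopy colimit of \Definition{hocolim} applied to $F$ viewed coherently via \Example{commutative-strong-homotopy}. The key simplification under the hypothesis that $F$ is honest is that the coherent structure maps $F(f_n,\dots,f_1)(t_1,\dots,t_{n-1})$ equal $F(f_n\circ\cdots\circ f_1)$ and are therefore constant in the cube coordinates.

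First I would construct a map $\Phi \co \hocolim^{\mathrm{h}} F \to \hocolim F$ summandwise: on the summand indexed by a composable chain $x_0 \to \cdots \to x_n$, use the order-preserving embedding $\iota_n \co \Delta^n \into [0,1]^n$ defined by $\iota_n(s_0,\dots,s_n) = (s_0,\, s_0+s_1,\, \dots,\, s_0+\cdots+s_{n-1})$, together with the identity on $F(x_0)$. Verifying that $\Phi$ descends to a well-defined map requires checking that each simplicial face and degeneracy identification is sent to a cubical identification. The outer simplicial faces $d_0$ and $d_n$ correspond respectively to the cubical relations $t_1 = 0$ and $t_n = 1$; the simplicial degeneracies correspond to the cubical $f_i = \Id$ relations; and the inner simplicial faces $d_i$ land on the diagonal locus $t_i = t_{i+1}$ inside the cube, where the honesty of $F$ together with the cubical $t_i = 1$ relation identifies these diagonals with the summand indexed by the composed chain with $f_{i+1}\circ f_i$. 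The basepoint collapse and the empty-chain $n=0$ summands are handled tautologically.

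Second, I would show $\Phi$ is a weak equivalence by inducting on the filtration of both sides by the length of the longest chain of non-identity composable arrows, in the sense of \Observation{hocolim-no-id}. Both filtrations consist of based cofibrations. The $n$-th subquotient on each side is a wedge over length-$n$ chains $x_0 \to \cdots \to x_n$ of non-identity arrows of smash products $(P_n/\partial P_n)\smas F(x_0)_+$, with $P_n = \Delta^n$ on the simplicial side and $P_n = [0,1]^n$ on the cubical side; both quotients are homeomorphic to $S^n$, and $\iota_n$ induces a degree $\pm 1$ map between them. A standard induction using the cofiber sequences of the filtrations and the gluing lemma for weak equivalences then shows that $\Phi$ itself is a weak equivalence.

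The main obstacle is the bookkeeping in the first step: the cube $[0,1]^n$ has $2n$ codimension-one faces while the simplex has only $n+1$, so the image of $\iota_n$ meets the interior of $[0,1]^n$ along the diagonal subspaces $t_i = t_{i+1}$ rather than only along the cube boundary. The technical content is precisely that honesty of $F$ makes the cubical structure maps constant in the $t_i$, so that following $\Phi$ by the various cubical relations collapses these diagonals to subcomplexes of lower dimension matching the simplicial inner faces. Once this matching is set up, the filtration argument is routine.
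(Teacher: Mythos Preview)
The paper gives no proof of this proposition; it defers entirely to Vogt. Your attempt, however, has a genuine gap: the comparison map $\Phi$ is not well-defined. Take $n=2$ and a chain $x_0\xrightarrow{f_1}x_1\xrightarrow{f_2}x_2$. Your $\iota_2$ sends the inner simplicial face $d_1$ (where $s_1=0$) to the diagonal $\{t_1=t_2\}$ of the square. For $\Phi$ to descend to the quotients you would need $(f_2,f_1;c,c;p)\sim(f_2\circ f_1;c;p)$ in the cubical homotopy colimit for every $c\in[0,1]$. But for $0<c<1$ both of these points lie in the interiors of their respective cubes and are untouched by \emph{any} of the relations in \Definition{hocolim}; they represent distinct points of $\hocolim F$. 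Honesty of $F$ only makes the structure maps $F(f_n,\dots,f_1)(t_1,\dots,t_{n-1})$ constant in the $t_j$, which simplifies the $t_i=0$ relations---it does not create new identifications in the interiors of the cubes, so your claimed collapse of the diagonals does not occur.

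The repair is to reverse the direction. Define $q_n\co[0,1]^n\to\Delta^n$ by $s_0=\prod_{j}t_j$ and $s_i=(1-t_i)\prod_{j>i}t_j$ for $1\le i\le n$. These assemble to a continuous map $\Psi\co\hocolim F\to\hocolim^{\mathrm h}F$: the cube face $\{t_i=1\}$ lands on the simplicial face $\{s_i=0\}$, matching the composition relation; the cube face $\{t_i=0\}$ is collapsed by $q_n$ onto the iterated-$d_0$ face $\{s_0=\cdots=s_{i-1}=0\}$, and this collapse is compatible precisely because honesty makes the cubical $t_i=0$ relation independent of $t_1,\dots,t_{i-1}$. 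Since $q_n$ restricts to a diffeomorphism from the open cube to the open simplex, the induced map on filtration subquotients has degree $\pm1$, and your second-step argument then goes through essentially unchanged (with $F(x_0)$ in place of $F(x_0)_+$, since the values of $F$ are already based).
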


\begin{corollary}
  Properties~\ref{item:hocolim-nat-trans}--\ref{item:hocolim-cofinal},
  or their obvious analogues, hold for homotopy colimits of homotopy
  coherent diagrams.
\end{corollary}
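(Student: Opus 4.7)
The plan is to reduce each property for homotopy coherent diagrams to the corresponding property for honest diagrams via rectification. More precisely, the two preceding propositions give us, for any homotopy coherent diagram $F\co\Cat\to\BSpaces$, an honest diagram $\tilde F\co\Cat\to\BSpaces$ with a homotopy equivalence $F\simeq \tilde F$ of homotopy coherent diagrams, and moreover $\hocolim \tilde F$ (as an honest diagram) agrees with $\hocolim F$ (as a homotopy coherent diagram). Combined with \Proposition{hequiv-diags-hocolim}, this lets us replace any homotopy coherent diagram by a homotopy equivalent honest one without changing the homotopy colimit up to equivalence.

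For \ref{item:hocolim-nat-trans}, given a morphism $\eta\co F\to G$ of homotopy coherent diagrams, first rectify $F$ and $G$ to honest diagrams $\tilde F$ and $\tilde G$. The naturality of Vogt's rectification functor (or a direct construction lifting $\eta$ through the rectification) produces a natural transformation $\tilde\eta\co \tilde F\to \tilde G$ of honest diagrams whose underlying maps are homotopic to those of $\eta$. If each $\eta(c)$ is a (stable) homotopy equivalence then so is each $\tilde\eta(c)$, and \ref{item:hocolim-nat-trans} for honest diagrams finishes the argument. For \ref{item:hocolim-sum} and \ref{item:hocolim-prod}, one checks that the wedge and smash constructions descend to homotopy coherent diagrams (which is immediate from \Definition{coherent-diag} since one simply takes the wedge or smash of the structure maps) and that these operations commute with rectification up to homotopy equivalence of diagrams; then \Proposition{hequiv-diags-hocolim} reduces the assertion to the honest case. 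For \ref{item:hocolim-cofinal}, given a homotopy cofinal functor $G\co \Cat\to\Dat$ and a homotopy coherent $F\co \Dat\to\BSpaces$, note that precomposition with $G$ sends homotopy coherent diagrams to homotopy coherent diagrams and respects homotopy equivalences of diagrams; rectifying $F$ to $\tilde F$ gives $F\circ G\simeq \tilde F\circ G$, and \ref{item:hocolim-cofinal} for $\tilde F$ yields the result.

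The main obstacle is really bookkeeping: one must verify that the constructions on diagrams (morphisms, wedges, smash products, precomposition with a functor) are all compatible with the rectification functor up to homotopy equivalence of homotopy coherent diagrams. This is most cleanly handled via the simplicial enrichment mentioned in \Remark{simplicial-cat}: a homotopy coherent diagram on $\Cat$ is a simplicial functor out of $\mathfrak{C}[\Cat]$, and in this setting the operations in question are manifestly functorial, while rectification is a Quillen equivalence from simplicial functors to honest ones. The analogous statements for diagrams of spectra are formally identical, using the appropriate model category structure on symmetric spectra.
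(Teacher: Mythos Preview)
Your proof sketch is correct and follows exactly the route the paper intends: the paper states this corollary immediately after the three propositions on rectification and comparison of homotopy colimits, and gives no proof at all, treating the reduction to honest diagrams via rectification as evident. Your sketch simply makes explicit the bookkeeping the paper leaves to the reader.
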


\subsection{Cubical flow categories are functors from the cube to the Burnside category}\label{sec:cubical-to-burnside}
\begin{construction}\label{construction:Flow-to-Burn}
  Fix a cubical flow category $\Funky\co
  \FlowCat\to\CubeFlowCat{n}$. We will construct a strictly unitary, lax
  2-functor $F\co \CCat{n}\to\BurnsideCat$, as follows. By
  \Lemma{2func-is}, it suffices to define the sets $X_v$ ($v\in
  \{0,1\}^n$), correspondences $A_{v,w}$ ($v>w\in \{0,1\}^n$) and isomorphisms of correspondences
  $F_{u,v,w}$ ($u>v>w\in\{0,1\}^n$). We do so as follows:
  \begin{itemize}
  \item Given $v\in\{0,1\}^n$ define $F(v)=\Funky^{-1}(v)$.
  \item Given $v>w$, define $A_{v,w}$ to be the set of path components
    of 
    \[
    \coprod_{\substack{x\in \Funky^{-1}(v)\\ y\in \Funky^{-1}(w)}}\Hom(x,y).
    \]
    We will write $\pi_0(X)$ for the set of path components of
    $X$. Then the source (respectively target) map $s\co A_{v,w}\to
    X_v$ (respectively $t\co A_{v,w}\to X_w$) is defined by
    $s(\pi_0(\Hom(x,y)))=x$ (respectively $t(\pi_0(\Hom(x,y)))=y$).
  \item Given $u>v>w$ the composition map in $\FlowCat$ induces a continuous map
    \[
    \circ \co \Bigl(\coprod_{\substack{y\in \Funky^{-1}(v)\\ z\in
        \Funky^{-1}(w)}} \Hom(y,z)\Bigr)\times_{\Funky^{-1}(v)}\Bigl(\coprod_{\substack{x\in \Funky^{-1}(u)\\ y\in
        \Funky^{-1}(v)}} \Hom(x,y)\Bigr)\to 
    \coprod_{\substack{x\in \Funky^{-1}(u)\\ z\in
        \Funky^{-1}(w)}} \Hom(x,z).
    \]
    Taking path components gives a map $A_{v,w}\times_{X_v}A_{u,v}\to
    A_{u,w}$, which we define to be $F_{u,v,w}.$
  \end{itemize}
\end{construction}

\begin{lemma}\label{lem:flow-to-Burn}
  \Construction{Flow-to-Burn} defines a strictly unitary, lax $2$-functor.
\end{lemma}
\begin{proof}
  By \Lemma{2func-is}, we only need to check the compatibility
  Condition~\ref{item:CuFunc}, which is immediate from associativity
  of composition in $\FlowCat$.
\end{proof}

\begin{construction}\label{construction:Burn-to-Flow}
  Fix a strictly unitary, lax $2$-functor $F\co \CCat{n}\to\BurnsideCat$. We will construct a
  cubical flow category $\Funky\co\FlowCat\to\CubeFlowCat{n}$, as
  follows.
  \begin{itemize}
  \item $\Ob(\FlowCat)=\amalg_{v\in \{0,1\}^n}F(v)$. The functor
    $\Funky$ sends an object $x\in F(v)$ to $v$.
  \item For any object $x$, $\Hom(x,x)$ consists of the identity morphism.
  \item Given objects $x$ and $y$, with $v=\Funky(x)>\Funky(y)=w$,
    consider the set 
    \[
    B_{x,y}=s^{-1}(x)\cap t^{-1}(y)\subset A_{v,w}=F(\cmorph{v}{w}).
    \]
    Define $\Hom(x,y)=B_{x,y}\times\Moduli_{\CubeFlowCat{n}}(v,w)$. The
    map $\Funky\co \Hom(x,y)\to \Hom(\Funky(x),\Funky(y))$ is
    projection to the permutohedron $\Moduli_{\CubeFlowCat{n}}(v,w)$.
  \item Given objects $x$, $y$, $z$ with
    $\Funky(x)>\Funky(y)>\Funky(z)$ define the composition map
    $\Hom(y,z)\times\Hom(x,y)\to \Hom(x,z)$ as follows. Let
    $u=\Funky(x)$, $v=\Funky(y)$, $w=\Funky(z)$.  The $2$-functor
    includes a map $F_{u,v,w}\co A_{v,w}\times_{F(v)} A_{u,v}\to
    A_{u,w}$.  The composition map in $\CubeFlowCat{n}$ gives a map
    $\circ\co \Moduli_{\CubeFlowCat{n}}(v,w)\times
    \Moduli_{\CubeFlowCat{n}}(u,v)\to
    \Moduli_{\CubeFlowCat{n}}(u,w)$. Define the composition map in
    $\FlowCat$ to be
    \[
    F_{u,v,w}\times \circ\co (B_{y,z}\times
    \Moduli_{\CubeFlowCat{n}}(v,w))\times (B_{x,y}\times
    \Moduli_{\CubeFlowCat{n}}(u,v))\to (B_{x,z}\times
    \Moduli_{\CubeFlowCat{n}}(u,w)).
    \]
    (That is, we apply $F_{u,v,w}$ to the $B$ factors and $\circ$ to the
    $\Moduli$ factors.)
  \end{itemize}
\end{construction}

\begin{lemma}\label{lem:Burn-gives-flow-cat}
  \Construction{Burn-to-Flow} defines a cubical flow category.
\end{lemma}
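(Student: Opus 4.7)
The plan is to verify the conditions of \Definition{flow-cat} for $\FlowCat$, together with associativity of composition and the covering-map condition from \Definition{cubical}. Throughout, I will freely use that $\CubeFlowCat{n}$ is itself a flow category (\Lemma{cube-is-flow-cat}), so the permutohedral factors ``already know'' what they are doing; the content is transporting this structure across the bijections supplied by the 2-functor $F$.

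Condition~\ref{item:FC-id} is built into the construction. For \ref{item:FC-mflds}, if $x\ne y$ with $\Funky(x)=v>\Funky(y)=w$ and $k=|v|-|w|$, then by construction $\Hom(x,y)=B_{x,y}\times\Moduli_{\CubeFlowCat{n}}(v,w)$, which is a finite disjoint union of copies of $\Permu{k-1}$, hence a compact $(k-1)$-dimensional $\Codim{k-1}$-manifold by \Lemma{permu-face}. The projection to $\Moduli_{\CubeFlowCat{n}}(v,w)$ along the discrete factor $B_{x,y}$ is manifestly a trivial covering map, so the covering condition of \Definition{cubical} will hold as soon as $\FlowCat$ is shown to be a flow category.

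The main point is \ref{item:FC-3}. Fix $x,y\in\Ob(\FlowCat)$ with $u=\Funky(x)$, $w=\Funky(y)$, and an integer $i$ with $0<i<|u|-|w|$. On the one hand,
\[
\del_i\Hom(x,y)=B_{x,y}\times\del_i\Moduli_{\CubeFlowCat{n}}(u,w)\cong\coprod_{\substack{v\\ u>v>w,\ |v|-|w|=i}} B_{x,y}\times\Moduli_{\CubeFlowCat{n}}(v,w)\times\Moduli_{\CubeFlowCat{n}}(u,v),
\]
where the second identification is the cube-flow-category composition on the permutohedral factor (\Lemma{permu-facet} and the proof of \Lemma{cube-is-flow-cat}). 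On the other hand, the composition side is
\[
\coprod_{\substack{z\\ \gr(z)-\gr(y)=i}}\!\!\Hom(z,y)\times\Hom(x,z)\cong\coprod_{v}\Bigl(\coprod_{z\in F(v)}B_{z,y}\times B_{x,z}\Bigr)\times\Moduli_{\CubeFlowCat{n}}(v,w)\times\Moduli_{\CubeFlowCat{n}}(u,v).
\]
For each relevant $v$ the inner coproduct is precisely $(A_{v,w}\times_{F(v)} A_{u,v})|_{s=x,\,t=y}$, and the bijection $F_{u,v,w}\co A_{v,w}\times_{F(v)} A_{u,v}\to A_{u,w}$ supplied by the 2-functor preserves source and target, so it restricts to a bijection of this set with $B_{x,y}$. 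Combining the permutohedral identification with these bijections, $F_{u,v,w}\times(\text{composition in }\CubeFlowCat{n})$ yields the desired diffeomorphism, matching the composition maps of \Construction{Burn-to-Flow} on the nose.

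Finally, I must check associativity of composition so that $\FlowCat$ is actually a category. On the $\Moduli_{\CubeFlowCat{n}}$-factor this is the associativity established in the proof of \Lemma{cube-is-flow-cat}. On the $B$-factor, both triple compositions reduce to reassociating an iterated fiber product $A_{w,x}\times_{F(w)}A_{v,w}\times_{F(v)}A_{u,v}$ and applying $F_{u,v,x}\circ(\mathrm{Id}\times F_{u,v,w})$ versus $F_{u,w,x}\circ(F_{v,w,x}\times\mathrm{Id})$; these agree by the compatibility Condition~\ref{item:CuFunc} of \Lemma{2func-is}. The one possibly fiddly point (the main obstacle, insofar as there is one) is bookkeeping the reshuffling of factors and keeping track of which copy of $\Moduli_{\CubeFlowCat{n}}$ corresponds to which pair of objects, but no new ideas are required.
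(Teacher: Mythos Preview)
Your proposal is correct and follows exactly the approach the paper intends: the paper's own proof merely says ``The proof is similar to the proof of \Lemma{cube-is-flow-cat}, and is left to the reader,'' and you have filled in precisely those details---verifying \ref{item:FC-id}--\ref{item:FC-3} by transporting the permutohedral structure across the bijections $F_{u,v,w}$, and checking associativity via Condition~\ref{item:CuFunc} together with associativity in $\CubeFlowCat{n}$.
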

\begin{proof}
  The proof is similar to the proof of \Lemma{cube-is-flow-cat}, and
  is left to the reader.
\end{proof}

It is straightforward to verify that
Constructions~\ref{construction:Flow-to-Burn}
and~\ref{construction:Burn-to-Flow} are inverse to each other, in a
sense which does not seem worth spelling out precisely.

\begin{example}\label{exam:KhFunc}
  Applying \Construction{Flow-to-Burn} to the Khovanov flow
  category from \Example{KhFlowCat} yields a functor
  $\KhFunc\from\CCat{n}\to\BurnsideCat$. For any $v\in\{0,1\}^n$, the
  set $\KhFunc(v)$ consists of the Khovanov generators over
  $v$, denoted $F(v)$ in \Section{khovanov-basic}. For
  $u>v\in\{0,1\}^n$ with $|u|-|v|=1$, and for
  $x\in\KhFunc(u),y\in\KhFunc(v)$, the set
  \[
  B_{x,y}=s^{-1}(x)\cap t^{-1}(y)\subseteq A_{u,v}=\KhFunc(\cmorph{u}{v})
  \]
  consists of one element if $x$ appears in $\diffKh(y)$ (see
  \Definition{KhCx-diffKh}), and is empty otherwise. The maps
  $F_{u,v,w}$ when $|u|-|w|=2$ are defined using the \emph{ladybug
    matching}~\cite[Section~5.4]{RS-khovanov}; see
  also \Section{two-functors-same}.
\end{example}

\begin{remark}\label{rem:no-Lee}
  In \Section{khovanov-basic}, we discussed a generalization of the
  Khovanov theory that works over the ring $\Z[h,t]$: a functor from
  $(\CCat{n})^{\op}$ to the category of (graded)
  $\Z[h,t]$-modules. Setting $h=t=0$ recovers Khovanov's original
  functor to $\ZZ\hyModCat$~\cite{Kho-kh-categorification},
  setting $(h,t)=(0,1)$ gives the theory studied by
  Lee~\cite{Lee-kh-endomorphism}, and setting $(h,t)=(1,0)$ gives a
  theory introduced by Bar-Natan~\cite{Bar-kh-tangle-cob}.

  \begin{figure}
    \centering
    \includegraphics{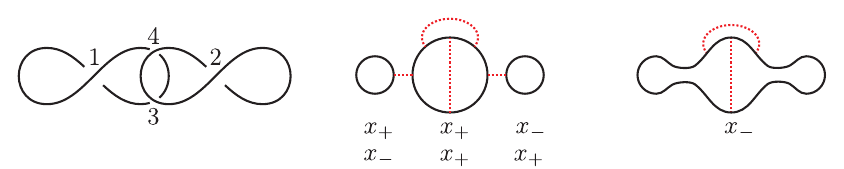}
    \caption{\textbf{An example showing that the Lee complex does not
        come from a functor $\CCat{n}\to\BurnsideCat$ that extends
        $\KhFunc$.} Left: A particular diagram for the two-component
      unlink, and an ordering of its crossings. Center: The
      corresponding resolution configuration (the
      $\vec{0}$-resolution with dashed lines recording the crossings)
      and two labelings of this resolution. Both labelings are in the
      image of $F(\cmorph{1100}{0000})$ of the labeling $x_-$ of the circle
      in the $(1,1,0,0)$-resolution (right). Further, the two labelings give
      incompatible restrictions on the map
      $F_{(1111,1110,1100)}^{-1}\circ F_{(1111,1101,1100)}$ associated
      to the subcube $(1,1,*,*)$.}
    \label{fig:no-Lee}
  \end{figure}

  There is a functor $(\BurnsideCat)^{\op}\to\ZZ\hyModCat$ given as
  follows: to a set $X$, associate the abelian group $\Z\langle
  X\rangle$ freely generated by the elements of $X$; to a correspondence
  $(A,s,t)$ from $X$ to $Y$, associate the following map $\Z\langle
  Y\rangle\to\Z\langle X\rangle$:
  \begin{equation}\label{eq:burnside-to-abelian}
  y\mapsto \sum_{x\in X}\#\card{\Set{a\in A}{s(a)=x,t(a)=y}}x.
  \end{equation}
  \Example{KhFunc} lifts Khovanov's functor
  $(\CCat{n})^{\op}\to\ZZ\hyModCat$ to the functor
  $\KhFunc\from\CCat{n}\to\BurnsideCat$. It is natural to ask whether
  any other specializations of $h$ and $t$ comes from a strictly
  unitary, lax $2$-functor $\CCat{n}\to\BurnsideCat$. Any candidate must
  have $h=0$ and $t\in\NN$, since the coefficients in
  Equation~\eqref{eq:burnside-to-abelian} need to be positive and
  integral. The special case $(h,t)=(0,1)$ (i.e., Lee's theory) does
  not come from any functor $\CCat{n}\to\BurnsideCat$ for arbitrary
  link diagrams extending the functor $\KhFunc$, as can be seen by
  considering the diagram in \Figure{no-Lee}.  The question of whether
  there is such an extension for $h=0$, $t>1$ is, as far as we know,
  open.
\end{remark}

\subsection{The thickened diagram}\label{sec:thicken}
Fix a small category $\Dat$, which we regard as a strict 2-category
whose only 2-morphisms are identity maps. Fix also a strictly unitary,
lax $2$-functor $F\co \Dat\to \BurnsideCat$, i.e., a $\Dat$-diagram in
$\BurnsideCat$. In this section, we will associate to $(\Dat,F)$ a new
(1-)category $\thic{\Dat}$ and, for each $k\geq1$, an (honest) functor
$\thicf[k]{F}\co \thic{\Dat}\to \BSpaces$, the category of based topological spaces. There will also be natural
transformations $\Sigma\circ \thicf[k]{F}\to \thicf[k+1]{F}$ (where
$\Sigma$ denotes suspension), so that we get an induced diagram
$\thicf{F}\co \thic{\Dat}\to\Spectra$, the category of symmetric spectra. To realize a functor from the
cube category to the Burnside category we will apply this construction
and then take an iterated mapping cone; see \Section{subsec-realize}.

We start by defining $\thic{\Dat}$:
\begin{definition}
  Let $\Dat$ be a small category. The \emph{thickening} of $\Dat$ is
  the small category $\thic{\Dat}$ defined as follows:
  \begin{itemize}
  \item The objects of $\thic{\Dat}$ are composable pairs of morphisms
    $u\stackrel{f}{\longrightarrow} v\stackrel{g}{\longrightarrow} w$
    in $\Dat$.
  \item The morphisms in $\thic{\Dat}$ are commutative diagrams: given
    composable pairs
    $u\stackrel{f}{\longrightarrow} v\stackrel{g}{\longrightarrow} w$
    and
    $u'\stackrel{f'}{\longrightarrow} v'\stackrel{g'}{\longrightarrow}
    w'$,
    \[
      \Hom((f,g),(f',g'))=\Bigl\{ (\alpha\co u\to u',\beta\from v'\to
      v,\gamma\from w\to w')\mid \mathcenter{ \xymatrix{
          u\ar[r]^f\ar[d]^\alpha & v\ar[r]^g & w\ar[d]^\gamma\\
          u'\ar[r]^{f'} & v'\ar[u]_{\beta}\ar[r]^{g'} & w' }} \text{
        commutes} \Bigr\}.
    \]
    (Note the direction of the middle vertical arrow.)
  \item Composition of morphisms is given by stacking diagrams
    vertically:
    $(\alpha',\beta',\gamma')\circ
    (\alpha,\beta,\gamma)=(\alpha'\circ\alpha,\beta\circ\beta',\gamma'\circ\gamma)$.
  \end{itemize}
\end{definition}

\begin{example}\label{exam:thicken-cat}
  For $\Dat=\CCat{1}$, $\thic{\Dat}$ has four objects: $1\to 1\to 1$,
  $1\to 1\to 0$, $1\to 0\to 0$ and $0\to0\to0$. There are unique
  morphisms
  \[
  (1\to 1\to 1)\longrightarrow(1\to 1\to 0)\longleftarrow (1\to 0\to 0)\longrightarrow (0\to0\to0).
  \]
  (Again, note the direction of the middle arrow.)
\end{example}

The thickening operation respects products:
\begin{lemma}\label{lem:thick-cat-prod}
  Given small categories $\Cat$ and $\Dat$, there is an isomorphism
  $q\co
  \thic{\Cat}\times\thic{\Dat}\stackrel{\cong}{\longrightarrow}\thic{\Cat\times\Dat}$
  given on objects by
  \[
    q\bigl( [u_C\stackrel{f_C}{\to} v_C \stackrel{g_C}{\to} w_C]\times 
    [u_D\stackrel{f_D}{\to} v_D \stackrel{g_D}{\to} w_D]\bigr) =
    [u_C\times u_D\stackrel{f_C\times f_D}{\longrightarrow} v_C\times
    v_D \stackrel{g_C\times g_D}{\longrightarrow}w_C\times w_D].
  \]
\end{lemma}
\begin{proof}
  This is immediate from the definitions.
\end{proof}

Next we define $\thicf[k]{F}$. 
\begin{definition}\label{def:thic-k-F}
  Let $\Dat$ be a small category, which we regard as a $2$-category
  with only identity $2$-morphisms and let $F\co\Dat\to \BurnsideCat$ be a
  strictly unitary, lax $2$-functor. Given an integer $k\geq 0$ we define a new functor
  $\thicf[k]{F}\co \thic{\Dat}\to \BSpaces$ (of $1$-categories) as follows.
  
  On objects, let
  \[
    \thicf[k]{F}(u\stackrel{f}{\longrightarrow}
    v\stackrel{g}{\longrightarrow} w) = \bigvee_{a\in
      F(f)}\prod_{\substack{b\in F(g)\\ s(b)=t(a)}}S^k.
  \]
  (Here, $s\co F(g)\to F(v)$ and $t\co F(f)\to F(v)$ are the source and
  target maps of the correspondences $F(g)$ and $F(f)$, respectively.)

  To define $\thicf[k]{F}$ on morphisms fix a commutative diagram
  \begin{equation}\label{eq:thic-morphism}
    \mathcenter{
      \xymatrix{
        u\ar[r]^f\ar[d]^\alpha & v\ar[r]^g & w\ar[d]^\gamma\\
        u'\ar[r]^{f'} & v'\ar[u]_{\beta}\ar[r]^{g'} & w'.
      }
    }
  \end{equation}
  We must construct a map
  \begin{equation}\label{eq:thic-F-source-targ}
    \bigvee_{a\in F(f)}\prod_{\substack{b\in F(g)\\ s(b)=t(a)}}S^k \to
    \bigvee_{a'\in F(f')}\prod_{\substack{b'\in F(g')\\
        s(b')=t(a')}}S^k.
  \end{equation}
  It suffices to construct this map one $a$ at a time, so fix
  $a\in F(f)$. The maps $F_{u,u',v'}$ and $F_{u,v',v}$ induce a
  bijection
  \[
    F(f)\cong F(\beta)\times_{F(v')}F(f')\times_{F(u')}F(\alpha);
  \]
  let $(y,a',x)\in F(\beta)\times_{F(v')}F(f')\times_{F(u')}F(\alpha)$
  be the triple corresponding to $a$. The map $\thicf[k]{F}$ will send
  $\prod_{\substack{b\in F(g)\\ s(b)=t(a)}}S^k$, the summand
  corresponding to $a$, to $\prod_{\substack{b'\in F(g')\\
      s(b')=t(a')}}S^k$, the summand corresponding to $a'$.

  Next, the maps $F_{v',v,w}$ and $F_{v',w,w'}$ induce a bijection
  \[
    F(g')\cong F(\gamma)\times_{F(w)}F(g)\times_{F(v)}F(\beta).
  \]
  Consider the map
  \begin{equation}\label{eq:diag-map}
    \prod_{\substack{b\in F(g)\\ s(b)=t(a)}}S^k
    \stackrel{\prod_b\Delta_b}{\longrightarrow}\prod_{\substack{b\in
        F(g)\\ s(b)=t(a)}}\prod_{\substack{b'=(z,\tilde{b},\tilde{y})\in F(g')\\ \tilde{b}=b\\
        \tilde{y}=y}}S^k\cong\prod_{\substack{b'\in F(g')\\ b'=(z,b,y)\\
        s(b)=t(a)}}S^k,
  \end{equation}
  where $\Delta_b$ is the diagonal map $S^k\to \prod_{b'}S^k$.  Notice
  that $\{b'\in F(g')\mid b'=(z,b,y),\ s(b)=t(a)\}$ is a subset of
  $\{b'\in F(g')\mid s(b')=t(a')\}$ since $s(b')=s(y)=t(a')$. We can
  extend the map~\eqref{eq:diag-map} to a map
  \[
    \prod_{\substack{b\in F(g)\\ s(b)=t(a)}}S^k\longrightarrow
    \prod_{\substack{b'\in F(g')\\ s(b')=t(a')}}S^k
  \]
  by mapping to the basepoint in the remaining factors. This is the
  desired map.

  (It can be helpful to think of this argument diagrammatically; see
  \Figure{build-thic-func}.)
\end{definition}

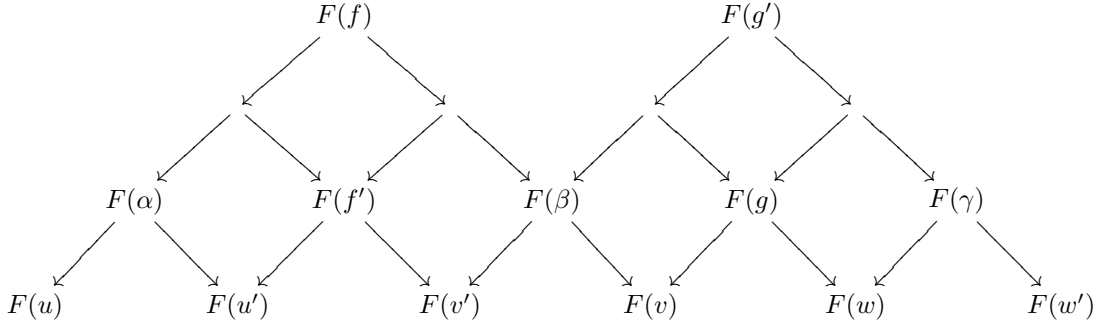
\begin{figure}
  \centering
  \[
  \xymatrix@C=1em{
    & & & F(f)\ar[dl]\ar[dr] & & & & F(g')\ar[dl]\ar[dr] & & \\
    & & \ar[dr]\ar[dl] & & \ar[dl]\ar[dr] & & \ar[dr]\ar[dl] & & \ar[dl]\ar[dr]\\
    & F(\alpha)\ar[dl]\ar[dr] & & F(f')\ar[dl]\ar[dr] & & F(\beta)\ar[dl]\ar[dr] & & F(g)\ar[dl]\ar[dr] & & F(\gamma)\ar[dl]\ar[dr] & \\
    F(u) & & F(u') & & F(v') & & F(v) & & F(w) & & F(w') 
  }
  \]
  \caption{\textbf{Constructing the functor from the thickened
      diagram.} In the first row of arrows, leftwards arrows are
    source maps, and rightwards arrows are target maps. All squares
    are fiber products. To define the map~\eqref{eq:diag-map}, fix
    $a\in F(f)$ and $b\in F(g)$. The element $a$ specifies $a'\in
    F(f')$ and $y\in F(\beta)$. We then consider the elements
    $b'\in F(g')$ which map down to $y$ and $b$. The diagonal map
    $\Delta_b$ corresponds to this set. Any such $b'$ has source equal
    to the target of $a'$.}
  \label{fig:build-thic-func}
\end{figure}

\begin{lemma}
  Definition~\ref{def:thic-k-F} defines a functor $\thicf[k]{F}$ whose
  values have natural actions of the symmetric group $\permu{k}$.
\end{lemma}
We omit the proof, which is straightforward, albeit a bit elaborate.

\begin{example}\label{exam:thicken-functor}
  Consider the functor $F\co \CCat{1}\to \BurnsideCat$ given by
  $F(1)=\{x,y\}$, $F(0)=\{z,w\}$, and $F(\cmorph{1}{0})=\{a,b,c,d\}$ with
  $s(a)=x$, $s(b)=s(c)=s(d)=y$, $t(a)=t(b)=t(c)=z$,
  $t(d)=w$. Graphically, $F$ is given by:
  \[
  \xymatrix{
    1: & x \ar[d]^{a}&& y\ar[dll]_{b,c}\ar[d]^d\\
    0: & z && w.
  }
  \]
  Recall the thickening $\thic{\CCat{1}}$ from
  \Example{thicken-cat}. The induced diagram $\thicf[k]{F}\co
  \thic{\CCat{1}}\to \BSpaces$ is given by
  \[
  S^k_{x,x}\vee S^k_{y,y}\stackrel{\Id\vee\Delta}\longrightarrow
  (S^k_{x,a})\vee(S^k_{y,b}\times S^k_{y,c}\times S^k_{y,d})\longleftarrow
  S^k_{a,z}\vee S^k_{b,z}\vee S^k_{c,z}\vee S^k_{d,w}\longrightarrow 
  S^k_{z,z}\vee S^k_{w,w}.
  \]
  (Here, for instance, the sphere $S^k_{x,a}$ corresponds to the pair
  $(x,a)\in F(\cmorph{1}{1})\times F(\cmorph{1}{0})$ over the object $1\to 1\to
  0$.)  We claim that the second map is the inclusion of the
  $k$-skeleton. To see this, note that it corresponds to the diagram
  \[
  \xymatrix{
    1\ar[r]^{\cmorph{1}{0}}\ar[d]^{\cmorph{1}{1}} & 0\ar[r]^{\cmorph{0}{0}} & 0\ar[d]^{\cmorph{0}{0}}\\
    1 \ar[r]^{\cmorph{1}{1}} & 1\ar[r]^{\cmorph{1}{0}}\ar[u]_{\cmorph{1}{0}} & 0.
  }
  \]
  The map decomposes along wedge sums. For $b$,
  for instance, we get the map $S^k_{b,z}\to (S^k_{y,b}\times
  S^k_{y,c}\times S^k_{y,d})$ as follows:
  \[
  \prod_{\{z\}}S^k=\left[\prod_{\substack{p\in F(\cmorph{0}{0})\\
        s(p)=t(b)}}S^k\right]\stackrel{\prod\Delta}{\longrightarrow}
  \left[\prod_{\substack{q'\in F(\cmorph{1}{0})\\q'=(r,p,b)\in
        F(\cmorph{0}{0})\times F(\cmorph{0}{0})\times F(\cmorph{1}{0})\\
        s(p)=t(b)}}\hspace{-1em}S^k\right]=\prod_{\{b\}}S^k \into
  \prod_{\{b,c,d\}}S^k.
  \]
  Similarly, the third map sends $S^k_{a,z}$, $S^k_{b,z}$ and
  $S^k_{c,z}$ by the identity map to $S^k_{z,z}$ and
  $S^k_{d,w}$ by the identity map to $S^k_{w,w}$.
\end{example}

Finally, we show that the $\thicf[k]{F}$ induce a functor
$\thicf{F}\co \thic{\Dat}\to \Spectra$.  Given a based space $X$,
identify $S^n \smas X$ as
$[0,1]^n\times X/\bigl(\bdy[0,1]^n \times X\cup
[0,1]^n\times\{*\}\bigr)$, where $*$ is the basepoint in $X$.  Given
$n\geq 0$ and based spaces $X_i$, there is a canonical map
\begin{align*}
  \sigma^n\co S^n \smas \prod_{i} X_i&\to \prod_{i}S^n \smas X_i,\\
  (y,x_1,\dots,x_n)&\mapsto((y,x_1),\dots,(y,x_n)).
\end{align*}
Given $k,n\geq 0$, define a map
\begin{equation}
  \label{eq:suspend-thicf}
  \eta(u\stackrel{f}{\to}
  v\stackrel{g}{\to} w)\co S^n \smas\thicf[k]{F}(u\stackrel{f}{\longrightarrow}
  v\stackrel{g}{\longrightarrow} w)
  \to \thicf[n+k]{F}(u\stackrel{f}{\longrightarrow}
  v\stackrel{g}{\longrightarrow} w)
\end{equation}
as the composition
\begin{equation}\label{eq:stab}
  \begin{split}
  S^n\smas \thicf[k]{F}(u\stackrel{f}{\longrightarrow}
v\stackrel{g}{\longrightarrow} w)=
S^n \smas \Bigl(\bigvee_{a\in F(f)}\prod_{\substack{b\in F(g)\\ s(b)=t(a)}}S^k\Bigr)
\cong&\bigvee_{a\in F(f)}S^n \smas\prod_{\substack{b\in F(g)\\ s(b)=t(a)}}S^k\\
\stackrel{\bigvee_{a\in F(f)}\sigma^n}{\longrightarrow} &\bigvee_{a\in F(f)}\prod_{\substack{b\in F(g)\\ s(b)=t(a)}}S^{n+k}
=\thicf[k+n]{F}(u\stackrel{f}{\longrightarrow}
v\stackrel{g}{\longrightarrow} w).
\end{split}
\end{equation}

\begin{lemma}\label{lem:suspend-thicf}
  The maps $\eta$ are $\permu{n} \times \permu{k}$-equivariant and
  form a natural transformation, in the sense that for each
  $u\stackrel{f}{\longrightarrow} v\stackrel{g}{\longrightarrow} w$
  the square
  \[
    \xymatrix{
      S^n\smas \thicf[k]{F}(u\stackrel{f}{\longrightarrow}
      v\stackrel{g}{\longrightarrow} w)\ar[rrrr]^-{S^n\smas\thicf[k]{F}
        \left(\begin{smallmatrix}
            u & \to & v &\to & w\\
            \downarrow & & \uparrow & & \downarrow\\
            u' & \to & v' &\to&w'
          \end{smallmatrix}\right)
      }\ar[d]_{\eta(u\stackrel{f}{\to} v\stackrel{g}{\to} w)} & & & &
      S^n\smas \thicf[k]{F}(u'\stackrel{f'}{\longrightarrow}
      v'\stackrel{g'}{\longrightarrow} w') \ar[d]^{\eta(u'\stackrel{f'}{\to} v'\stackrel{g'}{\to} w')}\\
      \thicf[k+n]{F}(u\stackrel{f}{\longrightarrow}
      v\stackrel{g}{\longrightarrow} w)\ar[rrrr]_-{\thicf[k+n]{F}      \left(\begin{smallmatrix}
            u & \to & v &\to & w\\
            \downarrow & & \uparrow & & \downarrow\\
            u' & \to & v' &\to&w'
          \end{smallmatrix}\right)
      } & & & &\thicf[k+n]{F}(u'\stackrel{f'}{\longrightarrow}
      v'\stackrel{g'}{\longrightarrow} w')
    }
  \]
  commutes. Further, each of the maps
  $\eta(u\stackrel{f}{\to} v\stackrel{g}{\to} w)$ induces an
  isomorphism on $\pi_i$ for $0\leq i\leq 2k-2$.
\end{lemma}
\begin{proof}
  The proof is straightforward, and is left to the reader.
\end{proof}

\begin{corollary}\label{cor:thicf-F-spectra}
  The functors $\thicf[k]{F}$ and natural transformations $\eta$ assemble to
  define a diagram of symmetric spectra $\thic{F}\co \thic{\Dat}\to \Spectra$.
\end{corollary}

\begin{remark}
  Lemma~\ref{lem:suspend-thicf} amounts to a verification that we can
  more concisely express $\thic{F}$ within the category of spectra
  itself by the formula
  \[
  \thic{F}(u\stackrel{f}{\longrightarrow} v\stackrel{g}{\longrightarrow} w)
  = \bigvee_{a\in F(f)}\prod_{\substack{b\in F(g)\\ s(b)=t(a)}}\SphereS.
  \]
  The wedge and product play the roles of coproduct and product in
  this category.
\end{remark}

\begin{remark}\label{rem:more-thickenings}
  The construction of the thickened functor
  $\thicf[k]{F}\co \thic{\Dat}\to \BSpaces$ from a functor
  $F\co\Dat\to\BurnsideCat$ would work with any based space $X_k$ in place of
  the sphere $S^k$: the construction only uses properties of the wedge sum and
  product operations. Further, if the spaces $X_k$ form a sequential spectrum
  then there is a corresponding thickened diagram in spectra. However, this
  construction is scarcely more general than the one we gave: in spectra, the
  diagram coming from $(X_k)$ is naturally equivalent to the smash product of
  the spectrum $X$ and the diagram obtained from the sphere spectrum
  $(S^k)$.
\end{remark}

\subsection{The realization}\label{sec:subsec-realize}

\begin{construction}
  \label{construction:realize-functor}
  Let $\thic{\CCat{n}}_+$ be the category obtained from $\thic{\CCat{n}}$ by adding
  a new object $*$ and a unique morphism $(u\to v\to w)\to *$ from
  each vertex $(u\to v\to w)$ of $\thic{\CCat{n}}$ with $w\neq \vec{0}$. 

  Given a functor $F\co \CCat{n}\to\BurnsideCat$, define
  $\thicf{F}^+\co \thic{\CCat{n}}_+\to\Spectra$ by
  $\thicf{F}^+|_{\thic{\CCat{n}}}=\thicf{F}$ and
  $\thicf{F}^+(*)=\{\pt\}$, a single point.

  Let $\Realize{F}$ be the homotopy colimit of $\thicf{F}^+$. We call
  $\Realize{F}$ the \emph{realization of $F$}.
\end{construction}

\begin{example}
  Continuing with \Example{thicken-functor}, we have
  \[
  \Realize{F}=\hocolim\left(
    \mathcenter{\xymatrix{
      \ar[d]\SphereS_{x,x}\vee \SphereS_{y,y}\ar[r]&
      (\SphereS_{x,a})\vee(\SphereS_{y,b}\times \SphereS_{y,c}\times \SphereS_{y,d}) &
      \SphereS_{a,z}\vee \SphereS_{b,z}\vee \SphereS_{c,z}\vee \SphereS_{d,w}\ar[l]\ar[r]& 
      \SphereS_{z,z}\vee \SphereS_{w,w}\\
      \{\pt\} & & & 
    }}
  \right),
  \]
  where $\SphereS$ denotes the sphere spectrum.
\end{example}

Instead of $\thic{\CCat{n}}_+$, it will be convenient, sometimes, to
work with the larger enlargement
$\thic{\CCat{n}}_{\othplus}=\left(\thic{\CCat{1}}_+\right)^{n}$ of
$\thic{\CCat{n}}$.  Given a functor $F\co \CCat{n}\to\BurnsideCat$
extend $\thicf{F}$ to a functor $\thicf{F}^{\othplus}\co
\thic{\CCat{n}}_{\othplus}\to\Spectra$ by setting
$\thicf{F}^{\othplus}|_{\thic{\CCat{n}}}=\thicf{F}$ and
$\thicf{F}^{\othplus}(d)=\{\pt\}$ if $d\not\in\Ob(\thic{\CCat{n}})$.

\begin{lemma}\label{lem:bigger-thic-plus}
  For any functor $F\co\CCat{n}\to\BurnsideCat$ there is a stable homotopy equivalence
  $
  \hocolim \thicf{F}^+\simeq \hocolim\thicf{F}^{\othplus}.
  $
\end{lemma}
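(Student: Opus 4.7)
The plan is to use property~\ref{item:hocolim-cofinal} via a homotopy cofinal functor between the two indexing categories. Define $Q\co\thic{\CCat{n}}_{\othplus}\to\thic{\CCat{n}}_+$ to be the identity on the common subcategory $\thic{\CCat{n}}$ and to send every object with at least one $*$-coordinate (forming the full subcategory $\mathcal{N}\subseteq\thic{\CCat{n}}_{\othplus}$) to the point $*\in\thic{\CCat{n}}_+$. The only delicate check for functoriality concerns morphisms $d\to d'$ in $\thic{\CCat{n}}_{\othplus}$ from $d\in\thic{\CCat{n}}$ to $d'\in\mathcal{N}$: in each $*$-coordinate of $d'$, the coordinate $d_i$ is forced to equal $(1,1,1)$, the unique object of $\thic{\CCat{1}}$ admitting a morphism to $*$ in $\thic{\CCat{1}}_+$, whence the third coordinate $w$ of $d=(u,v,w)$ has some $w_i=1$, i.e., $w\neq\vec{0}$, which is exactly the condition for $d\to *$ to exist in $\thic{\CCat{n}}_+$. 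One then checks directly that $\thicf{F}^+\circ Q=\thicf{F}^{\othplus}$, producing a natural map $\hocolim\thicf{F}^{\othplus}\to\hocolim\thicf{F}^+$.

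By property~\ref{item:hocolim-cofinal}, it remains to show $Q$ is homotopy cofinal, i.e., each undercategory $c\downarrow Q$ has contractible nerve. If $c=*$, then $c\downarrow Q\cong\mathcal{N}$; I would cover $\mathcal{N}$ by the $n$ convex subposets $\mathcal{N}_i$ (objects fixing the $i$-th coordinate to $*$) and observe that $\bigcap_{i\in S}\mathcal{N}_i\cong(\thic{\CCat{1}}_+)^{n-|S|}$ is a product of zigzag posets, hence contractible, so a nerve lemma identifies $N(\mathcal{N})$ with a simplex. If $c=(u,v,w)$ with $w=\vec{0}$, no morphism $c\to *$ exists in $\thic{\CCat{n}}_+$, so $c\downarrow Q=c\downarrow\thic{\CCat{n}}$ has initial object $(c,\Id_c)$ and is trivially contractible.

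The main obstacle is the remaining case $c=(u,v,w)$ with $w\neq\vec{0}$, where $c\downarrow Q$ glues the ``old'' part $c\downarrow\thic{\CCat{n}}$ and the ``new'' part $\mathcal{N}$ together by additional old-to-new morphisms. I would handle this in two stages. First, deformation retract $c\downarrow Q$ onto the full subcategory $V=\{(c,\Id_c)\}\cup\mathcal{N}$ by sending each old $(d,f)$ to $(c,\Id_c)$; the natural transformation $\iota_V\circ r\Rightarrow\Id$ with component $f\co(c,\Id_c)\to(d,f)$ supplies the homotopy, and the retraction is functorial because a morphism $c\to d$ in $\thic{\CCat{n}}$ with $d_i=(1,1,1)$ forces $c_i=(1,1,1)$, so every old-to-new morphism translates to a valid morphism $(c,\Id_c)\to(d',!)$ in $V$. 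Second, apply Quillen's Theorem A to the inclusion $\mathcal{N}\hookrightarrow V$: for $x\in\mathcal{N}$ the slice $x\downarrow\mathcal{N}$ has initial object $(x,\Id)$, and for $x=(c,\Id_c)$ the slice is the full subcategory $\mathcal{N}_c\subseteq\mathcal{N}$ of objects $d$ with $c\to d$ in $\thic{\CCat{n}}_{\othplus}$; letting $I=\{i\mid c_i=(1,1,1)\}$ (nonempty since $w\neq\vec{0}$), $\mathcal{N}_c$ splits as a product of a ``smaller $\mathcal{N}$'' on the coordinates in $I$ with a product of contractible $\thic{\CCat{1}}$-factors on the remaining coordinates, hence is contractible by the same nerve-lemma argument as in the first case. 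Combining the two stages gives $N(c\downarrow Q)\simeq N(V)\simeq N(\mathcal{N})\simeq\pt$, completing the proof of cofinality and hence of the lemma.
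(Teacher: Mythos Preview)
Your proof is correct and follows the same overall strategy as the paper: define the collapsing functor $Q$ (the paper calls it $G$), verify $\thicf{F}^+\circ Q=\thicf{F}^{\othplus}$, and show $Q$ is homotopy cofinal by computing the three types of undercategories.

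The difference lies only in how you verify contractibility in cases (a) and (c). For $c=*$, the paper simply observes that $N(\mathcal{N})$ is homeomorphic to $\{\vec{x}\in[0,1]^n\mid\exists i,\ x_i=0\}$ (identifying $N(\thic{\CCat{1}}_+)$ with an interval), which is a cone, while you run a nerve-lemma argument. For $c=(u,v,w)$ with $w\neq\vec{0}$, the paper writes $c\downarrow Q$ as the union of two \emph{downward-closed} full subcategories $\Dat=\mathcal{N}$ and $\Eat=\{d'\mid c\to d'\}$; downward-closedness forces every chain to lie entirely in one piece, so $N(c\downarrow Q)=N(\Dat)\cup_{N(\Dat\cap\Eat)}N(\Eat)$, and all three pieces are contractible (the intersection $\Dat\cap\Eat$ being exactly your $\mathcal{N}_c$). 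This is shorter than your two-stage retraction plus Quillen~A argument, though yours is also valid. Incidentally, in your Stage~1 the existence of the morphism $(c,\Id_c)\to(d',!)$ follows immediately from composing $c\to d\to d'$; the coordinate-wise observation about $(1,1,1)$ is not needed.
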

\begin{proof}
  Consider the functor $G\co
  \thic{\CCat{n}}_{\othplus}\to\thic{\CCat{n}}_+$ which is the
  identity on $\thic{\CCat{n}}$ and sends all objects not in
  $\thic{\CCat{n}}$ to $*$. We claim that $G$ is homotopy cofinal. To
  see this, we divide the computation of the undercategories into three cases:
  \begin{enumerate}[label=(\alph*)]
  \item\label{item:over-of-star} The undercategory $*\downarrow G$ of
    $*$ is the full subcategory of $\thic{\CCat{n}}_{\othplus}$
    spanned by the objects not in $\thic{\CCat{n}}$. The nerve of this
    category is homeomorphic to
    $\{\vec{x}=(x_1,\dots,x_n)\in[0,1]^n\mid \exists i \text{ such
      that }x_i=0\}$, which is contractible.
  \item For any object $d=(u\to v\to \vec{0})$ of $\thic{\CCat{n}}$,
    the undercategory $d\downarrow G$ of $d$ is the full subcategory of
    $\thic{\CCat{n}}$ of objects $d'$ for which there is a map $d\to
    d'$. The object $d$ is an initial object for this subcategory, so
    the nerve is contractible.
  \item Fix an object $d=(u\to v\to w)$ of $\thic{\CCat{n}}$ with
    $w\neq\vec{0}$.  The undercategory $d\downarrow G$ of $d$ is the
    full subcategory of $\thic{\CCat{n}}_{\othplus}$ spanned by the
    objects $d'$ not in $\thic{\CCat{n}}$ and the objects $d'$ in
    $\thic{\CCat{n}}$ for which there is a map $d\to d'$. Let $\Dat$
    be the full subcategory of $\thic{\CCat{n}}_{\othplus}$ spanned by
    the objects $d'$ not in $\thic{\CCat{n}}$ and let $\Eat$ be the
    full subcategory of $\thic{\CCat{n}}_{\othplus}$ consisting of
    objects $d'$ for which there is a morphism $d\to d'$. The
    categories $\Dat$ and $\Eat$ are each downwards closed in
    $d\downarrow G$, i.e., there are no morphisms out of $\Dat$ or
    $\Eat$. Thus, the nerve of $d\downarrow G$ is the union of the
    nerves of $\Dat$ and $\Eat$, glued along the nerve of
    $\Dat\cap\Eat$. We already saw in part~\ref{item:over-of-star}
    that the nerve of $\Dat$ is contractible. The category $\Eat$ has
    an initial object, $d$, and hence the nerve of $\Eat$ is
    contractible.
    Finally, the realization of the category
    $\Dat\cap\Eat$ is similar to the realization of $\Dat$ (i.e., a
    union of coordinate hyperplanes), and so contractible. It follows
    that the realization of $d\downarrow G$ is contractible.
  \end{enumerate}

  Thus, the functor $G$ is homotopy cofinal.  It is immediate from the
  definitions that $\thicf{F}^{\othplus}=\thicf{F}^+\circ G$, so the
  result follows from property~\ref{item:hocolim-cofinal} of homotopy
  colimits (\Section{colimit}).%
\end{proof}

\subsection{An invariance property of the realization}
\label{sec:thick-invariance}
\begin{lemma}\label{lem:iso-realize-same}
  If $F,G\co \CCat{n}\to\BurnsideCat$ are naturally isomorphic
  $2$-functors then the realizations of $F$ and $G$ are stably
  homotopy equivalent.
\end{lemma}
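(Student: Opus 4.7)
The plan is to promote a natural isomorphism $\eta \from F \to G$ to a natural transformation of diagrams $\thicf{F}^+ \to \thicf{G}^+$ whose components at every object of $\thic{\CCat{n}}_+$ are homeomorphisms of spectra, and then invoke property \ref{item:hocolim-nat-trans} of homotopy colimits. By \Lemma{2func-is}, the data of such an $\eta$ between strictly unitary $2$-functors amounts to bijections $\eta_v \from F(v) \to G(v)$ for each vertex $v$ and bijections $\eta_{v,w} \from F(\cmorph{v}{w}) \to G(\cmorph{v}{w})$ for each $v>w$ that intertwine the source and target maps, subject to the compatibility that for each $u>v>w$ the square relating the structure maps $F_{u,v,w}$ and $G_{u,v,w}$ commutes.

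With this in hand, I would define the component of the natural transformation at each object $(u \stackrel{f}{\to} v \stackrel{g}{\to} w)$ of $\thic{\CCat{n}}$ by relabeling the wedge summands of
\[
\thicf{F}(u \to v \to w) = \bigvee_{a \in F(f)} \prod_{\substack{b \in F(g) \\ s(b) = t(a)}} \SphereS
\]
via $\eta_f$ and the product factors via $\eta_g$; the intertwining of source and target maps guarantees that the indexing set $\{b \in F(g) \mid s(b) = t(a)\}$ is identified with $\{b' \in G(g) \mid s(b') = t(\eta_f(a))\}$. The resulting map is tautologically a homeomorphism, and it commutes with the stabilization maps of \Section{thicken} because those maps depend only on the underlying indexing sets. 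At the added vertex $*$, both functors take the value $\{\pt\}$ and there is nothing to define.

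The heart of the argument is verifying naturality with respect to every morphism of $\thic{\CCat{n}}$ specified by a square as in \Formula{thic-morphism}. On such a morphism, $\thicf{F}$ is built out of the canonical bijections
\[
F(f) \cong F(\beta) \times_{F(v')} F(f') \times_{F(u')} F(\alpha), \qquad F(g') \cong F(\gamma) \times_{F(w)} F(g) \times_{F(v)} F(\beta)
\]
together with diagonal and collapse maps at the sphere level. The structure-map compatibility of $\eta$ ensures that these bijections are intertwined under $\eta$ coordinatewise, while the sphere-level maps depend only on the indexing sets; so the naturality squares commute. This bookkeeping is the only step that requires any care, but presents no genuine obstacle. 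Once it is in place, property \ref{item:hocolim-nat-trans} immediately gives the desired stable equivalence $\Realize{F} \simeq \Realize{G}$.
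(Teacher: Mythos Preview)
Your proposal is correct and follows essentially the same route as the paper's own proof: unpack the natural isomorphism into bijections on objects and on correspondences compatible with source, target, and the structure maps $F_{u,v,w}$, use these to relabel wedge summands and product factors in $\thicf{F}$ to obtain an isomorphism of diagrams to $\thicf{G}$, and conclude via property~\ref{item:hocolim-nat-trans}. The paper leaves the naturality verification as ``straightforward,'' whereas you spell out a bit more of the bookkeeping; otherwise the arguments coincide.
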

\begin{proof}
  A natural isomorphism $T$ from $F$ to $G$ specifies:
  \begin{itemize}
  \item A bijection $T_v\co F(v)\to G(v)$ for each $v\in\{0,1\}^n$, and
  \item A bijection $T_{v,w}\co F(\cmorph{v}{w})\to G(\cmorph{v}{w})$ for each $v>w\in\{0,1\}^n$
  \end{itemize}
  satisfying the conditions that
  \[
  \xymatrix{
    & F(\cmorph{v}{w})\ar[dl]^{s}\ar[dr]^{t}\ar[rrr]^{T_{v,w}} & & & G(\cmorph{v}{w})\ar[dl]^{s}\ar[dr]^{t} & \\
    F(v)\ar@/_2pc/[rrr]_{T_v} & & F(w)\ar@/_2pc/[rrr]_{T_w} & G(v) & & G(w)
  }
  \]
  and
  \[
  \xymatrix{
    F(\cmorph{v}{w})\times_{F(v)}F(\cmorph{u}{v})\ar[r]^-{F_{u,v,w}}
    \ar[d]^{T_{v,w}\times T_{u,v}} & F(\cmorph{u}{w})\ar[d]^{T_{u,w}}\\
    G(\cmorph{v}{w})\times_{G(v)}G(\cmorph{u}{v}) \ar[r]_-{G_{u,v,w}}& G(\cmorph{u}{w})
  }
  \]
  commute.
  (See~\cite[Section I,2.4]{Gray-other-categories} and note that an
  isomorphism in $\BurnsideCat$ between two sets induces a bijection
  between them.)
  Given a natural transformation $T$, there is a corresponding map of
  diagrams defined as follows. Given $u>v>w$ we want a map 
  \[
  \bigvee_{a\in F(\cmorph{u}{v})}\prod_{\substack{b\in F(\cmorph{v}{w})\\s(b)=t(a)}}\SphereS
  \to
  \bigvee_{a\in G(\cmorph{u}{v})}\prod_{\substack{b\in G(\cmorph{v}{w})\\s(b)=t(a)}}\SphereS.
  \]
  This map sends the wedge summand corresponding to $a\in F(\cmorph{u}{v})$ to the wedge
  summand corresponding to $T_{u,v}(a)$, and sends the factor
  corresponding to $b\in F(\cmorph{v}{w})$ to the factor corresponding to
  $T_{v,w}(b)$. It is straightforward to verify that this gives a map
  of diagrams, which is by definition an
  isomorphism. It follows that the map induces a stable homotopy
  equivalence of homotopy colimits. (In fact, if we work with diagrams
  of $S^k$'s, the map would be a homeomorphism.)
\end{proof}

\subsection{Products and realization}\label{sec:products}
In the language of flow categories, the product is a rather
complicated object. In the language of functors from the cube to the
Burnside category, however, the product is quite simple:

\begin{definition}\label{def:product}
  There is a map $\times\co \BurnsideCat\times\BurnsideCat\to\BurnsideCat$ which
  sends a pair of objects $(X,Y)$ to their direct product $X\times Y$, a pair of
  correspondences $((A,s_A,t_A),(B,s_B,t_B))$ to
  $(A\times B,s_A\times s_B,t_A\times t_B)$, and a pair of isomorphisms of
  correspondences $(F\co A\to A',G\co B\to B')$ to the isomorphism
  $F\times G\co A\times B\to A'\times B'$.
  Given functors $F\co \CCat{m}\to \BurnsideCat$ and $G\co
  \CCat{n}\to\BurnsideCat$, we define the \emph{product} $F\times G$ of $F$ and
  $G$, to be the composition
  \[
    \CCat{m+n}\stackrel{\cong}{\longrightarrow} \CCat{m}\times\CCat{n}\stackrel{(F,G)}{\longrightarrow}\BurnsideCat\times\BurnsideCat\stackrel{\times}{\longrightarrow}\BurnsideCat.
  \]

  Explicitly, $(F\times G)\co \CCat{m+n}\to \BurnsideCat$ is given as follows. Identify $\{0,1\}^m\times\{0,1\}^n$ with $\{0,1\}^{m+n}$. Then:
  \begin{itemize}
  \item For $(u_1,u_2)\in \{0,1\}^m\times\{0,1\}^n=\{0,1\}^{m+n}$
    define 
    \[
    (F\times G)(u_1,u_2)=F(u_1)\times G(u_2).
    \]
  \item For $(u_1,u_2)>(v_1,v_2)\in\{0,1\}^{m+n}$ define $(F\times
    G)(\cmorph{(u_1,u_2)}{(v_1,v_2)})$ to be the correspondence
    \[
    \xymatrix{
      & F(\cmorph{u_1}{v_1})\times G(\cmorph{u_2}{v_2}) \ar[dl]_{s_F\times
        s_G}\ar[dr]^{t_F\times t_G}& \\
    F(u_1)\times G(u_2) & & F(v_1)\times G(v_2).
    }
    \]
  \item For $(u_1,u_2)>(v_1,v_2)>(w_1,w_2)\in \{0,1\}^{m+n}$ define
    $(F\times G)_{(u_1,u_2),(v_1,v_2),(w_1,w_2)}$
    by 
    \[
(F\times
    G)_{(u_1,u_2),(v_1,v_2),(w_1,w_2)}(x,y)=(F_{u_1,v_1,w_1}(x),
    G_{v_1,v_2,v_3}(y)).
    \]
  \end{itemize}
\end{definition}

\begin{lemma}
  \Definition{product} specifies a strictly unitary, lax $2$-functor.
\end{lemma}
\begin{proof}
  This is immediate.
\end{proof}

Note that smash products distribute across wedge sums. Moreover, while
$X\smas(Y\times Z)$ is not homotopy equivalent to $(X\smas Y)\times
(X\smas Z)$, there is a natural map $X\smas(Y\times Z)\to (X\smas Y)\times
(X\smas Z)$ defined by $(x,y,z)\mapsto ((x,y),(x,z))$. This
generalizes to a map 
\[
p\co \bigl(\prod_{a\in A}X_a\bigr)\smas\bigl(\prod_{b\in B}Y_b\bigr)\to
\prod_{(a,b)\in A\times B}X_a\smas Y_b.
\]
The map $p$ is natural in both factors.

Given functors $F\co \Cat\to\BSpaces$ and $G\co \Dat\to\BSpaces$ we
can take the smash product of $F$ and $G$ to obtain a functor $F\smas
G\co \Cat\times\Dat\to \BSpaces$ (cf.~\ref{item:hocolim-prod}). At a vertex $(c,d)$ of
$\Cat\times\Dat$, $(F\smas G)(c,d)=F(c)\smas G(d)$.

\begin{lemma}\label{lem:thicken-product}
  The thickening construction from \Section{thicken} respects
  products, in the following sense. Fix functors $F\co \CCat{m}\to
  \BurnsideCat$ and $G\co \CCat{n}\to\BurnsideCat$. Let $F\times G\co
  \CCat{m}\times\CCat{n}\to \BurnsideCat$ be as in \Definition{product}.
  Let $q\co \thic{\Cat}\times\thic{\Dat}\to \thic{\Cat\times\Dat}$ be the isomorphism from Lemma~\ref{lem:thick-cat-prod}.
  Then, the map $p$ induces a natural transformation from
  $\thicf[k]{F}\smas\thicf[l]{G}$ to $\thicf[k+l]{(F\times G)}\circ q$
  so that on vertices the natural transformation is a weak homotopy
  equivalence up to dimension $k+l+\min(k,l)-1$.  These
  natural transformations respect the spectrum structure, and so
  induce maps of diagrams $\thicf{F}\smas\thicf{G}\to \thicf{(F\times
    G)}\circ q$
  so that the map at each vertex is a stable homotopy equivalence.
\end{lemma}
\begin{proof}
  This is straightforward from the definitions. To illustrate, we
  describe the natural transformation at the level of vertices. At
  a vertex $[u_C\stackrel{f_C}{\to} v_C \stackrel{g_C}{\to} w_C]\times 
  [u_D\stackrel{f_D}{\to} v_D \stackrel{g_D}{\to} w_D]$ we have
  \[
  \thicf{F}(u_C\stackrel{f_C}{\to} v_C \stackrel{g_C}{\to} w_C)\smas
  \thicf{G}(u_D\stackrel{f_D}{\to} v_D \stackrel{g_D}{\to} w_D)
  =
  \bigl(\bigvee_{a_C\in F(f_C)}\prod_{\substack{b_C\in F(g_C)\\ s(b_C)=t(a_C)}}S^k\bigr)
  \smas
  \bigl(\bigvee_{a_D\in G(f_D)}\prod_{\substack{b_D\in G(g_D)\\ s(b_D)=t(a_D)}}S^l\bigr)  
  \]
  while
  \[
  \thicf[k+l]{(F\times G)}\bigl(u_C\times u_D\stackrel{f_C\times f_D}{\longrightarrow} v_C\times
  v_D \stackrel{g_C\times g_D}{\longrightarrow}w_C\times w_D\bigr)
  =
  \bigvee_{\substack{a_C\in F(f_C)\\a_D\in G(f_D)}}
  \prod_{\substack{b_C\in F(g_C)\\b_D\in G(g_D)\\ s(b_C)=t(a_C)\\ s(b_D)=t(a_D)}}S^{k+l}.
  \]
  The map $p$ sends the first to the second in an obvious way, and is
  an equivalence up to dimension $k+l+\min(k,l)-1$. Moreover, this map
  is $\permu{k}\times\permu{l}$-equivariant and respects the
  structure maps of the spectrum.
\end{proof}

\begin{proposition}\label{prop:realize-product}
  Given functors $F\co \CCat{n}\to\BurnsideCat$ and $G\co
  \CCat{m}\to\BurnsideCat$, we have $\Realize{F\times G}\simeq
  \Realize{F}\smas\Realize{G}$.
\end{proposition}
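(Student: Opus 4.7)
The plan is to reduce to the $\othplus$-enlargement of the thickened cube, since that version is manifestly a product. By \Lemma{bigger-thic-plus}, each of $\Realize{F}$, $\Realize{G}$, and $\Realize{F\times G}$ is stably equivalent to the homotopy colimit of the corresponding functor $\thicf{\cdot}^\othplus$ over $\thic{\CCat{n}}_\othplus$, $\thic{\CCat{m}}_\othplus$, and $\thic{\CCat{n+m}}_\othplus$ respectively. Since $\thic{\CCat{k}}_\othplus=(\thic{\CCat{1}}_+)^k$ by definition, there is a canonical equality $\thic{\CCat{n+m}}_\othplus=\thic{\CCat{n}}_\othplus\times\thic{\CCat{m}}_\othplus$, which is essential: the $+$-versions cannot be compared in this way, because adding a single new object does not commute with products.

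Next I would apply property~\ref{item:hocolim-prod} of homotopy colimits to obtain a stable homotopy equivalence
\[
\Realize{F}\smas\Realize{G}\simeq \hocolim_{\thic{\CCat{n}}_\othplus\times\thic{\CCat{m}}_\othplus}\bigl(\thicf{F}^\othplus\smas\thicf{G}^\othplus\bigr).
\]
Under the identification of indexing categories, the right-hand side is a homotopy colimit over $\thic{\CCat{n+m}}_\othplus$. So the task reduces to constructing a natural transformation $\eta\from\thicf{F}^\othplus\smas\thicf{G}^\othplus\to\thicf{(F\times G)}^\othplus$ that is a pointwise stable equivalence; property~\ref{item:hocolim-nat-trans} would then conclude the argument.

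The natural transformation $\eta$ is defined by cases on an object $(d_F,d_G)$ of $\thic{\CCat{n}}_\othplus\times\thic{\CCat{m}}_\othplus$. If both $d_F\in\thic{\CCat{n}}$ and $d_G\in\thic{\CCat{m}}$, then $(d_F,d_G)\in\thic{\CCat{n+m}}$ and $\eta_{(d_F,d_G)}$ is the map of \Lemma{thicken-product}, which is a stable equivalence at that vertex. Otherwise at least one of $d_F$, $d_G$ lies outside the inner thickening, so $\thicf{F}^\othplus(d_F)\smas\thicf{G}^\othplus(d_G)$ is a point (point smash anything is a point), and $\thicf{(F\times G)}^\othplus(d_F,d_G)$ is a point by construction; take $\eta$ to be the unique such map. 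Both cases give a stable equivalence.

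The main obstacle, such as it is, is verifying that this piecewise assembly gives a valid natural transformation. On morphisms between two interior objects this is precisely the naturality statement in \Lemma{thicken-product}. On morphisms with target outside the interior, the codomain is a point and there is nothing to check. On morphisms from an exterior object to an interior one, there are no morphisms to worry about because the only maps in $\thic{\CCat{1}}_+$ from the added object $*$ are identities (and in the product $(\thic{\CCat{1}}_+)^{n+m}$ an object with any coordinate equal to $*$ maps only to objects with that same coordinate equal to $*$). With naturality in hand, property~\ref{item:hocolim-nat-trans} gives $\hocolim\eta$ as a stable equivalence, completing the proof.
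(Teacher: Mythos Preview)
Your proof is correct and follows essentially the same approach as the paper's: pass to the $\othplus$-enlargement via \Lemma{bigger-thic-plus}, extend the natural transformation of \Lemma{thicken-product} to the exterior vertices (where everything is a point), and conclude using properties~\ref{item:hocolim-nat-trans} and~\ref{item:hocolim-prod}. You are simply more explicit about the product decomposition of the indexing category and the case analysis for naturality, which the paper compresses into ``From the definitions, it follows that\dots''.
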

\begin{proof}
  Let $\simeq$ denote stable homotopy equivalence.  By \Lemma{bigger-thic-plus},
  \begin{align*}
    \Realize{F}&\simeq \hocolim \thicf{F}^\othplus & \Realize{G}&\simeq \hocolim\thicf{G}^\othplus & \Realize{F\times G}&\simeq\hocolim(\thicf{F\times G})^\othplus.
  \end{align*}
  From the definitions, it follows that the natural
  transformation from $\thicf{F}\smas\thicf{G}$ to $\thicf{(F\times
    G)}$ of \Lemma{thicken-product} extends to a natural
  transformation from $\thicf{F}^\othplus \smas \thicf{G}^\othplus$ to
  $(\thicf{F\times G})^\othplus$, which is a stable homotopy
  equivalence on objects.  By point~\ref{item:hocolim-nat-trans}
  in \Section{colimit},
  \[
  \hocolim (\thicf{F\times G})^\othplus\simeq \hocolim (\thicf{F}^\othplus \smas \thicf{G}^\othplus).
  \]
  By point~\ref{item:hocolim-prod}
  in \Section{colimit},
\[
    \hocolim \left(\thicf{F}^\othplus\smas\thicf{G}^\othplus\right) \simeq \left(\hocolim \thicf{F}^\othplus\right)\smas\left(\hocolim\thicf{G}^\othplus\right).
\]
  The result follows.
\end{proof}

An even simpler operation is disjoint union:
\begin{definition}\label{def:disjoint-union}
  There is a map $\amalg\co \BurnsideCat\times\BurnsideCat\to\BurnsideCat$ which
  sends a pair of objects $(X,Y)$ to their disjoint union $X\amalg Y$, a pair of
  correspondences $((A,s_A,t_A),(B,s_B,t_B))$ to
  $(A\amalg B,s_A\amalg s_B,t_A\amalg t_B)$, and a pair of isomorphisms of
  correspondences $(F\co A\to A',G\co B\to B')$ to the isomorphism
  $F\amalg G\co A\amalg B\to A'\amalg B'$.
  Given functors $F,G\co \CCat{n}\to \BurnsideCat$, we define the \emph{disjoint union} $F\amalg G$ of $F$ and
  $G$, to be the composition
  \[
    \CCat{n}\stackrel{\Delta}{\longrightarrow} \CCat{n}\times\CCat{n}\stackrel{(F,G)}{\longrightarrow}\BurnsideCat\times\BurnsideCat\stackrel{\amalg}{\longrightarrow}\BurnsideCat.
  \]
  where the first arrow is the diagonal map.

  Explicitly, $(F\amalg G)\co \CCat{n}\to \BurnsideCat$ is given as follows:
  \begin{itemize}
  \item For $v\in\{0,1\}^n$, $(F\amalg G)(v)=F(v)\amalg G(v)$.
  \item For $v>w\in\{0,1\}^n$, 
    $(F\amalg G)_{v,w}(\cmorph{v}{w})$ is defined to be the correspondence
    \[
    \xymatrix{ & F(\cmorph{v}{w})\amalg G(\cmorph{v}{w})\ar[dl]_{s_F\amalg
        s_G}\ar[dr]^{t_F\amalg t_G}& \\
      F(v)\amalg G(v) & & F(w)\amalg G(w).  }
    \]
  \item For $u>v>w\in\{0,1\}^n$, $(F\amalg G)_{u,v,w}$ is defined by
    the commutative diagram
    \[
    \xymatrix{
    [F(\cmorph{v}{w})\amalg G(\cmorph{v}{w})]\times_{F(v)\amalg
      G(v)}[F(\cmorph{u}{v})\amalg G(\cmorph{u}{v})] \ar[rr]^-{(F\amalg G)_{u,v,w}}
    \ar[d]_\cong & & F(\cmorph{u}{w})\amalg G(\cmorph{u}{w}),\\
    [F(\cmorph{v}{w})\times_{F(v)}F(\cmorph{u}{v})]\amalg
    [G(\cmorph{v}{w})\times_{G(v)}G(\cmorph{u}{v})]\ar[urr]_{F_{u,v,w}\amalg
      G_{u,v,w}} 
    }
    \]
    where the vertical arrow is the obvious bijection.
  \end{itemize}
\end{definition}

\begin{lemma}
  \Definition{disjoint-union} specifies a strictly unitary, lax $2$-functor.
\end{lemma}
\begin{proof}
  This is immediate from the definitions.
\end{proof}

Given diagrams $F,G\co \Cat\to \BSpaces$ there is an induced diagram
$F\vee G\co \Cat\to\BSpaces$ with $(F\vee G)(v)=F(v)\vee G(v)$ and
$(F\vee G)(f)=F(f)\vee G(f)$.

\begin{lemma}\label{lem:thicken-disjoint-union}
  The thickening construction respects disjoint unions in the sense
  that given $F,G\co \Cat\to \BurnsideCat$, $\thicf[k]{(F\amalg G)}\cong\thicf[k]{F}\vee \thicf[k]{G}$.
\end{lemma}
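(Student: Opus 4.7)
The plan is to exhibit the isomorphism pointwise on objects of $\thic{\Cat}$ and then verify naturality. First I would fix an object $(u\stackrel{f}{\to} v\stackrel{g}{\to} w)$ of $\thic{\Cat}$ and unpack the definitions. By \Definition{disjoint-union}, $(F\amalg G)(f)=F(f)\amalg G(f)$ and $(F\amalg G)(g)=F(g)\amalg G(g)$, and the source and target maps are the disjoint unions of those for $F$ and $G$. In particular, if $a\in F(f)$ then $t(a)\in F(v)$, and since the only elements of $(F\amalg G)(g)$ whose source lies in $F(v)$ are elements of $F(g)$, the condition $s(b)=t(a)$ forces $b\in F(g)$. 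The symmetric statement holds for $a\in G(f)$. Thus
\begin{align*}
\thicf[k]{(F\amalg G)}(u\to v\to w)
&=\bigvee_{a\in F(f)\amalg G(f)}\prod_{\substack{b\in F(g)\amalg G(g)\\ s(b)=t(a)}}S^k\\
&\cong\Bigl(\bigvee_{a\in F(f)}\prod_{\substack{b\in F(g)\\ s(b)=t(a)}}S^k\Bigr)\vee\Bigl(\bigvee_{a\in G(f)}\prod_{\substack{b\in G(g)\\ s(b)=t(a)}}S^k\Bigr),
\end{align*}
which is $\thicf[k]{F}(u\to v\to w)\vee\thicf[k]{G}(u\to v\to w)$. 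Call this bijection $\Phi_{(f,g)}$.

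Next I would check naturality: given a morphism $(\alpha,\beta,\gamma)\co (f,g)\to(f',g')$ in $\thic{\Cat}$, I must verify that $\Phi_{(f',g')}\circ \thicf[k]{(F\amalg G)}(\alpha,\beta,\gamma) = \bigl(\thicf[k]{F}(\alpha,\beta,\gamma)\vee \thicf[k]{G}(\alpha,\beta,\gamma)\bigr)\circ \Phi_{(f,g)}$. The map $\thicf[k]{(F\amalg G)}(\alpha,\beta,\gamma)$ is defined summand-by-summand using the bijections $(F\amalg G)_{u,u',v'}$, $(F\amalg G)_{u,v',v}$, $(F\amalg G)_{v',v,w}$, $(F\amalg G)_{v',w,w'}$; by \Definition{disjoint-union} each of these is the disjoint union of the analogous bijection for $F$ with the one for $G$. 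Consequently, for $a\in F(f)$ the associated triple $(y,a',x)$ lies entirely in the $F$-part, and likewise the diagonal map into $\prod_{b'}S^k$ only targets factors indexed by $b'\in F(g')$; the $G$-case is symmetric. This is exactly the statement that $\Phi$ intertwines the two morphism maps.

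I would then remark that the identification is strictly $\permu{k}$-equivariant and commutes with the suspension-stabilization maps of \Equation{stab}, since those are induced from the pointwise sphere structure and do not interact with the indexing set; hence the isomorphism lifts to an isomorphism of diagrams of spectra $\thicf{(F\amalg G)}\cong \thicf{F}\vee\thicf{G}$. The only step that requires any care is the bookkeeping in the naturality check, where one must keep track of the five fiber-product identifications that enter the definition of $\thicf[k]{F}$ on morphisms; however, since the disjoint-union structure is preserved by each of these identifications separately, no genuine obstacle arises, and the verification is routine.
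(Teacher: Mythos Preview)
Your proof is correct and is essentially the same approach as the paper's, which simply states that the result is ``immediate from the definitions.'' You have spelled out the routine verification in detail, and each step is sound.
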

\begin{proof}
  Again, this is immediate from the definitions.
\end{proof}

\begin{proposition}\label{prop:realize-disjoint-union}
  Given functors $F,G\co \CCat{n}\to\BurnsideCat$, $\Realize{F\amalg
    G}\simeq \Realize{F}\vee\Realize{G}$.
\end{proposition}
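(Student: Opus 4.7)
The plan is to argue directly from the definitions, exactly parallel to the proof of \Proposition{realize-product} but simpler, since disjoint union on the Burnside side matches wedge sum of diagrams of spectra on the nose (no analog of the map $p$ is needed).

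First I would recall that by \Lemma{thicken-disjoint-union}, at each level $k$, $\thicf[k]{(F\amalg G)}\cong \thicf[k]{F}\vee \thicf[k]{G}$ as functors $\thic{\CCat{n}}\to\BSpaces$, and this identification is easily checked to be compatible with the stabilization maps from \Section{thicken} (since $\sigma^k$ distributes over wedges at the level of summands). Thus we obtain an isomorphism $\thicf{(F\amalg G)}\cong \thicf{F}\vee\thicf{G}$ of diagrams $\thic{\CCat{n}}\to\Spectra$.

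Next I would extend this to the enlarged category $\thic{\CCat{n}}_+$ used in \Construction{realize-functor}. Since each of $\thicf{F}^+$, $\thicf{G}^+$, and $\thicf{(F\amalg G)}^+$ sends the extra object $*$ to a single point (the basepoint of the wedge), and wedging basepoints with basepoints gives basepoints, we get an induced isomorphism of diagrams $\thicf{(F\amalg G)}^+\cong \thicf{F}^+\vee\thicf{G}^+$ on all of $\thic{\CCat{n}}_+$.

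Finally, I would apply property~\ref{item:hocolim-sum} of homotopy colimits from \Section{colimit} to conclude
\[
\Realize{F\amalg G} = \hocolim \thicf{(F\amalg G)}^+ \simeq \hocolim\bigl(\thicf{F}^+\vee\thicf{G}^+\bigr) \simeq \hocolim \thicf{F}^+ \vee \hocolim\thicf{G}^+ = \Realize{F}\vee\Realize{G}.
\]
There is no serious obstacle here; the only point to be careful about is the bookkeeping at the basepoint object $*$, which is what ensures the identification of thickened diagrams really is an isomorphism of pointed diagrams and not just on $\thic{\CCat{n}}$.
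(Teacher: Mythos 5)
Your proposal is correct and follows essentially the same route as the paper: invoke \Lemma{thicken-disjoint-union}, note that the identification extends over the extra basepoint object of $\thic{\CCat{n}}_+$ so that $\thicf{F\amalg G}^+\cong\thicf{F}^+\vee\thicf{G}^+$, and then apply property~\ref{item:hocolim-sum} of homotopy colimits. The extra care you take with the stabilization maps and the basepoint bookkeeping is fine but adds nothing beyond the paper's (very brief) argument.
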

\begin{proof}
  \Lemma{thicken-disjoint-union} extends immediately to the statement
  that $\thicf{F\amalg G}^+\cong\thicf{F}^+\vee \thicf{G}^+$, and homotopy
  colimit commutes with wedge sum (point~\ref{item:hocolim-sum}
  in \Section{colimit}).
\end{proof}

\section{Building a smaller cube from little box maps}\label{sec:smaller-cube}
In this section we show that the realization of a functor $\CCat{n}\to\BurnsideCat$ can be understood in terms of a smaller diagram.

\begin{definition}
  Let $\CCat{n}_+$ be the result of adding a single object, which we
  will denote $*$, to $\CCat{n}$ and declaring that
  \begin{align*}
  \Hom(v,*)&=
  \begin{cases}
    \text{one element $\cmorph{v}{*}$} & v\neq \vec{0}\\
    \emptyset & v=\vec{0}
  \end{cases} & 
  \Hom(*,v)&=
  \begin{cases}
    \{\Id\} & v=*\\
    \emptyset & v\neq *.
  \end{cases}
  \end{align*}  
\end{definition}
In other words, $\CCat{n}_+$ is the result of replacing the terminal object in
$\CCat{n}$ by two copies of itself, with no maps between them.

The goal of this section is to prove:
\begin{theorem}\label{thm:smaller-diag}
  Fix a strictly unitary, lax $2$-functor $F\co \CCat{n}\to\BurnsideCat$. Then there is a
  homotopy coherent diagram $\SpDiag{F}\co \CCat{n}_+\to\Spectra$ so that:
  \begin{enumerate}
  \item\label{item:verts-to-spheres} For each $v\in\Ob(\CCat{n})$, $\SpDiag{F}(v)=\bigvee_{a\in F(v)}\SphereS$.
  \item\label{item:star-to-pt} $\SpDiag{F}(*)$ is a single point.
  \item\label{item:same-hocolim} $\hocolim{\SpDiag{F}}\simeq \Realize{F}$.
  \end{enumerate}
\end{theorem}

The diagram $\SpDiag{F}$ is a special case of a more general
construction, which we give in \Section{box-refinement}. We specialize to
$\SpDiag{F}$ in \Section{coherent-box-maps}, and prove that this
homotopy colimit is equivalent to $\Realize{F}$
in \Section{small-is-big}. Before turning to these constructions, we
introduce a particular class of maps between spheres
in \Section{box-maps}.

\subsection{Box maps}\label{sec:box-maps}

\begin{definition}
  By a \emph{box in $\RR^k$} we mean a subset of the form
  $B=[a_1,b_1]\times\cdots\times[a_k,b_k]$ for some $a_1,\dots,b_k$
  with $a_i<b_i$. Given a box $B$ in $\RR^k$, a \emph{sub-box} $B'$ of
  $B$ is a box $B'$ in $\RR^k$ so that $B'\subset B$.
\end{definition}

\begin{figure}
  \centering
  \includegraphics{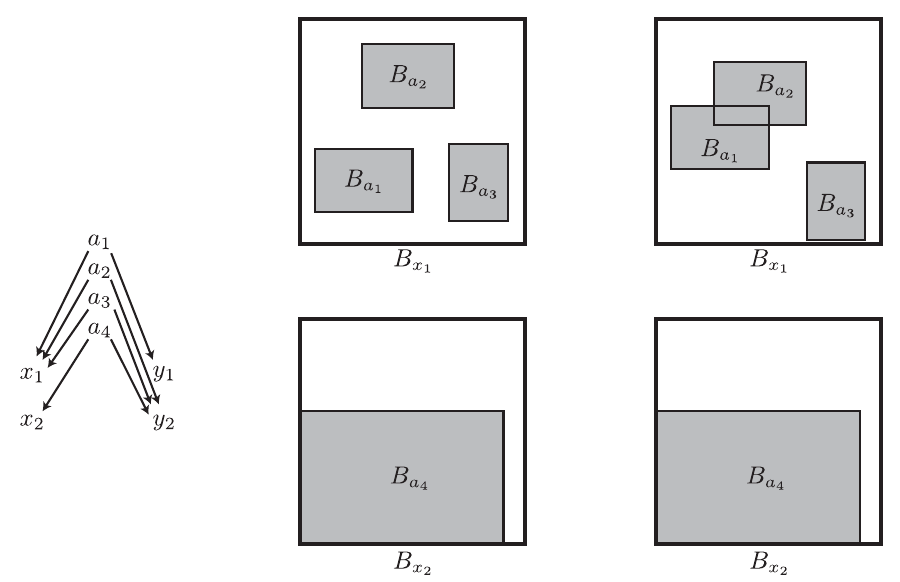}
  \caption{\textbf{Spaces of boxes.} Left: A correspondence $A$ from a
    $2$-element set $X$ to another $2$-element set $Y$. Center: A
    point in $E(\{B_x\},s)$. Right: A point in $F(\{B_x\},s,t)$ which is
    not in $E(\{B_x\},s)$. The boxes $B_{a_i}$ are shaded.}
  \label{fig:box-space}
\end{figure}

\begin{definition}
  Given a box $B$ in $\RR^k$, let $E(B,\ell)$ be the space of ordered
  $\ell$-tuples of disjoint boxes in $B$, topologized as a subspace of
  $(\RR^{2k})^\ell$.

  Given a correspondence $(A,s\co A\to X, t\co A\to Y)$, consider a
  collection $\{B_x\mid x\in X\}$ of distinct copies of the standard
  unit box. 
  Let $F(\{B_x\},s,t)$ be the space of collections
  $\{B_a\subset B_{s(a)}\mid a\in A\}$ of sub-boxes labeled by
  elements of $A$, satisfying the condition that
  $B_a\cap B_{a'}=\emptyset$ if $a\neq a'$ and $s(a)=s(a')$ and
  $t(a)=t(a')$. See \Figure{box-space}.

  Let $E(\{B_x\},s)\subset F(\{B_x\},s,t)$ be the space of collections
  $\{B_a\subset B_{s(a)}\mid a\in A\}$ satisfying the stronger condition that
  $B_a\cap B_{a'}=\emptyset$ if $a\neq a'$ and $s(a)=s(a')$.
  Again, see \Figure{box-space}.
\end{definition}

\begin{example}
  If $A$ is an $\ell$-element correspondence and $X$ has one element
  $x$ then $E(\{B_x\},s)=E(B_x,\ell)$.
\end{example}

\begin{definition}
  Fix once and for all an identification $S^k=[0,1]^k/\bdy$. When we
  view $S^k$ as a pointed space, we assume that this identification
  identifies the basepoint and $\bdy [0,1]^k$. For any box $B$ in
  $\RR^k$, there is a standard homeomorphism $B\to [0,1]^k$, given by
  $(x_1,\dots,x_k)\mapsto(\frac{x_1-a_1}{b_1-a_1},\dots,
  \frac{x_k-a_k}{b_k-a_k})$, an hence an induced identification
  $S^k\cong B/\bdy B$.

  Given $\{B_1,\dots,B_\ell\}\in E(B,\ell)$ the associated \emph{basic
    box map} is the composition
  \begin{equation}\label{eq:box-map-simple}
    S^k = B/\bdy B \to B / \bigl(B\setminus (\interior{B_1}\cup\cdots\cup\interior{B_\ell})\bigr) 
    =\bigvee_{a=1}^\ell B_a/\bdy B_a =\bigvee_{a=1}^\ell S^k\to S^k,
  \end{equation}
  where the last map is the identity on each summand (so the
  composition has degree $\ell$). This defines a continuous map
  $E(B,\ell)\to \Map(S^k,S^k)$.
\end{definition}

\begin{remark}
  The map $E(B,\ell)\to \Map(S^k,S^k)$ is the composition of the
  coaction of the little $k$-cubes operad with the fold map.
\end{remark}

\begin{definition}
  Given a correspondence $A$ from $X$ to $Y$ and a collection of boxes
  $e=\{B_a\subset B_{s(a)}\mid a\in A\}\in E(\{B_x\},s)$, there is an
  induced map
  \begin{equation}\label{eq:box-map-wedge-to-wedge}
    \Phi(e,A)\co \bigvee_{x\in X}S^k \to \bigvee_{y\in Y}S^k.
  \end{equation}
  defined by
  \begin{equation}\label{eq:auxiliary-box-map}
    \Phi_{e,A}|_{S^k_x}\co S^k_x = B_x/\bdy B_x \to B_x / \bigl(B_x\setminus (\bigcup_{\substack{a\in A\\s(a)=x}}\interior{B_a})\bigr) 
    =\bigvee_{\substack{a\in A\\s(a)=x}} B_a/\bdy B_a
    =\bigvee_{\substack{a\in A\\ s(a)=x}} S^k_a\to \bigvee_{y\in Y}S^k_y
  \end{equation}
  where the last map sends $S^k_a$ by the identity map to
  $S^k_{t(a)}$. This defines a continuous map
  $\Phi(\cdot,A)\co E(\{B_x\},s)\to \Map(\bigvee_{x\in
    X}S^k,\bigvee_{y\in Y}S^k)$.  We call a map of the form
  $\Phi(e,A)$ for some $e\in E(\{B_x\},s)$ a \emph{disjoint box
    map}, and say it \emph{refines the correspondence $(A,s,t)$}.
\end{definition}

\begin{example}
  The basic box maps are exactly the disjoint box maps where $X$ and
  $Y$ each consists of a single element.
\end{example}

\begin{definition}
  Given a correspondence $A$ from $X$ to $Y$ and a collection of boxes
  $e=\{B_a\subset B_{s(a)}\mid a\in A\}\in F(\{B_x\},s,t)$, there is an
  induced map
  \begin{equation}\label{eq:box-map-wedge-to-prod}
    \bigvee_{x\in X}S^k\to \prod_{y\in Y}S^k
  \end{equation}
  whose component sending $S^k_x$ to $S^k_y$ is the basic box map
  associated to $(B_x,\{B_a\mid s(a)=x,t(a)=y\})$. We call a map
  arising this way an \emph{overlapping box map}, and say it
  \emph{refines the correspondence $(A,s,t)$}.

  By a \emph{box map} we mean either a disjoint box map or an
  overlapping box map.
\end{definition}

\begin{example}\label{exam:disjoint-overlap-box}
  Given $e\in E(\{B_x\},s)\subset F(\{B_x\},s,t)$, the overlapping box
  map associated to $e$ is obtained by composing the disjoint box map
  $\Phi(e,A)$ with the standard inclusion
  $\bigvee_{y\in Y}S^k\into \prod_{y\in Y}S^k$.
\end{example}

\begin{remark}
  We have not defined, and will not need, box maps from products, just
  box maps to products.
\end{remark}

\begin{example}\label{exam:diag-is-box}
  The diagonal map $S^k\to \prod_{i=1}^m S^k$ is a box map: take
  $X=\{x\}$ to have a single element, $A$ and $Y$ to have $m$ elements
  each, $t$ to be a bijection, and each box $B_a$ to be all of $B_x$.
\end{example}

\begin{lemma}\label{lem:box-maps-compisition}
  A composition of box maps is a box map.
\end{lemma}
\begin{proof}
  Given composable box maps $F$ and $G$, the preimages under $F$ of
  the boxes for $G$ are the boxes for $G\circ F$.
\end{proof}

The value to us of these constructions comes from the following:
\begin{lemma}\label{lem:box-maps-contractible}
  If $B$ is $k$-dimensional then the space $E(B,\ell)$ is
  $(k-2)$-connected. More generally, $E(\{B_x\},s)$ and
  $F(\{B_x\},s,t)$ are $(k-2)$-connected.
\end{lemma}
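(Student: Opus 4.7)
The plan is to reduce the general statement to the case of $E(B,\ell)$ for a single box $B$, and then identify $E(B,\ell)$ with a classical configuration space.

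First, I would observe that both spaces factor as finite products of the basic spaces $E(B,\ell)$. In $E(\{B_x\},s)$, sub-boxes $B_a,B_{a'}$ with $s(a)\neq s(a')$ sit inside the distinct outer boxes $B_{s(a)}$ and $B_{s(a')}$, hence are automatically disjoint; the disjointness condition reduces to disjointness within each fiber $s^{-1}(x)$. This yields a natural homeomorphism
\[
E(\{B_x\},s) \;\cong\; \prod_{x\in X} E\bigl(B_x,\,|s^{-1}(x)|\bigr).
\]
In $F(\{B_x\},s,t)$, only boxes sharing a target must be disjoint, so within each $B_x$ the sub-boxes split along the partition of $s^{-1}(x)$ by $t$, and the pieces are placed independently:
\[
F(\{B_x\},s,t) \;\cong\; \prod_{x\in X}\prod_{y\in Y} E\bigl(B_x,\,|s^{-1}(x)\cap t^{-1}(y)|\bigr).
\]
Since a finite product of $(k-2)$-connected spaces is $(k-2)$-connected, it suffices to handle $E(B,\ell)$.

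Next, I would identify $E(B,\ell)$ up to homotopy with the ordered configuration space $F(\interior{B},\ell)=F(\R^k,\ell)$. The ``take the center'' map $E(B,\ell)\to F(\interior{B},\ell)$ admits a deformation retraction: the homotopy that shrinks every sub-box toward its center keeps sub-boxes disjoint (so stays in $E(B,\ell)$) and lands in the subspace of configurations of sub-boxes of a uniform, tiny side length, which is homotopy equivalent to $F(\interior{B},\ell)$ via the map inserting a cube of fixed small radius at each configuration point.

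Finally, $F(\R^k,\ell)$ is $(k-2)$-connected by the classical Fadell--Neuwirth argument. The forgetful map $F(\R^k,\ell)\to F(\R^k,\ell-1)$ is a fibration with fiber $\R^k\setminus\{\ell-1\text{ points}\}$, which is homotopy equivalent to $\bigvee_{i=1}^{\ell-1} S^{k-1}$ and hence $(k-2)$-connected. Induction on $\ell$ (with base case $\ell\leq 1$ being either a point or a contractible space of single sub-boxes), combined with the long exact sequence of the fibration, finishes the argument. The only subtlety is in the edge cases $k\leq 1$, where ``$(k-2)$-connected'' is either a vacuous condition or merely asserts non-emptiness, both of which are immediate. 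I expect no genuine obstacle here; the main content is the product decomposition, while the $(k-2)$-connectivity of configuration spaces of $\R^k$ is standard.
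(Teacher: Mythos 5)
Your proposal is correct and takes essentially the same route as the paper: both reduce $E(\{B_x\},s)$ and $F(\{B_x\},s,t)$ to products of the basic spaces $E(B,\ell)$ and then identify $E(B,\ell)$ up to homotopy with the ordered configuration space of $\ell$ points in $\interior{B}\cong\R^k$. The only divergence is in the final standard fact: the paper deduces $(k-2)$-connectivity from the fat diagonal $\Delta\subset B^\ell$ being a finite union of smooth codimension-$k$ submanifolds, whereas you run the Fadell--Neuwirth fibration $F(\R^k,\ell)\to F(\R^k,\ell-1)$ and induct; both are standard and equally valid.
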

\begin{proof}
  The space $E(B,\ell)$ is homotopy equivalent to the ordered
  configuration space of $\ell$ points in the interior of $B$, i.e., $B^\ell\setminus
  \Delta$, where $\Delta$ is the fat diagonal. Since $\Delta$ is a finite
  union of smooth submanifolds of codimension $k$, the result
  for $E(B,\ell)$ follows. Next, $E(\{B_x\},s)\cong \prod_{x\in
    X}E([0,1]^k,|s^{-1}(x)|)$ is a product of $(k-2)$-connected spaces, and
  hence is $(k-2)$-connected. Finally, $F(\{B_x\},s,t)\cong
  \prod_{(x,y)\in X\times Y}E([0,1]^k,|s^{-1}(x)\cap t^{-1}(y)|)$,
  and so again is $(k-2)$-connected.
\end{proof}

Informally, \Lemma{box-maps-contractible} says that the space of box
maps is highly connected. (For this statement to be correct, we must
think of a box map as not just the map but also the labeling of the
boxes.)  Indeed, we will show next that the space of box maps is also
highly connected in relative terms.

\begin{definition}
  For any set map $s\from A\to X$, define $E^\circ(\{B_x\},s)$ to be
  the subspace of $E(\{B_x\},s)$ where we require the box $B_a$ to lie
  in the interior of the box $B_{s(a)}$ for all $a\in A$.
\end{definition}

If $A_0\subset A$ is a subset and $s_0\from A_0\to X$ is the
restriction of $s$ then there is a map
$E^\circ(\{B_x\},s)\to E^\circ(\{B_x\},s_0)$ gotten by forgetting the
boxes labeled by $A\setminus A_0$.

\begin{lemma}\label{lem:box-maps-rel-contractible}
  Fix a set map $s\co A\to X$ and a subset $A_0\subset A$. Let $s_0=s|_{A_0}$.
  If the boxes $\{B_x\}$ are $k$-dimensional, then for any $i\leq
  k-1$ and any commutative diagram
  \[
  \xymatrix{S^{i-1}\ar[r]\ar@{^(->}[d]& E^\circ(\{B_x\},s)\ar[d]\\
    D^i\ar[r]& E^\circ(\{B_x\},s_0),}
  \]
  there exists a lift $D^i\to E^\circ(\{B_x\},s)$ making the diagram commute. 
\end{lemma}

\begin{proof}
  By induction, we may assume that $A\setminus A_0$ consists of a
  single element, say $a_1$, and let $x_1=s(a_1)$ and
  $\ell=|s_0^{-1}(x_1)|$.  The projection $\pi\from
  E^\circ(\{B_x\},s)\to E^\circ(\{B_x\},s_0)$ is seen to be a
  fiber bundle by the following argument.

  Let $\pi_\bullet\from E_\bullet\to E^\circ(\{B_x\},s_0)$ be the
  fiber bundle where the fiber over a point is the complement of the
  $\ell$ boxes in the interior of $B_{x_1}$. It is easy to see that
  $\pi_\bullet$ is a fiber bundle as follows. For $z\in
  E^\circ(\{B_x\},s_0)$, fix a minimal triangulation of the complement
  of the $\ell$ boxes in $z$. This triangulation persists in some
  small neighborhood $U$ of $z$ in $E^\circ(\{B_x\},s_0)$, and induces
  a PL homeomorphism between $\pi_\bullet^{-1}(z)$ and
  $\pi_\bullet^{-1}(z')$ for any $z'\in U$.

  Now for any $z\in E^\circ(\{B_x\},s_0)$, construct a coordinate
  chart on $\pi^{-1}(z)$ by the following variables: the center $C$ of
  the box $B_{a_1}$ viewed as a point in $\pi_\bullet^{-1}(z)$; the
  `aspect ratio' $R$ of the box $B_{a_1}$, presented as a
  $(k-1)$-tuple of ratios of the $k$ side-lengths of $B_{a_1}$; and a
  proportion $P\in(0,1)$ of the volume of the box $B_{a_1}$ relative to
  the volume of the largest box with the same center and same aspect
  ratio that lies in $B_{x_1}$ in the complement of the interiors of
  other $\ell$ boxes. This identifies $\pi^{-1}(z)$ with
  $\pi_\bullet^{-1}(z)\times(0,\infty)^{k-1}\times(0,1)$, and the
  identification holds for small open sets around $z$. But
  $\pi_\bullet$ is a fiber bundle, and therefore, so is $\pi$.


  The fiber over each point is homeomorphic to the space of boxes in
  the complement of $\ell$ disjoint boxes in the interior of
  $B_{x_1}$. The fiber, being homotopy equivalent to the complement of
  $\ell$ points in $\R^k$, is $(k-2)$-connected, so the statement
  follows.
\end{proof}

\subsection{Refining diagrams via box maps}\label{sec:box-refinement}
\begin{definition}\label{def:spatial-refinement}
  Fix a small category $\Dat$ and a strictly unitary, lax $2$-functor
  $F\co \Dat\to \BurnsideCat$ (i.e., a $\Dat$-diagram in $\BurnsideCat$). A \emph{$k$-dimensional spatial
    refinement} of $F$ is a homotopy coherent diagram
  $\SpRefine[k]{F}\co \Dat\to \BSpaces$ so that
  \begin{itemize}
  \item For any $u\in\Ob(\Dat)$, $\SpRefine[k]{F}(u)=\bigvee_{x\in
      F(u)}S^k=\bigl(\amalg_{x\in F(u)}B_x\bigr)/\bdy$;
  \item For any $u,v\in\Ob(\Dat)$ and $f\co u\to v$,
    $\SpRefine[k]{F}(f)$ is a disjoint box map which refines the
    correspondence $F(f)$ from $F(u)$ to $F(v)$
    (see \Section{box-maps}); and, more generally,
  \item For any sequence of morphisms
    \[
    u_0\stackrel{f_1}{\longrightarrow} u_1 \stackrel{f_2}{\longrightarrow}
    \cdots\stackrel{f_n}{\longrightarrow} u_n
    \]
    in $\Dat$,
    \[
      \SpRefine[k]{F}(f_n,\dots,f_1)\co [0,1]^{n-1}\to\Map(\bigvee_{x\in F(u_0)}S^k,\bigvee_{x\in F(u_n)}S^k)
    \]
    is a family of box maps induced by a map $[0,1]^{n-1}\to
    E(\{B_x\}_{x\in F(u_0)},s_{F(f_n\circ\cdots\circ f_0)})$
    (refining the correspondence 
    \[
    F(f_n\circ\cdots\circ f_1)\cong F(f_n)\times_{F(u_{n-1})}\cdots\times_{F(u_1)}F(f_1)    
    \]
    from $F(u_0)$ to $F(u_n)$).    
  \end{itemize}
\end{definition}

\begin{proposition}\label{prop:box-refinement-exist-unique}
  Let $\Dat$ be a small category in which every sequence of composable
  non-identity morphisms has length at most $n$. Fix a $\Dat$-diagram
  $F$ in $\BurnsideCat$.
  \begin{enumerate}
  \item\label{item:spatial-exists} If $k\geq n$ then there is a $k$-dimensional spatial refinement of $F$.
  \item\label{item:spatial-unique} If $k\geq n+1$ then any two $k$-dimensional spatial refinements of
    $F$ are homotopic (as homotopy coherent diagrams).
  \item\label{item:spatial-suspend} If $\SpRefine[k]{F}$ is a $k$-dimensional spatial refinement
    of $F$ then the result of suspending each $\SpRefine[k]{F}(u)$ and
    $\SpRefine[k]{F}(f_n,\dots,f_1)(\vec{t})$ gives a
    $(k+1)$-dimensional spatial refinement of $F$.
  \end{enumerate}
\end{proposition}
\begin{proof}
  We start with point~(\ref{item:spatial-exists}).  Given
  $u\in\Ob(\Dat)$, define $\SpRefine[k]{F}(u)=\bigvee_{x\in F(u)}S^k$;
  write the $S^k$ summand corresponding to $x$ as $B_x/\bdy$, where
  $B_x$ is a box in $\RR^k$ (e.g., $B_x=[0,1]^k$). 
  Next, by \Observation{hocolim-no-id} it suffices to consider only non-identity morphisms.
  For each non-identity morphism
  $f\co u\to v$ in $\Dat$ choose a disjoint box map which refines the
  correspondence $F(f)$. Let $e_{f}\in E(\{B_x\mid x\in
  F(u)\},s_{F(f)})$ be the collection of little boxes corresponding to $F(f)$.

  We have now defined $\SpRefine[k]{F}$ on vertices and arrows. The
  diagram does not commute, so it remains to define the coherence
  homotopies associated to sequences of composable morphisms. We will
  build these inductively. As a warm up, we spell out the first
  case carefully before proceeding to the general case.

  Fix a composable pair of morphisms
  $u\stackrel{f}{\longrightarrow}v\stackrel{g}{\longrightarrow} w$ in
  $\Dat$.  There are two points in $E(\{B_x\mid x\in
  F(u)\},s_{F(g\circ f)})$ associated to $(g,f)$. One is the point
  $e_{g\circ f}$. The other is defined as follows. The point $e_{g}$
  corresponds to a collection of boxes $\mathcal{B}_g$ in $\{B_y\mid
  y\in F(v)\}$, labeled by elements of $F(g)$. The inverse image
  $\Phi(e_{f},F(f))^{-1}(\mathcal{B}_g)$ of these boxes is a
  collection of boxes in $\{B_x\mid x\in F(u)\}$. The boxes
  $\Phi(e_{f},F(f))^{-1}(\mathcal{B}_g)$ inherit a labeling by
  elements of $F(g)\times_{F(v)}F(f) \cong_{F_{u,v,w}}F(g\circ f)$. 
  This labeling makes $\Phi(e_{f},F(f))^{-1}(\mathcal{B}_g)$ into a second
  point in $E(\{B_x\mid x\in F(u)\},s_{F(g\circ f)})$, which by abuse
  of notation we will call $e_{g}\circ e_{f}$ (see also \Lemma{box-maps-compisition}).
  
  By \Lemma{box-maps-contractible}, since $k\geq 2$ (or else we would
  not have a composable pair $(g,f)$), the space $E(\{B_x\mid x\in
  F(u)\},s_{F(g\circ f)})$ is connected, so we can find a path from
  $e_{g\circ f}$ to $e_{g}\circ e_{f}$. Fix such a path, and call it
  $e_{g,f}\co [0,1]\to E(\{B_x\mid x\in F(u)\},s_{F(g\circ f)})$. Then
  $e_{g,f}$ defines a homotopy $\Phi(e_{g,f},F(g\circ f))$ from
   $\Phi(e_{g},F(g))\circ\Phi(e_{f},F(f))$ to $\Phi(e_{g\circ f},F(g\circ f))$.

  More generally, suppose that for any sequence
  $v_0\stackrel{f_1}{\longrightarrow}\cdots\stackrel{f_\ell}{\longrightarrow}v_{\ell}$
  of non-identity morphisms we have chosen a map
  $e_{f_\ell,\dots,f_1}\co [0,1]^{\ell-1}\to E(\{B_x\mid x\in
  F(v_0)\},s_{F(f_\ell\circ\cdots\circ f_1)})$, and these maps are
  compatible in the following sense.  Let $(t_1,\dots,t_{\ell-1})$ be
  the coordinates on $[0,1]^{\ell-1}$.  Then for any $1\leq i\leq
  \ell-1$ we require that:
  \begin{equation}\label{eq:bm-sp-compat}
    \begin{split}
      e_{f_\ell,\dots,f_1}(t_1,\dots,t_{i-1},0,t_{i+1},\dots,t_{\ell-1})
      &=e_{f_\ell,\dots,f_i}(t_{i+1},\dots,t_{\ell-1})\circ e_{f_{i-1},\dots,f_{1}}(t_1,\dots,t_{i-1}) \\
      e_{f_\ell,\dots,f_1}(t_1,\dots,t_{i-1},1,t_{i+1},\dots,t_{\ell-1})
      &=e_{f_\ell,\dots,f_i\circ f_{i-1},\dots,f_1}(t_1,\dots,t_{i-1},t_{i+1},\dots,t_{\ell-1}).
    \end{split}
  \end{equation}
  Then given
  $v_0\stackrel{f_1}{\longrightarrow}\cdots\stackrel{f_{\ell+1}}{\longrightarrow}v_{\ell+1}$
  there is a map $S^{\ell-1}=\bdy([0,1]^{\ell})\to E(\{B_x\mid x\in
  F(v_0)\},s_{F(f_{\ell+1}\circ\cdots\circ f_1)})$ defined by
  \begin{equation}\label{eq:map-to-extend}
    \begin{split}
      (t_1,\dots,t_{i-1},0,t_{i+1},\dots,t_{\ell})
      &\mapsto e_{f_{\ell+1},\dots,f_{i+1}}(t_{i+1},\dots,t_{\ell})\circ e_{f_i,\dots,f_{1}}(t_1,\dots,t_{i-1}) \\
      (t_1,\dots,t_{i-1},1,t_{i+1},\dots,t_{\ell})
      &\mapsto e_{f_{\ell+1},\dots,f_{i+1}\circ f_i,\dots,f_{1}}(t_1,\dots,t_{i-1},t_{i+1},\dots,t_{\ell}).
    \end{split}
  \end{equation}
  The inductive hypothesis implies that this map is continuous. Since
  $k\geq \ell+1$, by \Lemma{box-maps-contractible}, the space
  $E(\{B_x\mid x\in F(v_0)\},s_{F(f_{\ell+1}\circ\cdots\circ f_1)})$
  is $(\ell-1)$-connected, so the map~\eqref{eq:map-to-extend} extends
  to a map $[0,1]^{\ell}\to E(\{B_x\mid x\in
  F(v_0)\},s_{F(f_{\ell+1}\circ\cdots\circ f_1)})$. Define
  $e_{f_{\ell+1},\dots,f_1}$ to be any such extension.
  
  Now, \Equation{box-map-wedge-to-wedge} gives a map 
  \[
  \Phi(e_{v_0,\dots,v_{\ell+1}},F(f_{\ell+1}\circ\cdots\circ f_1))\co [0,1]^\ell\times \bigvee_{x\in F(v_0)}S^k\to \bigvee_{x\in F(v_{\ell+1})}S^k.
  \]
  It follows from the compatibility conditions~\eqref{eq:bm-sp-compat} that these maps define a
  homotopy coherent diagram.

  Next, for point~(\ref{item:spatial-unique}), fix spatial refinements
  $\SpRefine[k]{F}$ and $\SpRefine[k]{F}'$ of $F$.  Consider the
  category $\CCat{1}\times \Dat$. It suffices to define a homotopy
  coherent diagram $G\co \CCat{1}\times\Dat\to \BSpaces$ so that
  $G|_{\{0\}\times\Dat}=\SpRefine[k]{F}$,
  $G|_{\{1\}\times\Dat}=\SpRefine[k]{F}'$, and for any
  $u\in\Ob(\Dat)$, $G(\cmorph{1}{0}\times\Id_u)$ is a homotopy
  equivalence~\cite[Proposition 4.6]{Vogt-top-hocolim}.  To define
  $G$, note that $G|_{\{0\}\times\Dat}$ and $G|_{\{1\}\times\Dat}$ are
  already specified.
  Let $G(\cmorph{1}{0}\times\Id_u)$ be the identity map. More
  generally, define (somewhat arbitrarily) $G(\cmorph{1}{0}\times
  g)=\SpRefine[k]{F}(g)$. It follows from the fact that both
  $\SpRefine[k]{F}$ and $\SpRefine[k]{F}'$ refine $F$ that the
  resulting diagram $G$ is homotopy commutative. Extend $G$ to a
  homotopy coherent diagram inductively, as in the proof of
  point~(\ref{item:spatial-exists}).
 
  Finally, point~(\ref{item:spatial-suspend}) is immediate from the definitions.
\end{proof}

\subsection{A coherent cube of box maps}\label{sec:coherent-box-maps}
\begin{definition}
  Given a strictly unitary, lax $2$-functor $F\co \CCat{n}\to\BurnsideCat$
  let $\SpRefine[k]{F}\co \CCat{n}\to\BSpaces$ be a spatial refinement
  of $F$. Let $\SpDiag[k]{F}\co \CCat{n}_+\to\BSpaces$ be the diagram obtained
  from $\SpRefine[k]{F}$ by defining $\SpDiag[k]{F}(*)$ to be a single
  point. Let $\SpDiag{F}$ be the diagram obtained from $\SpDiag[k]{F}$
  by replacing each vertex $\SpDiag[k]{F}(u)$ with $\Sigma^{-k}\bigl(\Sigma^\infty \SpDiag[k]{F}(u)\bigr)$, the $k$-fold formal desuspension of its suspension spectrum.
\end{definition}

\begin{corollary}\label{cor:box-realize-spectrum}
  Up to stable homotopy equivalence, the spectrum $\hocolim\SpDiag{F}$
  depends only on the functor $F$. In fact, for any $k>n$, the
  homotopy type of $\hocolim\SpDiag[k]{F}$ is independent of the
  choices in its construction.
\end{corollary}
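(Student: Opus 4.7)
The plan is to deduce both statements from Proposition~\ref{prop:box-refinement-exist-unique} together with the invariance of homotopy colimits under homotopy equivalence of homotopy coherent diagrams (Proposition~\ref{prop:hequiv-diags-hocolim}). First I would fix $k \geq n+1$ and consider two choices $\SpRefine[k]{F}$ and $\SpRefine[k]{F}'$ of $k$-dimensional spacial refinements. By part~(\ref{item:spacial-unique}) of Proposition~\ref{prop:box-refinement-exist-unique} these are homotopy equivalent as homotopy coherent diagrams on $\CCat{n}$. The passage from $\SpRefine[k]{F}$ to $\SpDiag[k]{F}$ merely adjoins the object $*$ with value a single point and adds a unique morphism from every non-$\vec{0}$ vertex of $\CCat{n}$; since the target at $*$ is a point and there is no choice of coherence data for maps into a point, any homotopy equivalence of homotopy coherent diagrams on $\CCat{n}$ extends canonically to one on $\CCat{n}_+$ between $\SpDiag[k]{F}$ and $\SpDiag[k]{F}'$.

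Next I would invoke Proposition~\ref{prop:hequiv-diags-hocolim} to conclude that $\hocolim\SpDiag[k]{F}$ and $\hocolim\SpDiag[k]{F}'$ are homotopy equivalent based spaces. This handles the dependence on the spacial refinement for a single fixed dimension $k > n$, proving the second sentence of the corollary.

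For the first sentence, I must also show independence of $k$, working stably. By part~(\ref{item:spacial-suspend}) of Proposition~\ref{prop:box-refinement-exist-unique}, vertex-wise suspension of a $k$-dimensional spacial refinement is a $(k+1)$-dimensional spacial refinement, and the same holds after adjoining $*$. Suspension commutes with homotopy colimits of based spaces (both being left adjoints, or directly by inspection of \Definition{hocolim}), so we obtain a natural homeomorphism $\Sigma \hocolim \SpDiag[k]{F} \cong \hocolim \SpDiag[k+1]{F}^{\Sigma}$, where $\SpDiag[k+1]{F}^{\Sigma}$ is the $(k+1)$-dimensional refinement produced by suspending. By the previous paragraph, $\hocolim \SpDiag[k+1]{F}^{\Sigma}$ is homotopy equivalent to $\hocolim \SpDiag[k+1]{F}$ for any other choice of $(k{+}1)$-dimensional spacial refinement. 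Translating to the spectrum level via the suspension-spectrum construction used to form $\SpDiag{F}$ from $\SpDiag[k]{F}$, this says precisely that the suspension spectrum of $\hocolim \SpDiag[k]{F}$ is stably equivalent to that of $\hocolim \SpDiag[k+1]{F}$, and iterating gives independence of the initial choice of $k > n$.

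The only nontrivial point to verify is the compatibility asserted in the second paragraph: that the coherence homotopies in $\SpDiag[k]{F}$ and $\SpDiag[k]{F}'$ can indeed be connected in a way compatible with the adjunction of $*$. This is essentially formal because the new morphisms in $\CCat{n}_+$ all land at a point, so the conditions from \Definition{coherent-diag} in any sequence that ends at $*$ reduce to either the composition rule $t_i = 0$ for the final coordinate (which forces the value to be the basepoint) or the identity conditions, and hence impose no extra coherence data that must be matched. Once this is noted the corollary follows immediately from the inputs assembled above.
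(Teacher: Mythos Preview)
Your proposal is correct and follows essentially the same approach as the paper: invoke Proposition~\ref{prop:box-refinement-exist-unique} (parts~(\ref{item:spacial-unique}) and~(\ref{item:spacial-suspend})) together with Proposition~\ref{prop:hequiv-diags-hocolim}. The paper's own proof is a one-sentence citation of these two inputs; you have simply unpacked the details, in particular the (formal) extension of the homotopy of diagrams from $\CCat{n}$ to $\CCat{n}_+$ and the suspension argument linking different values of $k$, which the paper leaves implicit.
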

\begin{proof}
  This is immediate from \Proposition{box-refinement-exist-unique},
  together with the fact that the homotopy colimits of homotopy equivalent
  homotopy coherent diagrams are homotopy equivalent (\cite[Theorem
  5.12]{Vogt-top-hocolim}, quoted as
  \Proposition{hequiv-diags-hocolim}).
\end{proof}

As in \Section{subsec-realize}, we can also work with a larger enlargement 
$\CCat{n}_{\othplus}=(*\leftarrow 1\rightarrow 0)^{\times n}$ of $\CCat{n}$. 
Extend $\SpRefine[k]{F}$ to a functor $\SpDiagOth[k]{F}\co\CCat{n}_{\othplus}\to\BSpaces$ by
setting $\SpDiagOth[k]{F}|_{\CCat{n}}=\SpRefine[k]{F}$ and 
$\SpDiagOth[k]{F}(v)=\{\pt\}$ if $v$ is an object which is not in $\CCat{n}$, 
i.e., if some coordinate of $v$ is $*$. 

\begin{lemma}\label{lem:bigger-plus}
  For any functor $F\co\CCat{n}\to\BurnsideCat$ and any spatial
  refinement $\SpRefine[k]{F}$ of $F$ there is a stable homotopy
  equivalence
  $
  \hocolim \SpDiag[k]{F}\simeq \hocolim\SpDiagOth[k]{F}.
  $
\end{lemma}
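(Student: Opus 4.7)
The plan is to mimic the proof of \Lemma{bigger-thic-plus} essentially verbatim, with $\CCat{n}_+$ and $\CCat{n}_{\othplus}$ playing the roles of $\thic{\CCat{n}}_+$ and $\thic{\CCat{n}}_{\othplus}$.

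First I would define a strict functor $G\co\CCat{n}_{\othplus}\to\CCat{n}_+$ to be the identity on the subcategory $\CCat{n}\subset\CCat{n}_{\othplus}$ and to send every object with at least one $*$-coordinate to $*\in\CCat{n}_+$. This is well-defined on morphisms: any morphism in $\CCat{n}_{\othplus}$ whose target has a $*$ in position $i$ necessarily has source with $c_i=1$, hence with source different from $\vec{0}$, so the unique morphism to $*$ in $\CCat{n}_+$ is available. Directly from the definitions $\SpDiagOth[k]{F}=\SpDiag[k]{F}\circ G$ as homotopy coherent diagrams (both agree with $\SpRefine[k]{F}$ on $\CCat{n}$ and are constant at the basepoint elsewhere, and the coherence data on the basepoint pieces is forced). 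By (the homotopy coherent version of) property~\ref{item:hocolim-cofinal} in \Section{colimit}, the lemma will follow once I verify that $G$ is homotopy cofinal.

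The cofinality check splits into three cases for the undercategories $d\downarrow G$, parallel to those in the proof of \Lemma{bigger-thic-plus}. The undercategory $\vec{0}\downarrow G$ has only the object $(\vec{0},\Id)$ (there is no morphism $\vec{0}\to *$ in $\CCat{n}_+$), so its nerve is a point. The undercategory $*\downarrow G$ is the full subcategory of $\CCat{n}_{\othplus}$ on objects with some $*$-coordinate; its nerve is homeomorphic to $\{\vec{x}\in[0,1]^n\mid \exists i,\ x_i=0\}$, which is star-shaped about $\vec{0}$ and hence contractible. For $d=v\in\CCat{n}\setminus\{\vec{0}\}$, let $\Eat\subseteq v\downarrow G$ be the full subcategory on those $c$ for which there is a morphism $v\to c$ in $\CCat{n}_{\othplus}$ (equivalently $c_i=0$ whenever $v_i=0$), and let $\Dat\subseteq v\downarrow G$ be the full subcategory on objects with some $*$-coordinate. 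A short check shows $\Eat\cup\Dat=v\downarrow G$, and any morphism in $v\downarrow G$ with source in $\Eat$ has target in $\Eat$, so there is no simplex of the nerve involving both a vertex in $\Eat\setminus\Dat$ and a vertex in $\Dat\setminus\Eat$; thus the nerve of $v\downarrow G$ is $N(\Eat)\cup N(\Dat)$ glued along $N(\Eat\cap\Dat)$. The nerve of $\Eat$ is contractible because $v$ is initial in $\Eat$; the nerve of $\Dat$ is contractible by the argument of the second case; and $\Eat\cap\Dat$ is isomorphic to the $*$-subcategory of $\CCat{\card{I}}_{\othplus}$ where $I=\{i\mid v_i=1\}\neq\emptyset$, so its nerve is also contractible by the second case. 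A standard gluing argument then yields that $N(v\downarrow G)$ is contractible.

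The main technical hurdle is the third case, in particular the combinatorial check that morphisms in $v\downarrow G$ cannot have source in $\Eat\setminus\Dat$ and target in $\Dat\setminus\Eat$. This follows from the morphism structure of $\CCat{n}_{\othplus}$: if $c\in\Eat$ and $c\to c'$ in $\CCat{n}_{\othplus}$, then for each $i$ with $v_i=0$ we have $c_i=0$, forcing $c'_i=0$ as well, so $c'\in\Eat$. Once this and the isomorphism $\Eat\cap\Dat\cong(\text{$*$-subcategory of }\CCat{\card{I}}_{\othplus})$ are verified, the cofinality check, and thus the stable homotopy equivalence $\hocolim \SpDiag[k]{F}\simeq \hocolim \SpDiagOth[k]{F}$, is complete.
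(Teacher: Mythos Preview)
Your proposal is correct and is exactly the argument the paper intends: the paper's own proof simply says ``The proof is similar to but easier than the proof of \Lemma{bigger-thic-plus}, and is left to the reader,'' and you have carried out precisely that adaptation. One tiny imprecision: in justifying well-definedness of $G$ you write that the source must have $c_i=1$, but in fact $c_i\in\{1,*\}$; either way the source is not $\vec{0}$, so the conclusion stands. Similarly, for the gluing $N(v\downarrow G)=N(\Eat)\cup_{N(\Eat\cap\Dat)}N(\Dat)$ you should also note that $\Dat$ is downward closed (if $c_i=*$ then $c'_i=*$), not just $\Eat$; this is equally trivial and your argument is otherwise complete.
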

\begin{proof}
  The proof is similar to but easier than the proof of
  \Lemma{bigger-thic-plus}, and is left to the reader.
\end{proof}

\subsection{The realizations of the small cube and big cube agree}\label{sec:small-is-big}
Before proving \Theorem{smaller-diag}, we introduce an auxiliary
category, the arrow category of $\CCat{n}$, and study its relationship
with $\CCat{n}$ and $\thic{\CCat{n}}$.
\begin{definition}
  Given a small category $\Cat$, \emph{the arrow category of $\Cat$},
  which we denote $\ArrowCat(\Cat)$ has
  $\Ob(\ArrowCat(\Cat))=\bigcup_{u,v\in\Ob(\Cat)}\Hom(u,v)$ the set of
  morphisms in $\Cat$. Given objects $f\co u\to v$ and $g\co w\to x$
  in the arrow category, $\Hom(f,g)$ consists of pairs
  $(\alpha\co u\to w,\beta\co v\to x)$ so that
  \begin{equation}\label{eq:arrow-in-arrow}
    \mathcenter{
    \xymatrix{
      u\ar[r]^f\ar[d]_\alpha & v\ar[d]^\beta\\
      w\ar[r]_g & x
    }}
  \end{equation}
  commutes. Maps compose in the obvious way:
  $(\gamma,\delta)\circ(\alpha,\beta)=(\gamma\circ\alpha,\delta\circ\beta)$.
  
  There is a functor $A\co \Cat\to\ArrowCat(\Cat)$ defined by
  \begin{align*}
    A(u)&=\Id_{u} \\
    A(f\co u\to v)
    &=\mathcenter{\xymatrix{
      u \ar[r]^{\Id_u} \ar[d]^{f} & u \ar[d]^{f}\\
    v\ar[r]^{\Id_v} & v.}}
  \end{align*}
  There is also a functor $B\co \thic{\Cat}\to \ArrowCat(\Cat)$
  defined by
  \begin{align*}
    B(u\stackrel{f}{\longrightarrow} v\stackrel{g}{\longrightarrow}w)&=g\circ f\\
    B\left(  \mathcenter{
    \xymatrix{
    u\ar[r]^f\ar[d]^\alpha & v\ar[r]^g & w\ar[d]^\gamma\\
    u'\ar[r]^{f'} & v'\ar[u]_{\beta}\ar[r]^{g'} & w'}}\right)
     &= 
       \mathcenter{
       \xymatrix{
       u\ar[r]^{g\circ f}\ar[d]_\alpha & w\ar[d]^\gamma\\
    u'\ar[r]_{g'\circ f'} & w'.}}
  \end{align*}
\end{definition}

In the special case of the cube category,
\[
  \ArrowCat(\CCat{1})=\left( \cmorph{1}{1}\longrightarrow
    \cmorph{1}{0}\longrightarrow \cmorph{0}{0}\right) \qquad
  \ArrowCat(\CCat{n})=\left(\ArrowCat(\CCat{1})\right)^{n}.
\]
We will need a version with extra objects added, analogous to
$\CCat{n}_{\othplus}$:
\begin{definition}
  Let
  \begin{align*}
    \ArrowCat(\CCat{1})_{\othplus}&=\left( *\longleftarrow
      \cmorph{1}{1}\longrightarrow \cmorph{1}{0}\longrightarrow
      \cmorph{0}{0}\right)\\
    \ArrowCat(\CCat{n})_{\othplus}&=\left(\ArrowCat(\CCat{1})_{\othplus}\right)^{n}.
  \end{align*}
  The functors $A$ and $B$ have extensions
  \begin{align*}
    A_\othplus&\co \CCat{n}_{\othplus}\to \ArrowCat(\CCat{n})_{\othplus}\\
  B_\othplus&\co \thic{\CCat{n}}_{\othplus}\to \ArrowCat(\CCat{n})_{\othplus}.
\end{align*}
These are products of the $1$-dimensional case, which is given by:
\[
\xymatrix{
  {*} \ar@{-->}[d] & 1 \ar[l]\ar[rr] \ar@{-->}[d]& &0\ar@{-->}[d] \\
  {*} & \cmorph{1}{1}\ar[l]\ar[r] & \cmorph{1}{0}\ar[r] & \cmorph{0}{0}
}\qquad
\xymatrix{
  {*} \ar@{.>}[d] & 111 \ar[l]\ar[r] \ar@{.>}[d]& 110\ar@{.>}[dr] & & 100\ar[ll]\ar[r]\ar@{.>}[dl] &000\ar@{.>}[d] \\
  {*} & \cmorph{1}{1}\ar[l]\ar[rr] &  & \cmorph{1}{0}\ar[rr] & & \cmorph{0}{0} 
}
\]
The dashed arrows denote $A_\othplus$, and the dotted arrows denote $B_\othplus$.
\end{definition}
                                    
To relate various diagrams, we will need to know $A_\othplus$ and
$B_\othplus$ are homotopy cofinal:
\begin{lemma}\label{lem:CCat-to-Ar-cofinal}
  The functor $A_\othplus$ is homotopy cofinal.
\end{lemma}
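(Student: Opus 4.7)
The plan is to reduce the statement to a one-dimensional computation via the product structure and then handle each of the four relevant undercategories by direct inspection.

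First I would observe that everything in sight decomposes as an $n$-fold product of its one-dimensional analogue: $\CCat{n}_{\othplus}=(\CCat{1}_{\othplus})^{n}$, $\ArrowCat(\CCat{n})_{\othplus}=(\ArrowCat(\CCat{1})_{\othplus})^{n}$, and the functor $A_\othplus$ is the $n$-fold product of its one-dimensional version $A_\othplus^{(1)}\co\CCat{1}_{\othplus}\to\ArrowCat(\CCat{1})_{\othplus}$. Since homs in a product category are products of homs and the identity condition for a morphism in the undercategory decouples across factors, for any $d=(d_1,\dots,d_n)\in\Ob(\ArrowCat(\CCat{n})_{\othplus})$ there is an isomorphism of categories
\[
d\downarrow A_\othplus \;\cong\; \prod_{i=1}^{n}\bigl(d_i\downarrow A_\othplus^{(1)}\bigr).
\]
The nerve functor commutes with finite products of small categories, and a finite product of contractible spaces is contractible, so it suffices to prove that $d_i\downarrow A_\othplus^{(1)}$ has contractible nerve for each of the four possible objects $d_i\in\{*,\cmorph{1}{1},\cmorph{1}{0},\cmorph{0}{0}\}$ of $\ArrowCat(\CCat{1})_{\othplus}$.

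Then I would enumerate these four undercategories using the fact that $A_\othplus^{(1)}$ sends $*\mapsto *$, $1\mapsto\cmorph{1}{1}$, $0\mapsto\cmorph{0}{0}$, and consulting the list of outgoing morphisms from each object of $\ArrowCat(\CCat{1})_{\othplus}=(*\leftarrow\cmorph{1}{1}\to\cmorph{1}{0}\to\cmorph{0}{0})$. For $d_i=*$, $\cmorph{1}{0}$, or $\cmorph{0}{0}$, one finds a single object with only its identity endomorphism, so the nerve is a point. The only interesting case is $d_i=\cmorph{1}{1}$, where the three objects $(*,\cmorph{1}{1}\to *)$, $(1,\Id_{\cmorph{1}{1}})$, $(0,\cmorph{1}{1}\to\cmorph{0}{0})$ sit in a span
\[
(*,\cmorph{1}{1}\to *)\;\longleftarrow\;(1,\Id)\;\longrightarrow\;(0,\cmorph{1}{1}\to\cmorph{0}{0}),
\]
whose nerve is an interval, again contractible. (The two arrows come from the morphisms $1\to *$ and $1\to 0$ in $\CCat{1}_{\othplus}$, and one checks using $A_\othplus^{(1)}(1\to 0)=(\cmorph{1}{1}\to\cmorph{0}{0})$ that the undercategory compatibility is satisfied; there are no other non-identity morphisms because $0$ and $*$ have no non-identity outgoing morphisms in $\CCat{1}_{\othplus}$.)

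The only delicate point is that there really are no further morphisms in the $d_i=\cmorph{1}{1}$ case and that the four-case enumeration is exhaustive—this is straightforward from the definitions of $\CCat{1}_{\othplus}$ and $\ArrowCat(\CCat{1})_{\othplus}$, so no serious obstacle arises. With all four one-dimensional undercategories contractible, the product reduction gives contractibility of $d\downarrow A_\othplus$ for every $d$, completing the proof that $A_\othplus$ is homotopy cofinal.
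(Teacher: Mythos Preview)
Your proof is correct and follows exactly the same approach as the paper: reduce to the one-dimensional case using the fact that undercategories commute with products, then verify directly. The paper's own proof actually leaves the one-dimensional verification to the reader, so your enumeration of the four undercategories is a complete version of what the paper only sketches.
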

\begin{proof}
  Recall from point~\ref{item:hocolim-cofinal} in \Section{colimit}
  that homotopy cofinality means that each undercategory $d\downarrow
  A_{\othplus}$ has contractible nerve.  Since taking
  undercategories
  commutes with taking products, it suffices to
  verify the one-dimensional case. This verification is
  straightforward, and is left to the reader.
\end{proof}

\begin{lemma}\label{lem:thic-to-Ar-cofinal}
  The functor $B_\othplus$ is homotopy cofinal.  
\end{lemma}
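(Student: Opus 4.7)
The plan is to mimic the proof of \Lemma{CCat-to-Ar-cofinal}: first reduce to the one-dimensional case via the product structure, and then verify the one-dimensional case by explicit inspection. Recall that by construction $\thic{\CCat{n}}_{\othplus}=(\thic{\CCat{1}}_+)^n$ and $\ArrowCat(\CCat{n})_{\othplus}=(\ArrowCat(\CCat{1})_{\othplus})^n$, and $B_\othplus$ is the $n$-fold product of the one-dimensional functor. Given $d=(d_1,\dots,d_n)\in\Ob(\ArrowCat(\CCat{n})_{\othplus})$, the undercategory $d\downarrow B_\othplus$ is canonically isomorphic to the product $(d_1\downarrow B_\othplus)\times\cdots\times(d_n\downarrow B_\othplus)$. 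Since the nerve of a product of small categories is (homeomorphic to) the product of the nerves, and products of contractible spaces are contractible, it suffices to verify that $d\downarrow B_\othplus$ has contractible nerve for each of the four objects $d\in\{*,\cmorph{1}{1},\cmorph{1}{0},\cmorph{0}{0}\}$ in dimension $1$.

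Next I would write down the morphism structure of $\thic{\CCat{1}}_+$ explicitly. Using the definition of $\thic{\Dat}$, the non-identity morphisms are $(111)\to(110)$, $(100)\to(110)$, $(100)\to(000)$, and $(111)\to *$ (noting the middle-arrow reversal, and that $*$ receives morphisms only from vertices $(u,v,w)$ with $w\neq \vec{0}$). Similarly $\ArrowCat(\CCat{1})_{\othplus}$ is the linear zigzag $*\leftarrow\cmorph{1}{1}\to\cmorph{1}{0}\to\cmorph{0}{0}$, and the functor $B_\othplus$ sends $(111)\mapsto\cmorph{1}{1}$, $(110),(100)\mapsto\cmorph{1}{0}$, $(000)\mapsto\cmorph{0}{0}$, and $*\mapsto *$.

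Then I would enumerate the four undercategories. For $d=*$ and for $d=\cmorph{0}{0}$ the undercategory has a single object, hence contractible nerve. For $d=\cmorph{1}{0}$ the undercategory has three objects, corresponding to $(110),(100),(000)$ in $\thic{\CCat{1}}_+$ together with the obvious morphisms $\cmorph{1}{0}\to\cmorph{1}{0}$ or $\cmorph{1}{0}\to\cmorph{0}{0}$, and the morphisms in the undercategory form the zigzag $(110)\leftarrow(100)\to(000)$, which is a tree and hence contractible. For $d=\cmorph{1}{1}$ one obtains five objects (one for each object of $\thic{\CCat{1}}_+$), and the morphisms in the undercategory form the tree $*\leftarrow(111)\to(110)\leftarrow(100)\to(000)$; this is again contractible.

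The only step requiring care is the bookkeeping for $d=\cmorph{1}{1}$, where for each candidate morphism $(u,v,w)\to(u',v',w')$ in $\thic{\CCat{1}}_+$ one needs to check both that the triple $(\alpha,\beta,\gamma)$ exists in $\CCat{1}$ (which fails precisely when $\beta$ would require a map $0\to 1$) and that the induced map in $\ArrowCat(\CCat{1})_{\othplus}$ is compatible with the chosen $f:d\to B_\othplus(c)$. This is the ``main obstacle,'' though it is entirely routine: once one writes down the morphism set of $\thic{\CCat{1}}_+$, the compatibility conditions hold automatically because $\ArrowCat(\CCat{1})_{\othplus}$ is a poset (so any two morphisms with the same source and target agree). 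Assembling these verifications with the reduction above completes the proof.
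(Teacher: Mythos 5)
Your proposal is correct and follows the same route as the paper: the paper's proof also reduces to the one-dimensional case via the product structure (as in the proof of \Lemma{CCat-to-Ar-cofinal}) and then declares the $1$-dimensional verification straightforward, which is exactly the explicit enumeration of the four undercategories (one point, one point, the zigzag $(110)\leftarrow(100)\to(000)$, and the five-object tree) that you carry out. Your extra bookkeeping for $d=\cmorph{1}{1}$, using that $\ArrowCat(\CCat{1})_{\othplus}$ is a poset so compatibility is automatic, is the right justification for that "straightforward" step.
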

\begin{proof}
  As in the proof of \Lemma{CCat-to-Ar-cofinal}, it suffices to verify
  the 1-dimensional case, which is straightforward.
\end{proof}

Next we see that for any small category $\Cat$, any functor $F\co
\Cat\to\BurnsideCat$ lifts to a functor $\vec{F}\co \ArrowCat(\Cat)\to
\BurnsideCat$. 
\begin{definition}
  Given a functor $F\co \Cat\to\BurnsideCat$, define a functor $\vec{F}\co \ArrowCat(\Cat)\to
  \BurnsideCat$ as follows.
  For $f\in\Ob(\ArrowCat(\Cat))$, $F(f)$ is a correspondence, and in
  particular a set; define $\vec{F}(f)=F(f)$. This defines $\vec{F}$
  on $\Ob(\ArrowCat(\Cat))$. Given a diagram as in
  Formula~\eqref{eq:arrow-in-arrow}, define
  $\vec{F}(\alpha,\beta)=F(g\circ \alpha)$ (which is exactly the same
  as $F(\beta\circ f)$, but merely in bijection with
  $F(g)\circ F(\alpha)$ and $F(\beta)\circ F(f)$). The source and
  target maps are given by
  \[
    \xymatrix{
      F(\beta)\times_{F(v)}F(f)\ar[d]&\vec{F}(\alpha,\beta)\ar[r]^-{F_{u,w,x}^{-1}}\ar[l]_-{F_{u,v,x}^{-1}}&F(g)\times_{F(w)}F(\alpha)\ar[d]\\\mathllap{\vec{F}(f)=}F(f)&&
      F(g)\mathrlap{=\vec{F}(g).}  }
  \]
  For any pair of composable morphisms
  \[
    \xymatrix{
      u\ar[r]^f\ar[d]_\alpha & v\ar[d]^\beta\\
      w\ar[r]^g\ar[d]_\gamma & x\ar[d]^\delta\\
      y \ar[r]_h & z, }
  \]
  $\vec{F}_{f,g,h}$ should specify an isomorphism from
  $\vec{F}(\gamma,\delta)\times_{F(g)}\vec{F}(\alpha,\beta)=
  F(\delta\circ g)\times_{F(g)}F(g\circ \alpha)$ to
  $F(\delta\circ g\circ \alpha)=F(\gamma\circ
  \alpha,\delta\circ\beta)$. Define this isomorphism
  to be the composition of the isomorphisms
  \begin{align*}
    F(\delta\circ g)\times_{F(g)}F(g\circ \alpha)
    &\cong
      F(\delta)\times_{F(x)}F(g)\times_{F(g)}F(g)\times_{F(w)}F(\alpha)\\
    &\cong
      F(\delta)\times_{F(x)}F(g)\times_{F(w)}F(\alpha)\\
    &\cong F(\delta\circ g\circ \alpha).
  \end{align*}
\end{definition}

\begin{lemma}\label{lem:lift-to-arrow}
  These maps make $\vec{F}$ into a strictly unitary, lax
  $2$-functor, and $\vec{F}\circ A=F$.
\end{lemma}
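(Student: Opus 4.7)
The plan is to verify three things in order: (a) that $\vec{F}$ is strictly unitary, (b) that the coherence data $\vec{F}_{f,g,h}$ satisfies Condition~\ref{item:CuFunc} of \Lemma{2func-is}, and (c) that $\vec{F}\circ A=F$ on the nose.

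For (a), the identity morphism at an object $f\co u\to v$ of $\ArrowCat(\Cat)$ is the pair $(\Id_u,\Id_v)$, since the only commuting square with $\alpha=\beta=\Id$ has $g=f$. By construction $\vec{F}(\Id_u,\Id_v)=F(\Id_v\circ f)=F(f)$, and the source (respectively target) map to $\vec{F}(f)=F(f)$ is the composition of $F^{-1}_{u,v,v}$ (respectively $F^{-1}_{u,u,v}$) with the canonical isomorphism $\lambda$ (respectively $\rho$). Since $F$ is strictly unitary, these composites are the identity map on $F(f)$, so $\vec{F}(\Id_u,\Id_v)$ is literally the identity correspondence on $\vec{F}(f)$, as required.

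For (b), fix a triple of composable morphisms in $\ArrowCat(\Cat)$, presented as a stack of three commutative squares with left columns $\alpha_1,\alpha_2,\alpha_3$ and right columns $\beta_1,\beta_2,\beta_3$, running from $f_1\co u_1\to v_1$ down to $f_4\co u_4\to v_4$. Both ways around the square in Condition~\ref{item:CuFunc} produce an isomorphism from a triple fiber product among $F(\beta_i\circ\cdots\circ f_j)$ to $F(\beta_3\circ\beta_2\circ\beta_1\circ f_1)=F(f_4\circ\alpha_3\circ\alpha_2\circ\alpha_1)$, and each side unwinds, via repeated application of the defining bijections of $\vec{F}_{\cdot,\cdot,\cdot}$, into the canonical identification
\[
F(\beta_3)\times_{F(v_3)}F(\beta_2)\times_{F(v_2)}F(\beta_1)\times_{F(v_1)}F(f_1)\times_{F(u_1)}F(\alpha_1)\times_{F(u_2)}F(\alpha_2)\times_{F(u_3)}F(\alpha_3).
\]
(Where we have tacitly used that, since $F$ is strict on $\CCat{n}$, all the fiber-product reassociations in the middle are forced.) The key point is that each $\vec{F}_{f,g,h}$ was built out of the same ingredients as $F_{\cdot,\cdot,\cdot}$ by inserting canonical $F_{\cdot,\cdot,\cdot}^{\pm 1}$'s, so Condition~\ref{item:CuFunc} for $\vec{F}$ follows formally from Condition~\ref{item:CuFunc} for $F$. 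This is the main bookkeeping step, but since every fiber product appears in a unique canonical form, there is essentially no choice involved.

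Finally, for (c): on objects, $\vec{F}(A(u))=\vec{F}(\Id_u)=F(\Id_u)=F(u)$ by strict unitarity of $F$. On morphisms $f\co u\to v$, the functor $A$ sends $f$ to the square $(\Id_u,\Id_v)\co \Id_u\to\Id_v$ in $\ArrowCat(\Cat)$, and $\vec{F}(\Id_u,\Id_v)=F(\Id_v\circ f)=F(f)$; the source and target maps are again identities by strict unitarity of $F$. On compositions $g\circ f$, the image under $A$ is a vertical stacking of two such squares, and $\vec{F}_{A(u),A(v),A(w)}$ was defined as a composition of canonical isomorphisms that, when all $\alpha$'s and $\beta$'s are identities, collapses to $F_{u,v,w}$ itself. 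Thus $\vec{F}\circ A=F$ as strictly unitary lax $2$-functors, completing the proof. The main obstacle, as noted, is the diagram chase in (b); once one sets up the canonical form above it reduces mechanically to associativity for $F$.
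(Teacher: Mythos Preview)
The paper itself gives no proof of this lemma---it simply says ``We leave the proof as an exercise to the reader.'' So there is nothing to compare approaches against; I can only evaluate your attempt on its own merits. Your three-part structure (strict unitarity, coherence, $\vec{F}\circ A=F$) is the natural one, and parts~(a) and~(b) are essentially correct, though (b) is telegraphic.

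Part~(c), however, contains a genuine confusion. You write that $A$ sends $f\co u\to v$ to ``the square $(\Id_u,\Id_v)\co \Id_u\to\Id_v$'', but this mislabels the morphism: the objects $\Id_u$ and $\Id_v$ are the top and bottom rows of the square, while the morphism datum---the pair $(\alpha,\beta)$ of vertical arrows---is $(f,f)$, not $(\Id_u,\Id_v)$. (The pair $(\Id_u,\Id_v)$ is the \emph{identity} morphism of the object $f$ in $\ArrowCat(\Cat)$, which you already handled in part~(a).) Your subsequent formula $F(\Id_v\circ f)$ is in fact consistent with the correct morphism $(f,f)$, so perhaps this is only a notational slip; but then your next sentence, ``the source and target maps are again identities'', is wrong. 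The correspondence $\vec{F}(f,f)=F(f)$ runs from $\vec{F}(\Id_u)=F(u)$ to $\vec{F}(\Id_v)=F(v)$, and what must be checked is that its source and target maps agree with the original $s,t$ of $F(f)$---not that they are identities. This does follow from strict unitarity (unwinding gives $F_{u,u,v}=\rho$ and $F_{u,v,v}=\lambda$, so the projections recover $s$ and $t$), but your argument as written does not establish it.

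One further point: you appeal to Condition~\ref{item:CuFunc} of \Lemma{2func-is}, but that lemma is stated only for domain $\CCat{n}$, whereas the present lemma is for arbitrary small $\Cat$. The reduction you use is valid whenever the domain has at most one morphism between any two objects, which covers $\ArrowCat(\CCat{n})$ and hence every application made in the paper; but you should either flag this restriction or verify the full axiom~\ref{item:str-assoc} of \Definition{2-functor} directly.
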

We leave the proof as an exercise to the reader.

\begin{lemma}\label{lem:len-in-arr}
  In $\ArrowCat(\CCat{n})_\othplus$, any sequence of composable,
  non-identity morphisms has length at most $2n$.
\end{lemma}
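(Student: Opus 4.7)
The plan is to reduce to the one-dimensional case, using the product decomposition $\ArrowCat(\CCat{n})_\othplus = (\ArrowCat(\CCat{1})_\othplus)^n$, and then count non-identity coordinate slots.

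First I would handle the base case $n=1$ by direct inspection of $\ArrowCat(\CCat{1})_\othplus = (*\leftarrow \cmorph{1}{1}\to \cmorph{1}{0}\to \cmorph{0}{0})$. Working out the $\Hom$-sets of the arrow category $\ArrowCat(\CCat{1})$ explicitly (each nonempty $\Hom$-set consists of a commutative square, and the nontrivial ones contain exactly one non-identity morphism), the non-identity morphisms are the four generators $\cmorph{1}{1}\to *$, $\cmorph{1}{1}\to \cmorph{1}{0}$, $\cmorph{1}{0}\to \cmorph{0}{0}$, together with the single composition $\cmorph{1}{1}\to \cmorph{0}{0}$. Since $*$ admits no outgoing non-identity morphisms and no non-identity morphism ends at $\cmorph{1}{1}$, inspection shows that the longest chain of composable non-identity morphisms in $\ArrowCat(\CCat{1})_\othplus$ has length exactly $2$, realized by $\cmorph{1}{1}\to \cmorph{1}{0}\to \cmorph{0}{0}$.

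For general $n$, I would argue as follows. Suppose $f_1,\dots,f_k$ is a sequence of composable non-identity morphisms in $(\ArrowCat(\CCat{1})_\othplus)^n$. Write $f_i=(f_i^1,\dots,f_i^n)$ with each $f_i^j$ a morphism in $\ArrowCat(\CCat{1})_\othplus$. Composition in a product category is coordinatewise, so for each fixed coordinate $j$ the sequence $(f_i^j)_{i=1}^k$ is a composable chain in $\ArrowCat(\CCat{1})_\othplus$. Discarding the identities in this coordinate chain produces a composable chain of non-identity morphisms in $\ArrowCat(\CCat{1})_\othplus$, which by the base case has length at most $2$. Let $n_j$ be the number of indices $i$ for which $f_i^j$ is not an identity; then $n_j\leq 2$ for every $j$.

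Finally, since each $f_i$ is non-identity in the product, at least one of its coordinates $f_i^j$ must be a non-identity morphism. Summing over $i$ and exchanging the order of summation gives
\[
k \;\leq\; \sum_{i=1}^{k}\#\{j\mid f_i^j\neq\Id\} \;=\; \sum_{j=1}^{n}n_j \;\leq\; 2n,
\]
which is the desired bound. The argument is essentially combinatorial bookkeeping; the only substantive step is the base-case enumeration of morphisms in $\ArrowCat(\CCat{1})_\othplus$, and I do not anticipate any real obstacles.
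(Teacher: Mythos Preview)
Your argument is correct; the paper's own proof is simply ``This is immediate from the definitions,'' so your product-decomposition-and-count argument is exactly the kind of unpacking the authors leave to the reader.
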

\begin{proof}
  This is immediate from the definitions.
\end{proof}

\begin{corollary}\label{cor:spatial-refine-arr}
  If $k\geq 2n$ then any strictly unitary, lax $2$-functor $\vec{F}\co
  \ArrowCat(\CCat{n})\to\BurnsideCat$ admits a $k$-dimensional spatial
  refinement $\wtv{F}\co
  \ArrowCat(\CCat{n})\to\BSpaces$.
\end{corollary}
\begin{proof}
  This is immediate from \Lemma{len-in-arr} and
  \Proposition{box-refinement-exist-unique}.
\end{proof}

\begin{lemma}\label{lem:box-to-thic}
  Fix $k>2n$.
  Given $F\co \CCat{n}\to\BurnsideCat$, let $\wtv{F}$ be a
  $k$-dimensional spatial refinement of $\vec{F}$ and consider the homotopy coherent
  diagram $\wtv{F}\circ B\co \thic{\CCat{n}}\to\BSpaces$. 
  There is a morphism of homotopy coherent diagrams
  $G_k\co \wtv{F}\circ B\to \thicf[k]{F}$ so 
  that on each object the underlying map induces an isomorphism on 
  $H_i$ for $i\leq 2k-1$.
\end{lemma}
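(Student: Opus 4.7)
My plan is to define $G_k$ on vertices by the canonical inclusion of a wedge into a product of spheres, then build the higher coherence data by an inductive obstruction argument parallel to the proof of \Proposition{box-refinement-exist-unique}. On a vertex $(u\stackrel{f}{\to} v\stackrel{g}{\to}w)$ of $\thic{\CCat{n}}$ we have
\[
(\wtv{F}\circ B)(u\to v\to w) = \wtv{F}(g\circ f) = \bigvee_{c\in F(g\circ f)} S^k,
\]
and using the bijection $F_{u,v,w}\co F(g)\times_{F(v)} F(f)\cong F(g\circ f)$ this rewrites as $\bigvee_{a\in F(f)}\bigvee_{b\in F(g),\, s(b)=t(a)} S^k$. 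I would let $G_k$ on this vertex be the natural inclusion into $\thicf[k]{F}(u\to v\to w)=\bigvee_{a\in F(f)}\prod_{b\in F(g),\, s(b)=t(a)} S^k$ sending, on each $a$-summand, $\bigvee_b S^k$ into $\prod_b S^k$ as the $k$-skeleton via the canonical coordinate inclusions. This is a box map of the wedge-to-product type from \Section{box-maps}. Since the pair $(\prod_b S^k,\bigvee_b S^k)$ has relative cells only in dimensions $\geq 2k$ and both source and target have vanishing homology in degree $2k-1$ (for $k\geq 2$), this map induces an isomorphism on $H_i$ for all $i\leq 2k-1$.

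For the higher coherence data, one must specify, for each chain $d_0\to\cdots\to d_\ell$ of non-identity morphisms in $\thic{\CCat{n}}$, a compatible map $[0,1]^{\ell}\times (\wtv{F}\circ B)(d_0)\to \thicf[k]{F}(d_\ell)$ restricting on faces as in Vogt's definition of an $h$-morphism of homotopy coherent diagrams. I would require every such map to be a wedge-to-product box map refining the appropriate composite correspondence in $F$, so that the space of choices at each stage is (a product of copies of) $F(\{B_x\},s,t)$, which by \Lemma{box-maps-contractible} is $(k-2)$-connected. The key combinatorial input is that in $\thic{\CCat{1}}$ no two non-identity morphisms compose (inspect \Example{thicken-cat}), so chains of non-identity morphisms in $\thic{\CCat{n}}=(\thic{\CCat{1}})^n$ have length at most $n$, and the analogous bound for $\thic{\CCat{n}}\times\CCat{1}$ (which is what enters for morphisms of homotopy coherent diagrams) is $n+1$. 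Since $k>2n$ gives $k-2>n$, the inductive extension step goes through verbatim as in the proof of \Proposition{box-refinement-exist-unique}.

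The main obstacle I expect is bookkeeping: verifying that the boundary data assembled from the previously constructed coherence maps on shorter chains actually land in the space of box maps refining the correct composite correspondence. This requires carefully tracking the identifications
\[
F(f_\ell\circ\cdots\circ f_1)\cong F(f_\ell)\times_{F(u_{\ell-1})}\cdots\times_{F(u_1)} F(f_1)
\]
coming from the lift $\vec{F}$ (\Lemma{lift-to-arrow}) and confronting them with the composition maps in $\thicf[k]{F}$, which are defined via diagonals (cf.\ the construction around \Equation{diag-map} and \Figure{build-thic-func}). Once this compatibility is established at lengths $\ell=1,2$, the general inductive step reduces to the connectivity of the spaces $F(\{B_x\},s,t)$, and on objects $G_k$ is by construction the wedge-into-product inclusion, giving the claimed isomorphism on $H_i$ for $i\leq 2k-1$.
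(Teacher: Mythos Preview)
Your approach is essentially the same as the paper's: define $G_k$ on objects as the inclusion of the $k$-skeleton $\bigvee_a\bigvee_b S^k\hookrightarrow\bigvee_a\prod_b S^k$, require all the coherence maps to be wedge-to-product box maps refining the correct correspondences, and fill them in inductively using the $(k-2)$-connectedness of the spaces $F(\{B_x\},s,t)$. The paper frames this as building a homotopy coherent diagram on $\CCat{1}\times\thic{\CCat{n}}$, first specifying the length-one data by $G(\cmorph{1}{0}\times(\alpha,\beta,\gamma))=G(\cmorph{1}{0}\times\Id)\circ(\wtv{F}\circ B)(\alpha,\beta,\gamma)$ and then observing that the resulting boundary data is combinatorially compatible, exactly the ``bookkeeping'' step you anticipate; the paper glosses over this with the same one-line remark you make. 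Your observation that non-identity chains in $\thic{\CCat{n}}$ have length at most $n$ (since no two non-identity morphisms in $\thic{\CCat{1}}$ compose) is correct and sharper than the paper's bound of $2n+1$, though the hypothesis $k>2n$ suffices either way.
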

\begin{proof}
  Recall that a morphism from $\wtv{F}\circ B$ to
  $\thicf[k]{F}$ is a diagram over $\CCat{1}\times \thic{\CCat{n}}$
  whose restriction to $\{1\}\times\thic{\CCat{n}}$ is
  $\wtv{F}\circ B$ and whose restriction to
  $\{0\}\times\thic{\CCat{n}}$ is $\thicf[k]{F}$. We will build such a
  diagram inductively, using box maps from wedges to products.

  On $\{0\}\times\thic{\CCat{n}}$ and $\{1\}\times\thic{\CCat{n}}$,
  $G$ is already specified. Notice that for each object
  $(u\stackrel{\cmorph{u}{v}}{\longrightarrow}v
  \stackrel{\cmorph{v}{w}}{\longrightarrow}
  w)\in\Ob(\thic{\CCat{n}})$, the space $(\wtv{F}\circ B)(u\to v\to
  w)=\bigvee_{a\in F(\cmorph{v}{w}\circ \cmorph{u}{v})}S^k$ is the
  $k$-skeleton of $\thicf[k]{F}(u\to v\to w)=\bigvee_{a\in
    F(\cmorph{u}{v})}\prod_{\substack{b\in F(\cmorph{v}{w}),\
      s(b)=t(a)}}S^k$.  For each arrow of the form
  $\cmorph{1}{0}\times \Id_{u\to v\to w}$, define
  $G(\cmorph{1}{0}\times \Id_{u\to v\to w})$ to be the inclusion of
  the $k$-skeleton (which is an isomorphism on $H_i$ for $i\leq
  2k-1$).  More generally, given a morphism $(\alpha,\beta,\gamma)$ as
  in Formula~(\ref{eq:thic-morphism}), define $G(\cmorph{1}{0}\times
  (\alpha,\beta,\gamma))$ to be the composition
  $\thicf[k]{F}(\alpha,\beta,\gamma)\circ
  G(\cmorph{1}{0}\times\Id_{u\to v\to w})$.  (Factoring in the
  other order would work just as well, though it would give a
  different map.)

  The result is a homotopy commutative diagram $G$, so that the
  restriction to ${\{0\}\times \thic{\CCat{n}}}$ is commutative and
  the restriction to ${\{1\}\times \thic{\CCat{n}}}$ is homotopy
  coherent. By construction, the maps on the $1$-side are disjoint box
  maps. The maps associated to the edges $\cmorph{1}{0}\times\Id_{u\to v\to w}$ from the $1$-side to the
  $0$-side are wedge sums of overlapping box maps:  the inclusion
  \[
    \bigvee_{x\in F(\cmorph{v}{w}\circ \cmorph{u}{v})}S^k=
    \bigvee_{a\in F(\cmorph{u}{v})}\bigvee_{\substack{b\in F(\cmorph{v}{w})\\
        s(b)=t(a)}}S^k
    \into \bigvee_{a\in F(\cmorph{u}{v})}\prod_{\substack{b\in F(\cmorph{v}{w})\\
        s(b)=t(a)}}S^k
  \]
  restricts to
  $\bigvee_{\substack{b\in F(\cmorph{v}{w}),\ s(b)=t(a)}}S^k$ as the
  overlapping box map associated to the correspondence
  $(\{b\in F(\cmorph{v}{w})\mid s(b)=t(a)\},\Id,\Id)$ from
  $\{b\in F(\cmorph{v}{w})\mid s(b)=t(a)\}$ to itself where each
  sub-box is the whole box;
  cf.~\Example{disjoint-overlap-box}.
  More succinctly, the map
  $S^k_{(a,b)}\to \prod_{b'\in F(\phi_{v,w}),\ s(b')=t(a)}S^k$ is the
  box map corresponding to the $1$-element correspondence from
  $\{(a,b)\}$ to $b\in \{b'\in F(\phi_{v,w}),\ s(b')=t(a)\}$, where the
  sub-box $B_{(a,b)}\subset B_b$ is equal to the whole box $B_b$.

  To extend the diagram constructed so far to an entire homotopy
  coherent diagram, we need to define families of maps
  \[
    \bigvee_{x\in F(\cmorph{u}{w})}S^k \to 
    \bigvee_{a\in F(\cmorph{u'}{v'})}\prod_{\substack{b\in F(\cmorph{v'}{w'})\\
        s(b)=t(a)}}S^k
  \]
  corresponding to sequences of morphism starting at $(1,u\to v\to w)$ and
  ending at $(0,u'\to v'\to w')$. Observe that for there to be such a
  sequence we must have $u\geq u'$ and $w\geq w'$.  We choose these
  extensions inductively, maintaining the following restrictions:
  \begin{enumerate}[leftmargin=*,label=(X-\arabic*)]
  \item\label{item:X1} Fix a morphism $(u\to v\to w)\to (u'\to v'\to w')$ in
    $\thic{\CCat{n}}$. Let $x\in F(\cmorph{u}{w})$. Decompose $x$ as
    \[
      (x_1,x_2,x_3,x_4)\in F(\cmorph{u}{u'})\times_{F(u')}F(\cmorph{u'}{v'})\times_{F(v')}F(\cmorph{v'}{v})\times_{F(v)}F(\cmorph{v}{w}).
    \]
    Then $G((u\to v\to w)\to (u'\to v'\to w'))$ sends the sphere
    associated to $x$ to the wedge summand of 
    \[
      \bigvee_{a\in F(\cmorph{u'}{v'})}\prod_{\substack{b\in F(\cmorph{v'}{w'})\\
          s(b)=t(a)}}S^k
    \]
    corresponding to $x_2$.
  \item\label{item:X2} The map
    \[
      h_x\co S^k\to \prod_{\substack{b\in F(\cmorph{v'}{w'})\\
          s(b)=t(x_2)}}S^k
    \]
    associated to $x$ in the previous condition is an overlapping box map.
  \end{enumerate}
  
  For the inductive step, we need to show that:
  \begin{enumerate}[label=(Y-\arabic*)]
  \item\label{item:Y1} If we post-compose a map satisfying
    Properties~\ref{item:X1}--\ref{item:X2} with the map in
    Formula~\eqref{eq:thic-F-source-targ} we obtain another map
    satisfying~\ref{item:X1}--\ref{item:X2}, and
  \item\label{item:Y2} If we pre-compose a map satisfying
    Properties~\ref{item:X1}--\ref{item:X2} with one of the box maps
    used to define $G$ on the $1$-side we obtain another map
    satisfying~\ref{item:X1}--\ref{item:X2}.
  \end{enumerate}
  Then, Lemma~\ref{lem:box-maps-contractible} implies we can find the
  desired family of maps to continue the induction.
  
  Statement~\ref{item:Y2} is clear. For
  Statement~\ref{item:Y1}, let $h_x$ be as in Condition~\ref{item:X2},
  for $x\in F(\cmorph{u}{w})$.  Given a map
  $g\co (u'\to v'\to w')\to (u''\to v''\to w'')$, the map
  $\thicf[k]{F}(g)$ sends the wedge summand corresponding to
  $x_2\in F(\cmorph{u'}{v'})$ to the summand corresponding to
  $x_2'$ where $x_2$ corresponds to
  $(x_1',x_2',x_3')\in
  F(\cmorph{u'}{u''})\times_{F(u'')}F(\cmorph{u''}{v''})\times_{F(v'')}F(\cmorph{v''}{v'})$. Since
  $x$ decomposes as $(x_1,x_1',x_2',x_3',x_3,x_4)$,
  Property~\ref{item:X1} is satisfied. Further, the map
  \[
    \thicf[k]{F}(g)\co \prod_{\substack{b\in F(\cmorph{v'}{w'})\\
        s(b)=t(x_2)}}S^k\to
    \prod_{\substack{b'\in F(\cmorph{v''}{w''})\\
        s(b')=t(x_2')}}S^k
  \]
  sends $S^k_b$ to $\prod_{b'=(x_3',b,y)}S^k_{b'}=\prod_{y\mid s(y)=t(b)}S^k_{(x_3',b,y)}$ by the diagonal
  map. If we replace each box for $h$ labeled by $c$ with
  $|\{y\in F(\cmorph{w'}{w''})\mid s(y)=t(b)\}|$ boxes, labeled by the
  corresponding $b'=(x_3',b,y)$, then the resulting box map is
  $\thicf[k]{F}(g)\circ h$. In particular, we have verified that
  $\thicf[k]{F}(g)\circ h$ is a box map, as desired.

  (The following diagram may be helpful in keeping track of the source
  and targets of the various elements in the previous paragraph:
  \[
    \begin{tikzpicture}
      \node at (0,0) (u) {$u$};
      \node at (2,0) (v) {$v$};
      \node at (4,0) (w) {$w$};
      \node at (0,-1) (up) {$u'$};
      \node at (2,-1) (vp) {$v'$};
      \node at (4,-1) (wp) {$w'$};
      \node at (0,-2) (upp) {$u''$};
      \node at (2,-2) (vpp) {$v''$};
      \node at (4,-2) (wpp) {$w''$};
      \draw[->] (u) to (v);
      \draw[->] (v) to node[below]{$x_4$} (w);
      \draw[->] (up) to node[below]{$x_2$} (vp);
      \draw[->] (vp) to node[below]{$b$} (wp);
      \draw[->] (upp) to node[below]{$x_2'$} (vpp);
      \draw[->] (vpp) to node[below]{$b'$} (wpp);
      \draw[->] (u) to node[left]{$x_1$} (up);
      \draw[->] (up) to node[left]{$x_1'$} (upp);
      \draw[->] (vp) to node[right]{$x_3$} (v);
      \draw[->] (vpp) to node[right]{$x_3'$} (vp);
      \draw[->] (w) to (wp);
      \draw[->] (wp) to node[right]{$y$} (wpp);
      \draw[->, bend left=20] (u) to node[above]{$x$} (w);
    \end{tikzpicture}
  \]
  The letter labeling each arrow $\to$ is the element of $F(\to)$ under consideration.)
  
  As noted above, Lemma~\ref{lem:box-maps-contractible} and induction
  now complete the proof.
  %
  %
\end{proof}

\begin{corollary}\label{cor:box-to-thic}
  There is a stable homotopy equivalence 
  $\hocolim (\wtv{F}\circ B)^\othplus \simeq \hocolim\thicf{F}^\othplus$.
\end{corollary}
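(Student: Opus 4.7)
The plan is to promote the morphism of homotopy coherent diagrams $G_k$ from \Lemma{box-to-thic} to a morphism at the spectrum level, extend it over the added basepoints in the $\othplus$ enlargements, and invoke the invariance of homotopy colimits under pointwise stable equivalences (property~\ref{item:hocolim-nat-trans}).

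First, using \Proposition{box-refinement-exist-unique}\Part{spacial-suspend}, a $k$-dimensional spacial refinement $\wtv{F}$ may be suspended to yield a $(k+1)$-dimensional spacial refinement; by iteration the diagrams $\wtv{F}\circ B$ assemble into a homotopy coherent diagram valued in $\Spectra$. The morphism $G_k$ from \Lemma{box-to-thic} is natural in $k$ up to the usual zigzag of homotopies in the construction, so passing to the stabilization produces a morphism of spectrum-valued homotopy coherent diagrams $G\co \wtv{F}\circ B\to \thicf{F}$ on $\thic{\CCat{n}}$, whose underlying map at each object $(u\to v\to w)$ is (up to suspension) the inclusion of the $k$-skeleton $\bigvee_{a\in F(\cmorph{v}{w}\circ\cmorph{u}{v})}S^k\hookrightarrow \bigvee_{a\in F(\cmorph{u}{v})}\prod_{\substack{b\in F(\cmorph{v}{w})\\s(b)=t(a)}}S^k$. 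Since this is a $(2k-1)$-connected map of CW complexes and $k$ can be taken arbitrarily large, the underlying map is a stable homotopy equivalence at each vertex.

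Second, extend $G$ to a morphism $G^\othplus\co (\wtv{F}\circ B)^\othplus\to \thicf{F}^\othplus$ of homotopy coherent diagrams on $\thic{\CCat{n}}_\othplus$. On the objects of $\thic{\CCat{n}}_\othplus$ not lying in $\thic{\CCat{n}}$, both target and source diagrams take the value of a single point, so we set $G^\othplus$ to be the (essentially unique) basepoint map there. The only morphisms in $\thic{\CCat{n}}_\othplus$ that are genuinely new are those terminating at a point; these force the higher coherence data of $G^\othplus$ involving such objects to be the constant map to the basepoint, which is automatically compatible with the coherence data of $G$ on $\thic{\CCat{n}}$. At vertices in $\thic{\CCat{n}}_\othplus\setminus\thic{\CCat{n}}$ the map $G^\othplus$ is the identity on a point, which is trivially a stable equivalence.

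Third, apply property~\ref{item:hocolim-nat-trans}: since $G^\othplus$ is a morphism of homotopy coherent diagrams in $\Spectra$ whose underlying map at every object is a stable homotopy equivalence, the induced map
\[
\hocolim_{\thic{\CCat{n}}_\othplus}(\wtv{F}\circ B)^\othplus\longrightarrow \hocolim_{\thic{\CCat{n}}_\othplus}\thicf{F}^\othplus
\]
is a stable homotopy equivalence. The main technical point will be checking that the extension of $G_k$ in the first step can be made coherent in $k$ (so as to yield a genuine morphism of spectrum-valued diagrams rather than merely a compatible sequence of maps up to homotopy); this is essentially bookkeeping, handled by building $G$ inductively in $k$ along with the spacial refinement, using the contractibility results of \Lemma{box-maps-contractible} and \Lemma{box-maps-rel-contractible} to fill in the required coherences.
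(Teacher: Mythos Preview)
Your proposal is correct and follows essentially the same approach as the paper. The paper handles your ``main technical point'' about coherence in $k$ more directly: rather than filling in coherences after the fact, it simply observes that the spacial refinement $\wtv{F}$ and the morphisms $G_k^\othplus$ can be \emph{chosen} so that $G_{k+1}^\othplus$ is literally the suspension of $G_k^\othplus$ (since suspending a spacial refinement and its box-map morphism yields another one, by \Proposition{box-refinement-exist-unique}\Part{spacial-suspend}), which makes the passage to spectra immediate without any appeal to \Lemma{box-maps-rel-contractible}.
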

\begin{proof}
  The morphism of diagrams $G_k$ from \Lemma{box-to-thic} extends uniquely
  to a morphism of thickened diagrams
  $G^\othplus_k\co (\wtv{F}\circ B)^\othplus\to \thicf[k]{F}^\othplus$. 
  Further, the diagram $\wtv{F}$ and the morphisms $G_k^\othplus$
  can be chosen so that $G_{k+1}^\othplus$ is the suspension of $G_k^\othplus$.
  It follows that there is an induced map of diagrams of spectra $G^\othplus$, 
  and the underlying maps of $G^\othplus$ are equivalences.
\end{proof}

\begin{proof}[Proof of \Theorem{smaller-diag}]
  Let $\vec{F}\co \ArrowCat(\CCat{n})\to \BurnsideCat$ be the functor
  from \Lemma{lift-to-arrow}. By \Corollary{spatial-refine-arr} there
  is a spatial refinement $\wtv{F}$ of $\vec{F}$. The composition
  $\SpRefine{F}=\wtv{F}\circ A$ is a spatial refinement of $F$. We
  will show that the corresponding diagram $\SpDiag{F}\co
  \CCat{n}_+\to\Spectra$ satisfies the conditions of the
  theorem. Indeed, all of the conditions except that the homotopy
  colimit is $\Realize{F}$ are immediate. We compute the homotopy
  colimit.

  By \Lemma{bigger-plus}, 
  \[
  \hocolim_{\CCat{n}_+}\SpDiag{F}\simeq
  \hocolim_{\CCat{n}_\othplus}\SpRefine{F}^\othplus.
  \]
  By \Lemma{CCat-to-Ar-cofinal}, \Lemma{thic-to-Ar-cofinal}, and
  Property~\ref{item:hocolim-cofinal} of homotopy colimits,
  \[
  \hocolim_{\CCat{n}_\othplus}\SpRefine{F}^\othplus \simeq
  \hocolim_{\ArrowCat(\CCat{n})_\othplus} \wtvdag{F}\simeq
  \hocolim_{\thic{\CCat{n}}_\othplus}\wtvdag{F}\circ B^\othplus.
  \]
  By \Corollary{box-to-thic}, there is a homotopy equivalence 
  \[
  \hocolim_{\thic{\CCat{n}}_\othplus}\wtvdag{F}\circ B^\othplus
  \simeq \hocolim_{\thic{\CCat{n}}_\othplus}\thicf{F}^\othplus.
  \]
  By \Lemma{bigger-thic-plus}, 
  \[
  \hocolim_{\thic{\CCat{n}}_\othplus}\thicf{F}^\othplus\simeq
  \hocolim_{\thic{\CCat{n}}_+}\thicf{F}^+=\Realize{F},
  \]
  proving the result.
\end{proof}

\section{A CW complex structure on the realization of the small cube}
\label{sec:cubical-to-cubes}

In this section, we prove that the realization in terms of little
cubes (\Section{smaller-cube}) is stably homotopy equivalent to the
cubical realization (Section~\ref{sec:cubical}). We start by studying the cell structure on the
little cubes realization:

\begin{proposition}\label{prop:cwcomplex-box-hocolim}
  Let $F\co \CCat{n}\to\BurnsideCat$ be a strictly unitary, lax $2$-functor and
  $\SpRefine[k]{F}\co \CCat{n}\to\BSpaces$ a spatial refinement of $F$
  (\Definition{spatial-refinement}).  Then the homotopy colimit of
  $\SpDiag[k]{F}$ carries a CW complex structure whose cells except the
  basepoint correspond to the elements of the set
  $\coprod_{u\in\{0,1\}^n}F(u)$. Further, the equivalences
  $\Sigma\hocolim \SpDiag[k]{F}\simeq \hocolim \SpDiag[k+1]{F}$ can be chosen to
  be cellular, so $\hocolim\SpDiag{F}$ inherits the structure of a CW spectrum
  (in the sense of~\cite[Section III.2]{Adams-top-book}).
\end{proposition}

\begin{proof}
  Per \Observation{hocolim-no-id}, when taking the homotopy colimit we
  may (and will) consider only chains of non-identity arrows.

  For $u\in\Ob(\CCat{n})$ and $x\in F(u)$, let $B_x$ be the box that is
  associated to $x$ during the construction of $\SpDiag[k]{F}$; that
  is, $B_x/\bdy B_x$ is the $S^k$-summand corresponding to $x$ in
  $\SpDiag[k]{F}(u)=\bigvee_{x\in F(u)}S^k$. Following \Definition{hocolim} (and with $\sim$ denoting the same equivalence relation), we can
  write the homotopy colimit as
  \begin{align*}
    \hocolim\SpDiag[k]{F}
  &=\Biggl(\{*\}\amalg \coprod_{u\in\{0,1\}^n}\Bigl(\coprod_{m\geq 0}
  \coprod_{\substack{u=u^0\stackrel{f_1}{\longrightarrow}\cdots\stackrel{f_m}{\longrightarrow}u^m\\u^i\in\{0,1\}^n\cup\{*\}\\f_i\neq\Id}}
  [0,1]^{m}\Bigr)\times\Bigl(\bigvee_{x\in F(u)}B_x/\bdy B_x\Bigr)\Biggr)/\sim\\
  &=\Biggl(\Bigl[\{*\}\amalg \coprod_{u\in\{0,1\}^n}\Bigl(\coprod_{m\geq 0}
  \coprod_{\substack{u=u^0\stackrel{f_1}{\longrightarrow}\cdots\stackrel{f_m}{\longrightarrow}u^m\\u^i\in\{0,1\}^n\cup\{*\}\\f_i\neq\Id}}
  [0,1]^{m}\Bigr)/\sim_1\Bigr]\times\Bigl(\coprod_{x\in F(u)}B_x\Bigr)\Biggr)/\sim_2.
  \end{align*}
  where we have broken up the identification $\sim$ into a two-step
  identification $\sim_1$ and $\sim_2$, defined as follows:
  \begin{align*}
  (f_m,\dots,f_1;&t_1,\dots,t_m)\\
  &\sim_1
  \begin{cases}
    (f_m,\dots,f_{i+1}\circ f_i,\dots,f_1;t_1,\dots,t_{i-1},t_{i+1},\dots,t_m) & t_i=1,\ i<m\\
    (f_{m-1},\dots,f_1;t_{1},\dots,t_{m-1}) & t_m=1
  \end{cases}\\
  (f_m,\dots,f_1;&t_1,\dots,t_m;y)\\
  &\sim_2
  \begin{cases}
    (f_m,\dots,f_{i+1};t_{i+1},\dots,t_{m};\SpDiag[k]{F}(f_i,\dots,f_1)(t_{1},\dots,t_{i-1})(y)) & t_i=0\\
    * & y\in\bdy B_x.
  \end{cases}
  \end{align*}

  Now fix $u\in\{0,1\}^n$, and let us study the cubical complex
  \begin{equation}\label{eq:cubical-cx-mu}
  M_u\defeq\Bigl(\coprod_{m\geq 0}
  \coprod_{\substack{u=u^0\stackrel{f_1}{\longrightarrow}\cdots\stackrel{f_m}{\longrightarrow}u^m\\u^i\in\{0,1\}^n\cup\{*\}\\f_i\neq\Id}}
  [0,1]^{m}\Bigr)/\sim_1.
  \end{equation}
  If $u=\vec{0}$, then $M_u$ is a single point, which we write as
  $\{0\}$ for reasons that will soon be apparent. When $u\neq\vec{0}$,
  we divide the chains
  $u=u^0\stackrel{f_1}{\longrightarrow}\cdots\stackrel{f_m}{\longrightarrow}u^m$
  into two types: the ones ending at $\vec{0}$ or $*$, and the ones
  ending at neither. In the first case, when $u^m\in\{\vec{0},*\}$,
  the facet $[0,1]^{m-1}\times\{1\}$ is identified with the cube
  $[0,1]^{m-1}$ coming from the sub-chain 
  $u=u^0\stackrel{f_1}{\longrightarrow}\cdots
  \stackrel{f_{m-1}}{\longrightarrow}u^{m-1}$. Therefore, we can write
  \begin{align*}
    M_u&=\Biggl(\Bigl(\coprod_{m\geq 1}
    \coprod_{\substack{u=u^0\stackrel{f_1}{\longrightarrow}\cdots\stackrel{f_m}{\longrightarrow}\vec{0}\\f_i\neq\Id}}
    [0,1]^{m-1}\times[0,1]\Bigr)\amalg\Bigl(\coprod_{m\geq 1}
    \coprod_{\substack{u=u^0\stackrel{f_1}{\longrightarrow}\cdots\stackrel{f_m}{\longrightarrow}\ast\\f_i\neq\Id}}
    [0,1]^{m-1}\times[0,1]\Bigr)\\
    &\qquad\qquad\amalg\Bigl(\coprod_{m\geq 1}
    \coprod_{\substack{u=u^0>\cdots > u^{m-1}\\u^i\in\{0,1\}^n\setminus\{\vec{0}\}}}
    [0,1]^{m-1}\Bigr)\Biggr)/\sim_1\\
    &\cong\Biggl(\Bigl(\coprod_{m\geq 1}
    \coprod_{\substack{u=u^0\stackrel{f_1}{\longrightarrow}\cdots\stackrel{f_m}{\longrightarrow}\vec{0}\\f_i\neq\Id}}
    [0,1]^{m-1}\times[0,1]\Bigr)\amalg\Bigl(\coprod_{m\geq 1}
    \coprod_{\substack{u=u^0\stackrel{f_1}{\longrightarrow}\cdots\stackrel{f_m}{\longrightarrow}\ast\\f_i\neq\Id}}
    [0,1]^{m-1}\times[1,2]\Bigr)\\
    &\qquad\qquad\amalg\Bigl(\coprod_{m\geq 1}
    \coprod_{\substack{u=u^0>\cdots > u^{m-1}\\u^i\in\{0,1\}^n\setminus\{\vec{0}\}}}
    [0,1]^{m-1}\Bigr)\Biggr)/\sim_1\\
    &=\Bigl(\coprod_{m\geq 1}
    \coprod_{\substack{u=u^0>\cdots > u^{m-1}\\u^i\in\{0,1\}^n\setminus\{\vec{0}\}}}
    [0,1]^{m-1}\Bigr)/\sim_1\times[0,2]
  \end{align*}
  where the second identification is via the linear map $[0,1]\to[1,2]$
  that sends $0$ to $2$ and $1$ to $1$. 
  This quotient space is just $M_{u,\vec{0}}\times [0,2]$, where
  $M_{u,\vec{0}}\cong \Moduli_{\CubeFlowCat{n}}(u,\vec{0})$ is the
  cubical complex from \Definition{cube-moduli-cubical-complex}.

  Therefore, we can write
  \[
  \hocolim\SpDiag[k]{F}=
    \Bigl(\{*\}\amalg \displaystyle\Big[\coprod_{u\in\{0,1\}^n\setminus\{\vec{0}\}}\displaystyle\coprod_{x\in F(u)}\Moduli_{\CubeFlowCat{n}}(u,\vec{0})\times[0,2]\times B_x\Big]
    \amalg\Big[\displaystyle\coprod_{x\in F(\vec{0})}\{0\}\times B_x\Big]\Bigr)/\sim_2.
  \]
  This gives the required CW complex on the homotopy
  colimit, where the cell corresponding to $x$ is
  \[
  \Cell{x}=\begin{cases}
    \Moduli_{\CubeFlowCat{n}}(u,\vec{0})\times[0,2]\times B_x&\text{if $u\neq\vec{0}$}\\
    \{0\}\times B_x&\text{if $u=\vec{0}$.}
  \end{cases}
  \]
  The identification $\sim_2$ glues the boundary $\bdy\Cell{x}$
  to lower-dimensional cells. Specifically, everything over $\bdy B_x$
  is identified with the basepoint $*$. If $u\neq\vec{0}$, everything
  over $\{2\}\subset\bdy([0,2])$ is identified with $*$ as well, and
  everything over $\{0\}\subset\bdy([0,2])$ is identified with cells
  corresponding to $u=\vec{0}$. Finally, points on
  $\bdy\Moduli_{\CubeFlowCat{n}}(u,\vec{0})$ correspond to points on
  some cube $[0,1]^l\subset\Moduli_{\CubeFlowCat{n}}(u,\vec{0})$ where
  some coordinate is $0$ (\Lemma{cube-moduli-cubical-complex}~(\ref{item:cube-coprod-bdy})),
  and such points are identified by $\sim_2$ to points of
  $\Moduli_{\CubeFlowCat{n}}(u',\vec{0})$ for some $u>u'>\vec{0}$.

  The fact that the suspension maps are cellular is trivial if we
  choose compatible spatial refinements. Specifically, take the boxes used to
  define $\SpDiag[k+1]{F}$ to be $[0,1]$ times the boxes used to define
  $\SpDiag[k]{F}$.
\end{proof}

It is not hard to show that if $\FlowCat$ is the cubical flow
category corresponding to $F$ (from
\Construction{Burn-to-Flow}) then its associated
chain complex (from \Definition{flow-cat-chain-complex}) is isomorphic
to the reduced cellular cochain complex of the above CW complex, via
an isomorphism sending the objects of $\FlowCat$ to the corresponding
cells in the CW complex. We will not prove this now, since it
follows from \Theorem{box-equals-cubical}.

\begin{theorem}\label{thm:box-equals-cubical}
  Let $(\FlowCat,\Funky\from\FlowCat\to\CubeFlowCat{n})$ be a cubical
  flow category, and let $F\from\CCat{n}\to\BurnsideCat$ be the
  corresponding functor (\Construction{Flow-to-Burn}). Then the
  cubical realization of $\FlowCat$ (\Definition{cubical-realize}) is stably homotopy equivalent to the
  realization of $F$ as the homotopy colimit of the homotopy coherent
  diagram $\SpDiag{F}\from\CCat{n}_+\to\Spectra$ from
  \Theorem{smaller-diag}; and the homotopy equivalence sends the
  cells in the CW complex structure on the homotopy colimit from
  \Proposition{cwcomplex-box-hocolim} to the corresponding cells in
  the cubical realization of $\FlowCat$ via maps of degree $\pm 1$.
\end{theorem}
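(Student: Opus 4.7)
The plan is to identify the two CW complexes cell by cell, using a specific cubical neat embedding of $\FlowCat$ that is extracted directly from a spacial refinement of $\vec{F}$.

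First I would, by \Lemma{lift-to-arrow} and \Corollary{spacial-refine-arr}, fix a $k$-dimensional spacial refinement $\wtv{F}$ of $\vec{F}\co \ArrowCat(\CCat{n}) \to \BurnsideCat$, and then extract from its box-map data a cubical neat embedding $\iota$ of $\FlowCat$. For each pair of objects $x, y$ with $\Funky(x) = u$ and $\Funky(y) = v$, the centers of the $k$-dimensional little boxes provided by $\wtv{F}$, ranging over all chains $u > u^1 > \cdots > u^{m-1} > v$ in $\CCat{n}$, determine the values of $\iota_{x,y}$ on the corresponding cubes of the cubical subdivision of $\Moduli_{\CubeFlowCat{n}}(u,v)$ from \Lemma{cube-moduli-cubical-complex}. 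The strict compatibility Condition~\ref{item:neat-embed-commute} of \Definition{cube-cat-neat-embed} for $\iota$ follows from the face relations~\eqref{eq:bm-sp-compat} of the spacial refinement.

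Next I would exhibit an explicit cellular map between $\CRealize{\FlowCat}$ and $\hocolim \SpDiag[k]{F}$. Recall from the proof of \Proposition{cwcomplex-box-hocolim} that for $x$ with $\Funky(x) = u \neq \vec{0}$ the hocolim cell is $\Moduli_{\CubeFlowCat{n}}(u, \vec{0}) \times [0,2] \times B_x$, where the permutohedral factor is tiled by the cubes indexed by chains in $\CCat{n}_+$ starting at $u$; the corresponding cell of the cubical realization is the product $\prod [-R,R]^{d_i} \times \prod [-\ep,\ep]^{d_i} \times \Precone\Moduli_{\CubeFlowCat{n}}(u, \vec{0})$. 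The map I construct identifies the permutohedral factors (the two cubical subdivisions match by \Lemma{cube-moduli-cubical-complex}), identifies the $[0,2]$-factor of the hocolim with the cone parameter of $\Precone$, and identifies (after the formal desuspension by $\Sigma^{-(k + d_0 + \cdots + d_{n-1})}$) the $k$-box $B_x$ with the Euclidean box factors via the extension $\ol\iota_{x,y}$ of Equation~\eqref{eq:extend-iota}. Under this identification, the attaching maps match cell by cell, because both are built from the very same box-map / embedding data.

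The main obstacle will be to verify that the attaching maps match exactly, not merely up to homotopy, so that the constructed map is truly cellular and of degree $\pm 1$ on each cell. The cubical realization uses the explicit extension $\ol\iota$ of Equation~\eqref{eq:extend-iota} and collapses the complement of $\bigcup_y \Cell[y]{x}$ in $\bdy \Cell{x}$ to the basepoint, while the hocolim uses the equivalence relation $\sim_2$ from the proof of \Proposition{cwcomplex-box-hocolim}, which involves evaluating the box maps $\SpDiag[k]{F}(f_i, \ldots, f_1)(t_1, \ldots, t_{i-1})$ at $t_i = 0$. Reconciling these requires choosing $\wtv{F}$ so that its little boxes are axis-aligned and nested in the product Euclidean structure of $E_{u,v}$ in the same way as the $[-\ep,\ep]^{d_i}$ neighborhoods in $\ol\iota$, which by \Lemma{box-maps-rel-contractible} can be done coherently. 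Once this is arranged, the face and composition relations in the spacial refinement translate directly into the attaching data for the cubical realization, completing the identification.
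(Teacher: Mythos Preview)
Your overall strategy — identify the two CW complexes cell by cell via compatible box-map/embedding data — is the right one, but you have the construction running in the wrong direction, and this creates a genuine gap. The paper starts from a cubical neat embedding $\iota$ of $\FlowCat$ and uses it to \emph{build} a particular spacial refinement $\SpRefine[k]{F}$: for each chain $u=u^0>\cdots>u^m=v$ and each $\gamma\in F(\cmorph{u}{v})$, the sub-box $B_\gamma\subset B_x$ at a point of the corresponding cube in $\Moduli_{\CubeFlowCat{n}}(u,v)$ is the $[-\ep,\ep]$-box centered at $\pi^R\iota_{x,y}$ (Equation~\eqref{eq:primitive-box-hocoherent-diag}). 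With this specific refinement, the attaching maps of the two CW complexes match on the nose, and the map $\hocolim\SpDiag[k]{F}\to\CRealize{\FlowCat}$ collapses $[1,2]\subset[0,2]$ and is the identity on everything else.

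Your reversed direction — extract $\iota$ from an arbitrary spacial refinement by taking box centers — does not produce a cubical neat embedding. A neat embedding is required to be smooth and to meet the boundary strata of $E_{u,v}$ perpendicularly (\Section{manifold-corners}); box centers assembled cube-by-cube over the decomposition of \Lemma{cube-moduli-cubical-complex} are only continuous, and there is no mechanism forcing perpendicularity at the $t_i=0$ facets. More seriously, the attaching map for $\Cell{x}$ in $\CRealize{\FlowCat}$ uses the explicit extension $\ol\iota$ of \eqref{eq:extend-iota}, whose image is a union of $[-\ep,\ep]$-boxes of \emph{fixed} size; the hocolim attaching map (the relation $\sim_2$) uses the actual sub-boxes of the spacial refinement, which for a generic refinement have varying sizes and aspect ratios. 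These only coincide when the sub-boxes are exactly the $[-\ep,\ep]$-translates dictated by $\iota$ — i.e., when the spacial refinement was built from a neat embedding in the first place. Invoking \Lemma{box-maps-rel-contractible} does not fix this: that lemma lets you extend box configurations, not force them into the rigid $[-\ep,\ep]$-form. (Also, there is no need to pass through the arrow category $\ArrowCat(\CCat{n})$ here; that machinery is for \Theorem{smaller-diag}, and the present argument works directly with a refinement of $F$ on $\CCat{n}$.) Reversing your construction to match the paper's direction makes all of these issues disappear.
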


\begin{proof}
  Fix a cubical neat embedding $\iota$ of $\FlowCat$ relative to
  $\TupV{d}=(d_0,\dots,d_{n-1})$ and let $k=\sum_i d_i$.
  Let $\ep$ and $R$ be the parameters from \Section{cubical-neat}, and let
  \[
  \ol{\iota}_{x,y}\from \prod_{i=|\Funky(y)|}^{|\Funky(x)|-1}[-\ep,\ep]^{d_i}\times\Moduli_{\FlowCat}(x,y)\to\Big[\prod_{i=|\Funky(y)|}^{|\Funky(x)|-1}[-R,R]^{d_i}\Big]\times\Moduli_{\CubeFlowCat{n}}(\Funky(x),\Funky(y))
  \]
  be the extension of $\iota$ from \Formula{extend-iota}.  Recall that
  given $u\in\Ob(\CCat{n})$ and $x\in F(u)$ the cubical realization 
  $\CRealize{\FlowCat}$ has a corresponding cell
  \[
  \Cell{x}=
  \begin{cases}
    \prod_{i=0}^{|u|-1}[-R,R]^{d_i}\times\prod_{i=|u|}^{n-1}[-\ep,\ep]^{d_i}\times[0,1]\times\Moduli_{\CubeFlowCat{n}}(u,\vect{0})&\text{if $u\neq\vec{0}$,}\\
    \prod_{i=0}^{n-1}[-\ep,\ep]^{d_i}\times\{0\}&\text{if $u=\vec{0}$.}
  \end{cases}
  \]

  The strategy of the proof is to use the cubical neat embedding to
  build a particular spatial refinement $\SpRefine[k]{F}$ of $F$ and
  construct a map from $\hocolim \SpDiag[k]{F}$, with the CW structure
  from Proposition~\ref{prop:cwcomplex-box-hocolim}, to the cubical
  realization $\CRealize{\FlowCat}$ of the cubical flow category
  $\FlowCat$ associated to $F$, that sends cells to cells by degree $\pm1$ maps,
  and hence is a stable homotopy equivalence.

  The diagram $\SpRefine[k]{F}$ is defined as
  follows. The box associated to $x\in F(u)$ is
  \[
  B_x=\prod_{i=0}^{|u|-1}[-R,R]^{d_i}\times\prod_{i=|u|}^{n-1}[-\ep,\ep]^{d_i}.
  \]
  Next, consider a sequence of composable non-identity morphisms
  $u=u^0\stackrel{f_1}{\longrightarrow}\cdots\stackrel{f_m}{\longrightarrow}
  u^m=v$ in $\CCat{n}$.  We will define
  \[
  \SpDiag[k]{F}(f_m,\dots,f_1)=\Phi(e_{t_1,\dots,t_{m-1}},F(\cmorph{u}{v}))\from
  [0,1]^{m-1}\times\SpDiag[k]{F}(u)\to \SpDiag[k]{F}(v)
  \]
  for an appropriate 
  family of boxes $e_{t_1,\dots,t_{m-1}}\co [0,1]^{m-1}\to E(\{B_x\mid x\in F(u)\},s_{F(\cmorph{u}{v})})$.
  In other words, if for $\gamma\in F(\cmorph{u}{v})$ we write
  $B_{\gamma}=\prod_{i=0}^{|v|-1}[-R,R]^{d_i}\times\prod_{i=|v|}^{n-1}[-\ep,\ep]^{d_i}$
  then $e_{t_1,\dots,t_{m-1}}$ is a $[0,1]^{m-1}$-parameter family of
  embeddings (as disjoint sub-boxes)
  \[
  \coprod_{\gamma\mid s(\gamma)=x}B_{\gamma}\into B_{x},\qquad \forall
  x\in F(u).
  \]

  To define $e_{t_1,\dots,t_{m-1}}$, fix $\gamma\in F(\cmorph{u}{v})$ with
  $s(\gamma)=x$, and let $y=t(\gamma)$. Let $\upsilon_\gamma$ be the section of
  the covering map
  $\Moduli_{\FlowCat}(x,y)\to\Moduli_{\CubeFlowCat{n}}(u,v)$ whose
  image is the path component corresponding to $\gamma$. Consider the
  map
  \begin{align*}
    \Moduli_{\CubeFlowCat{n}}(u,v)\times B_{\gamma}&=\Moduli_{\CubeFlowCat{n}}(u,v)\times\prod_{i=0}^{|v|-1}[-R,R]^{d_i}\times\prod_{i=|v|}^{n-1}[-\ep,\ep]^{d_i}\\
    &\stackrel{(\upsilon_{\gamma},\Id)}{\lhook\joinrel\relbar\joinrel\relbar\joinrel\relbar\joinrel\rightarrow}\Moduli_{\FlowCat}(x,y)\times\prod_{i=0}^{|v|-1}[-R,R]^{d_i}\times\prod_{i=|v|}^{n-1}[-\ep,\ep]^{d_i}\\
    &\cong\prod_{i=0}^{|v|-1}[-R,R]^{d_i}\times\Big(\prod_{i=|v|}^{|u|-1}[-\ep,\ep]^{d_i}\times\Moduli_{\FlowCat}(x,y)\Big)\times\prod_{i=|u|}^{n-1}[-\ep,\ep]^{d_i}\\
    &\stackrel{(\Id,\ol{\iota}_{x,y},\Id)}{\lhook\joinrel\relbar\joinrel\relbar\joinrel\relbar\joinrel\longrightarrow}\prod_{i=0}^{|v|-1}[-R,R]^{d_i}\times\Big(\prod_{i=|v|}^{|u|-1}[-R,R]^{d_i}\times\Moduli_{\CubeFlowCat{n}}(u,v)\Big)\times\prod_{i=|u|}^{n-1}[-\ep,\ep]^{d_i}\\
    &\stackrel{(\Id,\pi^R,\Id)}{\relbar\joinrel\relbar\joinrel\relbar\joinrel\relbar\joinrel\onto}\prod_{i=0}^{|v|-1}[-R,R]^{d_i}\times\prod_{i=|v|}^{|u|-1}[-R,R]^{d_i}\times\prod_{i=|u|}^{n-1}[-\ep,\ep]^{d_i}\\
    &\cong\prod_{i=0}^{|u|-1}[-R,R]^{d_i}\times\prod_{i=|u|}^{n-1}[-\ep,\ep]^{d_i}=B_x
  \end{align*}
  and the induced map
  \begin{equation}\label{eq:primitive-box-hocoherent-diag}
  \Moduli_{\CubeFlowCat{n}}(u,v)\times \coprod_{\substack{\gamma\in
    F(\cmorph{u}{v})\\s(\gamma)=x}} B_{\gamma}\to B_x.
  \end{equation}
  It follows from the definition of cubical neat embeddings and the formula for $\ol{\iota}_{x,y}$ that
  for any point $\pt\in\Moduli_{\CubeFlowCat{n}}(u,v)$, the
  restriction 
  $\{\pt\}\times\coprod_{\gamma\in F(\cmorph{u}{v})\mid s(\gamma)=x} B_{\gamma}\to B_x$
  is an inclusion of
  disjoint sub-boxes. Therefore, we may view the map
  from Equation~\eqref{eq:primitive-box-hocoherent-diag} as a
  $\Moduli_{\CubeFlowCat{n}}(u,v)$-parameter family of sub-boxes
  $\coprod_{\gamma\mid s(\gamma)=x} B_{\gamma}\subset B_x$.

  The chain $u=u^0>\dots >u^m=v$ corresponds to some
  cube $[0,1]^{m-1}$ in the cubical complex $M_{u,v}$ from
  \Definition{cube-moduli-cubical-complex}, which via
  \Lemma{cube-moduli-cubical-complex} is identified with some cube
  $[0,1]^{m-1}\subset\Moduli_{\CubeFlowCat{n}}(u,v)$. Restrict
  the map from Formula~\eqref{eq:primitive-box-hocoherent-diag} to
  $[0,1]^{m-1}\times\coprod_{\gamma\mid s(\gamma)=x} B_{\gamma}$ to
  obtain the required $[0,1]^{m-1}$-parameter family of sub-boxes
  $\coprod_{\gamma\mid s(\gamma)=x} B_{\gamma}\subset B_x$.

  We check that
  $\SpRefine[k]{F}$ is indeed a homotopy coherent diagram. For any
  sequence of composable non-identity morphisms
  $u=u^0\stackrel{f_1}{\longrightarrow}\cdots\stackrel{f_m}{\longrightarrow}
  u^m=v$ in $\CCat{n}$, we need to show that
  \begin{align*}
    \SpRefine[k]{F}(f_m,\dots,f_1)(t_{1},\dots,t_{i-1},&1,t_{i+1},\dots,t_{m-1})\\
    &=\SpRefine[k]{F}(f_m,\dots,f_{i+1}\circ f_i,\dots,f_1)(t_{1},\dots,t_{i-1},t_{i+1}\dots,t_{m-1})\\
    \SpRefine[k]{F}(f_m,\dots,f_1)(t_{1},\dots,t_{i-1},&0,t_{i+1},\dots,t_{m-1})\\
    &=\SpRefine[k]{F}(f_m,\dots,f_{i+1})(t_{i+1},\dots,t_{m-1})\circ\SpRefine[k]{F}(f_i,\dots,f_1)(t_{1},\dots,t_{i-1}).
  \end{align*}
  The first equation is immediate from
  \Definition{cube-moduli-cubical-complex} since the facet of
  $[0,1]^{m-1}$ which has $t_i=1$ is identified with the cube
  $[0,1]^{m-2}$ coming from the sequence of composable morphisms
  $u=u^0\stackrel{f_1}{\longrightarrow}\cdots\stackrel{f_{i-1}}{\longrightarrow}
  u^{i-1}\stackrel{f_{i+1}\circ
    f_i}{\longrightarrow}u^{i+1}\stackrel{f_{i+2}}{\longrightarrow}\cdots\stackrel{f_m}{\longrightarrow}u^m=v$.
  The second equation follows from
  \Lemma{cube-moduli-cubical-complex}~(\ref{item:cube-coprod-commute})
  since the facet of $[0,1]^{m-1}$ that has $t_i=0$ lies in the facet
  $\Moduli_{\CubeFlowCat{n}}(u^i,v)\times
  \Moduli_{\CubeFlowCat{n}}(u,u^i)$ of
  $\Moduli_{\CubeFlowCat{n}}(u,v)$ and is identified with the product
  $[0,1]^{m-i-1}\times[0,1]^{i-1}$, coming from the sequences
  $u^i\stackrel{f_{i+1}}{\longrightarrow}\cdots\stackrel{f_m}{\longrightarrow}
  u^m=v$ and
  $u^0\stackrel{f_1}{\longrightarrow}\cdots\stackrel{f_i}{\longrightarrow}
  u^i$, respectively. Since the maps $\SpDiag[k]{F}$ were defined via
  cubical neat embeddings that satisfied
  \Definition{cube-cat-neat-embed}~(\ref{item:neat-embed-commute}),
  the second equation holds.

  Finally, we construct the desired cellular map from $\hocolim
  \SpDiag[k]{F}$ to $\CRealize{\FlowCat}$.

  For any $u\in\Ob(\CCat{n})$ and any $x\in F(u)$, the cell associated
  to $x$ in $\hocolim \SpDiag[k]{F}$ is
  \begin{align*}
  \Cell{x}'&=\begin{cases}
    \Moduli_{\CubeFlowCat{n}}(u,\vec{0})\times[0,2]\times B_x&\text{if $u\neq\vec{0}$}\\
    \{0\}\times B_x&\text{if $u=\vec{0}$,}
  \end{cases}\\
  \shortintertext{while the cell associated to $x$ in $\CRealize{\FlowCat}$ is}
  \Cell{x}&=\begin{cases}
    \Moduli_{\CubeFlowCat{n}}(u,\vec{0})\times[0,1]\times B_x&\text{if $u\neq\vec{0}$}\\
    \{0\}\times B_x&\text{if $u=\vec{0}$.}
  \end{cases}
  \end{align*}
  Map $\Cell{x}'$ to $\Cell{x}$ by the quotient map $[0,2]\to
  [0,2]/[1,2]\cong [0,1]$, and the identity map on all other factors. 
  This map certainly has degree $\pm 1$ on each cell.
  To check
  that it produces a well-defined map on CW complexes, we must
   check that it commutes with the attaching maps. 
   Everything over $\bdy B_x$ was quotiented to the basepoint on
  either side. If $u=\vec{0}$, this is the only identification on
  either side. If $u\neq \vec{0}$, everything over $\{2\}\subset
  [0,2]$ was quotiented to the basepoint for $\Cell{x}'$, while
  everything over $\{1\}\subset [0,1]$ was quotiented to the basepoint
  for $\Cell{x}$. Therefore, we only need to consider $u\neq \vec{0}$
  and concentrate on the attaching maps on the portion of the boundary
  of $\Cell{x}$ (respectively, $\Cell{x}'$) that lives over $\bdy
  \Moduli_{\CubeFlowCat{n}}(u,\vec{0})$ or $\{0\}\subset\bdy([0,1])$
  (respectively, $\{0\}\subset\bdy([0,2])$).

  Consider the subcomplex 
  \[
  \wt{M}_u\defeq\Bigl(
  \coprod_{\substack{u=u^0>\dots>u^m\\u^i\in\{0,1\}^n}}
  [0,1]^{m}\Bigr)/\sim_1\cong \Moduli_{\CubeFlowCat{n}}(u,\vec{0})\times[0,1]
  \]
  of the cubical complex $M_u\cong
  \Moduli_{\CubeFlowCat{n}}(u,\vec{0})\times[0,2]$ from
  \Equation{cubical-cx-mu} in the proof of
  \Proposition{cwcomplex-box-hocolim}.  We are interested in the part
  of $\bdy\Cell{x}'$ that lives over the following subset $N_u$ of
  $\bdy\wt{M}_u$:
  \begin{equation*}
    \begin{split}
      N_u&\defeq\Big(\bdy\Moduli_{\CubeFlowCat{n}}(u,\vec{0})\times[0,1]\Big)\cup
      \Big(\Moduli_{\CubeFlowCat{n}}(u,\vec{0})\times\{0\}\Big)\\
      &\phantom{:}=\Bigl(
      \coprod_{u=u^0>\dots>u^m}\coprod_{i=1}^{m}
      [0,1]^{i-1}\times\{0\}\times[0,1]^{m-i}\Bigr)/\sim_1.
    \end{split}
  \end{equation*}
  Let $p\in\bdy\Cell{x}'$ be some point living over $N_u$, and write
  $p=(p_1,p_2)$, where $p_1\in N_u$ and $p_2\in B_x$. Assume $p_1$
  lies in the cube $[0,1]^m$ corresponding to some chain
  $u=u^0>\dots>u^m$. Let $p_{1,1},\dots, p_{1,m}$ be the
  coordinates of $p_1$ as a point in the cube, and assume
  $p_{1,\ell}=0$. 
  Let $\phi$ denote the restriction of $\SpRefine[k]{F}(\cmorph{u^{\ell-1}}{u_\ell},\dots, \cmorph{u^0}{u^1})$
  to $[0,1]^{\ell-1}\times (B_x/\bdy B_x)$. 
  Under the CW complex attaching map (denoted $\sim_2$ in the
  proof of \Proposition{cwcomplex-box-hocolim}), $p$ is attached to
  the point 
  \[
  \bigl((p_{1,\ell+1},\dots,p_{1,m}),\phi((p_{1,1},\dots,p_{1,\ell-1}),p_2)\bigr)\in
  [0,1]^{m-\ell}\times \bigvee_{y\in F(u^\ell)}(B_{y}/\bdy B_{y}),
  \]
  where $[0,1]^{m-\ell}$ is the cube in $\wt{M}_{u^\ell}$
  corresponding to the chain $u^\ell>\dots>u^m$ . The map $\phi$ is
  constructed as a $[0,1]^{m-\ell}$-parameter family of box maps.
  Therefore, the attaching map glues $p$ to the basepoint $*$ unless
  $p_2$ lies in the interior of one of the sub-boxes at the point
  $(p_{1,1},\dots,p_{1,\ell-1})$ in the family; and if $p_2$ lies in
  the interior of some box $B_{\gamma_0}$ then $p$ is glued to the
  point
  \[
  q\defeq ((p_{1,\ell+1},\dots,p_{1,m}),q_2)\in
  [0,1]^{m-\ell}\times B_{y_0}\subset \wt{M}_{u^\ell}\times B_{y_0}\subset \Cell{y_0}'
  \]
  where $y_0=t(\gamma_0)$ and $q_2\defeq
  \phi((p_{1,1},\dots,p_{1,\ell-1}),p_2)\in B_{y_0}$.

  We need to check that $p$, now viewed as a point in $\bdy\Cell{x}$,
  is also glued to $q$, now viewed as a point in $\Cell{y_0}$, in the
  CW complex $\CRealize{\FlowCat}$. In the construction of
  $\CRealize{\FlowCat}$ (\Definition{cubical-realize}), we extended the
  cubical neat embedding
  \[
  \iota_{x,y_0}\from\Moduli_{\FlowCat}(x,y_0)\into
  \Moduli_{\CubeFlowCat{n}}(u^\ell,\vec{0})\times \prod_{i=|u^\ell|}^{|u|-1}(-R,R)^{d_i}
  \]
  to an embedding
  \[
  \ol{\iota}_{x,y_0}\from\Moduli_{\FlowCat}(x,y_0)\times\prod_{i=|u^\ell|}^{|u|-1}[-\ep,\ep]^{d_i}\into
  \Moduli_{\CubeFlowCat{n}}(u^\ell,\vec{0})\times \prod_{i=|u^\ell|}^{|u|-1}[-R,R]^{d_i},
  \]
  and used it to define embeddings $\imath\from
  \Moduli_{\FlowCat}(x,y_0)\times B_{y_0}\into
  \Moduli_{\CubeFlowCat{n}}(u,u^\ell)\times B_x$ and
  $\jmath\from\Moduli_{\FlowCat}(x,y_0)\times\Cell{y_0}\into
  \bdy\Cell{x}$.  (Note that
  $\Precone\Moduli_{\CubeFlowCat{n}}(u^\ell,\vec{0})=\wt{M}_{u^\ell}$.)
  
  For any path component $\gamma$ of $\Moduli_{\FlowCat}(x,y_0)$, let
  $\imath_{\gamma}$ and $\jmath_{\gamma}$ denote the restrictions
  $\imath|_{\gamma\times B_{y_0}}$ and
  $\jmath|_{\gamma\times\Cell{y_0}}$; and as before, let
  $\upsilon_{\gamma}$ denote the section of
  $\Moduli_{\FlowCat}(x,y_0)\to\Moduli_{\CubeFlowCat{n}}(u,u^\ell)$
  whose image is $\gamma$.  Since $\iota_{x,y_0}$ satisfies
  \Definition{cube-cat-neat-embed}~(\ref{item:trivial-covering}), and
  its extension $\ol{\iota}_{x,y_0}$ was defined via
  \Equation{extend-iota}, there exists a map $\mu_{\gamma}\from
  B_{y_0}\to \Moduli_{\CubeFlowCat{n}}(u,u^\ell)\times B_x$, so that
  $\imath_{\gamma}(\upsilon_{\gamma}(a),b)=(a,\mu_{\gamma}(a,b))$, for all $(a,b)\in
  \Moduli_{\CubeFlowCat{n}}(u,u^\ell)\times B_{y_0}$.

  Let $q_1=(p_{1,\ell+1},\dots,p_{1,m})\in[0,1]^{m-\ell}$ and
  $q'=(p_{1,1},\dots,p_{1,\ell-1})\in[0,1]^{\ell-1}$; treat $q'$ as a
  point in $\Moduli_{\CubeFlowCat{n}}(u,u^{\ell})$, after viewing the
  cube $[0,1]^{\ell-1}$ as the cube corresponding to the chain
  $u=u^0>\dots>u^\ell$ in the cubical complex structure on
  $\Moduli_{\CubeFlowCat{n}}(u,u^\ell)$ from
  \Lemma{cube-moduli-cubical-complex}.  Let $\kappa$ be the inclusion
  map $\Moduli_{\CubeFlowCat{n}}(u,u^\ell)\times\wt{M}_{u^\ell}\times
  B_x\into \Cell{x}$.  Since
  $\SpRefine[k]{F}(\cmorph{u^{\ell-1}}{u_\ell},\dots, \cmorph{u^0}{u^1})$ (used to
  define $\phi$) is defined via
  \Equation{primitive-box-hocoherent-diag} using the cubical neat
  embeddings $\iota_{x,y_0}$ (which are used to define the maps
  $\imath_{\gamma}$, and consequently, $\mu_{\gamma}$), $p_2$ is in
  the interior of the box $B_{\gamma_0}$, and $\phi(q',p_2)=q_2$, it
  follows that $\mu_{\gamma_0}(q',q_2)=p_2$. Therefore,
  \begin{align*}
  \jmath_{\gamma_0}(\upsilon_{\gamma}(q'),q)&=\jmath_{\gamma_0}(\upsilon_{\gamma}(q'),q_1,q_2)=\kappa(q',q_1,\mu_{\gamma_0}(q',q_2))\\
  &=\kappa(q',q_1,p_2)=\kappa((p_{1,1},\dots,p_{1,\ell-1}),(p_{1,\ell+1},\dots,p_{1,m}),p_2)\\
  &=((p_{1,1},\dots,p_{1,\ell-1},0,p_{1,\ell+1},\dots,p_{1,m}),p_2)=(p_1,p_2)=p.
  \end{align*}
  The last equation is justified by the fact that the
  cubical complex structures on $\Moduli_{\CubeFlowCat{n}}(u,u^\ell)$
  and $\wt{M}_{u^\ell}$ respect the product structure on facets of
  $\wt{M}_u=\Moduli_{\CubeFlowCat{n}}(u,\vec{0})\times[0,1]$
  (\Lemma{cube-moduli-cubical-complex}~(\ref{item:cube-coprod-commute})).
  Therefore, $p$ is glued to $q$ in the CW complex
  $\CRealize{\FlowCat}$ as well.
\end{proof}

\section{The Khovanov homotopy type}\label{sec:Kh-htpy}
We pause briefly to review where we stand. 
We have introduced a special kind of flow categories,
cubical flow categories (\Section{cubical}), and shown that the data
of a cubical flow category is equivalent to a strictly unitary, lax
$2$-functor from the cube $\CCat{n}$ to the Burnside $2$-category
(\Section{realize-functor}). Given a cubical flow category (or functor
from the cube to the Burnside category) we have four ways of realizing
the functor as a spectrum:
\begin{itemize}
\item The original Cohen-Jones-Segal realization (\Section{flow-cat}).
\item The cubical realization, a modification of the Cohen-Jones-Segal
  construction taking into account the map to the cube
  (\Section{cubical-realize}).
\item Thickening the diagram, producing a canonical diagram in
  spectra, and taking the iterated mapping cone
  (\Section{realize-functor}).
\item Using the ``little box'' construction to produce a homotopy
  coherent cube in spectra, and then taking the iterated mapping cone
  (\Section{coherent-box-maps}).
\end{itemize}
Moreover,
Theorems~\ref{thm:cubical-CJS-realization},~\ref{thm:smaller-diag},
and~\ref{thm:box-equals-cubical} together imply that, up to stable
homotopy equivalence, these realizations all agree.

For the rest of the paper, we turn to a particular cubical flow
category: the Khovanov flow category constructed in~\cite{RS-khovanov}
(see also \Section{khovanov-basic} and
Examples~\ref{exam:KhFlowCat} and~\ref{exam:KhFunc}).
\begin{definition}
  Given an oriented link diagram $K$, let $\KhFunc(K)\co\CCat{n}\to\BurnsideCat$ be the
  Khovanov functor constructed in \Example{KhFunc}. Let $\Realize{\KhFunc(K)}$
  be the
  result of applying \Construction{realize-functor} to
  $\KhFunc(K)$ and let
  $\KhSpace(K)=\Sigma^{-n_-}\Realize{\KhFunc(K)}$, where $n_-$ is the
  number of negative crossings in $K$.
\end{definition}
Theorems~\ref{thm:cubical-CJS-realization},~\ref{thm:smaller-diag},
and~\ref{thm:box-equals-cubical} imply that, up to stable homotopy equivalence,
$\KhSpace(K)$ is the same as applying any of the other three realization
constructions to $\KhFunc(K)$.  A more direct description of $\KhFunc(K)$ was
given by Hu-Kriz-Kriz~\cite{HKK-Kh-htpy}; see \Section{two-functors-same}.

Similar constructions can be carried out for the reduced Khovanov flow category:
\begin{definition}
  Let $\rKhFunc(K)\co\CCat{n}\to\BurnsideCat$ be the functor corresponding to
  the reduced Khovanov flow category from~\cite[Section 8]{RS-khovanov}, via
  \Construction{Flow-to-Burn}.  Let $\rKhSpace(K)$ be the corresponding spectrum,
  obtained by applying \Construction{realize-functor} to
  $\rKhFunc(K)$ and desuspending $n_-$ times.
\end{definition}

\section{Relationship with Hu-Kriz-Kriz}\label{sec:HKK}
The  goal of this section is to prove:
\begin{theorem}
  \label{thm:agree-with-HKK}
  Fix a link diagram $K$. Let $\HKK(K)$ be the homotopy type associated
  to $K$ by Hu-Kriz-Kriz~\cite[Theorem 5.4]{HKK-Kh-htpy}. Then $\KhSpace(K)$ is
  stably homotopy equivalent to $\HKK(K)$.
\end{theorem}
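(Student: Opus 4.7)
The plan is to break the comparison into two independent steps: (i) identifying the input data on both sides, and (ii) comparing the two realization procedures once that data is identified.

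For step (i), one must check that the strictly unitary $2$-functor $\HKKfun(K)\co\CCat{n}\to\BurnsideCat$ built by Hu-Kriz-Kriz is naturally isomorphic to our $\KhFunc(K)$ from \Example{KhFunc}. On vertices both functors return the set of Khovanov generators, and on edges both return the canonical correspondences determined by the Frobenius (co)multiplication structure, so the only substantive content is that the two choices of $2$-morphism on ladybug faces agree. This is exactly the comparison promised by \Section{two-functors-same} (via the TQFT/embedded cobordism formulation), so I would assume that comparison and reduce to showing the two realizations of a common functor $F\co\CCat{n}\to\BurnsideCat$ agree. By \Lemma{iso-realize-same} the output only depends on $F$ up to natural isomorphism, so this reduction is legitimate.

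For step (ii), the strategy is to fit Hu-Kriz-Kriz's construction into the spacial-refinement framework of \Section{box-refinement}. Recall that they apply the Elmendorf-Mandell machine to a Burnside-valued cube to produce a diagram of symmetric spectra, and then take a homotopy colimit after adjoining a single basepoint object (playing the role of our $\ast$ in $\CCat{n}_+$). I would argue that the Elmendorf-Mandell output, levelwise, is weakly equivalent to a diagram of the form $v\mapsto\bigvee_{x\in F(v)}\SphereS$ with edge maps refining the corresponding correspondences, so after restricting to any large enough simplicial level $k$ one obtains a $k$-dimensional spacial refinement of $F$ in the sense of \Definition{spacial-refinement}. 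Then by \Proposition{box-refinement-exist-unique}\Item{spacial-unique} any two such spacial refinements are homotopy equivalent as homotopy coherent diagrams, so by \Proposition{hequiv-diags-hocolim} their homotopy colimits are stably homotopy equivalent; combined with \Theorem{smaller-diag} this identifies the HKK homotopy colimit with $\Realize{F}=\KhSpace(K)$.

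The main obstacle will be step (ii): namely, matching the Elmendorf-Mandell output with the little-box spacial refinement at a precise enough level to invoke \Proposition{box-refinement-exist-unique}. Elmendorf-Mandell delivers a commutative diagram of $\Gamma$-spaces (or symmetric spectra), not a homotopy coherent diagram of little-box maps, so one needs to verify that after one forgets down to $\BSpaces$ at a sufficiently high simplicial level, the structure maps do in fact refine the given correspondences in the sense of \Section{box-maps} up to an action of the appropriate symmetric groups. One clean way to handle this is to interpose, at each vertex, the tautological equivalence $\bigvee_{x\in F(v)}\SphereS\simeq (\text{E-M of }F(v))$ of spectra, and then observe that both diagrams become spacial refinements of $F$ after passing to sufficiently high skeleta, at which point the uniqueness in \Proposition{box-refinement-exist-unique}\Item{spacial-unique}, together with the two stabilization statements \Proposition{box-refinement-exist-unique}\Item{spacial-suspend} and \Lemma{bigger-plus} (to reconcile the two conventions for adjoining a basepoint to the cube), complete the identification $\hocolim\SpDiag{F}\simeq \HKK(K)$, and hence $\KhSpace(K)\simeq\HKK(K)$ by \Theorem{smaller-diag}.
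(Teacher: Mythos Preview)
Your two-step decomposition matches the paper's: step (i) is exactly \Lemma{HKK-same-Burnside}, and your reduction via \Lemma{iso-realize-same} is correct.

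The gap is in step (ii). You propose to identify the Elmendorf--Mandell output with a spacial refinement and then invoke the uniqueness clause of \Proposition{box-refinement-exist-unique}. But that uniqueness statement is specific to \emph{spacial refinements}: diagrams in which every map (and every higher coherence) is literally a box map in the sense of \Section{box-maps}. The Elmendorf--Mandell machine does not produce box maps; it produces maps between $K$-theory spectra of permutative categories. Passing to a high skeleton does not make these into box maps, and the connectivity argument underlying \Proposition{box-refinement-exist-unique} (namely \Lemma{box-maps-contractible}) genuinely uses the parametrization by configuration spaces of sub-boxes. So you cannot conclude that the HKK diagram is homotopic, as a homotopy coherent diagram, to your $\SpRefine[k]{F}$ just from the fact that they agree pointwise and on $\pi_*$ of edges. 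Your final paragraph acknowledges this obstacle but the suggested fix---interposing vertex-by-vertex equivalences and asserting both sides ``become spacial refinements''---does not go through: pointwise equivalences need not assemble into a map of diagrams, and the HKK side never becomes a diagram of box maps.

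The paper's route for step (ii) is different and avoids the box-map framework entirely. It compares the HKK realization to the \emph{thickened} realization $\hocolim\thicf{F}^\othplus$ of \Section{thicken} rather than to the little-boxes realization. The argument (\Proposition{HKK-real-agree}) passes through the arrow category $\ArrowCat(\CCat{n})$ and a chain of intermediate diagrams in $\PermuCat$ and $\Spectra$, using that the $K$-theory functor takes Cartesian products of permutative categories to products of spectra (Property~\ref{item:EM-preserve-prods}), which lets one match the HKK diagram with the categorical analogue $G_p$ of the thickening. The comparison with $\KhSpace(K)$ then comes from \Theorem{smaller-diag} (already proved), not from re-running a uniqueness argument. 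A minor point: HKK's enlarged cube $\mathcal{I}$ is identified with $\CCat{n}_\othplus$ (\Lemma{HKK-extension}), not $\CCat{n}_+$, so \Lemma{bigger-plus} alone would not suffice for the basepoint comparison.
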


The Hu-Kriz-Kriz construction has four steps:
\begin{enumerate}
\item Construct a $2$-functor $\CCat{n}\to\BurnsideCat$. (The category
  $\CCat{n}$ is denoted $I^n$ in~\cite{HKK-Kh-htpy}, and
  the category $\BurnsideCat$ is denoted $\mathcal{S}_2$.)
\item Use the Elmendorf-Mandell machine~\cite{EM-top-machine} to turn
  the $2$-functor $\CCat{n}\to\BurnsideCat$ into an $A_\infty$-functor
  $B_2(\CCat{n})'\to \Spectra$, where $B_2(\CCat{n})'$ is an
  auxiliary category which we review below.
\item Use Elmendorf-Mandell's rectification result~\cite[Theorem 1.4]{EM-top-machine}
  to lift this composition to a strict functor $\CCat{n}\to\Spectra$.
\item Expand $\CCat{n}$ to a category $\mathcal{I}$, analogous to the
  expansion of $\CCat{n}$ to $\CCat{n}_\othplus$, extend the functor
  $\CCat{n}\to\Spectra$ to a functor $\mathcal{I}\to\Spectra$, and
  take the homotopy colimit.
\end{enumerate}
We will prove that the two constructions agree, step-by-step.

\subsection{The functors from the cube to the Burnside category agree}\label{sec:two-functors-same}
Hu-Kriz-Kriz's construction of the Khovanov homotopy type uses the
following auxiliary category:
\begin{definition}
  The \emph{$(1+1)$-dimensional embedded cobordism category}
  $\Cob_\emb^{1+1}$ is the $2$-category defined as follows. The
  objects of $\Cob_\emb^{1+1}$ are oriented $1$-manifolds $C$ embedded
  in $S^2$
  along with a 2-coloring of the components of $S^2\setminus C$, by
  the colors ``white'' and ``black'', so that if $B(S^2\setminus C)$
  denotes the closure of the black region, then $C$ is the oriented
  boundary of $B(S^2\setminus C)$. The morphisms from $C_1$ to $C_0$
  are oriented cobordisms $\Sigma$ embedded in $[0,1]\times S^2$
  satisfying $\Sigma\cap(\{i\}\times S^2)=\{i\}\times C_i$ for
  $i\in\{0,1\}$, along with a 2-coloring of
  $([0,1]\times S^2)\setminus \Sigma$, so that if
  $B(([0,1]\times S^2)\setminus \Sigma)$ denotes the closure of the
  black region, then $\Sigma$ is oriented as the boundary of
  $B(([0,1]\times S^2)\setminus \Sigma)$. The $2$-morphisms are
  isotopy classes of isotopies of cobordisms relative boundary.
\end{definition}

The Hu-Kriz-Kriz functor $\CCat{n}\to\BurnsideCat$ is constructed in
two steps~\cite[Section 5]{HKK-Kh-htpy}, which we give as
Constructions~\ref{construction:diag-to-cob} and~\ref{construction:cob-to-Burn}.
\begin{construction}\label{construction:diag-to-cob}
  \cite[Section 4.3]{HKK-Kh-htpy} Given a link diagram $L$ in $S^2$
  with $n$ crossings $c_1,\dots,c_n$, along with a checkerboard
  coloring of the link diagram, we construct a lax $2$-functor from
  $\CCat{n}$ to $\Cob_\emb^{1+1}$. This functor was partially
  described in \Section{khovanov-basic}: to $v\in\{0,1\}^n$, associate
  the complete resolution $\mc{P}(v)$ which is a collection of
  disjoint circles in $S^2$. The checkerboard coloring for $L$ induces
  a 2-coloring of the complement of these circles; orient the circles
  as the boundary of the black region. To $u>v\in\{0,1\}^n$, associate
  the embedded cobordism $\Sigma\subset [0,1]\times S^2$, which is a
  product cobordism outside a neighborhood of the crossings where $u$
  and $v$ differ, and has a saddle for each such crossing. We declare
  the cobordism to be running from
  $\mc{P}(u)=\Sigma\cap(\{1\}\times S^2)$ to
  $\mc{P}(v)=\Sigma\cap(\{0\}\times S^2)$.  Up to isotopy, $\Sigma$ is
  independent of the order of the saddles, and in fact the isotopies
  changing the order of saddles are themselves well-defined up to
  isotopy. Thus, this construction gives a lax $2$-functor
  $\CCat{n}\to \Cob_{\emb}^{1+1}$.
\end{construction}

\begin{construction}\label{construction:cob-to-Burn}
  \cite[Section 3.4]{HKK-Kh-htpy} There is a lax $2$-functor
  $\mathcal{L}\co \Cob_\emb^{1+1}\to \BurnsideCat$ defined as
  follows. On objects, $\mathcal{L}$ sends a $1$-manifold
  $C\subset S^2$ to the set of all possible labelings of the
  components of $C$ by elements of $\{x_+,x_-\}$:
  $\mathcal{L}(C)=\prod_{C_i\in\pi_0(C)}\{x_+,x_-\}$.  The value of
  $\mathcal{L}$ on morphisms is more complicated. For any embedded
  cobordism $\Sigma$ and any connected component $\Sigma_0$ of
  $\Sigma$, consider the 2-coloring of
  $([0,1]\times S^2)\setminus \Sigma_0$ that agrees with the given
  2-coloring of $([0,1]\times S^2)\setminus\Sigma$ near $\Sigma_0$,
  and let $B(([0,1]\times S^2)\setminus\Sigma_0))$ denote the closure
  of the black region. Observe that
  \begin{align*}
    H_1(([0,1]\times S^2)\setminus\Sigma_0)/H_1((\{0,1\}\times S^2)\setminus\bdy\Sigma_0)&\cong \Z^{2g(\Sigma_0)}\\
    H_1(B(([0,1]\times S^2)\setminus\Sigma_0))/H_1(B((\{0,1\}\times S^2)\setminus\bdy\Sigma_0))&\cong \Z^{g(\Sigma_0)}.
  \end{align*}
  A \emph{valid labeling} of a cobordism
  $\Sigma\subset [0,1]\times S^2$ consists of:
  \begin{itemize}
  \item
    A labeling of each boundary component of $\Sigma$ by $x_+$ or
    $x_-$, and
  \item
    A labeling of each genus $1$ component $\Sigma_0$ of $\Sigma$ by
    $\alpha$ or $-\alpha$, where $\{\pm\alpha\}$ are the generators of
    $H_1(B(([0,1]\times S^2)\setminus\Sigma_0))/H_1(B((\{0,1\}\times
    S^2)\setminus\bdy\Sigma_0))\cong \Z$.
  \end{itemize}
  so that:
  \begin{itemize}
  \item Each connected component of $\Sigma$ has genus $0$ or $1$.
  \item For each genus $0$ connected component of $\Sigma$, the number
    of boundary components in $\{0\}\times S^2$ labeled $x_-$ plus the
    number of boundary components in $\{1\}\times S^2$ labeled $x_+$
    is $1$.
  \item For each genus $1$ connected component of $\Sigma$, all
    boundary components in $\{0\}\times S^2$ are labeled $x_+$ and all
    boundary components in $\{1\}\times S^2$ are labeled $x_-$.
  \end{itemize}
  (See~\cite[Formula (12)]{HKK-Kh-htpy}.) Define $\mathcal{L}(\Sigma)$
  to be the set of valid labelings of $\Sigma$. The source and target
  maps of $\mathcal{L}(\Sigma)$ send a labeling of $\Sigma$ to the
  induced labeling of the boundary components.

  For $\Sigma_0$ and $\Sigma_1$ composable cobordisms in $[0,\frac{1}{2}]\times S^2$ and $[\frac{1}{2},1]\times S^2$, respectively, the composition $2$-isomorphism
  $\mathcal{L}(\Sigma_0)\circ
  \mathcal{L}(\Sigma_1)\stackrel{\cong}{\longrightarrow}\mathcal{L}(\Sigma_0\circ\Sigma_1)$ 
  is obvious except for the situation when one gets a 
  cobordism $\Sigma\subset [0,1]\times S^2$ with a genus $1$ component by stacking two genus $0$
  cobordisms. The composition map
  decomposes as a product over connected components of $\Sigma$, so for simplicity
  assume that $\Sigma$ is connected. Given valid labelings on
  $\Sigma_0$ and $\Sigma_1$ that agree on
  $(\bdy\Sigma_0)\cap(\bdy\Sigma_1)\subset\{\frac{1}{2}\}\times S^2$,
  we want to construct a valid labeling on $\Sigma$, which
  amounts to labeling $\Sigma$ by $\alpha$ or $-\alpha$.  It follows
  from the labeling conditions that there is a unique component $C$ of
  $(\bdy\Sigma_0)\cap(\bdy\Sigma_1)$ that is non-separating in
  $\Sigma$ and is labeled $x_+$. Orient $C$ as the boundary of the
  black region, and let $C_b$ and $C_w$ be the push-offs of $C$ into
  the black and the white regions, respectively. One of $C_b$ and
  $C_w$ is a generator of
  $H_1(([0,1]\times S^2)\setminus\Sigma)/H_1((\{0,1\}\times
  S^2)\setminus\bdy\Sigma)\cong \ZZ^2$ and the other one is zero.  If
  $C_b$ is the generator, label $\Sigma$ by $[C]$.  If $C_w$ is the
  generator, let $D$ be a curve on $\Sigma$, oriented so that the
  algebraic intersection number $D\cdot C=1$, and label $\Sigma$ by
  $[D]$. It is clear that these composition $2$-isomorphisms satisfy
  the coherence condition~\ref{item:str-assoc}.
\end{construction}

\begin{remark}
  It is not hard to see that choosing the other checkerboard coloring
  on the link diagram yields a naturally isomorphic functor
  $\CCat{n}\to\BurnsideCat$. On the other hand, one could have
  required $D\cdot C$ to be $-1$ instead of $1$; this would have
  produced a different functor. This global choice is essentially the
  choice of ladybug matching from \cite[Section~5.4]{RS-khovanov}.
\end{remark}

\begin{lemma}\label{lem:HKK-same-Burnside}
  Hu-Kriz-Kriz's $2$-functor
  $\HKKfun\from\CCat{n}\to\BurnsideCat$~\cite{HKK-Kh-htpy} is
  naturally isomorphic to the $2$-functor
  $\KhFunc$ constructed in \Example{KhFunc} by applying
  \Construction{Flow-to-Burn} to the Khovanov flow category
  from~\cite{RS-khovanov}.
\end{lemma}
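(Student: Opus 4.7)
The plan is to apply \Lemma{characterize-functor}, which reduces the comparison of the two strictly unitary $2$-functors $\HKKfun$ and $\KhFunc$ to three pieces of data: their values on vertices of $\CCat{n}$, their correspondences on edges (morphisms $\cmorph{v}{w}$ with $|v|-|w|=1$), and the composition $2$-isomorphisms on 2-dimensional faces of the cube (the data of the form $F_{u,v',w}^{-1}\circ F_{u,v,w}$ with $|u|-|w|=2$). I will construct the natural isomorphism by first identifying the data on objects and edges, and then verifying that the two $2$-functors induce the same bijections on the $2$-faces.

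On objects, the identification is immediate: for each $v\in\{0,1\}^n$, both $\HKKfun(v)$ and $\KhFunc(v)$ equal the set of Khovanov generators over $v$, i.e., labelings of the components of the complete resolution $\mc{P}(v)$ by $\{x_+,x_-\}$ (for $\HKKfun$, this is the definition of $\mathcal{L}$ on the $1$-manifold $\mc{P}(v)$; for $\KhFunc$, this is by \Example{KhFunc}). On edges, I will unpack the definition of a \emph{valid labeling} of the single-saddle cobordism $\Sigma$ associated to $\cmorph{u}{v}$ with $|u|-|v|=1$. Such $\Sigma$ is a disjoint union of cylinders (on the unchanged circles) and one genus-zero pair-of-pants (either a merge or a split). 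The condition that each connected component of $\Sigma$ has exactly one ``$x_-$ below $\sqcup$ $x_+$ above'' boundary label matches precisely the nontrivial entries of the multiplication and comultiplication maps $m,S$ from \Section{khovanov-basic}. A careful enumeration of valid labelings in each of the four subcases (merge/split, with each possible pre-edge Khovanov generator) shows that the correspondence $\HKKfun(\cmorph{u}{v})$ is canonically bijective with $\KhFunc(\cmorph{u}{v})$ via an isomorphism commuting with the source and target maps.

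The heart of the argument, and the main obstacle, is the identification of composition $2$-isomorphisms on $2$-faces $u>v,v'>w$ with $|u|-|w|=2$. For most such 2-faces, the composition $\HKKfun(\cmorph{v}{w})\circ\HKKfun(\cmorph{u}{v})\to\HKKfun(\cmorph{u}{w})$ is forced by the labels on $\bdy\Sigma$ alone, and the corresponding uniqueness holds on the $\KhFunc$ side by \cite[Lemma~5.12]{RS-khovanov}, so the two $2$-isomorphisms agree automatically. The only delicate case is the so-called \emph{ladybug configuration}, where the composed cobordism $\Sigma$ from $\mc{P}(u)$ to $\mc{P}(w)$ contains a connected genus-$1$ component, and there are exactly two valid labelings on each of the intermediate resolutions $\mc{P}(v)$ and $\mc{P}(v')$ yielding the given labels on $\mc{P}(u)$ and $\mc{P}(w)$. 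Here the HKK $2$-isomorphism is determined by the sign convention $D\cdot C=1$ described in \Section{two-functors-same}, which picks out a specific pairing of the two labelings through $v$ with the two labelings through $v'$ by reading off the homology class $[C]$ in $H_1$ of the complementary black region. I will show that this pairing coincides with the ladybug matching from \cite[Section~5.4]{RS-khovanov}.

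To carry this comparison out concretely, I would localize to a neighborhood of the ladybug: both matchings are determined by combinatorial data near the four crossings/saddles, so it suffices to verify the claim for a single standard ladybug diagram with one chosen checkerboard coloring (both matchings are invariant under the evident symmetries). In that local model, one identifies the non-separating simple closed curve $C$ on the genus-$1$ cobordism $\Sigma$ with the ``right-pair'' curve singled out by the ladybug matching, and checks that orienting $C$ as the boundary of the black region and taking the dual $D$ with $D\cdot C=1$ produces exactly the same pairing of valid labelings as the ladybug rule. Once this local comparison is established, the natural isomorphisms on objects and edges promote, via \Lemma{characterize-functor}, to a natural isomorphism $\HKKfun\cong\KhFunc$.
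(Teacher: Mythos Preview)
Your proposal is correct and follows essentially the same route as the paper's proof: both reduce via \Lemma{characterize-functor} to objects, edges, and $2$-faces, dispose of everything but the ladybug case by uniqueness of the edge correspondences, and then handle the ladybug by reducing to a standard local model (the paper invokes isotopy-invariance in $S^2$, citing \cite[Lemma~5.8]{RS-khovanov}, where you invoke symmetry) and performing an explicit intersection-number computation on the embedded genus-$1$ cobordism. The paper's version is slightly more concrete in that final step---it draws the cobordism, names the $x_+$-labeled circles $C$ and $C'$ at the two intermediate resolutions, isotopes them to meet at the saddle point, and verifies $C'\cdot C=1$ by reading off the orientation of the frame $[\vec n,\vec t',\vec t]$---but your outline captures the same idea.
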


\begin{proof}
  The functors $\HKKfun$ and $\KhFunc$ are identical on
  objects.  By \Lemma{characterize-functor}, we only need to show that
  they agree on the edges and that the composition $2$-isomorphisms for
  the functors agree on the $2$-dimensional faces of the cube.

  For $u>v\in\{0,1\}^n$ with $|u|-|v|=1$, the corresponding embedded
  cobordism is a merge or split. Let $F$ denote either
  $\HKKfun$ or $\KhFunc$. It is straightforward from the definitions
  that for any $x\in F(u)$ and $y\in F(v)$, $s^{-1}(x)\cap
  t^{-1}(y)\subset F(\cmorph{u}{v})$ is empty if $x$ does not appear in the
  Khovanov differential of $y$, and consists of one point otherwise,
  so in particular there is a unique bijection $\HKKfun(\cmorph{u}{v})\cong
  \KhFunc(\cmorph{u}{v})$.

  Now consider $u>w\in\{0,1\}^n$ with $|u|-|w|=2$, and let $v,v'$ be
  the two intermediate vertices. The composition $2$-isomorphisms for
  $\HKKfun$ and $\KhFunc$
  produce bijections between $F(\cmorph{v}{w})\circ F(\cmorph{u}{v})$ and
  $F(\cmorph{v'}{w})\circ F(\cmorph{u}{v'})$. We want to show that these two
  bijections are the same.

  Fix $x\in F(u)$ and $z\in F(w)$, and consider $s^{-1}(x)\cap
  t^{-1}(z)\subset F(\cmorph{u}{w})$.  This set can have $0$, $1$, or $2$
  elements. The only nontrivial case to check is when the set has $2$
  elements, which occurs precisely when $x$ and $z$ are related by a
  ladybug configuration (\Figure{ladybug-matching}).

  \begin{figure}
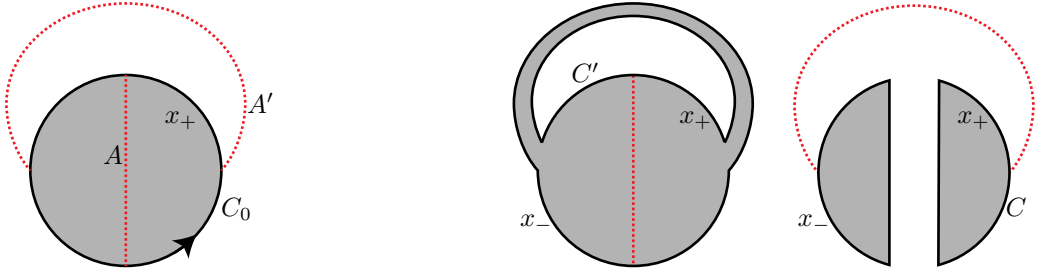

    \begin{overpic}{LadyBugNew}
      \put(15.5,14) {$x_+$}
      \put(9.5,10){$A$}
      \put(23.5,15){$A'$}
      \put(50,4) {$x_-$}
      \put(65.5,14) {$x_+$}
      \put(77,4) {$x_-$}
      \put(92.5,14) {$x_+$}
      \put(21,5){$C_0$}
      \put(97.2,5){$C$}
      \put(55,18){$C'$}
    \end{overpic}
    \caption{\textbf{The ladybug matching.} Left: The ladybug
      configuration. Right: Two intermediate configurations, obtained
      by attaching embedded $1$-handles along the two arcs. Black
      regions are indicated by dark
      shading.}\label{fig:ladybug-matching}
  \end{figure}

  Therefore, assume $x$ and $z$ are related by a ladybug
  configuration, and without loss of generality assume that the
  corresponding embedded genus $1$ cobordism $\Sigma$ is
  connected. The composition $2$-isomorphisms for $F$ are unchanged
  under isotopy in $S^2$: this is \cite[Lemma~5.8]{RS-khovanov} for
  $\KhFunc$, and is immediate from the definition for
  $\HKKfun$. Therefore, we may further assume that the ladybug
  configuration is as shown in \Figure{ladybug-matching}, in the
  following sense. The circle $C_0$ is the complete resolution at $w$,
  and it is labeled $x_+$ by $z$; and the circle $C_1$, the complete
  resolution at $u$, is obtained by attaching embedded $1$-handles
  along the arcs $A$ and $A'$, and $C_1$ is labeled $x_-$ by $x$. The
  embedded cobordism $\Sigma\subset[0,1]\times S^2$ connects $C_1$ to
  $C_0$, that is, $\Sigma\cap(\{i\}\times S^2)=\{i\}\times
  C_i$. Further, the black region contains the arc $A$.  Consider the
  labelings of the circles at the complete resolutions at $v$ and $v'$
  shown in \Figure{ladybug-matching}; the corresponding generators
  $y\in F(v)$ and $y'\in F(v')$ are matched by the ladybug matching
  for $\KhFunc$ \cite[Figure~5.1b]{RS-khovanov}.

  \begin{figure}
    \centering
    \begin{overpic}[width=0.5\textwidth]{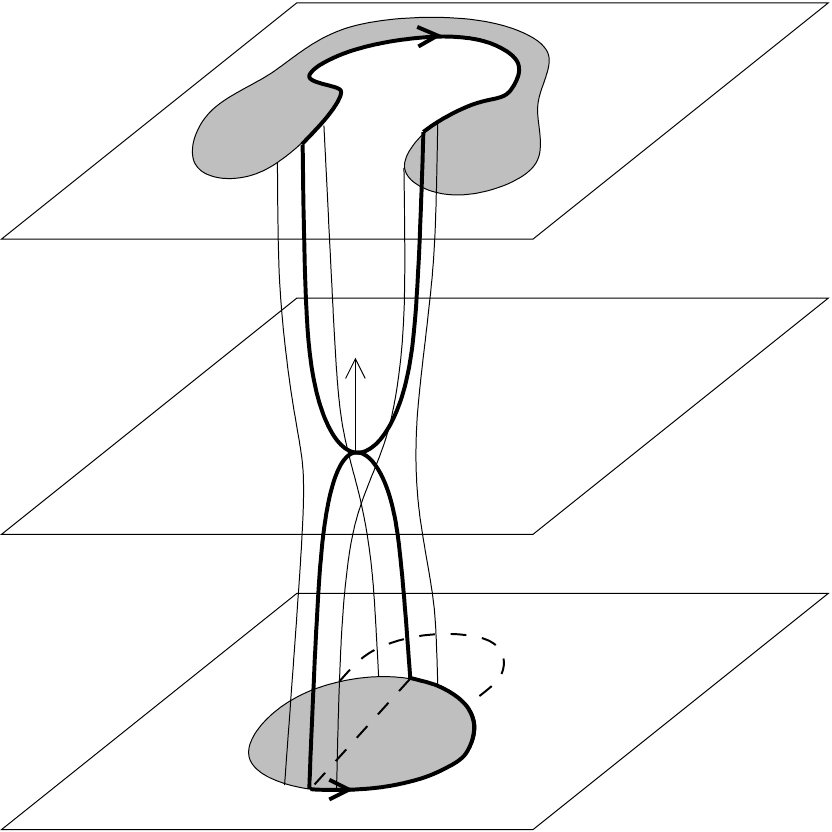}
      \put(75,25){$\{0\}\times S^2$}
      \put(75,60){$\{\frac{1}{2}\}\times S^2$}
      \put(75,95){$\{1\}\times S^2$}
      \put(59,10){$C$}
      \put(45,90){$C'$}
      \put(42,58){$\vec{n}$}
    \end{overpic}
    \caption{\textbf{The embedded cobordism $\Sigma$.} The cobordism
      connects $C_0$ at the bottom to $C_1$ at the top. The portion of
      the cobordism near the saddle point $p$, the oriented curves $C$
      and $C'$, and the normal vector $\vec{n}$ are
      shown.}\label{fig:embedded-cob}
  \end{figure}

  To show that $y$ and $y'$ are also matched by $\HKKfun$, let $C$
  (respectively, $C'$) be the circle that is labeled $x_+$ by $y$
  (respectively, $y'$); therefore, $C$ (respectively, $C'$) is a
  homology generator of the white (respectively, black) component of
  the complement of $\Sigma$. Isotope the cobordism $\Sigma$ inside
  $[0,1]\times S^2$ relative boundary so that both the saddles occur
  at $\{\frac{1}{2}\}\times S^2$. Isotope the curves $C$ and $C'$ on
  $\Sigma$ so that they intersect transversally at one point, $C$ lies
  in $[0,\frac{1}{2}]\times S^2$, and $C'$ lies in
  $[\frac{1}{2},1]\times S^2$. Therefore, $C$ and $C'$ intersect at
  the saddle point $p$ corresponding to the arc $A$. The portion of
  the cobordism $\Sigma$ near $p$ is shown in
  \Figure{embedded-cob}. Let $\vec{n}$ be the normal vector to
  $\Sigma$ at $p$ pointing away from the black region, and let
  $\vec{t}$ and $\vec{t}'$ be the tangent vectors to $C$ and $C'$ at
  $p$. It is clear from \Figure{embedded-cob} that
  $[\vec{n},\vec{t'},\vec{t}]$ constitute a positive basis;
  consequently in $\Sigma$, oriented as the boundary of the black
  region, the intersection number $C'\cdot C=1$. Therefore, for both
  $y$ and $y'$, the surface $\Sigma$ is labeled by $[C']$, and thus
  $y$ and $y'$ are matched by $\HKKfun$.
\end{proof}

\subsection{Iterated mapping cones}\label{sec:mapping-cone}
The easiest part of the identification is to see that the
Hu-Kriz-Kriz notion of iterated mapping cone agrees with
ours. Specifically, to take the iterated mapping cone of
their functor $K(F_{\HKKa})\co\CCat{n}\to\Spectra$ 
(see Sections~\ref{sec:two-functors-same} and~\ref{sec:elmendorff-mandell-machine}) 
they enlarge $\CCat{n}$ slightly to a category
$\mathcal{I}$, and extend $F_{\HKKa}$ to a functor $\wt{F}_{\HKKa}\co
\mathcal{I}\to \Spectra$ by declaring that the new vertices in
$\mathcal{I}$ map to $\{*\}$, a one-point space~\cite[Section
5.2]{HKK-Kh-htpy}.  We observe that their enlargement is the same as
$\CCat{n}_\othplus$:
\begin{lemma}\label{lem:HKK-extension}
  There is an isomorphism $\mathcal{I}\cong \CCat{n}_\othplus$ which
  commutes with the inclusions of $\CCat{n}$:
  \[
  \xymatrix{
    \mathcal{I}\ar[rr]^\cong & & \CCat{n}_\othplus\\
    & \CCat{n}\ar@{_(->}[ul] \ar@{^(->}[ur]& 
  }
  \]
\end{lemma}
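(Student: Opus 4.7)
The plan is to reduce this to a one-variable comparison and then identify things on the nose. I would first recall the definition of Hu-Kriz-Kriz's category $\mathcal{I}$: in~\cite[Section~5.2]{HKK-Kh-htpy}, $\mathcal{I}$ is built as the $n$-fold product of a one-dimensional enlargement $\mathcal{I}_1$ of $\CCat{1}$, and so it suffices to exhibit an isomorphism $\mathcal{I}_1\cong\CCat{1}_+$ which restricts to the identity on $\CCat{1}$. (The product structure of $\mathcal{I}$ matches the product definition $\CCat{n}_{\othplus}=(\CCat{1}_+)^n$ given earlier in the paper.)

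Next I would spell out $\mathcal{I}_1$: it has three objects, the two objects of $\CCat{1}$ together with an added basepoint object $*$, with a single non-identity morphism $1\to *$ (and, of course, the morphism $1\to 0$ from $\CCat{1}$). This is exactly the category
\[
\CCat{1}_+ = \bigl(* \longleftarrow 1 \longrightarrow 0\bigr)
\]
as defined in \Theorem{smaller-diag}. Writing down the obvious bijection on objects sending $*\mapsto *$, $1\mapsto 1$, $0\mapsto 0$ and checking that it preserves the (very few) morphisms gives an isomorphism $\mathcal{I}_1\cong\CCat{1}_+$ which is the identity on the sub-category $\CCat{1}$.

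Taking $n$-fold products then yields the desired isomorphism $\mathcal{I}=(\mathcal{I}_1)^n\cong(\CCat{1}_+)^n=\CCat{n}_{\othplus}$, and since each factor isomorphism restricts to the identity on $\CCat{1}$, the product isomorphism restricts to the identity on $\CCat{n}$, so the diagram in the statement commutes.

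The main (such as it is) obstacle is purely notational: one has to reconcile the conventions in~\cite{HKK-Kh-htpy} with those used here, in particular verifying that the directions of the extra morphisms in $\mathcal{I}_1$ match the directions in $\CCat{1}_+$ (both are ``outgoing'' from $1$, not ``incoming''), and that the Hu-Kriz-Kriz $\mathcal{I}$ is indeed a product rather than some subcategory of one. Once these conventions are aligned, there is nothing further to prove.
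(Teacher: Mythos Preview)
Your overall strategy---reduce to the one-dimensional case and compare directly---is sound, and you are right that the content here is bookkeeping. However, your proof rests on a description of $\mathcal{I}$ that does not match the one actually used in~\cite{HKK-Kh-htpy}. The paper's proof recalls that $\mathcal{I}$ is defined there with objects the pairs $(J\subseteq\{1,\dots,n\},\ \phi\co J\to\{0,1\})$ and a unique morphism $\phi\to\psi$ whenever $\phi$ is a restriction of $\psi$; the cube $\CCat{n}$ sits inside as the full subcategory on $\{(J,\phi)\mid 0\notin\image(\phi)\}$. This is not literally a product, and the inclusion of $\CCat{1}$ is not the ``identity on $\{0,1\}$'' in any obvious sense. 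So your sentence ``$\mathcal{I}$ is built as the $n$-fold product of a one-dimensional enlargement $\mathcal{I}_1$'' is not a recollection but a (true) lemma that still needs the dictionary you omit.

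The paper instead writes down that dictionary directly: to $o=(v_1,\dots,v_n)\in\CCat{n}_\othplus$ it assigns $J=\{i\mid v_i\in\{0,*\}\}$ and $\phi(i)=0$ if $v_i=*$, $\phi(i)=1$ if $v_i=0$. In the one-variable case this sends $1\mapsto(\emptyset,\emptyset)$, $0\mapsto(\{1\},1)$, $*\mapsto(\{1\},0)$---so the isomorphism is \emph{not} the identity on object labels, contrary to your ``$*\mapsto *$, $1\mapsto 1$, $0\mapsto 0$''. Once the correct dictionary is in hand, your product argument and the paper's direct argument are equivalent; but as written, your proposal asserts the wrong bijection and defers the actual check (``reconcile the conventions'') to exactly the place where the work lies.
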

\begin{proof}
  The category $\mathcal{I}$ has, as objects, pairs $(J\subseteq
  \{1,\dots,n\},\phi\co J\to\{0,1\})$, and there is a morphism
  $\phi\to \psi$ (which is unique) if and only if $\phi$ is a
  restriction of $\psi$. The cube $\CCat{n}$ sits in $\mathcal{I}$ as
  the subcategory $\{(J,\phi)\mid 0\not\in\image(\phi)\}$.  Recall
  that $\CCat{n}_{\othplus}=(\CCat{1}_{+})^n$, and
  $\Ob(\CCat{1}_+)=\{0,1,*\}$. Given an object
  $o=(v_1,\dots,v_n)\in\CCat{n}_{\othplus}$, define $J=\{i\mid
  v_i\in\{0,*\}\}$ and $\phi(i)=
  \begin{cases}
    0 & v_i=*\\
    1 & v_i=0
  \end{cases}
  $. With this dictionary, the rest of the verification is straightforward.
\end{proof}

\subsection{Another kind of homotopy coherent diagram}
\label{sec:B2}
Hu-Kriz-Kriz use a slightly different notion from Vogt of homotopy
coherent diagrams (\Section{colimit}), which is defined in two
steps:
\begin{definition}\label{def:cat-prime}
  Given a small category $\Cat$, let $\Cat'$ be the $2$-category with
  the same objects as $\Cat$,
  \[
    \Hom_{\Cat'}(x,y)=\coprod_{x=x_0,x_1,\dots,x_n=y}\Hom_\Cat(x_{n-1},x_n)\times\Hom_\Cat(x_{n-2},x_{n-1})\times\cdots\times\Hom_\Cat(x_0,x_1)
  \]
  the set of finite sequences of composable morphisms starting at $x$
  and ending at $y$, and a unique $2$-morphism from $(f_n,\dots,f_1)$
  to $(g_m,\dots,g_1)$ whenever
  $f_n\circ\cdots\circ f_1=g_m\circ\cdots\circ g_1$
  (compare~\cite[Section 4.1]{HKK-Kh-htpy}). Composition is given by
  concatenation of sequences. There is a projection $\Cat'\to\Cat$,
  where we view $\Cat$ as a $2$-category with only identity
  2-morphisms, which sends $(f_n,\dots,f_1)$ to
  $f_n\circ\cdots\circ f_1$.
\end{definition}

\begin{definition}\label{def:B2}
  Given a $2$-category $\Dat$, we can form a simplicially enriched category
  $B_2(\Dat)$ by replacing each $\Hom$ category in $\Dat$ by the
  realization of its nerve.
\end{definition}

Combining Definitions~\ref{def:cat-prime} and~\ref{def:B2},
the analogue of a homotopy coherent diagram
in~\cite{HKK-Kh-htpy} is a (simplicially enriched) functor $B_2(\Cat')\to
\Spectra$, or more generally an $A_\infty$ functor $B_2(\Cat')\to
\Spectra$. Note that there is a projection $\Pi\co B_2(\Cat')\to \Cat$
induced by the projection $\Cat'\to\Cat$ (and the triviality that
$B_2(\Cat)=\Cat$). So, given a functor $F\co\Cat\to \Spectra$ (say)
there is an induced (simplicially enriched) functor $B_2(F') = F \circ \Pi\co
B_2(\Cat')\to\Spectra$.

\begin{lemma}\label{lem:B2-contractible}
  For any small category $\Cat$, the projection map
  $\Pi\co B_2(\Cat')\to\Cat$ is a homotopy equivalence on each $\Hom$
  space.
\end{lemma}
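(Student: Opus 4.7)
The plan is to fix $x, y \in \Ob(\Cat)$ and analyze the category $\Hom_{\Cat'}(x,y)$ directly. First I would observe that this category decomposes as a disjoint union
\[
\Hom_{\Cat'}(x,y) = \coprod_{h \in \Hom_\Cat(x,y)} \Hom_{\Cat'}(x,y)_h,
\]
where $\Hom_{\Cat'}(x,y)_h$ is the full subcategory spanned by those composable sequences $(f_n,\dots,f_1)$ with $f_n \circ \cdots \circ f_1 = h$. The projection $B_2(\Cat')(x,y) = N(\Hom_{\Cat'}(x,y)) \to \Hom_\Cat(x,y)$ sends the component corresponding to $h$ to the point $h$, so it suffices to show that $N(\Hom_{\Cat'}(x,y)_h)$ is contractible for every $h$.

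Next I would note that by the defining property of $2$-morphisms in $\Cat'$, between any two objects of $\Hom_{\Cat'}(x,y)_h$ there is a unique morphism in each direction, and the composition of these morphisms must agree with the (unique) endomorphism of each object, namely the identity. Hence $\Hom_{\Cat'}(x,y)_h$ is an indiscrete groupoid on its set of objects (which is nonempty, since the length-one sequence $(h)$ lies in it).

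To finish, I would invoke the standard fact that the indiscrete groupoid on a nonempty set is equivalent, as a category, to the terminal category, and that equivalent categories have homotopy equivalent nerves. Consequently $N(\Hom_{\Cat'}(x,y)_h)$ is contractible for each $h$, and therefore the projection $B_2(\Cat')(x,y) \to \Hom_\Cat(x,y)$ is a homotopy equivalence. The main (minor) subtlety to verify is that the decomposition into components really is by disjoint union of subcategories — this follows because $2$-morphisms in $\Cat'$ exist only between sequences with equal composites, so no morphism crosses between components.
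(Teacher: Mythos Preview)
Your proposal is correct and follows essentially the same approach as the paper: decompose $\Hom_{\Cat'}(x,y)$ as a disjoint union over $h\in\Hom_\Cat(x,y)$ and show each piece has contractible nerve. The only cosmetic difference is that the paper phrases the contractibility by observing that every object in each subcategory is initial, whereas you identify each subcategory as an indiscrete groupoid; both are immediate consequences of the uniqueness of $2$-morphisms in $\Cat'$.
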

\begin{proof}
  The category $\Hom_{\Cat'}(x,y)$ decomposes as a disjoint union of subcategories
  \[
  \Hom_{\Cat'}(x,y)=\coprod_{f\in \Hom(x,y)}\{(f_n,\dots,f_1)\mid
  f_n\circ \cdots\circ f_1=f\},
  \]
  and for each of these subcategories, every object is initial.
\end{proof}

The notion of homotopy colimits extends easily to functors
from simplicially enriched categories: in Formula~(\ref{eq:hocolim2}), say, one
replaces the disjoint union over sequences of composable morphisms
with the disjoint union of products
$\coprod_{x_0,\dots,x_n}\Hom(x_{n-1},x_n)\times\cdots\times
\Hom(x_0,x_1)\times[0,1]^n\times F(x_0)$, and quotient by the same
equivalence relation from \Definition{hocolim}. The properties
of homotopy colimits stated in Section~\ref{sec:colimit} extend
without change to diagrams from simplicially enriched categories.

\begin{lemma}\label{lem:top-hocolim-is-hocolim}
  Let $F\co\Cat\to\Spectra$ be a diagram from a small category
  $\Cat$. Then there is a homotopy equivalence $\hocolim_\Cat F\simeq
  \hocolim_{B_2(\Cat')} B_2(F')$.  
\end{lemma}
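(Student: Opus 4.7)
The plan is to express both homotopy colimits as realizations of two-sided bar constructions and then apply \Lemma{B2-contractible} levelwise. The projection $p\co B_2(\Cat')\to\Cat$ is the identity on objects, and by the construction of $B_2(F')$ at the end of \Section{B2} we have $B_2(F')=F\circ p$, so all the relevant structure is controlled by $p$.

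First, I would use the standard reformulation of \Definition{hocolim} for a topological category $\Dat$ and a continuous diagram $G\co \Dat\to\Spectra$ as the geometric realization of the simplicial spectrum
\[
B_n(*,\Dat,G)=\bigvee_{x_0,\dots,x_n\in\Ob(\Dat)} \Hom_\Dat(x_{n-1},x_n)_+\smas\cdots\smas\Hom_\Dat(x_0,x_1)_+\smas G(x_0),
\]
with face maps induced by composition in $\Dat$ and the action on $G$, and degeneracies induced by insertion of identities. Applying this to both $(\Cat,F)$ and $(B_2(\Cat'),B_2(F'))$, the functor $p$ induces a map of simplicial spectra
\[
p_\bullet\co B_\bullet(*,B_2(\Cat'),B_2(F'))\longrightarrow B_\bullet(*,\Cat,F),
\]
which at the $n$-th level is a wedge, indexed by tuples $(x_0,\dots,x_n)$, of maps obtained by smashing the projections $p_*\co \Hom_{B_2(\Cat')}(x_{i-1},x_i)\to \Hom_\Cat(x_{i-1},x_i)$ with $\Id_{F(x_0)}$.

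Next, I would invoke \Lemma{B2-contractible} to see that each $p_*$ is a weak homotopy equivalence, so $p_\bullet$ is a levelwise weak equivalence of simplicial spectra. Since the bar construction simplicial spectra are Reedy cofibrant (the degeneracies split off from the unit inclusions in the usual way), a levelwise weak equivalence induces a weak equivalence on geometric realizations. This yields $\hocolim_{B_2(\Cat')}B_2(F')\simeq \hocolim_\Cat F$.

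The main obstacle to getting a fully rigorous argument is verifying the cofibrancy/properness needed for levelwise weak equivalences of simplicial spectra to pass to realizations; however, in the model of symmetric spectra used in this paper this is a standard and well-documented fact, and the free structure of the Hom-factors in the bar construction makes the requisite Reedy cofibrancy automatic. In essence, the lemma is a formal consequence of \Lemma{B2-contractible} together with the homotopy invariance of the two-sided bar construction under levelwise equivalences.
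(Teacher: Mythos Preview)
Your argument is correct, but it takes a more hands-on route than the paper. The paper's proof is a two-line application of the cofinality machinery already set up in \Section{colimit}: since $B_2(F')=F\circ\Pi$ and, by \Lemma{B2-contractible}, the projection $\Pi\co B_2(\Cat')\to\Cat$ is an equivalence on each $\Hom$-space, $\Pi$ is homotopy cofinal, and Property~\ref{item:hocolim-cofinal} of homotopy colimits finishes the proof. You instead unpack both homotopy colimits as realizations of bar constructions and argue directly that the induced map of simplicial spectra is a levelwise equivalence, then invoke Reedy cofibrancy to pass to realizations. This is essentially a direct proof of the relevant instance of the cofinality theorem, rather than an appeal to it. The paper's approach is shorter and avoids the cofibrancy check you flag as the main obstacle; your approach is more self-contained and makes explicit where \Lemma{B2-contractible} is doing the work.
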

\begin{proof}
  By definition $B_2(F')=F\circ \Pi$. By \Lemma{B2-contractible}, the
  projection $\Pi$ is a quasi-equivalence, and in particular homotopy
  cofinal, so the result follows from
  Property~\ref{item:hocolim-cofinal} of homotopy colimits.
\end{proof}

\subsection{The Elmendorff-Mandell
  machine}\label{sec:elmendorff-mandell-machine}
A permutative category is a strictly unital, strictly associative,
symmetric monoidal category (see, e.g.,~\cite[Definition
3.1]{EM-top-machine}). Let $\PermuCat$ denote the (strict)
$2$-category of permutative categories~\cite[Definition
3.2]{EM-top-machine}.  The Elmendorff-Mandell
machine~\cite{EM-top-machine} is a functor
\[
  K\co B_2(\PermuCat)\to \Spectra
\]
from ($B_2$ of) the category of permutative categories to spectra.
(Constructions of this type go back to
Segal~\cite{Segal-top-categories}.) Rather than explain how the
machine works, we list the properties we will need:
\begin{enumerate}[label=(EM-\arabic*),ref=(EM-\arabic*)]
\item\label{item:EM-object-gives-point} Given a permutative category
  $\Cat$ and an object $x$ in $\Cat$ there is an induced map $K(x)\co
  \SphereS\to K(\Cat)$. Further, this is natural in the sense that
  given a functor of permutative categories $F\co \Cat\to\Dat$ the
  following diagram commutes:
  \[
  \xymatrix{
    \SphereS\ar[d]_{K(x)}\ar[dr]^{K(F(x))}\\
    K(\Cat) \ar[r]_{K(F)}& K(\Dat).
  }
  \]
\item\label{item:EM-prod-coprod} The Cartesian product is both the categorical product and
  coproduct in the category of permutative categories.
\item\label{item:EM-preserve-prods} Given permutative categories
  $\Cat$ and $\Dat$, $K(\Cat\times\Dat)=K(\Cat)\times K(\Dat)$. 
\item\label{item:EM-sets-to-sphere} The category $\Sets$ of finite
  sets, with disjoint union, is equivalent to a permutative category
  \cite{Isbell-coherent-algebras}; to keep the exposition clear we
  will continue to use the name $\Sets$ for this category. The map
  $\SphereS\to K(\Sets)$ induced by a $1$-element set and
  property~\ref{item:EM-object-gives-point} is a stable homotopy
  equivalence.  (This is a version of the Barratt-Priddy-Quillen
  theorem.)
\item\label{item:EM-2-func} Given a $2$-category $\Cat$ in which all 2-morphisms are
  isomorphisms and a strict $2$-functor
  $F\co\Cat\to\PermuCat$, there is an induced (simplicially enriched) functor
  $K(B_2(F))\co B_2(\Cat)\to \Spectra$.
\end{enumerate}

\begin{construction}\label{construction:Burn-to-permu}
  Given a set $X$, we can consider the category $\prod_{x\in X}\Sets$.
  Given a correspondence $(C,s,t)$ from $X$ to $Y$, there is an
  induced functor $\prod_{x\in X}\Sets\to \prod_{y\in Y}\Sets$ which
  sends
  \[
    (A_x)_{x\in X}\mapsto \bigl(\bigcup_{x\in X} (s^{-1}(x)\cap
    t^{-1}(y))\times A_x\bigr)_{y\in Y}.
  \]
  (Note that the union operation in the above formula is actually a
  disjoint union.)  An isomorphism between correspondences $(C,s,t)$
  and $(C',s',t')$ can be viewed as simply a relabeling of the
  elements of $C$; and this relabeling induces a natural isomorphism
  between the two functors.
\end{construction}

\begin{lemma}
  \Construction{Burn-to-permu} defines a lax $2$-functor $\BurnsideCat \to \PermuCat$.
\end{lemma}
\begin{proof}[Sketch of proof]
  Verify that the category $\prod_{x\in X}\Sets$ is naturally
  isomorphic to the category $\Sets/X$, and that under this
  identification the functor induced by $C$ is naturally isomorphic to
  the functor $A \mapsto C\times_X A$.
\end{proof}

\begin{remark}
  Using properties~\ref{item:EM-preserve-prods}
  and~\ref{item:EM-sets-to-sphere},
  $K(\prod_{x\in X}\Sets)\simeq\prod_{x\in X}K(\Sets)\simeq
  \prod_{x\in X}\SphereS$. With respect to this decomposition,
  however, the map $\prod_{x\in X}\SphereS\to \prod_{y\in Y}\SphereS$
  induced by a correspondence $C\co X\to Y$ is not geometrically
  obvious.
\end{remark}

\begin{construction}
  Given a category $\Cat$, viewed as a $2$-category with only identity
  $2$-morphisms, and a strictly unitary, lax $2$-functor
  $F\co \Cat\to\BurnsideCat$ there is an induced strict $2$-functor
  $F'\co \Cat'\to\BurnsideCat$, where $\Cat'$ is as in
  \Definition{cat-prime}. Composing with the
  $\prod_{x\in(-)}\Sets$ construction gives a strict $2$-functor
  $\Cat'\to \PermuCat$, which we will still denote $F'$, with
  $F'(u)=\prod_{x\in F(u)}\Sets$. Finally, by~\ref{item:EM-2-func}, the $K$-theory
  functor gives a functor $K(B_2(F'))\co B_2(\Cat')\to \Spectra$.
\end{construction}
Hu-Kriz-Kriz apply this construction to the Khovanov functor
$F_{\HKKa}\co \CCat{n}\to\BurnsideCat$, and then take the iterated
mapping cone as in \Section{mapping-cone} to obtain their Khovanov
stable homotopy type.

\begin{figure}
  \centering
  \[
    \begin{tikzpicture}
      \node at (0,0) (HKK) {$\hocolim K(B_2(F'))^\othplus$};
      \node at (6,0) (vecFBp) {$\hocolim K(B_2((\vec{F}\circ B)'))^\othplus$};
      \node at (12,0) (Gpp) {$\hocolim K(B_2(G'_p))^\othplus$};      
      \node at (12,-1.25) (Gp) {$\hocolim K(G_p)^\othplus$};      
      \node at (6,-1.25) (G) {$\hocolim G^\othplus$};      
      \node at (0,-1.25) (ours) {$\hocolim\thicf{F}^\othplus$};      
      \draw[<->] (HKK) to node[above]{\ref{item:F-FAr}} (vecFBp);
      \draw[<->] (vecFBp) to node[above]{\ref{item:big-step}} (Gpp);
      \draw[<->] (Gpp) to node[right]{\ref{item:G-B2G}} (Gp);
      \draw[<->] (Gp) to node[above]{\ref{item:G-Gp}} (G);
      \draw[<->] (G) to node[above]{\ref{item:thic-F-thic-G}} (ours);
    \end{tikzpicture}
  \]
  \caption{\textbf{The steps in the proof of
      Proposition~\ref{prop:HKK-real-agree}.} Each arrow is labeled by
    the step showing it is an equivalence.}
    \label{fig:prop-HKK-outline}
\end{figure}

\begin{proposition}\label{prop:HKK-real-agree}
  Given a strictly unitary, lax $2$-functor $F\co\CCat{n}\to\BurnsideCat$,
  there is a stable homotopy equivalence between the Hu-Kriz-Kriz
  realization $\hocolim K(B_2(F'))^\othplus$ and the realization $\hocolim
  \thicf{F}^\othplus$ from Section~\ref{sec:subsec-realize}.
\end{proposition}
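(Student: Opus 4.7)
The plan is to interpolate between the two realizations through a zigzag of diagrams connected by stable equivalences. The structure of the argument is essentially dictated by the list of six intermediate comparisons already isolated in the paragraphs~\ref{item:F-FAr}--\ref{item:big-step} preceding the proposition; my proposal is to verify that each step in that list is indeed justified by the tools developed earlier in the paper, and to spell out the one step that requires genuine new work.

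First, using \Lemma{lift-to-arrow}, I would lift $F$ to $\vec{F}\co\ArrowCat(\CCat{n})\to\BurnsideCat$ with $\vec{F}\circ A=F$, and also form $\vec{F}\circ B\co\thic{\CCat{n}}\to\BurnsideCat$. The cofinality statements \Lemma{CCat-to-Ar-cofinal} and \Lemma{thic-to-Ar-cofinal} combined with property~\ref{item:hocolim-cofinal} of homotopy colimits and property~\ref{item:EM-equiv-to-equiv} of the $K$-theory functor then yield a stable equivalence
\[
\hocolim K(B_2(F'))^\othplus \simeq \hocolim K(B_2((\vec{F}\circ B)'))^\othplus.
\]
This establishes step~\ref{item:KF-KFB}. (Here \Lemma{B2-contractible} is used to see that applying $B_2$ does not affect homotopy cofinality.)

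Next, to compare the Hu-Kriz-Kriz side to the thickening side of \Section{realize-functor}, I would introduce the intermediate spectral diagram $G$ on $\thic{\CCat{n}}$ obtained by replacing each sphere $\SphereS$ in the definition of $\thicf{F}$ with $K(\Sets)$, as well as the permutative-category diagram $G_p$ obtained by replacing wedge sums with coproducts and spheres with $\Sets$. Barratt-Priddy-Quillen (property~\ref{item:EM-sets-to-sphere}) gives a natural transformation $\thicf{F}\to G$ that is a stable equivalence on each object, so by property~\ref{item:hocolim-nat-trans} we get $\hocolim\thicf{F}^\othplus\simeq\hocolim G^\othplus$; this is step~\ref{item:thic-F-thic-G}. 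Step~\ref{item:G-Gp} uses property~\ref{item:EM-preserve-prods} (so that $K$ turns Cartesian products of permutative categories into products of spectra) together with the standard wedge-to-product stable equivalence to produce $\hocolim G^\othplus\simeq\hocolim K(G_p)^\othplus$. Step~\ref{item:G-B2G} is then immediate from \Lemma{top-hocolim-is-hocolim} (applied on $\thic{\CCat{n}}_\othplus$).

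What remains, and what I expect to be the main obstacle, is the identification of $B_2(G_p')$ with $B_2((\vec{F}\circ B)')$ at the level of homotopy colimits, i.e.\ step~\ref{item:big-step}. The natural bijection $F(g\circ f)\cong F(g)\times_{F(v)}F(f)$ defines the desired component-wise equivalence on objects, but this does not assemble into a strict natural isomorphism of 2-functors $G_p'\to(\vec{F}\circ B)'$; only into a pseudonatural equivalence. My proposal to handle this is the device described in~\ref{item:big-step}: let $\mathscr{E}$ be the indiscrete category on two objects, set $\Dat=\mathscr{E}\times\thic{\CCat{n}}_\othplus$, and package the pseudonatural equivalence into a single lax 2-functor $H\co\Dat\to\PermuCat$ whose restrictions to the two copies $\Dat_0,\Dat_1\cong\thic{\CCat{n}}_\othplus$ are $(\vec{F}\circ B)'$ and $G_p'$ respectively. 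Since the inclusions $\Dat_i\hookrightarrow\Dat$ are equivalences of categories, property~\ref{item:EM-equiv-to-equiv} of the Elmendorff-Mandell machine gives equivalences $K(B_2(G_p'))^\othplus\to K(B_2(H))\leftarrow K(B_2((\vec{F}\circ B)'))^\othplus$ of (homotopy coherent) diagrams, and \Proposition{hequiv-diags-hocolim} then gives the desired stable equivalence of homotopy colimits. The one delicate point I would need to verify carefully is that the coherence data making $H$ a lax 2-functor really does satisfy the associativity and unit axioms; this reduces to checking that the canonical isomorphisms between iterated fiber products $F(g\circ f)\cong F(g)\times_{F(v)}F(f)$ respect the associators of $\BurnsideCat$, which in turn is a direct consequence of the axiom~\ref{item:CuFunc} in \Lemma{2func-is}. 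Chaining together the equivalences from all six steps yields the proposition.
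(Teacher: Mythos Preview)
Your proposal is correct and follows essentially the same approach as the paper: the paper's proof simply chains together steps~\ref{item:F-FAr}--\ref{item:big-step} in order, and your write-up does the same while supplying a bit more explanation of why each step is valid. The additional care you take with step~\ref{item:big-step} (the pseudonatural equivalence packaged via $\mathscr{E}\times\thic{\CCat{n}}_\othplus$) is exactly what the paper outlines in that item, so there is no meaningful divergence.
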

\begin{proof}
  To identify the Hu-Kriz-Kriz construction and the thickening
  construction from \Section{realize-functor} we use a sequence of
  intermediate diagrams, somewhat in the spirit
  of \Section{small-is-big}. (See \Figure{prop-HKK-outline} for an
  overview.)
\begin{enumerate}[leftmargin=*,label=(\arabic*)]
\item\label{item:F-FAr} By \Lemma{lift-to-arrow}, we can lift the functor $F\co
  \CCat{n}\to\BurnsideCat$ to a functor $\vec{F}\co
  \ArrowCat(\CCat{n})\to\BurnsideCat$, so that $F=\vec{F}\circ
  A$. Composing with the composition map $B\co
  \thic{\CCat{n}}\to\ArrowCat(\CCat{n})$ gives a functor $\vec{F}\circ
  B\co \thic{\CCat{n}}\to\BurnsideCat$.

  There is a homotopy equivalence $\hocolim
  K(B_2(F'))^{\othplus}\simeq \hocolim K(B_2((\vec{F}\circ
  B)'))^\othplus$; the argument is similar to the one in
  Section~\ref{sec:small-is-big}.
  %
  In the commutative diagram 
  \[
    \xymatrix{
    B_2({\CCat{n}}')_\othplus\ar[r]\ar[d]_\simeq &
    B_2(\ArrowCat(\CCat{n})')_\othplus\ar[d]^{\simeq} &
    B_2((\thic{\CCat{n}})')_\othplus\ar[l]\ar[d]^{\simeq}\\
    \CCat{n}_{\othplus}\ar[r]_-{A_\othplus} &
    \ArrowCat(\CCat{n})_{\othplus} & \thic{\CCat{n}}_\othplus\ar[l]^-{B_\othplus}, 
  }
  \]
  the vertical arrows are quasi-equivalences by
  \Lemma{B2-contractible}, and 
  $A_\othplus$ and $B_\othplus$ are homotopy cofinal by
  \Lemma{CCat-to-Ar-cofinal} and \Lemma{thic-to-Ar-cofinal}, so
  the functors
  in the top row are also homotopy cofinal. Hence, by
  Property~\ref{item:hocolim-cofinal} of homotopy colimits
  $\hocolim K(B_2(F'))^\othplus\simeq \hocolim
  K(B_2(\vec{F}'))^\othplus\simeq \hocolim K(B_2((\vec{F}\circ
  B)'))^\othplus$.
  %
\item\label{item:thic-F-thic-G} Recall that $\thicf{F}$ sends an object
  $(u\stackrel{f}{\longrightarrow} v\stackrel{g}{\longrightarrow} w)$ to
  $\bigvee_{a\in F(f)}\prod_{b\in F(g),\ s(b)=t(a)}\SphereS$. Further,
  the definition of $\thicf{F}$ uses only the universal properties of
  product and coproduct, so for any spectrum $X$ we could
  define a functor $\thicf[X]{F}\co \thic{\CCat{n}}\to\Spectra$ with
  $\thicf[X]{F}(u\stackrel{f}{\longrightarrow}
  v\stackrel{g}{\longrightarrow} w) = \bigvee_{a\in F(f)}\prod_{b\in
    F(g),\ s(b)=t(a)}X$ (cf.~\Remark{more-thickenings}). Moreover, this thickening procedure is
  clearly natural in $X$. Taking the special case $X=K(\Sets)$ and
  using Property~\ref{item:EM-sets-to-sphere} gives a diagram $G\co
  \thic{\CCat{n}}\to\Spectra$ with
  \[
  G(u\stackrel{f}{\longrightarrow} v\stackrel{g}{\longrightarrow} w)=
  \bigvee_{a\in F(f)}\prod_{\substack{b\in F(g)\\s(b)=t(a)}}K(\Sets),
  \]
  and a natural transformation of diagrams $\thicf{F}\to G$ which is
  an equivalence on objects. In particular, $\hocolim
  \thicf{F}^\othplus \simeq \hocolim G^\othplus$.
  
\item\label{item:G-Gp} There is also a functor $G_p\co
  \thic{\CCat{n}}\to\PermuCat$ defined similarly to $\thicf{F}$, using
  the product and coproduct on $\PermuCat$ in place of the product and
  wedge sum of spaces, and using $\Sets$ in place of $\SphereS$. That
  is, 
  \[
  G_p(u\stackrel{f}{\longrightarrow} v\stackrel{g}{\longrightarrow}
  w)= \coprod_{a\in F(f)}\prod_{\substack{b\in F(g)\\s(b)=t(a)}}\Sets.
  \]
  Recall from~\ref{item:EM-prod-coprod} that finite products and coproducts of permutative categories
  are given by the Cartesian product. Thus, using
  Property~\ref{item:EM-preserve-prods},
  \[
    K(G_p)(u\stackrel{f}{\longrightarrow} v\stackrel{g}{\longrightarrow} w) \simeq \prod_{a\in F(f)}\prod_{\substack{b\in F(g)\\s(b)=t(a)}}K(\Sets).
  \]
  (Note that since $\thic{\CCat{n}}$ has only identity 2-morphisms,
  $B_2(\thic{\CCat{n}})$ can be identified with $\thic{\CCat{n}}$, and
  therefore, we have written $K(G_p)$ instead of $K(B_2(G_p))$.)
    
  Using the stable homotopy equivalence between wedge sum and product
  we get a natural transformation of diagrams from $G$ to $K(G_p)$
  which is an equivalence on objects. In particular, $\hocolim
  G^\othplus \simeq \hocolim K(G_p)^\othplus$.
\item\label{item:G-B2G} By \Lemma{top-hocolim-is-hocolim}, there is a
  homotopy equivalence $\hocolim K(G_p)^\othplus\simeq \hocolim
  B_2(K(G_p)')^\othplus$.  However, since $G_p$ is a strict functor from
  a $1$-category to $\PermuCat$,
  $B_2(K(G_p)')=K(G_p)\circ\Pi=K(B_2(G_p'))$, 
  where $\Pi\from
  B_2((\thic{\CCat{n}})')\to\thic{\CCat{n}}$ is the projection;
  therefore, $\hocolim K(G_p)^\othplus\simeq \hocolim
  K(B_2(G_p'))^\othplus$.
\item\label{item:big-step} To finish the identification, 
  there are isomorphisms $H$ from $G_p'$ to $(\vec{F}\circ
  B)'$. On the objects, the natural transformation will induce the
  equivalence from
  \[
  G_p'(u\stackrel{f}{\longrightarrow}
  v\stackrel{g}{\longrightarrow} w)=
  G_p(u\stackrel{f}{\longrightarrow} v\stackrel{g}{\longrightarrow}
  w)=
  \coprod_{a\in F(f)}\prod_{\substack{b\in F(g)\\
         s(b)=t(a)}}\Sets
  \]
  to
  \[
  (\vec{F}\circ B)'(u\stackrel{f}{\longrightarrow}
  v\stackrel{g}{\longrightarrow} w)=(\vec{F}\circ
  B)(u\stackrel{f}{\longrightarrow} v\stackrel{g}{\longrightarrow}
  w)=\prod_{c \in F(g\circ f)}\Sets
  \]
  which comes from the natural bijection $F(g \circ f) \to F(g)
  \times_{F(v)} F(f)$ between their indexing sets, and the
  identification of coproducts and products in $\PermuCat$ with
  Cartesian products. Because Cartesian product of sets is not
  strictly associative, and because of the 2-categorical nature of the identification between
  co-products and products in $\PermuCat$,
  this does not strictly define a strict natural
  isomorphism $G_p' \to (\vec{F}\circ B)'$. Instead, it defines a
  pseudonatural equivalence: for a map $\varphi\co
  (u\stackrel{f}{\longrightarrow} v\stackrel{g}{\longrightarrow} w)
  \to (u'\stackrel{f'}{\longrightarrow} v'\stackrel{g'}{\longrightarrow}
  w')$ in $\thic{\CCat{n}}$, there is a natural isomorphism of
  functors
  \[
  (\vec{F}\circ B)'(\varphi) \circ H(u\stackrel{f}{\longrightarrow}
  v\stackrel{g}{\longrightarrow} w) \cong
  H(u'\stackrel{f'}{\longrightarrow} v'\stackrel{g'}{\longrightarrow}
  w') \circ G_p'(\varphi).
  \]
  which respects composition in $\thic{\CCat{n}}$.

  We define $\mathscr{E}$ to be the category with objects $1$ and $0$
  and a unique morphism between any pair of objects. Let $\Dat
  =\mathscr{E} \times \thic{\CCat{n}}_\othplus$, and let $\Dat_0$ and $\Dat_1$
  be the full subcategories of $\Dat$ spanned by the objects of
  $\{0\}\times\thic{\CCat{n}}_\othplus$ and $\{1\}\times\thic{\CCat{n}}_\othplus$,
  respectively.  Note that the projection $\Dat \to \thic{\CCat{n}}_\othplus$
  is an equivalence, and restricts to isomorphisms
  from both $\Dat_0$ and $\Dat_1$ to $\thic{\CCat{n}}_\othplus$.

  The pseudonatural equivalence above defines a lax 2-functor, also
  denoted $H$, from $\Dat$ to $\PermuCat$ whose restriction to $\{1\}
  \times \thic{\CCat{n}}$ is $G_p'$ and whose restriction to $\{0\}
  \times \thic{\CCat{n}}$ is $(\vec{F} \circ B)'$.
  Therefore, $K(B_2(H))$ restricts to 
  $K(B_2(G_p'))^\othplus$ and $K(B_2(\vec{F} \circ B)')^\othplus$,
  and since both inclusions $\Dat_0\to\Dat$ and $\Dat_1\to\Dat$ are
  homotopy cofinal,
  \[
  \hocolim K(B_2((\vec{F}\circ B)'))^\othplus \simeq \hocolim K(B_2(H))\simeq \hocolim
  K(B_2(G_p'))^\othplus.
  \]
\end{enumerate}

Putting these all together (cf.~\Figure{prop-HKK-outline}), in
conjunction with \Lemma{HKK-extension}, the Hu-Kriz-Kriz realization
$\hocolim K(B_2(F'))^\othplus$ agrees with our thickening construction
$\hocolim \thicf{F}^\othplus$.  
\end{proof}

\subsection{Proof that the Khovanov homotopy types agree}\label{sec:HKK-is-LLS}

\begin{proof}[Proof of \Theorem{agree-with-HKK}]
  \Lemma{HKK-same-Burnside} identifies the $2$-functors
  $\CCat{n}\to\BurnsideCat$ used in this paper
  and~\cite{HKK-Kh-htpy}. Proposition~\ref{prop:HKK-real-agree}
  identifies Hu-Kriz-Kriz's realization of this functor with ours.
\end{proof}

\section{Khovanov homotopy type of a disjoint union and connected sum}\label{sec:Kh-disj-union}
In this section we prove
Theorems~\ref{thm:disjoint-union},~\ref{thm:connect-sum},
and~\ref{thm:unred-con-sum}.  For the first two theorems, we merely
need to show that the functor associated to a disjoint union
(respectively connect sum) of links is the product of the functors of
the individual links:
\begin{proposition}\label{prop:Kh-func-prod}
  Let $L_1$ and $L_2$ be link diagrams, and let $L_1\amalg L_2$ be
  their disjoint union. Order the crossings in $L_1\amalg L_2$ so that
  all of the crossings in $L_1$ come before all of the crossings in
  $L_2$. Then
  \begin{align*}
  \KhFunc^j(L_1\amalg L_2)&\cong\coprod_{j_1+j_2=j}\KhFunc^{j_1}(L_1)\times
  \KhFunc^{j_2}(L_2),\\
  \shortintertext{where $\times$ denotes the product of functors
    (\Definition{product}), $\coprod$ denotes the disjoint union of functors
  (\Definition{disjoint-union}), and $\cong$ denotes natural
  isomorphism of 2-functors. If we
    fix a basepoint on $L_1$ then}
  \rKhFunc^j(L_1\amalg
  L_2)&\cong\coprod_{j_1+j_2=j}\rKhFunc^{j_1}(L_1)\times
  \KhFunc^{j_2}(L_2).\\
  \shortintertext{If we fix basepoints on $L_1$ and $L_2$ and let $L_1\#L_2$ denote the connected sum (at the basepoints) then}
  \rKhFunc^j(L_1\#L_2)&\cong\coprod_{j_1+j_2=j}\rKhFunc^{j_1}(L_1)\times \rKhFunc^{j_2}(L_2).
  \end{align*}
  \end{proposition}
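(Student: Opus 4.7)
The plan is to verify the three natural isomorphisms directly from the description of $\KhFunc$ and $\rKhFunc$ in \Example{KhFunc}, using \Lemma{characterize-functor} to reduce the verification to the vertices, edges, and $2$-dimensional faces of the cube of resolutions. In each case, $\KhFunc$ (or $\rKhFunc$) is determined by the Khovanov generators over each vertex, the Khovanov differential on edges, and the ladybug matching on $2$-dimensional faces; and the products and disjoint unions of functors are described explicitly in Definitions~\ref{def:product} and~\ref{def:disjoint-union}. So we need to check that these three pieces of data respect the appropriate decomposition.

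For the disjoint union statement, fix $v=(v_1,v_2)\in\{0,1\}^{n_1}\times\{0,1\}^{n_2}$. The complete resolution $\mc{P}(v)$ of $L_1\amalg L_2$ is the disjoint union of $\mc{P}(v_1)\subset S^2$ and $\mc{P}(v_2)\subset S^2$, so a Khovanov generator over $v$ is precisely a pair of Khovanov generators, one over each of $v_1$ and $v_2$; and the quantum grading is additive, giving the identification of sets $\KhFunc^j(L_1\amalg L_2)(v)=\coprod_{j_1+j_2=j}\KhFunc^{j_1}(L_1)(v_1)\times\KhFunc^{j_2}(L_2)(v_2)$. On edges with $|u|-|v|=1$, the crossing being changed lies in either $L_1$ or $L_2$, and the corresponding merge or split cobordism acts as the identity on the other summand; so \Example{KhFunc} identifies the correspondence $\KhFunc(\cmorph{u}{v})$ with the product correspondence of \Definition{product} (in the $L_1$-case, tensored with the identity correspondence on $\KhFunc(L_2)(v_2)$, and vice versa). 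For the $2$-dimensional faces of the cube, the only nontrivial case is a ladybug configuration; a ladybug configuration is a local phenomenon supported near two crossings, so both crossings lie on the same component (either $L_1$ or $L_2$), and the ladybug matching for $L_1\amalg L_2$ is induced from the ladybug matching on the relevant component. In all other $2$-dimensional faces, both sides of the target isomorphism are determined by uniqueness, so \Lemma{characterize-functor} then gives the required natural isomorphism.

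For the reduced disjoint union statement, placing a basepoint on $L_1$ restricts the Khovanov generators by requiring the pointed circle of $\mc{P}(v_1)$ to be labeled $x_-$; this is precisely the condition defining $\rKhFunc^{j_1}(L_1)(v_1)$, and imposes no condition on $\mc{P}(v_2)$. The Khovanov differential and ladybug matching never change the label of the pointed circle (more precisely, they restrict compatibly to the reduced subcomplex), so the edge and $2$-face data restrict to the product of the reduced functor on $L_1$ with the unreduced one on $L_2$. The connected sum statement is the same sort of verification, with one extra combinatorial input. When $L_1\#L_2$ is formed at the basepoints, each complete resolution $\mc{P}(v_1,v_2)$ of $L_1\#L_2$ differs from $\mc{P}(v_1)\amalg\mc{P}(v_2)$ only in that the basepointed circles of $\mc{P}(v_1)$ and $\mc{P}(v_2)$ are replaced by a single basepointed circle; the remaining circles are the same. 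In the reduced theory, the pointed circle of $\mc{P}(v_1,v_2)$ must be labeled $x_-$, and under the (co)multiplication formulas in \Section{khovanov-basic} with $h=t=0$, labeling the merged circle by $x_-$ is equivalent to labeling both of the pointed circles of $\mc{P}(v_1)$ and $\mc{P}(v_2)$ by $x_-$, under the merge/split induced by the small $1$-handle joining them. This identifies the vertex sets $\rKhFunc^j(L_1\#L_2)(v_1,v_2)$ with $\coprod_{j_1+j_2=j}\rKhFunc^{j_1}(L_1)(v_1)\times\rKhFunc^{j_2}(L_2)(v_2)$.

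The main thing to check carefully will be the ladybug matching on $2$-dimensional faces for the connected sum: we must confirm that the ladybug matching of $L_1\#L_2$ on a $2$-face with both crossings in, say, $L_1$ agrees with the ladybug matching for $L_1$ applied to the corresponding face. The potential subtlety is that a ladybug configuration for $L_1$ at a resolution involving the pointed circle of $L_1$ could interact with the connected sum region. However, the ladybug matching is a local operation supported in a $2$-sphere neighborhood of the two arcs defining the ladybug, and we can arrange the connected sum region to lie far from this neighborhood (after isotopy in $S^2$, using the invariance statement \cite[Lemma~5.8]{RS-khovanov} cited in the proof of \Lemma{HKK-same-Burnside}). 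After this, the ladybug matching on $L_1\#L_2$ agrees with the one on $L_1$, and the product/disjoint union identifications on the $2$-faces follow. \Lemma{characterize-functor} then packages everything into a natural isomorphism of strictly unitary $2$-functors.
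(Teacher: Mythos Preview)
Your proposal is correct and follows essentially the same approach as the paper: both reduce via \Lemma{characterize-functor} to checking vertices, edges, and $2$-faces, identify the vertex and edge data directly, and observe that the only nontrivial $2$-face check is the ladybug case, where both crossings must lie in the same summand.

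One small remark on your connected sum discussion: the isotopy argument you give is valid but unnecessary. In the reduced theory the pointed circle is always labeled $x_-$, while a ladybug configuration requires its body circle $C_0$ to be labeled $x_+$; hence no ladybug in $\rKhFunc(L_1\#L_2)$ can have the merged pointed circle as its body. Every ladybug therefore sits on a non-pointed circle, entirely within the $L_1$- or $L_2$-region, and the matching is manifestly the same as for the summand. The paper simply asserts this case is ``fairly obvious,'' so your extra care does no harm.
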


\begin{proof}
  We will prove the first statement; the proofs of the other two are
  similar. Let $n_i$ be the number of crossings in $L_i$. To keep
  notation simple, write $F=\KhFunc^j(L_1\amalg L_2)$, $X_v=F(v)$,
  $A_{v,w}=F(\cmorph{v}{w})$, $G=\coprod_{j_1+j_2=j}\KhFunc^{j_1}(L_1)\times
  \KhFunc^{j_2}(L_2)$, $Y_v=G(v)$ and $B_{v,w}=G(\cmorph{v}{w})$.

  By \Lemma{characterize-functor}, it suffices to construct bijections
  $\phi_v\co X_v\stackrel{\cong}{\longrightarrow} Y_v$ and
  $\psi_{v,w}\co A_{v,w}\stackrel{\cong}{\longrightarrow} B_{v,w}$ for
  all $v>w$ with $|v|-|w|=1$ so that $\psi_{v,w}$ respects the source
  and target maps and for any $u> w$ with $|u|-|w|=2$, the following
  diagram commutes:
  \begin{equation}\label{eq:disj-un-diag}
  \xymatrix{
    A_{v,w}\times_{X_v}A_{u,v} \ar[rr]^{\psi_{v,w}\times
      \psi_{u,v}}\ar[d]^{F_{u,v,w}}
   & & B_{v,w}\times_{Y_v}B_{u,v}\ar[d]^{G_{u,v,w}}\\
   A_{u,w} & & B_{u,w}\\
   A_{v',w}\times_{X_{v'}}A_{u,v'} \ar[rr]^{\psi_{v',w}\times
     \psi_{u,v'}} \ar[u]_{F_{u,v',w}} & & B_{v',w}\times_{Y_{v'}}B_{u,v'}.\ar[u]_{G_{u,v',w}}
  }
  \end{equation}
  Here, $v$ and $v'$ are the two vertices so that $u>v,v'> w$. Note
  that all arrows in this diagram are isomorphisms.

  The map $\phi_v$ is the canonical identification between Khovanov
  generators for $L_1\amalg L_2$ and pairs of a Khovanov generator for
  $L_1$ and a Khovanov generator for $L_2$. There is a unique map
  $\psi_{v,w}\co A_{v,w}\to B_{v,w}$ for $v>w$ with $|v|-|w|=1$ which
  commutes with the source and target maps, because:
  \begin{enumerate}
  \item\label{item:empty-or-one} Given $x_v\in X_v$ and $x_w\in X_w$,
    $s^{-1}(x_v)\cap t^{-1}(x_w)\subset A_{v,w}$ is either empty, if
    $x_v$ does not occur in the Khovanov differential of $x_w$, or
    consists of a single point, if $x_v$ does occur in the Khovanov
    differential of $x_w$. Similar statements hold for $Y_{v}$ and
    $B_{v,w}$. It follows that if $\psi_{v,w}$ exists then it is
    unique.
  \item\label{item:is-in-fact-chain-map} The canonical identification
    of Khovanov generators does, in fact, give a chain map. So, by the
    observations in the previous point, the map $\psi_{v,w}$ does
    exist.
  \end{enumerate}
  Except in one case, the same argument shows that the
  diagram~\eqref{eq:disj-un-diag} commutes: typically, for each
  $x_u\in X_u$ and $x_w\in X_w$ (with $u>w$ and $|u|-|w|=2$),
  $s^{-1}(x_u)\cap t^{-1}(x_w)\subset A_{u,w}$ is either empty or has
  a single element. The exceptional case is the case of a ladybug
  configuration, as in~\cite[Section~5.4]{RS-khovanov} (see also
  \Figure{ladybug-matching}). In the ladybug case, either both
  crossings under consideration lie in $L_1$ or both crossings lie in
  $L_2$, from which it follows easily that the diagram commutes. (This
  is immediate for the present case when we are considering the
  disjoint union $L_1\amalg L_2$; the connect-sum case $L_1\# L_2$ is
  also fairly obvious.)
\end{proof}

We are now ready to prove Theorems~\ref{thm:disjoint-union}
and~\ref{thm:connect-sum}, which we recall for the reader's convenience:

\begin{reptheorem}{thm:disjoint-union}
  Let $L_1$ and $L_2$ be links, and $L_1\amalg L_2$ their disjoint union. Then
  \begin{equation}\tag{\ref*{eq:Kh-disjoint}}
  \KhSpace^j(L_1\amalg L_2)\simeq \bigvee_{j_1+j_2=j}\KhSpace^{j_1}(L_1)\smas\KhSpace^{j_2}(L_2).
  \end{equation}
  Moreover, if we fix a basepoint $p$ in $L_1$, not at a crossing, and
  consider the corresponding basepoint for $L_1\amalg L_2$, then
  \begin{equation}\tag{\ref*{eq:rKh-disjoint}}
  \rKhSpace^j(L_1\amalg L_2)\simeq \bigvee_{j_1+j_2=j}\rKhSpace^{j_1}(L_1)\smas\KhSpace^{j_2}(L_2).
  \end{equation}
\end{reptheorem}

\begin{reptheorem}{thm:connect-sum}
  Let $L_1$ and $L_2$ be based links and $L_1\# L_2$ the connected sum
  of $L_1$ and $L_2$, where we take the connected sum near the
  basepoints. Then
  \begin{equation}\tag{\ref*{eq:rKh-conn-sum}}
  \rKhSpace^j(L_1\# L_2)\simeq \bigvee_{j_1+j_2=j}\rKhSpace^{j_1}(L_1)\smas\rKhSpace^{j_2}(L_2).
  \end{equation}
\end{reptheorem}

\begin{proof}[Proof of Theorems~\ref{thm:disjoint-union} and~\ref{thm:connect-sum}]
  We will prove Formula~(\ref{eq:Kh-disjoint}); the proofs of
  Formulas~(\ref{eq:rKh-disjoint}) and~(\ref{eq:rKh-conn-sum}) are
  essentially the same. 

  Fix a diagram for $L_1\amalg L_2$ so that there are no crossings
  between $L_1$ and $L_2$. Order the crossings in $L_1\amalg L_2$ so
  that all of the crossings in $L_1$ come before all of the crossings
  in $L_2$. By \Proposition{Kh-func-prod},
  \[
    \KhFunc^j(L_1\amalg L_2)\cong\coprod_{j_1+j_2=j}\KhFunc^{j_1}(L_1)\times
    \KhFunc^{j_2}(L_2).
  \]
  By \Lemma{iso-realize-same}, naturally isomorphic functors have
  stably homotopy equivalent realizations.
  By Propositions~\ref{prop:realize-product}
  and~\ref{prop:realize-disjoint-union}, the realization of $\amalg_{j_1+j_2=j}\KhFunc^{j_1}(L_1)\times
  \KhFunc^{j_2}(L_2)$ is 
  $
  \bigvee_{j_1+j_2=j}\KhSpace^{j_1}(K_1)\smas\KhSpace^{j_2}(L_2).
  $
\end{proof}

These results quickly imply \Corollary{Kh-squares}, which we also recall:
\begin{repcorollary}{cor:Kh-squares}
  For any $n$ there exists a link $L_n$ so that the operation
  \[
  \Sq^n\co \Kh^{i,j}(L_n)\to \Kh^{i+n,j}(L_n)
  \]
  is non-zero, for some $i,j\in\ZZ$. Similarly, there exists a knot $K_n$ so that the operation
  \[
  \Sq^n\co \rKh^{i,j}(K_n)\to \rKh^{i+n,j}(K_n)
  \]
  is non-zero, for some $i,j\in\ZZ$. Further, for this knot, the operation 
  \[
  \Sq^n\co \rKh^{i,j}(K_n)\to \rKh^{i+n,j}(K_n)
  \]
  is also non-zero for some $i,j\in\ZZ$.
\end{repcorollary}
\begin{proof}
  For the first statement, consider the disjoint union $L_n$ of $n$
  copies of the left-handed trefoil $T$. It follows
  from~\cite[Proposition~9.2]{RS-khovanov} that
  \[
  \KhSpace(T)\simeq\Sigma^{-3}\SphereS\vee \Sigma^{-2}\SphereS\vee
  \SphereS\vee \SphereS\vee \Sigma^{-4}\mathbb{R}\mathrm{P}^2
  \]
  (compare~\cite[Example~9.4]{RS-khovanov}). So,
  by \Theorem{disjoint-union},
  \[
  \KhSpace(L_n)\simeq \Sigma^{-4n}(\overbrace{\mathbb{R}\mathrm{P}^2\smas\cdots\smas\mathbb{R}\mathrm{P}^2}^{n\text{ copies}})\vee Y
  \]
  for some space $Y$. It follows from the Cartan formula that 
  \[
  \Sq^n\co H^n\bigl((\mathbb{R}\mathrm{P}^2)^{\smas n};\ZZ/2\ZZ)\to 
  H^{2n}\bigl((\mathbb{R}\mathrm{P}^2)^{\smas n};\ZZ/2\ZZ)
  \]
  is non-trivial. This proves the first part of the result.

  For the second part of the result, let $K$ be the knot
  $15^n_{41127}$ and let $K_n$ be the connect sum of $n$ copies of
  $K$. According to the calculation in~\cite[Figure
  6]{Shu-kh-patterns}, $\rKh^{-2,0}(K;\ZZ)\cong\ZZ$,
  $\rKh^{0,0}(K;\ZZ)=\ZZ/2\ZZ$, and $\rKh^{i,0}(K;\ZZ)=0$ for $i\neq
  -2,0$. In particular, there is a class $\alpha\in
  \rKh^{-1,0}(K;\ZZ/2\ZZ)\cong \ZZ/2\ZZ$ so that $\Sq^1(\alpha)$ is
  non-zero and $\Sq^i(\alpha)=0$ for $i>1$. So, it follows from
  \Theorem{connect-sum} and the Cartan formula that for the class
  $\beta=\alpha\smas\cdots\smas\alpha\in \rKh^{-n,0}(K_n)$,
  $\Sq^n(\beta)$ is non-trivial.

  Finally, we argue that $\Sq^n$ is nonvanishing on $\Kh^{*,*}(K_n)$,
  as well. To see this, recall from~\cite[Theorem 3]{RS-khovanov} that
  there is a cofibration sequence
  \[
    \rKhSpace^{j-1}(K_n)\to \KhSpace^j(K_n)\to\rKhSpace^{j+1}(K_n)
  \]
  inducing the long exact sequence
  \[
    \cdots\to \rKh^{i,j+1}(K_n)\to \Kh^{i,j}(K_n)\to \rKh^{i,j-1}(K_n)\to\rKh^{i+1,j+1}(K_n)\to\cdots.
  \]
  Further, for coefficients in $\ZZ/2\ZZ$,
  \[
    \dim(\Kh^{i,j}(K_n;\ZZ/2\ZZ))=\dim(\rKh^{i,j-1}(K_n;\ZZ/2\ZZ))+\dim(\rKh^{i,j+1}(K_n;\ZZ/2\ZZ))
  \]
  \cite[Proposition 1.7]{OSzR-kh-oddkhovanov}, so the map
  $\Kh^{i,j}(K_n;\ZZ/2\ZZ)\to \rKh^{i,j-1}(K_n;\ZZ/2\ZZ)$ is
  surjective. In particular, there is a class $\gamma\in
  \Kh^{-n,1}(K_n;\ZZ/2\ZZ)$ which maps to $\beta\in
  \rKh^{-n,0}(K_n;\ZZ/2\ZZ)$. By naturality of $\Sq^n$, it follows that
  $\Sq^n(\gamma)$ maps to $\Sq^n(\beta)\neq 0$, so $\Sq^n(\gamma)\neq
  0$, as desired.
  %
\end{proof}

Finally, we turn to the unreduced Khovanov homology of a connected
sum.
\begin{definition}
  Consider the Khovanov homotopy type associated to the unknot,
  $\KhSpace(U)=\SphereS\vee\SphereS$, which is the suspension spectrum
  of $S^0\vee S^0=\{\ast, p_-, p_+\}$. The spectrum
  $\KA{1}\defeq \KhSpace(U)$ has a product
  $\mu\co \KA{1}\smas \KA{1}\to \KA{1}$ induced by the map of spaces
  $(S^0\vee S^0)\smas(S^0\vee S^0)\to (S^0\vee S^0)$ given by
  \[
    p_-\smas p_-\mapsto p_-\qquad p_+\smas p_-\mapsto p_+ \qquad
    p_-\smas p_+\mapsto p_+\qquad p_+\smas p_+\mapsto *.
  \]
\end{definition}

\begin{lemma}
  The operation $\mu$ makes $\KA{1}$ into a ring spectrum.
\end{lemma}
\begin{proof}
  This is immediate from the definitions.
\end{proof}

\begin{remark}
  The map on reduced cohomology induced by $\mu$ is the split map
  $\Kh(U)\to \Kh(U)\otimes \Kh(U)$. (The generators of $\Kh(U)$ are
  $x_-$ corresponding to $p_-$ and $x_+$ corresponding to $p_+$.)
\end{remark}

\begin{remark}
  The notation $\KA{1}$ is chosen to be reminiscent of the first of
  Khovanov's arc algebras $H^n$.
\end{remark}


Next, fix a link diagram $K$ with $n$ crossings and a basepoint
$p\in K$.  We make $\KhSpace(K)$ into a module spectrum over
$\KA{1}$. We will use the box map realization
from \Section{smaller-cube}, applied to the functor
$F_{\Kh}\from\CCat{n}\to\BurnsideCat$, using a particular choice of
spatial refinement, which we need to specify. The resulting CW complex
is described in \Proposition{cwcomplex-box-hocolim}, producing a finite CW
spectrum $\KhSpace(K)$. Before specifying the spatial refinement, we
recall the definitions of the reduced Khovanov functors
$F_{\pm{\Kh}}\from\CCat{n}\to\BurnsideCat$, to fix notation:

\begin{notation}
  For $u\in\{0,1\}^n$, define $F_{+\Kh}(u)$ (respectively,
  $F_{-\Kh}(u)$) to be the subset of $F_{\Kh}(u)$ where the circle in
  the complete resolution $\mc{P}(u)$ containing the basepoint is
  labeled $x_+$ (respectively, $x_-$). For $u>v\in\{0,1\}^n$, define
  the correspondence from $F_{+\Kh}(u)$ to $F_{+\Kh}(v)$
  (respectively, from $F_{-\Kh}(u)$ to $F_{-\Kh}(v)$) to be the subset
  $s^{-1}(F_{+\Kh}(u))\cap t^{-1}(F_{+\Kh}(v))$ (respectively,
  $s^{-1}(F_{-\Kh}(u))\cap t^{-1}(F_{-\Kh}(v))$) of the correspondence
  from $F_{\Kh}(u)$ to $F_{\Kh}(v)$. It is straightforward from the
  definition of the Khovanov differential (\Section{khovanov-basic})
  that this produces well-defined functors
  $F_{\pm{\Kh}}\from\CCat{n}\to\BurnsideCat$.  Furthermore, the map
  from $F_{+\Kh}(u)$ to $F_{-\Kh}(u)$ which relabels the pointed
  circle in $\mc{P}(u)$ from $x_+$ to $x_-$ induces a natural isomorphism
  from $F_{+\Kh}$ to $F_{-\Kh}$; we often write $F_{\wt{\Kh}}$ to
  denote either functor.  Finally, for any $u\in\{0,1\}^n$,
  $F_{\Kh}(u)=F_{+\Kh}(u)\amalg F_{-\Kh}(u)$; and for any
  $u>v\in\{0,1\}^n$, the correspondence $F_{\Kh}(\cmorph{u}{v})$ from
  $F_{\Kh}(u)$ to $F_{\Kh}(v)$ is the disjoint union of the
  correspondence $F_{+\Kh}(\cmorph{u}{v})$ from $F_{+\Kh}(u)$ to
  $F_{+\Kh}(v)$, the correspondence $F_{-\Kh}(\cmorph{u}{v})$ from
  $F_{-\Kh}(u)$ to $F_{-\Kh}(v)$, and some correspondence from
  $F_{-\Kh}(u)$ to $F_{+\Kh}(v)$.
\end{notation}

\begin{definition}
  A spatial refinement $\wt{F}_{\Kh}$ of $F_{\Kh}$ induces spatial
  refinements $\wt{F}_{+\Kh}$ of $F_{+\Kh}$ and $\wt{F}_{-\Kh}$ of
  $F_{-\Kh}$. We call $\wt{F}_{\Kh}$ a \emph{pointed spatial
    refinement} if, with respect to the natural isomorphism between
  $F_{+\Kh}$ and $F_{-\Kh}$, the boxes used to define $\wt{F}_{+\Kh}$
  and $\wt{F}_{-\Kh}$ are identical. In this case, there is an induced
  natural isomorphism between $\wt{F}_{+\Kh}$ and $\wt{F}_{-\Kh}$. For
  pointed spatial refinements, the CW complexes
  $\hocolim(\wt{F}^+_{+\Kh})$ and $\hocolim(\wt{F}^+_{-\Kh})$ are
  canonically isomorphic.
\end{definition}

\begin{lemma}
  Every pointed link diagram $(K,p)$ admits a pointed spatial
  refinement $\wt{F}_{\Kh}$.
\end{lemma}

\begin{proof}
We construct a spatial refinement $\wt{F}_{\Kh}$ of $F_{\Kh}$ in
several steps.  First construct a spatial refinement
$\wt{F}_{-\Kh}$ of $F_{-\Kh}$ with the additional
restriction that the box maps come from the subspaces
$E^\circ(\{B_x\},s)$ of $E(\{B_x\},s)$, that is, the sub-boxes are
contained in the interiors of the bigger boxes. Then use the natural
isomorphism between $F_{+\Kh}$ and $F_{-\Kh}$ to get a
spatial refinement $\wt{F}_{+\Kh}$ of $F_{+\Kh}$. Finally,
extend $\wt{F}_{+\Kh}$ and $\wt{F}_{-\Kh}$ to construct a
spatial refinement $\wt{F}_{\Kh}$ of $F_{\Kh}$, following the
inductive argument in the proof of
\Proposition{box-refinement-exist-unique}~(\ref{item:spatial-exists}).
For the induction step, fix a length-$\ell$ sequence
$v_0\to\dots\to v_{\ell}$ of non-identity morphisms in $\CCat{n}$. 
There is a correspondence $F_{\Kh}(\cmorph{v_0}{v_\ell})$ and a subset
$F_{+\Kh}(\cmorph{v_0}{v_\ell})\amalg
F_{-\Kh}(\cmorph{v_0}{v_\ell})$; let $s$ be the source map of the
correspondence $F_{\Kh}(\cmorph{v_0}{v_\ell})$ and $s'$ the restriction of
$s$ to $F_{+\Kh}(\cmorph{v_0}{v_\ell})\amalg
F_{-\Kh}(\cmorph{v_0}{v_\ell})$. Induction and $\wt{F}_{+\Kh}$ and
$\wt{F}_{-\Kh}$ give a diagram
\[
\xymatrix{
  \bdy([0,1]^{\ell-1})\ar[r] \ar[d] & E^\circ(\{B_x\},s)\ar[d]\\
  [0,1]^{\ell-1} \ar[r]& E^\circ(\{B_x\},s'),
}
\]
where the right-hand vertical map forgets the boxes labeled by
elements of $F_{\Kh}(\cmorph{v_0}{v_\ell})\setminus
(F_{+\Kh}(\cmorph{v_0}{v_\ell})\amalg
F_{-\Kh}(\cmorph{v_0}{v_\ell}))$. The inductive step is to construct a
lift $[0,1]^{\ell-1}\to E^\circ(\{B_x\},s)$ making the diagram
commute.
\Lemma{box-maps-rel-contractible} guarantees the existence of such a
lift. Thus, induction implies that $F_{\Kh}$ has a spatial refinement
extending $\wt{F}_{+\Kh}$ and $\wt{F}_{-\Kh}$.
\end{proof}

\begin{construction}
  Given a pointed spatial refinement, define a map
  $\Psi\from\hocolim(\wt{F}^+_{\Kh})\to\hocolim(\wt{F}^+_{\Kh})$ as
  follows. Notice that $\hocolim(\wt{F}^+_{+\Kh})$ is a subcomplex of
  $\hocolim(\wt{F}^+_{\Kh})$ and $\hocolim(\wt{F}^+_{-\Kh})$ is the
  corresponding quotient complex. Define $\Psi$
  to be the composition
  \[
    \hocolim(\wt{F}^+_{\Kh})\onto\hocolim(\wt{F}^+_{-\Kh})\stackrel{\cong}{\longrightarrow}
    \hocolim(\wt{F}^+_{+\Kh})\into\hocolim(\wt{F}^+_{\Kh}),
  \]
  where the first map is the quotient map, the second map is the
  canonical isomorphism, and the third map is the subcomplex
  inclusion. Note that $\Psi$ is a cellular map. The induced map on
  $\KhCx(K)$, the reduced cellular cochain complex of $\hocolim(\wt{F}^+_{\Kh})$,
  sends generators that label the pointed circle by $x_-$ to zero and
  on the rest of the Khovanov generators relabels the pointed circle
  from $x_+$ to $x_-$.
\end{construction}

Now we are ready to define the $\KA{1}$-module structure on $\KhSpace(K)$. 
\begin{definition}\label{def:mod-struct}
  Define a map $\KhSpace(K)\smas\KA{1}\to\KhSpace(K)$ induced the following map of spaces:
  \begin{equation}\label{eq:mod-struct}
    \hocolim(\wt{F}^+_{\Kh})\smas\{\ast, p_-,
    p_+\}=(\hocolim(\wt{F}^+_{\Kh})\times\{p_-\})\vee(\hocolim(\wt{F}^+_{\Kh})\times\{p_+\}) \to
    \hocolim(\wt{F}^+_{\Kh}).
  \end{equation}
  On the first summand, the map
  $\hocolim(\wt{F}^+_{\Kh})\times\{p_-\}\to \hocolim(\wt{F}^+_{\Kh})$
  is projection to the first factor. On the second summand, the
  map is projection to the first factor composed with the map
  $\Psi\from\hocolim(\wt{F}^+_{\Kh})\to\hocolim(\wt{F}^+_{\Kh})$ defined above.
\end{definition}

\begin{lemma}
  \Definition{mod-struct} makes $\KhSpace(K)$ into a module spectrum
  over $\KA{1}$.
\end{lemma}
\begin{proof}
  This follows from the fact that $\Psi\circ\Psi$ sends all of
  $\hocolim(\wt{F}^+_{\Kh})$ to the basepoint.
\end{proof}

The ring spectrum $\KA{1}$ is commutative, so we can view the action
of $\KA{1}$ on $\KhSpace(K)$ as either a left or a right action.

Note that the induced map on the reduced cellular cochain complexes
associated to the map~\eqref{eq:mod-struct} is the split map
$\KhCx(K)\to \KhCx(K)\otimes \Kh(U)$.

\begin{proposition}\label{prop:quasi-isom-module-spectrum}
  Up to weak equivalence of $\KA{1}$-module spectra,
  $\KhSpace(K)$ is an invariant of pointed links. That is, if $(K,p)$
  and $(K',p')$ are pointed link diagrams representing isotopic
  pointed links, then there exist $\KA{1}$-module spectra
  $\KhSpace(K)=X_0,X_1,\dots, X_{\ell-1},X_\ell=\KhSpace(K')$, and for
  any adjacent pair $X_i,X_{i+1}$, either a map $X_i\to X_{i+1}$ or a
  map $X_{i+1}\to X_i$, which is both an $\KA{1}$-module map and a
  weak equivalence.
\end{proposition}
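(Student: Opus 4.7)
The plan is to prove the $\KA{1}$-module invariance in three stages, each adapting existing invariance machinery.

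\emph{Stage 1 (Independence of the pointed spacial refinement).} My first goal is a pointed analog of \Proposition{box-refinement-exist-unique}\Item{spacial-unique}: for two pointed spacial refinements $\wt{F}_\Kh$ and $\wt{F}'_\Kh$ of the same pointed diagram, I will inductively construct a homotopy coherent diagram over $\CCat{1}\times\CCat{n}$ restricting to them on the two ends and respecting the pointed structure at every stage. The key tool is \Lemma{box-maps-rel-contractible}, which gives the relative lifting property for the interior box-map space $E^\circ$. Concretely, one builds the interpolation first over $\wt{F}_{-\Kh}$, transports it to $\wt{F}_{+\Kh}$ via the canonical relabeling isomorphism $F_{+\Kh}\cong F_{-\Kh}$ (so the two restrictions are identified in the colimit), and then extends to $\wt{F}_\Kh$ by the same relative lifting used to produce a pointed spacial refinement in the first place. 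The induced map on homotopy colimits preserves both the subcomplex $\hocolim(\wt{F}^+_{+\Kh})$ and its quotient $\hocolim(\wt{F}^+_{-\Kh})$ and commutes with the identification between them, hence is a $\KA{1}$-module map by the definition of $\Psi$.

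\emph{Stage 2 (Reidemeister moves disjoint from the basepoint).} For a Reidemeister move performed in a disk $D\subset S^2$ disjoint from $p$, the invariance proof for $\KhSpace$ from \cite{RS-khovanov}, translated into the Burnside-theoretic framework of \Section{two-functors-same}, produces a natural transformation of $2$-functors $\CCat{n}\to\BurnsideCat$ arising from an embedded cobordism supported in $D\times[0,1]$. Because this cobordism avoids the pointed circle, the label $x_\pm$ at the basepoint is preserved by every correspondence in the natural transformation, which therefore splits as a disjoint union of natural transformations on $F_{+\Kh}$ and on $F_{-\Kh}$. Applying Stage~1 relative to this natural transformation produces compatible pointed spacial refinements on both sides; the induced map of hocolims is automatically a $\KA{1}$-module map, and is a stable equivalence by the usual Reidemeister argument, applied simultaneously to the whole functor and to its sub- and quotient $\pm\Kh$ pieces (in each case one cancels a pair of acyclic subdiagrams, and the cancellation is manifestly compatible with the $\pm$-decomposition).

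\emph{Stage 3 (Sliding the basepoint).} By Stage~2 and planar isotopy (which is trivial), the remaining case is a local slide of $p$ across a single crossing. This changes the sub-functor $F_{-\Kh}$ only at those resolutions $\mc{P}(v)$ for which the old and new pointed circles differ, and the change is on the chain level a standard quasi-isomorphism expressing basepoint-independence of reduced Khovanov homology. I plan to realize this at the Burnside level as a zigzag of natural transformations passing through an intermediate ``bipointed'' functor in which both arcs are marked at once, and then apply the argument of Stage~1 to upgrade the zigzag to a chain of $\KA{1}$-module maps of hocolims. The main obstacle, and the most delicate part of the proof, is verifying $\KA{1}$-linearity of the zigzag as the sub-functor $F_{-\Kh}$ itself changes; I expect to handle this by observing that at each resolution $v$ the two pointed circles carry commuting $\KA{1}$-actions whose difference, after restricting to the sub- and quotient complexes, is witnessed by a $\KA{1}$-linear chain homotopy that lifts to the spectrum level by the method of Stages~1 and~2.
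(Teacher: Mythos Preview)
Your Stage~1 is correct and matches the paper exactly: the paper states that independence from the choice of pointed spacial refinement is ``essentially the same as \Proposition{box-refinement-exist-unique}~(\ref{item:spacial-unique}), but using \Lemma{box-maps-rel-contractible} instead of \Lemma{box-maps-contractible}.''

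Your Stage~3, however, is unnecessary, and this is the main divergence from the paper. The paper disposes of basepoint-sliding in one sentence by citing the standard argument from \cite[Section~3]{Kho-kh-patterns}: an isotopy of pointed links can always be realized by Reidemeister moves performed away from the basepoint, so one never has to slide $p$ across a crossing. Thus the case you call ``the most delicate part of the proof'' does not arise. Your sketch for Stage~3 is also genuinely incomplete as written: it is not clear that the two $\KA{1}$-actions associated to the old and new basepoints are intertwined by a $\KA{1}$-linear homotopy in the sense you need, and you do not actually produce the homotopy you say you ``expect'' to exist.

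For Stage~2 the paper's mechanism is also more concrete than a cobordism-induced natural transformation. For diagrams $K_0$, $K_1$ differing by a Reidemeister move (with $n_0<n_1$ crossings), the paper uses the subquotient description from \cite[Section~6]{RS-khovanov}: there is a fixed vertex $w\in\{0,1\}^{n_1-n_0}$ and for each $u\in\{0,1\}^{n_0}$ a subset $S_u\subseteq F_\Kh(K_1)((u,w))$ identified with $F_\Kh(K_0)(u)$, compatibly with the correspondences. The observation is that these identifications respect the $\pm$-decomposition, i.e., $S_u\cap F_{\pm\Kh}(K_1)((u,w))$ is identified with $F_{\pm\Kh}(K_0)(u)$. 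One then chooses a pointed spacial refinement for $K_1$ and \emph{restricts} it to the $S_u$ to get one for $K_0$; with the CW structures of \Proposition{cwcomplex-box-hocolim}, $\KhSpace(K_0)$ is then literally a subquotient complex of $\KhSpace(K_1)$, and the resulting two-step zigzag of inclusion/quotient maps is $\KA{1}$-equivariant on the nose. This sidesteps any separate verification that cobordism maps are $\KA{1}$-linear.
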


\begin{proof}
  First, observe that the $\KA{1}$-module structure is independent
  of the choice of box maps; the proof is essentially the same as
  \Proposition{box-refinement-exist-unique}~(\ref{item:spatial-unique}),
  but using \Lemma{box-maps-rel-contractible} instead of
  \Lemma{box-maps-contractible}.


  Next we show that, up to weak equivalence, this $\KA{1}$-module
  spectrum is invariant under Reidemeister moves. By a
  standard argument~\cite[Section~3]{Kho-kh-patterns}, we only
  need to consider Reidemeister moves that do not cross the marked
  point $p$. We follow the framework from
  \cite[Section~6]{RS-khovanov}. Let $K_0$ and $K_1$ (with $n_0$ and
  $n_1$ crossings respectively) be pointed link diagrams
  related by any of the three Reidemeister moves of
  \cite[Figure~6.1]{RS-khovanov}, and assume $n_0<n_1$. The usual
  proof of invariance of Khovanov homology shows that $\KhCx(K_0)$ can be identified with a
  subquotient complex of $\KhCx(K_1)$, inducing a (two-step) zig-zag
  of isomorphisms connecting $\Kh(K_0)$ and $\Kh(K_1)$
  (see also the proofs of
  \cite[Propositions~6.2--6.4]{RS-khovanov}). Indeed, there is a
  particular vertex $w\in\{0,1\}^{n_1-n_0}$, so that for every
  $u\in\{0,1\}^{n_0}$, $F_{\Kh}(K_0)(u)$ is identified with a certain
  subset $S_u\subseteq F_{\Kh}(K_1)((u,w))$, and for every
  $u>v\in\{0,1\}^{n_0}$, the correspondence $F_{\Kh}(K_0)(\cmorph{u}{v})$ is
  identified with the subset $s^{-1}(S_u)\cap t^{-1}(S_v)\subseteq
  F_{\Kh}(K_1)(\cmorph{(u,w)}{(v,w)})$. Furthermore, these identifications
  identify $F_{+\Kh}(K_0)(u)$ with $S_u\cap
  F_{+\Kh}(K_1)((u,w))$ and consequently,
  $F_{-\Kh}(K_0)(u)$ with $S_u\cap F_{-\Kh}(K_1)((u,w))$.

  Construct the $\KA{1}$-module spectrum $\KhSpace(K_1)$ using some
  pointed spatial refinement $\wt{F}_\Kh(K_1)$ for $K_1$. Restricting
  to the subsets $S_u$ and the correspondences between them, we get
  a pointed spatial refinement $\wt{F}_\Kh(K_0)$ for $K_0$, which we
  use to construct the $\KA{1}$-module spectrum $\KhSpace(K_0)$. With
  the CW complex structures from \Proposition{cwcomplex-box-hocolim},
  $\hocolim(\wt{F}^+_{\Kh}(K_0))$ can be identified with a subquotient
  complex of $\hocolim(\wt{F}^+_{\Kh}(K_1))$, leading to a two-step
  zig-zag of maps connecting $\KhSpace(K_1)$ and $\KhSpace(K_2)$.
  Since the Reidemeister moves do not cross the basepoint, it is immediate
  from the definitions of these sub- and quotient complexes that the
  maps are $\KA{1}$-equivariant.  Since they also induce
  isomorphisms on homology, they are weak equivalences.
\end{proof}

For the rest of this section, fix a link diagram for
$K_1\amalg K_2$, which is a disjoint union of link diagrams for $K_1$
and $K_2$, with $n_1$ and $n_2$ crossings respectively, and fix
basepoints $p_i$ on $K_i$ so that the two basepoints are next to one
another (i.e., on the boundary of the same component of $S^2\setminus
(K_1\amalg K_2)$).

Recall:
\begin{definition}
  The \emph{(derived) tensor product} of the module spectra $\KhSpace(K_1)$
  and $\KhSpace(K_2)$ is the homotopy colimit of the diagram
  \begin{equation}\label{eq:space-dtp}
    \xymatrix@C=3ex{
      \KhSpace(K_1)\smas \KhSpace(K_2) &
      \KhSpace(K_1)\smas \KA{1}\smas \KhSpace(K_2)
      \ar@<0.5ex>[l]\ar@<-0.5ex>[l] &
      \KhSpace(K_1)\smas \KA{1}\smas \KA{1}\smas \KhSpace(K_2)
      \ar@<1ex>[l]\ar[l]\ar@<-1ex>[l]
      &\cdots
      \ar@<1.5ex>[l]\ar@<0.5ex>[l]\ar@<-0.5ex>[l]\ar@<-1.5ex>[l]
    }
  \end{equation}
  where the maps are all possible ways of applying $\mu$ to a pair of
  consecutive factors. To be more precise, let $\DeltaInj$ be the
  category with one object $\bul{n}=\{0,\dots,n-1\}$ for each positive
  integer $n$ and $\Hom(\bul{m},\bul{n})$ the set of order-preserving
  injections $\{0,\dots,m-1\}\to\{0,\dots,n-1\}$; for $n>0$ and
  $0\leq i\leq n$, let
  $f_{\bul{n},i}\in\Hom_{\DeltaInj}(\bul{n},\bul{n+1})$ be the
  morphism $\bul{n}\to\bul{n+1}$ whose image is
  $\bul{n+1}\setminus\{i\}$.  (The category $\DeltaInj$ is the
  subcategory of the simplex category generated by the face maps, and
  the $f_{\bul{n},i}$ are the face maps themselves.)  Then the
  diagram~\eqref{eq:space-dtp} can be treated as a (strict) functor
  $F_{\otimes}$ from $\DeltaInj^\op$ to $\CWSpectra$, the category of
  CW spectra. On objects,
  $F_{\otimes}(\bul{n})=\KhSpace(K_1)\smas(\bigwedge_{i=1}^{n-1}\KA{1})\smas\KhSpace(K_2)$.
  On morphisms, $F_\otimes(f^\op_{\bul{n},i})$ is the map
  $\KhSpace(K_1)\smas(\bigwedge_{i=1}^{n}\KA{1})\smas\KhSpace(K_2)\to
  \KhSpace(K_1)\smas(\bigwedge_{i=1}^{n-1}\KA{1})\smas\KhSpace(K_2)$
  gotten by applying $\mu$ to the $(i+1)\th$ pair of consecutive
  factors.  Let
  $\KhSpace(K_1)\otimes_{\KA{1}}\KhSpace(K_2)\defeq\hocolim(F_{\otimes})$
  denote the derived tensor product of $\KhSpace(K_1)$ and
  $\KhSpace(K_2)$.
\end{definition}


\begin{theorem}\label{thm:unred-con-sum}
  There is a stable homotopy equivalence 
  $\KhSpace(K_1\#K_2)\simeq \KhSpace(K_1)\otimes_{\KA{1}}\KhSpace(K_2)$.
\end{theorem}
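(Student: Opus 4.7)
The plan is to identify $\KhSpace(K_1\#K_2)$ with the geometric realization of the bar simplicial spectrum computing the derived tensor product. By definition, $\KhSpace(K_1)\otimes_{\KA{1}}\KhSpace(K_2)$ is the homotopy colimit over $\DeltaInj^{\op}$ of the simplicial spectrum $B_\bullet$ with $B_n=\KhSpace(K_1)\smas(\KA{1})^{\smas n}\smas\KhSpace(K_2)$. Since $\KA{1}\simeq\KhSpace(U)$, iterated application of \Theorem{disjoint-union} gives a stable homotopy equivalence
\[
B_n\simeq \KhSpace(K_1\amalg U^{\amalg n}\amalg K_2),
\]
where the basepoints of the $n$ auxiliary unknots and of $K_1, K_2$ are placed consecutively along a small arc, so that $K_1$ is adjacent to $U_1$, each $U_i$ to $U_{i+1}$, and $U_n$ to $K_2$.

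Under this identification, each face map $d_i\co B_n\to B_{n-1}$ corresponds geometrically to a saddle cobordism merging two adjacent components of $K_1\amalg U^{\amalg n}\amalg K_2$: $d_0$ merges $K_1$ with $U_1$ via the right $\KA{1}$-action on $\KhSpace(K_1)$; $d_n$ merges $U_n$ with $K_2$ via the left $\KA{1}$-action on $\KhSpace(K_2)$; and $d_i$ for $0<i<n$ merges $U_i$ with $U_{i+1}$ via the ring multiplication $\mu\co\KA{1}\smas\KA{1}\to\KA{1}$. Each such saddle is realized as an edge map in the Khovanov cube of an auxiliary one-crossing diagram whose $\vec{0}$-resolution is the source of the saddle and whose $\vec{1}$-resolution is the target. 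The key technical identification is that the $\KA{1}$-action on $\KhSpace(K)$ defined in \Section{Kh-disj-union} via the pointed spacial refinement structure agrees, up to homotopy, with this Khovanov edge map; this should follow by unfolding the pointed spacial refinement and noting that its defining decomposition by the label on the basepoint circle is precisely the decomposition $F_{+\Kh}\amalg F_{-\Kh}$ induced by the auxiliary crossing.

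To see that $\hocolim B_\bullet\simeq \KhSpace(K_1\#K_2)$, construct for each $n\geq 0$ a link diagram $\Lambda_n$ for $K_1\#K_2$ in which the connected-sum region is spread out by inserting $n$ intermediate unknots, with a small gadget of extra crossings between each pair of adjacent components configured so that the all-$\vec{0}$-resolution of the extra crossings gives $K_1\amalg U^{\amalg n}\amalg K_2$ and the all-$\vec{1}$-resolution gives $K_1\#K_2$. Then $\KhSpace(\Lambda_n)\simeq \KhSpace(K_1\#K_2)$ by Reidemeister invariance, and is computed as a homotopy colimit over the cube of extra-crossing resolutions; all vertices except the $\vec{1}$-vertex are of the form $\KhSpace(K_1\amalg U^{\amalg k}\amalg K_2)\simeq B_k$, and the edge maps are the saddle maps identified in the previous paragraph. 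Organizing these cubes so that they realize truncations of $B_\bullet$, and passing to the limit $n\to\infty$ so that the $\vec{1}$-vertex is pushed to ever-higher simplicial degree, identifies $\hocolim B_\bullet$ with $\KhSpace(K_1\#K_2)$.

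The principal obstacle is the identification in the second paragraph of the $\KA{1}$-module structure on $\KhSpace(K)$ with the Khovanov edge map of an auxiliary one-crossing diagram: one must carefully relate the pointed spacial refinement used to define the module action to the box maps arising in the Khovanov cube of the auxiliary diagram, and check compatibility at each level of the spacial refinement. Secondary difficulties include verifying that the higher simplicial coherences of $B_\bullet$ match the higher coherences of the cube computation of $\KhSpace(\Lambda_n)$, and carrying out the $n\to\infty$ limit rigorously. A cleaner alternative, bypassing the limit, would construct a single bisimplicial link diagram for $K_1\#K_2$ whose iterated hocolim gives the bar construction directly; but the core identification between module action and saddle map would still be required.
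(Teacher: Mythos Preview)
Your approach is genuinely different from the paper's, and as sketched it has real gaps. The paper does not introduce auxiliary link diagrams, saddle edge maps, or any limit in $n$. Instead it observes (Lemma~\ref{lem:du2cs-quotient}) that the Burnside functor $F_{\Kh}(K_1\#K_2)$ is literally the quotient of the subfunctor $F_{\centernot{++}\Kh}(K_1\amalg K_2)\subset F_{\Kh}(K_1\amalg K_2)$, obtained by forbidding the $(x_+,x_+)$ labeling of the two basepoint circles and then identifying $F_{+-\Kh}$ with $F_{-+\Kh}$ via Lemma~\ref{lem:du2cs-isom}. Choosing a doubly pointed spacial refinement of $F_{\Kh}(K_1\amalg K_2)$ (Lemma~\ref{lem:disj-union-to-connect-sum-spacial}) makes this quotient a strict map of CW complexes $\KhSpace(K_1\amalg K_2)\to\KhSpace(K_1\#K_2)$, and one checks directly that it strictly coequalizes the two $\KA{1}$-actions on $\KhSpace(K_1\amalg K_2)$ (step~\ref{item:bul-2-to-0} of Lemma~\ref{lem:extend-space-dtp}). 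This yields a strict functor $\ol{F}_\times\co\olDeltaInj^{\op}\to\CW$ extending the bar diagram with $\ol{F}_\times(\bul{0})=\KhSpace(K_1\#K_2)$ at the terminal object; the augmentation $\hocolim F_\times\to\KhSpace(K_1\#K_2)$ is then shown to be an equivalence by a homological computation (Lemmas~\ref{lem:dtp-cw-structure} and~\ref{lem:coderived}), together with the objectwise equivalence $F_\otimes\simeq F_\times$.

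Your route, by contrast, needs ingredients the paper does not supply. First, the $\KA{1}$-module action on $\KhSpace(K)$ is not defined via a saddle or an auxiliary crossing: it is the composition of a quotient, a relabeling isomorphism $\wt F_{-\Kh}\cong\wt F_{+\Kh}$, and an inclusion, and only on cochains is it identified with a cobordism map. Promoting this to a coherent identification with a Khovanov edge map of an auxiliary diagram, with all higher simplicial coherences, is a nontrivial project; the paper has no cobordism maps on $\KhSpace$ beyond well-definedness up to sign on homology (Proposition~\ref{prop:cob-map-spectra}). Second, the cube of extra crossings in your $\Lambda_n$ is a $2^n$-vertex diagram, not a simplicial truncation; ``organizing these cubes so that they realize truncations of $B_\bullet$'' is an assertion, not an argument. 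Third, the $n\to\infty$ step would require identifying $\KhSpace(\Lambda_n)$ with a skeleton of the bar realization and proving convergence of the skeletal filtration; neither is addressed. The paper's combinatorial route via subfunctors and quotients of $F_{\Kh}(K_1\amalg K_2)$ sidesteps all three issues.
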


The proof of \Theorem{unred-con-sum} involves three components. First,
we show how the functor $F_{\Kh}(K_1\# K_2)$ is determined by the
functors $F_{\Kh}(K_1)$ and $F_{\Kh}(K_2)$; this is
Lemmas~\ref{lem:du2cs-isom} and~\ref{lem:du2cs-quotient}, which take a
little work but are purely combinatorial. Second, in
Lemmas~\ref{lem:dtp-cw-structure} and~\ref{lem:coderived}, we prove that
\Theorem{unred-con-sum} holds at the level of cellular cochains, for
an appropriate CW complex structure on
$\KhSpace(K_1)\otimes_{\KA{1}}\KhSpace(K_2)$. This is essentially
immediate from Segal's construction of homotopy colimits and the
connected sum theorem for the Khovanov chain complex. Third, using the
description of $F_{\Kh}(K_1\# K_2)$ in terms of $F_{\Kh}(K_1)$ and
$F_{\Kh}(K_2)$ and carefully chosen spatial refinements, we produce a
(strict) map from the diagram~(\ref{eq:space-dtp}) to
$\KhSpace(K_1\#K_2)$, inducing the desired map of cellular
cochains. This argument is
Lemmas~\ref{lem:disj-union-to-connect-sum-spatial}
and~\ref{lem:extend-space-dtp}. From these three steps,
\Theorem{unred-con-sum} follows easily.

We begin by reconstructing the functor $F_{\Kh}(K_1\# K_2)$ from the
functor $F_{\Kh}(K_1\amalg
K_2)\from\CCat{n_1+n_2}\to\BurnsideCat$.
\begin{definition}
  For $a,b\in\{\ast,+,-\}$, let
  $F_{ab\Kh}(K_1\amalg K_2)\from\CCat{n_1+n_2}\to\BurnsideCat$ denote
  the functor where we only consider the Khovanov generators that
  label the circle containing $p_1$ by $x_a$ if $a\in\{+,-\}$ and
  label the circle containing $p_2$ by $x_b$ if $b\in\{+,-\}$, and we
  restrict the correspondences correspondingly. (If $a$ or $b$ is $*$,
  we make no restriction on the label of the corresponding circle.)
\end{definition}
\begin{lemma}\label{lem:du2cs-isom} For $v\in\{0,1\}^{n_1+n_2}$, 
  the map from $F_{+-\Kh}(K_1\amalg K_2)(v)$ to $F_{-+\Kh}(K_1\amalg
  K_2)(v)$ that interchanges the labelings of the two pointed circles
  in $\mc{P}(v)$ induces an isomorphism from $F_{+-\Kh}(K_1\amalg
  K_2)$ to $F_{-+\Kh}(K_1\amalg K_2)$.
\end{lemma}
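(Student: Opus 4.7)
My plan is to reduce the lemma to the already-stated natural isomorphism $\eta_i\from F_{+\Kh}(K_i)\cong F_{-\Kh}(K_i)$ (recalled in the paragraph preceding the lemma) by identifying each of $F_{+-\Kh}(K_1\amalg K_2)$ and $F_{-+\Kh}(K_1\amalg K_2)$ with a product of pointed reduced functors of the components. Since we have fixed a diagram for $K_1\amalg K_2$ with no crossings between $K_1$ and $K_2$ and have ordered the crossings of $K_1$ before those of $K_2$, every resolution $\mc{P}(v)$ decomposes as $\mc{P}(v|_{K_1})\amalg \mc{P}(v|_{K_2})$, with $p_1$ lying on a circle in $\mc{P}(v|_{K_1})$ and $p_2$ on a circle in $\mc{P}(v|_{K_2})$. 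Consequently, the restriction conditions defining $F_{+-\Kh}$ and $F_{-+\Kh}$ factor independently over the two components.

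Rerunning the argument of \Proposition{Kh-func-prod} on these restricted functors then yields natural isomorphisms
\[
F_{+-\Kh}(K_1\amalg K_2)\cong F_{+\Kh}(K_1)\times F_{-\Kh}(K_2), \qquad F_{-+\Kh}(K_1\amalg K_2)\cong F_{-\Kh}(K_1)\times F_{+\Kh}(K_2).
\]
Composing with $\eta_1\times\eta_2^{-1}\from F_{+\Kh}(K_1)\times F_{-\Kh}(K_2)\to F_{-\Kh}(K_1)\times F_{+\Kh}(K_2)$ yields the claimed isomorphism. On objects it acts by relabeling the pointed circle of $K_1$ from $x_+$ to $x_-$ and the pointed circle of $K_2$ from $x_-$ to $x_+$, which is precisely the interchange of labelings described in the lemma.

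The main obstacle to making this rigorous is verifying the two product decompositions above, and in particular the compatibility with the composition $2$-isomorphisms (ladybug matchings) at $2$-dimensional faces. The argument of \Proposition{Kh-func-prod} hinges on the fact that any ladybug configuration is supported near a pair of crossings that together are disjoint from the rest of the diagram. In our setting both crossings of such a pair must lie in the same component $K_i$, so the ladybug matching depends only on data from $K_i$; the pointed circle of the other component is a spectator throughout, and the restriction conditions factor and commute with the product decomposition. With this in hand, taking the product of the two natural isomorphisms $\eta_1$ and $\eta_2^{-1}$ of 2-functors is purely formal and produces the required natural isomorphism.
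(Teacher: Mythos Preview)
Your proof is correct and follows essentially the same approach as the paper's. The paper's proof is more terse: it simply notes that the isomorphism from \Proposition{Kh-func-prod} identifies both $F_{+-\Kh}(K_1\amalg K_2)$ and $F_{-+\Kh}(K_1\amalg K_2)$ with $F_{\wt{\Kh}}(K_1)\times F_{\wt{\Kh}}(K_2)$, and that the given map is the composite of these identifications---which is exactly your argument spelled out in more detail.
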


\begin{proof}
  The isomorphism from \Proposition{Kh-func-prod} identifies either
  functor to $F_{\wt{\Kh}}(K_1)\times F_{\wt{\Kh}}(K_2)$. The given
  map is the composition $F_{+-\Kh}(K_1\amalg K_2)\cong
  F_{\wt{\Kh}}(K_1)\times F_{\wt{\Kh}}(K_2)\cong F_{-+\Kh}(K_1\amalg
  K_2)$.
\end{proof}

\begin{notation}
  Let $F_{\centernot{++}\Kh}(K_1\amalg K_2)$ denote the functor
  $\CCat{n_1+n_2}\to\BurnsideCat$ where we only consider the Khovanov
  generators that label at least one of the two pointed circles by
  $x_-$, and we restrict the correspondences correspondingly. (The
  notation $F_{\centernot{++}\Kh}$ is the mnemonic ``not $++$''.) That
  is, for all $u\in\{0,1\}^{n_1+n_2}$,
  $F_{\centernot{++}\Kh}(K_1\amalg K_2)=F_{--\Kh}(K_1\amalg
  K_2)(u)\amalg F_{+-\Kh}(K_1\amalg K_2)(u)\amalg F_{-+\Kh}(K_1\amalg
  K_2)(u)$; and for all $u>v\in\{0,1\}^{n_1+n_2}$, the correspondence
  $F_{\centernot{++}\Kh}(K_1\amalg K_2)(\cmorph{u}{v})$ is the
  disjoint union of the correspondences
  $F_{--\Kh}(K_1\amalg K_2)(\cmorph{u}{v})$,
  $F_{+-\Kh}(K_1\amalg K_2)(\cmorph{u}{v})$,
  $F_{-+\Kh}(K_1\amalg K_2)(\cmorph{u}{v})$, some correspondence $P_{u,v}$
  from $F_{--\Kh}(K_1\amalg K_2)(u)$ to $F_{+-\Kh}(K_1\amalg K_2)(v)$,
  and some correspondence $Q_{u,v}$ from $F_{--\Kh}(K_1\amalg K_2)(u)$ to
  $F_{-+\Kh}(K_1\amalg K_2)(v)$. Let $F$ be the functor obtained from
  $F_{\centernot{++}\Kh}(K_1\amalg K_2)$ by identifying
  $F_{+-\Kh}(K_1\amalg K_2)$ and $F_{-+\Kh}(K_1\amalg K_2)$ by the
  isomorphism from \Lemma{du2cs-isom}. That is, for all
  $u\in\{0,1\}^{n_1+n_2}$,
  \[
    F(u)=\big(F_{--\Kh}(u)\amalg F_{+-\Kh}(u)\amalg
    F_{-+\Kh}(u)\big)/(F_{+-\Kh}(u)=F_{-+\Kh}(u));
  \]
  and for all $u>v\in\{0,1\}^{n_1+n_2}$,
  \[
    F(\cmorph{u}{v})=\big(F_{--\Kh}(\cmorph{u}{v})\amalg
    F_{+-\Kh}(\cmorph{u}{v})\amalg F_{-+\Kh}(\cmorph{u}{v})\amalg
    P_{u,v}\amalg Q_{u,v}\big)/(F_{+-\Kh}(\cmorph{u}{v})= F_{-+\Kh}(\cmorph{u}{v})).
  \]
\end{notation}

\begin{lemma}\label{lem:du2cs-quotient} The functor $F$ constructed
  above is isomorphic to $F_{\Kh}(K_1\# K_2)$ via the following map:
  for all $u\in\{0,1\}^{n_1+n_2}$, the isomorphism sends $x\in F(u)$
  to $y\in F_{\Kh}(K_1\# K_2)(u)$ where $y$ labels the connect-sum
  circle by $x_-$ if and only if $x$ labels both the pointed circles
  by $x_-$, and $x$ and $y$ label all the circles that are disjoint
  from the connect-sum region identically.
\end{lemma}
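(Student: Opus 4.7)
The plan is to invoke \Lemma{characterize-functor}: it suffices to give compatible bijections on the vertex sets and on the edge correspondences $F(\cmorph{u}{v})$ for $|u|-|v|=1$, and then to verify that the induced composition $2$-isomorphisms on $2$-dimensional subfaces agree. The bijection on objects is explicitly specified in the statement. Its bijectivity follows from counting: if $\mc{P}_{K_1\#K_2}(u)$ has $N$ circles, then $F_{\Kh}(K_1\#K_2)(u)$ has $2^N$ elements, while $|F_{--}(u)|+|F_{-+}(u)|=2\cdot 2^{N-1}=2^N$, with the bijection given by the obvious identification away from the pointed/connect-sum circles.

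Next, I verify the edge correspondences. Fix an edge $u>v$ with $|u|-|v|=1$, corresponding to a single crossing in $K_1$ or $K_2$; without loss of generality suppose it lies in $K_1$. The saddle is local, so its action on the resolutions of $K_1\amalg K_2$ and of $K_1\#K_2$ differs only in that the pointed circle of $K_1$ is (in the latter) part of the larger connect-sum circle, which also contains the pointed circle of $K_2$ via the tube. Case-analyze by whether the saddle is disjoint from pointed-$K_1$, merges it with a non-pointed circle, or splits it; in each case, enumerate the elements of $F_{\centernot{++}\Kh}(\cmorph{u}{v})$ and compute their source-target profile. The quotient identification $F_{+-}(\cmorph{u}{v})\cong F_{-+}(\cmorph{u}{v})$ from \Lemma{du2cs-isom}, combined with the Frobenius algebra relations $m(-,-)=0$, $m(+,-)=m(-,+)=-$, $m(+,+)=+$, and their comultiplicative analogues, produces exactly the same list of correspondence elements as $F_{\Kh}(K_1\#K_2)(\cmorph{u}{v})$ under the object bijection: the elements of $P$ and $Q$ account for the correspondence elements in the connect-sum picture where the connect-sum label flips from $+$ to $-$ under the saddle.

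For the $2$-dimensional face compatibility, the composition $2$-isomorphism is uniquely determined outside the ladybug configuration (as in the proof of \Lemma{HKK-same-Burnside} and \cite[Section~5.4]{RS-khovanov}), so I need only address ladybug faces. A ladybug requires its two saddle arcs to lie on a common circle, which forces both corresponding crossings to lie in the same component, so they are both in $K_1$ or both in $K_2$. If the ladybug is disjoint from the pointed circle, the local pictures for $K_1\amalg K_2$ and $K_1\#K_2$ coincide and so do their ladybug matchings. If the ladybug involves the pointed circle of $K_1$, the local configuration at the two crossings is still identical in the two pictures (only the global identification of the circle changes, extending through the tube in $K_1\#K_2$); since the ladybug matching is defined by the local combinatorial/geometric data shown in \Figure{ladybug-matching}, it is preserved under the bijection, and one checks directly that the restriction-and-quotient producing $F$ respects this matching. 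The main obstacle is the bookkeeping in the edge verification, particularly tracking how the quotient $F_{+-}\sim F_{-+}$ collapses pairs of correspondence elements; the ladybug step itself is conceptually subtle but, thanks to locality and the fact that no ladybug straddles the two components, reduces to the observation above.
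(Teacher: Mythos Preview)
Your overall structure---invoking \Lemma{characterize-functor}, establishing the vertex bijection, verifying edge correspondences, and handling the ladybug faces---matches the paper's. The difference lies in how the edge step is carried out. You propose a direct case analysis by whether the saddle touches the pointed circle of $K_1$, enumerating Frobenius-algebra outputs in each case. The paper instead makes a single structural observation: the correspondences in $F_{\centernot{++}\Kh}(K_1\amalg K_2)$ preserve the pair of quantum gradings coming separately from $K_1$ and from $K_2$; since any $x_v\in F(v)$ has two lifts $\iota^1(x_v)\in F_{*-\Kh}(v)$ and $\iota^2(x_v)\in F_{-*\Kh}(v)$ with distinct double gradings, at most one of these lifts can be the target of any fixed source $z_u$. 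This shows at once that each source--target fiber $s^{-1}(x_u)\cap t^{-1}(x_v)$ in the quotient has at most one element (your case analysis amounts to verifying the same thing, since for a saddle in $K_1$ the piece $Q$ is always empty). For the matching of nonempty fibers, the paper notes that the object map $F_{\centernot{++}\Kh}(K_1\amalg K_2)(v)\to F(v)\stackrel{\phi}{\to}F_{\Kh}(K_1\#K_2)(v)$ induces the cobordism split map $\KhCx(K_1\#K_2)\to\KhCx(K_1\amalg K_2)$, which is a chain map; this replaces your case-by-case enumeration. Your approach is correct, but the paper's double-grading argument and cobordism-map identification are cleaner and avoid the bookkeeping you flag as the main obstacle.
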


\begin{proof}
  The proof is similar to the proof of \Proposition{Kh-func-prod}. To
  keep the notations similar, let $X_v=F(v)$, $A_{u,v}=F(\cmorph{u}{v})$,
  $G=F_{\Kh}(K_1\# K_2)$, $Y_v=G(v)$, and $B_{u,v}=G(\cmorph{u}{v})$. The
  bijections $\phi_v\from X_v\stackrel{\cong}{\longrightarrow} Y_v$
  are already provided to us. To construct bijections $\psi_{u,v}\from
  A_{u,v}\stackrel{\cong}{\longrightarrow} B_{u,v}$, for all $u>v$
  with $|u|-|v|=1$, we need to check the
  conditions~(\ref{item:empty-or-one})
  and~(\ref{item:is-in-fact-chain-map}) of the proof of
  \Proposition{Kh-func-prod}. 

  For any $u\in\{0,1\}^{n_1+n_2}$ and any $z_u\in F_{\centernot{++}\Kh}(K_1\amalg
  K_2)(u)$, let $\pi(z_u)$ denote the image of $z_u$ in $X_u$; and for any
  $x_u\in X_u$, let $\iota^1(x_u)$ (respectively, $\iota^2(x_u)$)
  denote the preimage of $x_u$ in $F_{*-\Kh}(K_1\amalg K_2)(u)$ (respectively,
  $F_{-*\Kh}(K_1\amalg K_2)(u)$). Then for any $u>v$ with $|u|-|v|=1$, $z_u\in
  F_{\centernot{++}\Kh}(K_1\amalg K_2)(u)$, and $x_v\in X_v$, one of the
  two subsets $s^{-1}(z_u)\cap t^{-1}(\iota^1(x_v))\subseteq
  F_{*-\Kh}(K_1\amalg K_2)(\cmorph{u}{v})$ and $s^{-1}(z_u)\cap
  t^{-1}(\iota^2(x_v))\subseteq F_{-*\Kh}(K_1\amalg K_2)(\cmorph{u}{v})$ is
  empty, and the other one is canonically identified with the subset
  $s^{-1}(\pi(z_u))\cap t^{-1}(x_v)\subseteq A_{u,v}$. This
  follows from the fact that the correspondences in
  $F_{\centernot{++}\Kh}(K_1\amalg K_2)$ preserve two quantum
  gradings, the one coming from $K_1$ and the one coming from $K_2$;
  however, the double quantum gradings of $\iota^1(x_v)$ and
  $\iota^2(x_v)$ are different, and therefore, at least one of
  $s^{-1}(z_u)\cap t^{-1}(\iota^1(x_v))$ and $s^{-1}(z_u)\cap
  t^{-1}(\iota^2(x_v))$ is empty.

  From this observation, condition~(\ref{item:empty-or-one}) is
  immediate. Condition~(\ref{item:is-in-fact-chain-map}) follows from
  additionally noting that the composition $F_{\Kh}(K_1\amalg K_2) \to
  X \stackrel{\phi}{\longrightarrow} Y$ induces the cobordism map
  $\KhCx(K_1\# K_2)\to\KhCx(K_1\amalg K_2)$ associated to splitting at
  the connect-sum region, which is a chain map.

  Finally, we need to check that diagram~\eqref{eq:disj-un-diag} commutes
  for all $u>w$ with $|u|-|w|=2$. Using the
  observation in the previous paragraph, this follows from the same arguments as in the proof of
  \Proposition{Kh-func-prod}.
\end{proof}

Next we observe that \Theorem{unred-con-sum} holds at the level of
cellular cochain complexes.

\begin{lemma}\label{lem:dtp-cw-structure}
  There exists a CW complex structure on
  $\KhSpace(K_1)\otimes_{\KA{1}}\KhSpace(K_2)$ so that the reduced
  cellular cochain complex is the following chain complex
  \begin{equation}\label{eq:derived-cotensor}
    \KhCx(K_1)\otimes \KhCx(K_2) \to
    \KhCx(K_1)\otimes \Kh(U)\otimes \KhCx(K_2)\to\KhCx(K_1)\otimes \Kh(U)\otimes\Kh(U)\otimes \KhCx(K_2)\to\cdots
  \end{equation}
  with the differential given by
  \begin{align*}
    d(x_0\otimes\cdots\otimes x_n)
    &=
      \diff(x_0)\otimes\cdots\otimes x_{i}\otimes\cdots\otimes x_n
      +x_0\otimes\cdots\otimes x_{i}\otimes\cdots\otimes \diff(x_n)\\
    &\qquad{}+ \sum_{i=0}^n(-1)^{i+n}x_0\otimes\cdots\otimes
    x_{i-1}\otimes S(x_i)\otimes x_{i+1}\otimes\cdots\otimes x_n,
  \end{align*}
  where $S$ denotes either the Khovanov Frobenius algebra
  comultiplication map $\Kh(U)\to\Kh(U)\otimes\Kh(U)$ or the cobordism
  map $\KhCx(K_1)\to \KhCx(K_1)\otimes \Kh(U)$ (respectively,
  $\KhCx(K_2)\to \Kh(U)\otimes \KhCx(K_2)$) for splitting off a
  trivial unknot at $p_1$ (respectively, $p_2$), and $\diff$ denotes
  the Khovanov differential on $\KhCx(K_1)$ and $\KhCx(K_2)$.
\end{lemma}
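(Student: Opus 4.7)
The plan is to model the derived tensor product by the thick geometric realization (\`a la Segal) of the semi-simplicial object $F_\otimes\co\DeltaInj^{\op}\to\CW$. That is, I will compute
\[
\KhSpace(K_1)\otimes_{\KA{1}}\KhSpace(K_2)=\hocolim_{\DeltaInj^{\op}} F_\otimes
\]
as the quotient
\[
\coprod_{n\geq 0}\Delta^{n}\smas \bigl(\KhSpace(K_1)\smas\KA{1}^{\smas n}\smas\KhSpace(K_2)\bigr)\Big/\sim,
\]
where the identifications are dictated by the face maps $F_\otimes(f_{\bul{n+1},i}^{\op})$, i.e., the maps that multiply the $(i+1)\th$ and $(i+2)\th$ factors together using $\mu$ (where we count $\KhSpace(K_1)$ as the $0\th$ factor and $\KhSpace(K_2)$ as the $(n+1)\th$).

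First, I endow each piece $\Delta^{n}\smas\KhSpace(K_1)\smas\KA{1}^{\smas n}\smas\KhSpace(K_2)$ with the product CW structure, using the standard CW structure on $\Delta^n$, the cellular Khovanov CW structures on $\KhSpace(K_i)$ from \Proposition{cwcomplex-box-hocolim}, and the two-cell CW structure on $\KA{1}=\SphereS\vee\SphereS$ coming from the generators $x_+$ and $x_-$. The face maps $F_\otimes(f_{\bul{n+1},i}^{\op})$ are cellular, since $\mu$ is cellular (by its definition on $\{*,p_-,p_+\}$). This makes the thick realization into a CW complex whose non-basepoint cells are indexed by tuples
\[
(x_0,y_1,y_2,\dots,y_n,x_{n+1}),\qquad n\geq 0,
\]
with $x_0$ a Khovanov generator for $K_1$, $x_{n+1}$ a Khovanov generator for $K_2$, and each $y_i\in\{x_+,x_-\}$, together with the interior cell of $\Delta^n$. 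In particular, the group of $n$-cells (in the filtration by $n$) is freely generated by $\KhCx(K_1)\otimes\Kh(U)^{\otimes n}\otimes\KhCx(K_2)$.

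Next, I compute the coboundary. There are two contributions. The internal Khovanov differentials on $\KhSpace(K_1)$, $\KhSpace(K_2)$ and on the copies of $\KA{1}$ (which are zero) give the first sum in the formula, with signs given by the standard Koszul convention on a smash product of cochain complexes. The contribution from the boundary of $\Delta^n$ comes from the face maps; since multiplication $\mu$ on $\KA{1}$ is dual (on reduced cellular cochains) to the Frobenius comultiplication $S$, and since $\mu$ also dualizes to the split cobordism maps $\KhCx(K_i)\to\KhCx(K_i)\otimes\Kh(U)$ in the two extreme positions, the $i\th$ face map dualizes to applying $S$ in the $(i+1)\th$ tensor slot. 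The alternating simplicial coboundary then produces exactly $\sum_{i=0}^n(-1)^{i+n}x_0\otimes\cdots\otimes S(x_i)\otimes\cdots\otimes x_n$.

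The main technical step will be pinning down the signs. The Koszul signs $(-1)^{\gr(x_0)+\cdots+\gr(x_{i-1})}$ come from commuting the Khovanov differential past the earlier factors in the smash product, while the signs $(-1)^{i+n}$ in the simplicial term come from the standard orientation conventions on the face inclusions $\Delta^{n-1}\into\Delta^n$ (combined with the reversal between homological cellular boundary and cochain coboundary). Both of these are routine bookkeeping once the CW structure is set up, but I will carry out the bookkeeping carefully on two-cell examples before asserting it in general. Together these two contributions give precisely the differential $d$ in the statement, so the reduced cellular cochain complex of the thick realization equals the chain complex~\eqref{eq:derived-cotensor}, completing the proof.
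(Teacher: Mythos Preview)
Your approach is essentially the same as the paper's: both use Segal's thick geometric realization $\|F_\otimes\|$ with its natural product CW structure, and observe that its reduced cellular cochain complex is the bar-type complex~\eqref{eq:derived-cotensor}. The one step you gloss over---and the paper addresses explicitly---is the identification of the thick realization with $\hocolim_{\DeltaInj^{\op}} F_\otimes$ as defined in this paper (Vogt's cubical model, \Definition{hocolim}); the paper handles this by citing \cite[Corollary~8.5]{Vogt-top-hocolim} to pass to the simplicial model and then \cite[Proposition~A.3]{Segal-top-categories} to recognize the simplicial hocolim as the barycentric subdivision of $\|F_\otimes\|$.
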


\begin{proof}
  If $F\from\DeltaInj^\op\to\CW$ is a functor,
  Segal~\cite[Appendix~A]{Segal-top-categories} defines its geometric
  realization as
  \[
    \|F\|=\Bigl(\{\ast\}\amalg\coprod_{n=1}^\infty\Delta^{n-1} \times F(\bul{n})\Bigr)/\sim
  \]
  with $(f_*(\zeta),a)\sim(\zeta,F(f^\op)(a))$ for all
  $\zeta\in\Delta^{m-1}$, $a\in F(\bul{n})$, and
  $f\in\Hom_{\DeltaInj}(\bul{m},\bul{n})$, and
  $\Delta^{n-1}\times\{\ast\}\sim\ast$ for all $n$. (The map
  $f_*\from\Delta^{m-1}\to\Delta^{n-1}$ is the face inclusion
  corresponding to $f$.) If $\Delta^{n-1}_+$ denotes the disjoint
  union of $\Delta^{n-1}$ and a basepoint, then we can rewrite
  \[
    \|F\|=\Bigl(\bigvee_{n}\Delta_+^{n-1} \smas F(\bul{n})\Bigr)/\sim
  \]
  with just the relation $(f_*(\zeta),a)\sim(\zeta,F(f^\op)(a))$ for
  all $\zeta\in\Delta^{m-1}$, $a\in F(\bul{n})$, and
  $f\in\Hom_{\DeltaInj}(\bul{m},\bul{n})$.

  Let $F_{\otimes,\ell}\from\DeltaInj^{\op}\to\CW$ be the functor
  obtained from $F_{\otimes}\from\DeltaInj^{\op}\to \CWSpectra$ by
  looking at the $\ell\th$ spaces in the spectra.
  Define the geometric realization $\|F_{\otimes}\|$ as a
  CW spectrum whose $\ell\th$ space is
  \(
  \|F_{\otimes}\|_\ell=\|F_{\otimes,\ell}\|
  \)
  with the structure map
  \[
    \Sigma\|F_{\otimes}\|_\ell\cong\Sigma\Bigl(\bigvee_{n}\Delta_+^{n-1} \smas F_{\otimes,\ell}(\bul{n})\Bigr)/\!\!\sim\,=\Bigl(\bigvee_{n}\Delta_+^{n-1} \smas \Sigma F_{\otimes,\ell}(\bul{n})\Bigr)/\!\!\sim\,\longrightarrow\Bigl(\bigvee_{n}\Delta_+^{n-1} \smas F_{\otimes,\ell+1}(\bul{n})\Bigr)/\!\!\sim\,=\|F_{\otimes}\|_{\ell+1},
  \]
  where the middle arrow is induced by the structure maps
  $\Sigma(F_\otimes(n))_\ell\to(F_\otimes(n))_{\ell+1}$.
  Equipped with the natural CW complex structure
  (\cite[Proposition~A.1(i)]{Segal-top-categories}), the reduced
  cellular cochain complex of $\|F_\otimes\|$ is easily seen to be the one from
  Formula~\eqref{eq:derived-cotensor}. 

  To identify $\|F_{\otimes}\|$ with $\hocolim(F_{\otimes})$, use the
  construction of homotopy colimits of the strict functor
  $F_{\otimes}$ via simplices, instead of cubes; Vogt~\cite[Corollary~8.5]{Vogt-top-hocolim} shows the two
  definitions agree.  Using the
  simplicial model for the homotopy colimit, it is well known that
  $\hocolim(F_{\otimes})$ is the barycentric subdivision of
  $\|F_{\otimes}\|$. (See also
  \cite[Proposition~A.3]{Segal-top-categories} where
  $|\mathrm{simp}(\cdot)|$ plays the role of this space.)
  %
\end{proof}

\begin{lemma}\label{lem:coderived}
  The cobordism map $S\from \KhCx(K_1\#K_2)\to \KhCx(K_1)\otimes
  \KhCx(K_2)$ associated to splitting at the connected sum region
  induces a quasi-isomorphsim 
  \[
  \xymatrix@C=3ex{
    \KhCx(K_1\#K_2)\ar[d]^S\\
    \KhCx(K_1)\otimes \KhCx(K_2) \ar[r]& \KhCx(K_1)\otimes
    \Kh(U)\otimes \KhCx(K_2)\ar[r]&\KhCx(K_1)\otimes
    \Kh(U)\otimes\Kh(U)\otimes \KhCx(K_2)\ar[r]&\cdots }
  \]
  from $\KhCx(K_1\# K_2)$ to the chain complex from
  Formula~\eqref{eq:derived-cotensor}.
\end{lemma}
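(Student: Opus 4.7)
The plan is to prove the equivalent statement that the augmented cochain complex
\begin{equation*}
0 \to \KhCx(K_1\#K_2) \xrightarrow{S} \KhCx(K_1)\otimes\KhCx(K_2) \to \KhCx(K_1)\otimes\Kh(U)\otimes\KhCx(K_2) \to \cdots
\end{equation*}
has acyclic totalization. I will view the right-hand side as the total complex of a double complex whose horizontal direction tracks the number of $\Kh(U)$ factors (with the cobar-type differential built from $S$ and the Frobenius comultiplication $\Delta$), and whose vertical direction carries the Khovanov differentials of $\KhCx(K_1)$ and $\KhCx(K_2)$.

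The first step is a local analysis vertex-by-vertex over the cube of resolutions. Fix a vertex $v\in\{0,1\}^{n_1+n_2}$; the connect-sum band merges the two pointed circles of $\mc{P}_{K_1\amalg K_2}(v)$ into the single pointed circle of $\mc{P}_{K_1\#K_2}(v)$, and all remaining circles agree. After stripping off the labels of the unpointed circles (which the horizontal differentials ignore), the column over $v$ of the augmented complex becomes the augmented cobar complex
\begin{equation*}
A \xrightarrow{\Delta} A^{\otimes 2} \to A^{\otimes 3} \to \cdots
\end{equation*}
of the Frobenius coalgebra $A=\Kh(U)$. The counit $\epsilon\from A\to\ZZ$ (sending $x_+\mapsto 0$, $x_-\mapsto 1$) yields a contracting homotopy $h(a_0\otimes\cdots\otimes a_m)=\epsilon(a_0)\,a_1\otimes\cdots\otimes a_m$, and a direct computation using the counit identity $(\epsilon\otimes 1)\Delta=\mathrm{id}$ gives $dh+hd=\mathrm{id}$, so each local column is acyclic.

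The second step promotes this to global exactness via a spectral sequence. Filter the augmented total complex by cube degree, with $F^k$ the subcomplex of contributions from vertices $v$ with $|v|\geq k$. Since the Khovanov differential strictly increases $|v|$ while the cobar differential preserves $|v|$, this filtration is preserved by the total differential, and it is bounded between $0$ and $n_1+n_2+1$. On the associated graded piece $F^k/F^{k+1}$ the surviving differential is purely the cobar one, and this graded piece decomposes as a direct sum, over cube vertices $v$ with $|v|=k$, of the local augmented cobar columns analyzed above. Each of these is acyclic, so $E_1=0$; boundedness of the filtration forces convergence, and the augmented total complex is acyclic, i.e., $S$ is a quasi-isomorphism.

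The only substantive verification is the contracting homotopy identity on the local column, which is the standard contractibility of the cobar complex of a counital coalgebra; commutation of the cobar and Khovanov differentials required to view the picture as a genuine double complex is automatic from the naturality of the underlying $(1+1)$-dimensional TQFT, since splitting at the connect-sum region commutes (up to sign) with the Khovanov saddle cobordisms. I anticipate no serious obstacle.
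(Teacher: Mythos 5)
Your argument is correct, but it takes a genuinely different route from the paper. You prove acyclicity of the augmented complex directly: filter by cube degree, note that the Khovanov differential raises the vertex weight while the cobar-type differential is vertex-wise, and contract each vertex-wise column. Locally the connect-sum circle is the fusion of the two pointed circles, so after discarding the unpointed circles the column is the augmented two-sided cobar construction of the Frobenius coalgebra $\Kh(U)$ on itself, which your counit homotopy (the standard extra codegeneracy) contracts; coassociativity also gives the needed vanishing of the first cobar map composed with $S$, so the augmented object really is a complex. This is self-contained, works over $\ZZ$, and in effect verifies by hand the co-flatness/relative injectivity of $\KhCx(K_i)$ as a $\Kh(U)$-comodule. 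The paper deliberately sidesteps exactly that verification: it dualizes over $\ZZ$, which exchanges the split and merge maps, derived cotensor and derived tensor, and $\KhCx(K)$ with $\KhCx(m(K))$, and then quotes that $\KhCx(m(K_i))$ is free as a $\Kh(U)$-module together with Khovanov's connected sum theorem, so the bar complex collapses to the ordinary tensor product. The paper's route is shorter given those citations; yours trades the external input (duality of complexes, the connected sum theorem for chain complexes) for an explicit filtration and homotopy. The only point needing care in your write-up is sign bookkeeping: with the paper's convention $(-1)^{i+n}$ on the $S$-terms and the Koszul signs on the internal differentials, the identity $dh+hd=\mathrm{id}$ holds only after inserting column-dependent signs in $h$ (or after the harmless isomorphism rescaling columns to the standard cobar signs); this is cosmetic, not a gap.
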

\begin{proof}
  The Khovanov complex $\KhCx(K_1\#K_2)$ is the cotensor product of
  $\KhCx(K_1)$ and $\KhCx(K_2)$ as comodules over
  $\Kh(U)$~\cite[Lemma 10.5]{RS-khovanov}, while
  Formula~\eqref{eq:derived-cotensor} is the derived cotensor product
  of $\KhCx(K_1)$ and $\KhCx(K_2)$. Thus, the statement presumably
  follows from the fact that $\KhCx(K_1)$ and $\KhCx(K_2)$ are
  co-flat. Rather than going down this rabbit hole, dualize the
  complex~\eqref{eq:derived-cotensor} over $\ZZ$, which exchanges the
  split map $S$ and the merge map $m$, the (derived) cotensor product and
  the (derived) tensor product, and $\KhCx(K)$ and $\KhCx(m(K))$. (The
  last assertion is \cite[Proposition 32]{Kho-kh-categorification}.)
  The result then follows from the fact that $\KhCx(m(K_i))$ is free
  as a $\Kh(U)$-module and Khovanov's connected sum
  theorem~\cite[Proposition 3.3]{Kho-kh-patterns}.
\end{proof}

We turn to the third part of the argument, constructing compatible
spatial refinements for $F_{\Kh}(K_1\# K_2)$ and $F_{\Kh}(K_1\amalg
U\amalg\cdots\amalg U\amalg K_2)$.

\begin{lemma}\label{lem:disj-union-to-connect-sum-spatial}
  Consider any spatial refinement
  $\wt{F}_{\centernot{++}\Kh}(K_1\amalg K_2)$ of
  $F_{\centernot{++}\Kh}(K_1\amalg K_2)$ whose induced spatial
  refinements $\wt{F}_{-+\Kh}(K_1\amalg K_2)$ and
  $\wt{F}_{+-\Kh}(K_1\amalg K_2)$ of
  $F_{-+\Kh}(K_1\amalg K_2)\cong F_{+-\Kh}(K_1\amalg K_2)$ agree. Then, identifying
  $\wt{F}_{-+\Kh}(K_1\amalg K_2)$ and $\wt{F}_{+-\Kh}(K_1\amalg K_2)$
  produces a spatial refinement $\wt{F}_{\Kh}(K_1\# K_2)$ of
  $F_{\Kh}(K_1\# K_2)$.
\end{lemma}

\begin{proof}
  It is immediate from the definitions that identifying
  $\wt{F}_{-+\Kh}(K_1\amalg K_2)$ and $\wt{F}_{+-\Kh}(K_1\amalg K_2)$
  produces a spatial refinement of the functor $F$ above. The
  isomorphism from \Lemma{du2cs-quotient} then produces the spatial
  refinement $\wt{F}_{\Kh}(K_1\# K_2)$ of $F_{\Kh}(K_1\# K_2)$.
\end{proof}

Let $\olDeltaInj=\DeltaInj\cup\bul{0}$ be the category obtained by
adding an object $\bul{0}=\emptyset$ to $\DeltaInj$ and a unique
morphism $\bul{0}\to\bul{n}$ for each $n$; let $f_{\bul{0},0}$ denote
the unique morphism from $\bul{0}$ to $\bul{1}$. We will extend
Diagram~\eqref{eq:space-dtp} to construct a functor
$\olDeltaInj^{\op}\to\CWSpectra$.

\begin{lemma}\label{lem:extend-space-dtp}
There exists a functor $\ol{F}_{\times}\from \olDeltaInj^{\op}\to\CWSpectra$
satisfying the following:
\begin{enumerate}
\item\label{item:objects-map-correctly} $\ol{F}_{\times}(\bul{0})=\KhSpace(K_1\# K_2)$ and
  $\ol{F}_{\times}(\bul{n})=\KhSpace(K_1\amalg
  K_2)\smas(\bigwedge_{i=1}^{n-1}\KA{1})$ for all $n>0$.
\item\label{item:is-extension-upto-nat-trans} Let $F_{\times}$ denote the restriction
  $\ol{F}_{\times}|_{\DeltaInj^{\op}}$. Then there is a natural
  transformation $\eta$ from the functor $F_{\otimes}$ of
  diagram~\eqref{eq:space-dtp} to $F_{\times}$, so that for all $n>0$,
  $\eta_{\bul{n}}\from F_{\otimes}(\bul{n})\to F_{\times}(\bul{n})$
  sends each cell in
  $\KhSpace(K_1)\smas(\bigwedge_{i=1}^{n-1}\KA{1})\smas\KhSpace(K_2)$
  to the corresponding cell in $\KhSpace(K_1\amalg
  K_2)\smas(\bigwedge_{i=1}^{n-1}\KA{1})$ by a degree one map.
\item\label{item:morphisms-map-correctly}
  $\ol{F}_{\times}(f^\op_{\bul{0},0})$ is a map $\KhSpace(K_1\amalg
  K_2)\to \KhSpace(K_1\# K_2)$ so that the induced map on reduced
  cellular cochains is the cobordism map
  \[
  S\from \KhCx(K_1\# K_2)\to \KhCx(K_1\amalg K_2)
  \]
  induced by splitting at the connected sum region.
\end{enumerate}
\end{lemma}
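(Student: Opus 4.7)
The strategy is to build $\ol{F}_\times$ from a single doubly pointed spacial refinement, and to read off the natural transformation $\eta$ from the product-of-refinements comparison.

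First, I would construct a spacial refinement $\wt{F}_{\Kh}(K_1\amalg K_2)$ of $F_{\Kh}(K_1\amalg K_2)$ that is pointed at both $p_1$ and $p_2$ simultaneously, and whose restrictions to $F_{+-\Kh}(K_1\amalg K_2)$ and $F_{-+\Kh}(K_1\amalg K_2)$ agree under the isomorphism of Lemma~\ref{lem:du2cs-isom}. One takes pointed spacial refinements of $F_{\Kh}(K_1)$ at $p_1$ and $F_{\Kh}(K_2)$ at $p_2$ (as constructed before Proposition~\ref{prop:quasi-isom-module-spectrum}) and combines them via the product-of-refinements construction from Proposition~\ref{prop:realize-product}; all three required symmetries are then built in from the symmetry of the product functor. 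Applying Lemma~\ref{lem:disj-union-to-connect-sum-spacial} yields a spacial refinement $\wt{F}_{\Kh}(K_1\#K_2)$ together with a canonical cellular quotient $\pi\co\KhSpace(K_1\amalg K_2)\to\KhSpace(K_1\#K_2)$ that sends each cell to the corresponding cell by a degree $\pm1$ map and, by Lemma~\ref{lem:du2cs-quotient}, induces the split cobordism map $S$ on reduced cellular cochains.

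The doubly pointed structure supplies two strict, commuting $\KA{1}$-module structures on $\KhSpace(K_1\amalg K_2)$, via the relabelling endomorphisms $\Psi_{p_1}$ and $\Psi_{p_2}$ defined exactly as before Proposition~\ref{prop:quasi-isom-module-spectrum}; they commute because they act on disjoint circles. Define $F_\times\co \DeltaInj^{\op}\to\CW$ by $F_\times(\bul{n})=\KhSpace(K_1\amalg K_2)\smas\KA{1}^{\wedge(n-1)}$, with outer face maps given by $\Psi_{p_1}$ and $\Psi_{p_2}$ on the leftmost and rightmost $\KA{1}$ factor and inner face maps given by the ring multiplication $\mu$; strict associativity and unitality make this a strict functor. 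To extend to $\olDeltaInj^{\op}$, set $\ol{F}_\times(f^{\op}_{\bul{0},0})=\pi$. The only new condition for functoriality is the identity $\pi\circ\Psi_{p_1}=\pi\circ\Psi_{p_2}$, which one verifies cell by cell: on cells of $F_{++\Kh}$, $F_{+-\Kh}$, and $F_{-+\Kh}$, both compositions equal the constant map to the basepoint, while on an $F_{--\Kh}$ cell $\Psi_{p_1}$ produces the corresponding cell of $F_{+-\Kh}$ and $\Psi_{p_2}$ produces the corresponding cell of $F_{-+\Kh}$, and these two cells are identified by $\pi$ by the very construction of $\wt{F}_{\Kh}(K_1\#K_2)$. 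Both sides being cellular and agreeing on every cell, they agree strictly.

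The natural transformation $\eta\co F_\otimes\to F_\times$ at level $\bul{n}$ is then the product-to-disjoint-union comparison $\KhSpace(K_1)\smas\KhSpace(K_2)\to\KhSpace(K_1\amalg K_2)$ associated to the same product-of-refinements data, with the $(n-1)$ auxiliary $\KA{1}$ factors carried through unchanged. The cell-to-cell degree one property follows from Propositions~\ref{prop:Kh-func-prod} and~\ref{prop:realize-product}; naturality against the face maps reduces to the fact that the $p_i$-action on $\KhSpace(K_1\amalg K_2)$ restricts to the $\KA{1}$-action on $\KhSpace(K_i)$ under the product decomposition. The main obstacle is essentially bookkeeping: a single collection of choices (pointed refinements of $F_\Kh(K_i)$ and their product) must simultaneously supply the doubly pointed refinement of $F_\Kh(K_1\amalg K_2)$, the identification used by Lemma~\ref{lem:disj-union-to-connect-sum-spacial}, the module actions entering the face maps, and the comparison map used to define $\eta$, so that all the required compatibilities appear as strict cellular equalities rather than merely up to homotopy.
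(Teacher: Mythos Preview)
Your proposal is correct and follows essentially the same approach as the paper: build a doubly pointed spacial refinement of $F_{\Kh}(K_1\amalg K_2)$ as the product of pointed refinements of the factors, use it to define the bimodule structure and hence $F_\times$, apply Lemma~\ref{lem:disj-union-to-connect-sum-spacial} to obtain $\pi$, and verify the coequalizer relation $\pi\circ\Psi_{p_1}=\pi\circ\Psi_{p_2}$ to extend to $\olDeltaInj^{\op}$. The paper organizes the same argument into labeled steps and writes out the coequalizer verification as an explicit diagram, but the content is the same; your cell-by-cell check is exactly the paper's observation that $\KhSpace(K_1\#K_2)$ is the coequalizer of the two inclusions of $\hocolim(\wt{F}^+_{+-\Kh})\cong\hocolim(\wt{F}^+_{-+\Kh})$ into $\hocolim(\wt{F}^+_{\centernot{++}\Kh})$.
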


\begin{proof}
  During the construction of $\ol{F}_\times$, we will use spatial
  refinements for $K_1\amalg K_2$ that are pointed spatial refinements
  with respect to both $p_1$ and $p_2$. We will call such spatial
  refinements \emph{doubly pointed spatial refinements}.  Doubly
  pointed spatial refinements are spatial refinements that agree on
  $F_{\ast+\Kh}(K_1\amalg K_2)$ and $F_{\ast-\Kh}(K_1\amalg K_2)$, and
  also on $F_{+\ast\Kh}(K_1\amalg K_2)$ and $F_{-\ast\Kh}(K_1\amalg
  K_2)$ (and, therefore, agree on $F_{++\Kh}(K_1\amalg K_2)$,
  $F_{+-\Kh}(K_1\amalg K_2)$, $F_{-+\Kh}(K_1\amalg K_2)$, and
  $F_{--\Kh}(K_1\amalg K_2)$).
  The CW spectrum $\KhSpace(K_1\amalg K_2)$ constructed using any such
  doubly pointed spatial refinement can be viewed as a strict bimodule
  over $\KA{1}$, with the two actions coming from the two basepoints
  $p_1$ and $p_2$. Therefore, we can construct a strict functor
  $G\from\DeltaInj^\op\to\CW$ by declaring
  $G(\bul{n})=\KhSpace(K_1\amalg
  K_2)\smas(\bigwedge_{i=1}^{n-1}\KA{1})$ and by defining the map
  $G(f^\op_{\bul{n},i})\from \KhSpace(K_1\amalg
  K_2)\smas(\bigwedge_{i=1}^{n}\KA{1})\to \KhSpace(K_1\amalg
  K_2)\smas(\bigwedge_{i=1}^{n-1}\KA{1})$ to be the ring
  multiplication map applied to the $i\th$ pair of consecutive
  $\KA{1}$-factors if $0<i<n$, and the bimodule map coming from $p_1$
  (respectively, $p_2$) using the first (respectively, last) $\KA{1}$
  factor if $i=0$ (respectively, $n$). 


  Now we are in a position to construct $F_{\times}$. The construction
  proceeds in several stages.
  \begin{enumerate}[leftmargin=*,label=(F-\arabic*),ref=(F-\arabic*)]
  \item We start with pointed spatial refinements $\wt{F}_{\Kh}(K_1)$
    and $\wt{F}_{\Kh}(K_2)$ of $F_{\Kh}(K_1)$ and $F_{\Kh}(K_2)$
    (using $k_1$-dimensional and $k_2$-dimensional boxes with
    $k_1+k_2=k$). Since $F_{\Kh}(K_1\amalg K_2)=F_{\Kh}(K_1)\times
    F_{\Kh}(K_2)$ (\Proposition{Kh-func-prod}),
    $\wt{F}_{\Kh}(K_1\amalg K_2)\defeq
    \wt{F}_{\Kh}(K_1)\smas\wt{F}_{\Kh}(K_2)$ (cf.~\ref{item:hocolim-prod}) is a doubly pointed
    spatial refinement for $K_1\amalg K_2$. 
  \item\label{item:l1b-l2} Define
    $F_{\times}\from\DeltaInj^\op\to\CWSpectra$ to be the functor
    associated to this doubly pointed spatial refinement. This
    automatically satisfies the second part of
    \Lemma{extend-space-dtp}~(\ref{item:objects-map-correctly}). To
    relate $F_{\otimes}$ and $F_{\times}$, observe that for all $n>0$,
    $F_{\times}(\bul{n})=F_{\times}(\bul{1})\smas(\bigwedge_{i=1}^{n-1}\KA{1})$
    and $F_{\otimes}(\bul{n})$ is canonically isomorphic to
    $F_{\otimes}(\bul{1})\smas(\bigwedge_{i=1}^{n-1}\KA{1})$
    (preserving the order of the $\KA{1}$-factors); therefore, it is
    enough to relate $F_{\times}(\bul{1})$, which is a formal
    desuspension of the suspension spectrum of
    $\hocolim\big((\wt{F}_{\Kh}(K_1)\smas\wt{F}_{\Kh}(K_2))^+\big)$,
    and $F_{\otimes}(\bul{1})$, which is a formal desuspension of the
    suspension spectrum of
    $\hocolim\big((\wt{F}_{\Kh}(K_1))^+\big)\smas
    \hocolim\big((\wt{F}_{\Kh}(K_2))^+\big)$. However, the former is
    easily seen to be a quotient of the latter, with the quotient map
    sending each cell by a degree one map to the corresponding
    cell. This proves
    \Lemma{extend-space-dtp}~(\ref{item:is-extension-upto-nat-trans}).
  \item The doubly pointed spatial refinement $\wt{F}_{\Kh}(K_1\amalg
    K_2)$ induces a spatial refinement
    $\wt{F}_{\centernot{++}\Kh}(K_1\amalg K_2)$ of
    $F_{\centernot{++}\Kh}(K_1\amalg K_2)$; and its induced spatial
    refinements $\wt{F}_{-+\Kh}(K_1\amalg K_2)$ and
    $\wt{F}_{+-\Kh}(K_1\amalg K_2)$ of $F_{-+\Kh}(K_1\amalg K_2)$ and
    $F_{+-\Kh}(K_1\amalg K_2)$ agree (with $F_{-+\Kh}(K_1\amalg K_2)$
    and $F_{+-\Kh}(K_1\amalg K_2)$ identified by
    \Lemma{du2cs-isom}). Therefore, by
    \Lemma{disj-union-to-connect-sum-spatial}, we get a (pointed)
    spatial refinement $\wt{F}_{\Kh}(K_1\# K_2)$ for $K_1\# K_2$ (with
    the basepoint chosen on either of the two strands near the connect
    sum region).
  \item\label{item:l1a} We use $\wt{F}_{\Kh}(K_1\# K_2)$ to
    construct the CW spectrum $\ol{F}_{\times}(\bul{0})=\KhSpace(K_1\#
    K_2)$; this automatically satisfies the first part of
    \Lemma{extend-space-dtp}~(\ref{item:objects-map-correctly}).
  \item\label{item:l3} The space
    $\hocolim\big((\wt{F}_{\centernot{++}\Kh}(K_1\amalg K_2))^+\big)$
    is a quotient complex of
    $\hocolim\big((\wt{F}_{\Kh}(K_1\amalg K_2))^+\big)$; it has two
    subcomplexes $\hocolim\big((\wt{F}_{-+\Kh}(K_1\amalg K_2))^+\big)$
    and $\hocolim\big((\wt{F}_{+-\Kh}(K_1\amalg K_2))^+\big)$ that
    have a canonical isomorphism between them; and the coequalizer is
    canonically identified with the space
    $\hocolim\big((\wt{F}_{\Kh}(K_1\# K_2))^+\big)$. That is, we have
    a diagram
    \[
    \xymatrix{ &\mathclap{\hocolim\big((\wt{F}_{-+\Kh}(K_1\amalg
      K_2))^+\big)\cong\hocolim\big((\wt{F}_{+-\Kh}(K_1\amalg
      K_2))^+\big)}\\
      \hocolim\big((\wt{F}_{\Kh}(K_1\amalg K_2))^+\big) \ar@{->>}[r]&\hocolim\big((\wt{F}_{\centernot{++}\Kh}(K_1\amalg K_2))^+\big) \ar@{->>}[d] \ar@{<-_)}[u]+<7ex,-2ex> \ar@{<-_)}[u]+<-7ex,-3ex>\\
      &\hocolim\big((\wt{F}_{\Kh}(K_1\#
      K_2))^+\big).}
    \]
    Let
    $\ol{F}_{\times}(f^\op_{\bul{0},0})\from\ol{F}_{\times}(\bul{1})=\KhSpace(K_1\amalg
    K_2)\to\KhSpace(K_1\# K_2)=\ol{F}_{\times}(\bul{0})$ be the map
    induced from the composition
    $\hocolim\big((\wt{F}_{\Kh}(K_1\amalg
    K_2))^+\big)\to\hocolim\big((\wt{F}_{\Kh}(K_1\#
    K_2))^+\big)$. This map is a cellular map sending the cells in
    $\KhSpace(K_1\amalg K_2)$ to the corresponding cells in
    $\KhSpace(K_1\# K_2)$ by degree one maps (with the correspondence
    described in \Lemma{du2cs-quotient}). Therefore, this map
    satisfies
    \Lemma{extend-space-dtp}~(\ref{item:morphisms-map-correctly}).
  \item\label{item:bul-2-to-0} We have to define the map
    $\ol{F}_{\times}(\bul{2})\to\ol{F}_{\times}(\bul{0})$ to be both
    $\ol{F}_{\times}(f^\op_{\bul{0},0})\circ\ol{F}_{\times}(f^\op_{\bul{1},0})$
    and
    $\ol{F}_{\times}(f^\op_{\bul{0},0})\circ\ol{F}_{\times}(f^\op_{\bul{1},1})$;
    so we merely need to check that the latter two maps agree. The two
    maps $\ol{F}_{\times}(f^\op_{\bul{1},0})$ and
    $\ol{F}_{\times}(f^\op_{\bul{1},1})$ are both induced from maps of spaces
    \begin{align*}
    &\hocolim\big((\wt{F}_{\Kh}(K_1\amalg K_2))^+\big)\smas\{\ast,p_-,p_+\}\\
      &\qquad=(\hocolim\big((\wt{F}_{\Kh}(K_1\amalg K_2))^+\big)\times\{p_-\})\vee(\hocolim\big((\wt{F}_{\Kh}(K_1\amalg K_2))^+\big)\times\{p_+\})\\
      &\qquad\qquad\to\hocolim\big((\wt{F}_{\Kh}(K_1\amalg K_2))^+\big).
    \end{align*}
    The two maps agree on the first summand: both are the
    projection
    $\hocolim\big((\wt{F}_{\Kh}(K_1\amalg
    K_2))^+\big)\times\{p_-\}\to\hocolim\big((\wt{F}_{\Kh}(K_1\amalg
    K_2))^+\big)$. On the second summand, the two maps are
    compositions of the projection map
    $\hocolim\big((\wt{F}_{\Kh}(K_1\amalg K_2))^+\big)\times\{p_+\}\to\hocolim\big((\wt{F}_{\Kh}(K_1\amalg K_2))^+\big)$
    with the following two maps:
    \[
    \xymatrix{&\mathclap{\hocolim\big((\wt{F}_{\Kh}(K_1\amalg
        K_2))^+\big)}\\
      \hocolim\big((\wt{F}_{-*\Kh}(K_1\amalg
      K_2))^+\big)\ar[d]^\cong\ar@{<<-}[ur]+<-2ex,-2ex>&&\hocolim\big((\wt{F}_{*-\Kh}(K_1\amalg
      K_2))^+\big)\ar[d]_\cong\ar@{<<-}[ul]+<2ex,-2ex>\\
      \hocolim\big((\wt{F}_{+*\Kh}(K_1\amalg
      K_2))^+\big)\ar@{^(->}[dr]+<-2ex,3ex>&&\hocolim\big((\wt{F}_{*+\Kh}(K_1\amalg
      K_2))^+\big)\ar@{_(->}[dl]+<2ex,3ex>\\
      &\mathclap{\hocolim\big((\wt{F}_{\Kh}(K_1\amalg K_2))^+\big).}
    }
    \]
    Therefore, after composing with the map
    $\hocolim\big((\wt{F}_{\Kh}(K_1\amalg
    K_2))^+\big)\to\hocolim\big((\wt{F}_{\Kh}(K_1\# K_2))^+\big)$ from
    step~\ref{item:l3} that induces
    $\ol{F}_{\times}(f^\op_{\bul{0},0})$, we get the two maps
    \[
    \xymatrix{&\hocolim\big((\wt{F}_{\Kh}(K_1\amalg
      K_2))^+\big)\ar@{->>}[d]\\
      &\hocolim\big((\wt{F}_{--\Kh}(K_1\amalg
      K_2))^+\big)\ar[dr]+<0ex,3ex>^-\cong\ar[dl]+<0ex,3ex>_-\cong\\
      \mathclap{\hocolim\big((\wt{F}_{+-\Kh}(K_1\amalg
      K_2))^+\big)}&&\mathclap{\hocolim\big((\wt{F}_{-+\Kh}(K_1\amalg
      K_2))^+\big)}\\
      &\hocolim\big((\wt{F}_{\centernot{++}\Kh}(K_1\amalg K_2))^+\big) \ar@{->>}[d] \ar@{<-_)}[ul]+<0ex,-3ex> \ar@{<-^)}[ur]+<0ex,-3ex>\\
      &\hocolim\big((\wt{F}_{\Kh}(K_1\# K_2))^+\big).  }
    \]
    (The first vertical map is the quotient map to
    $\hocolim\big((\wt{F}_{--\Kh}(K_1\amalg K_2))^+\big)$, as the
    composition sends the rest of
    $\hocolim\big((\wt{F}_{\Kh}(K_1\amalg K_2))^+\big)$ to the
    basepoint.)  However, since $\hocolim\big((\wt{F}_{\Kh}(K_1\#
    K_2))^+\big)$ is the coequalizer (see step~\ref{item:l3}), these
    maps agree.
  \item The $n$ different morphisms $\bul{n}\to\bul{1}$ in
    $\olDeltaInj^\op$ induce the following two maps
    $\ol{F}_{\times}(\bul{n})\to\ol{F}_{\times}(\bul{1})$: first apply
    the ring multiplication on the $\KA{1}$-factors to get a map
    \[
    \phi\from\ol{F}_{\times}(\bul{n})=\KhSpace(K_1\amalg
    K_2)\smas(\bigwedge_{i=1}^{n-1}\KA{1})\to \KhSpace(K_1\amalg
    K_2)\smas\KA{1}=\ol{F}_{\times}(\bul{2}),
    \]
    and then compose with the two maps
    $\ol{F}_{\times}(f^\op_{\bul{1},0})$ and
    $\ol{F}_{\times}(f^\op_{\bul{1},1})$. We have already defined the
    map $\ol{F}_{\times}(\bul{2})\to\ol{F}_{\times}(\bul{0})$; define
    the map $\ol{F}_{\times}(\bul{n})\to\ol{F}_{\times}(\bul{0})$ to
    be its composition with $\phi$.
\end{enumerate}
The functor $\ol{F}_\times$ thus constructed satisfies the conditions
of the lemma (see steps~\ref{item:l1b-l2},~\ref{item:l1a},
and~\ref{item:l3}), thereby concluding the proof.
\end{proof}

\begin{remark}
  Steps~\ref{item:l3} and~\ref{item:bul-2-to-0} in the proof of
  \Lemma{extend-space-dtp} actually show that with the specific
  choices made in the proof, $\KhSpace(K_1\# K_2)$ is the ordinary
  tensor product of $\KhSpace(K_1)$ and $\KhSpace(K_2)$ over
  $\KA{1}$. However, these spectra and  their module structures are only defined up to
  weak equivalence (see
  \Proposition{quasi-isom-module-spectrum}); and the ordinary tensor
  product is not invariant under such equivalences, while the derived
  tensor product is.
\end{remark}

\begin{proof}[Proof of \Theorem{unred-con-sum}]
  Let $F_{\otimes}\from\DeltaInj^\op\to\CWSpectra$ be the functor from
  diagram~\eqref{eq:space-dtp}, and let
  $\ol{F}_{\times}\from\olDeltaInj^{\op}\to\CWSpectra$ be the functor
  constructed in \Lemma{extend-space-dtp}. The derived tensor product
  $\KhSpace(K_1)\otimes_{\KA{1}}\KhSpace(K_2)$ is defined to be
  $\hocolim(F_{\otimes})$. The natural transformation $\eta\from
  F_{\otimes}\to F_{\times}=\ol{F}_{\times}|_{\DeltaInj^\op}$ from
  \Lemma{extend-space-dtp}~(\ref{item:is-extension-upto-nat-trans}) is a
  stable homotopy equivalence on each object; therefore, by
  property~\ref{item:hocolim-nat-trans} in \Section{colimit},
  $\hocolim(F_{\otimes})\simeq\hocolim(F_{\times})$. Since $\bul{0}$ is a
  terminal object in $\olDeltaInj^\op$, the functor $\ol{F}_\times$
  induces a map 
  \[
  \KhSpace(K_1)\otimes_{\KA{1}}\KhSpace(K_2)=\hocolim(F_{\times})\longrightarrow
  \ol{F}_{\times}(\bul{0})=\KhSpace(K_1\# K_2).
  \]
 
  Using \Lemma{dtp-cw-structure} and
  \Lemma{extend-space-dtp}~(\ref{item:is-extension-upto-nat-trans}),
  we can view $\hocolim(F_{\times})$ as a CW spectrum whose reduced
  cellular cochain complex is the complex from
  Formula~\eqref{eq:derived-cotensor}. \Lemma{extend-space-dtp}~(\ref{item:morphisms-map-correctly})
  implies that the map $\hocolim(F_{\times})\to
  \ol{F}_{\times}(\bul{0})$ induces the quasi-isomorphism from
  \Lemma{coderived} at the level of reduced cellular cochain
  complexes. Therefore, the map is a stable homotopy equivalence.
\end{proof}

\section{Khovanov homotopy type of a mirror}\label{sec:mirror}
Given a spectrum $X$, let $\SWdual{X}$ denote the Spanier-Whitehead
dual of $X$, which is the internal function object parametrizing maps
$X \to \SphereS$. If $X$ is a finite spectrum, the spectrum $\SWdual{X}$
is also finite, and is characterized by the existence of map of spectra 
\[
\mu\co X\smas \SWdual{X}\to \SphereS
\]
such that the slant product map 
\begin{equation}\label{eq:slant}
\mu^*(\gamma)/\cdot \co \wt{H}_*(X)\to \wt{H}^*(\SWdual{X})
\end{equation}
is an isomorphism. (Here, $\gamma\in \wt{H}^0(\SphereS)$ is the fundamental class.) 
We will say that \emph{$\mu$ witnesses $S$-duality between $X$ and $\SWdual{X}$.}
See Switzer~\cite[Proposition 14.37]{Switzer-top-book} and Adams~\cite[Section III.9]{Adams-top-book} for more details.

\begin{theorem}\label{thm:mirror}\cite[Conjecture~10.1]{RS-khovanov}
  Let $L$ be a link and let $\mirror{L}$ denote the mirror of $L$. Then
  \[
  \KhSpace^j(\mirror{L})\simeq\SWdual{\KhSpace^{-j}(L)}.
  \]
\end{theorem}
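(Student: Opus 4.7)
The strategy is to realize, at the level of spectra, the classical duality pairing on Khovanov homology coming from the pair of pants cobordism. Arrange a diagram for $\mirror{L}$ as a literal reflection of the diagram for $L$ across an equatorial circle in $S^2$, and let $\Sigma\subset [0,1]\times S^2$ denote the embedded cobordism from $L\amalg \mirror{L}$ (at time $0$) to the empty link (at time $1$) obtained by folding $L\times[0,1]$ across this reflection axis. The Euler characteristic of $\Sigma$ accounts for the grading shift below.

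First I would build from $\Sigma$ a map of spectra $\phi\co\KhSpace(L\amalg\mirror{L})\to\SphereS$ (suitably suspended). Rather than developing full cobordism functoriality, the plan is to decompose $\Sigma$ into a sequence of elementary pieces: births, saddles, and planar isotopies/Reidemeister-type moves. Each elementary piece modifies the link diagram in a controlled way, and the Hu-Kriz-Kriz-style factorization through $\Cob_\emb^{1+1}\to \BurnsideCat$ from \Section{two-functors-same} converts it into a natural transformation (or stable equivalence) between the relevant functors $\CCat{n}\to\BurnsideCat$. Passing through the realizations of \Section{realize-functor} or \Section{smaller-cube} then yields the required map. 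Combining $\phi$ with the splitting from \Theorem{disjoint-union},
\[
\KhSpace^j(L\amalg\mirror{L})\simeq\bigvee_{j_1+j_2=j}\KhSpace^{j_1}(L)\smas\KhSpace^{j_2}(\mirror{L}),
\]
and tracking the quantum grading shift contributed by $\Sigma$ isolates the component concentrated on $j_1=j$, $j_2=-j$. This produces a candidate duality pairing
\[
\mu\co \KhSpace^j(L)\smas\KhSpace^{-j}(\mirror{L})\to\SphereS.
\]

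Next I would verify that $\mu$ witnesses Spanier-Whitehead duality. By construction the induced map on reduced (co)homology is the pairing $\Kh^{*,j}(L)\otimes \Kh^{*,-j}(\mirror{L})\to\ZZ$ coming from $\Sigma$, i.e.\ the classical trace form on Khovanov homology. It is well known (and verifiable directly from the Frobenius algebra structure) that this pairing is perfect: it identifies $\Kh^{i,-j}(\mirror{L})$ with $\Hom(\Kh^{-i,j}(L),\ZZ)$, essentially because the composition of $\Sigma$ with its reflected copy is isotopic to $L\times[0,1]$. Therefore the slant product of \Equation{slant} is an isomorphism, which is exactly the definition of $S$-duality for finite spectra; finiteness of $\KhSpace^j(L)$ is immediate from its CW structure.

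The main obstacle is the first step: promoting the cochain-level cobordism map to an honest spectrum map. The cobordism $\Sigma$ is particularly mild---each of its non-isotopy pieces is a birth (corresponding to the unit $\SphereS\to\KA{1}$ studied in \Section{Kh-disj-union}) or a matched saddle pairing a crossing of $L$ with its mirror partner in $\mirror{L}$. Each saddle amounts to a single edge of a resolution cube, and can therefore be realized by an explicit natural transformation between the associated functors $\CCat{n}\to\BurnsideCat$, which descends to a spectrum map by \Lemma{iso-realize-same} and the thickening construction of \Section{thicken}. Reidemeister moves along the way are absorbed by invariance of $\KhSpace(-)$ as in~\cite{RS-khovanov}. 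The bookkeeping is somewhat involved but purely combinatorial; the conceptual content is that each elementary cobordism is local enough to be realized by modifying the cube of resolutions in a prescribed way, and the resulting tower of maps assembles into $\phi$.
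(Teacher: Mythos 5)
Your overall route is the same as the paper's: take the canonical cobordism from $L\amalg\mirror{L}$ to the empty link, produce from it a spectrum-level map to $\SphereS$, split the source via \Theorem{disjoint-union}, and conclude Spanier--Whitehead duality from perfectness of the induced pairing on Khovanov homology (the paper's \Proposition{detect-SW} and \Proposition{Kh-perf-pair}, the latter by the same snake-straightening TQFT argument you sketch). The genuine gap is in the step you yourself flag as the main obstacle: promoting the cobordism map to a map of spectra. Your plan is to realize each elementary piece (birth, saddle) as ``a natural transformation between the relevant functors $\CCat{n}\to\BurnsideCat$'' and then descend via \Lemma{iso-realize-same} and the thickening construction. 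But \Lemma{iso-realize-same} only realizes natural \emph{isomorphisms} of Burnside functors, and a birth or saddle map is nowhere close to an isomorphism; nothing in this paper's machinery realizes a general (non-invertible) transformation of Burnside functors as a map of realizations. That is exactly why the paper does not reprove this: it imports the elementary cobordism maps for the Khovanov spectrum from~\cite{RS-s-invariant} (stated here as \Proposition{cob-map-spectra}, which also records that their effect on homology is the Khovanov cobordism map up to sign). As written, your construction of $\phi$ does not go through; either cite those maps or supply the mapping-cone/subquotient-type construction they use.

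A second, smaller issue is the duality verification. Over $\ZZ$ the statement ``the trace pairing identifies $\Kh^{i,-j}(\mirror{L})$ with $\Hom(\Kh^{-i,j}(L),\ZZ)$'' is false in the presence of torsion, which Khovanov homology has in abundance; a bilinear pairing into $\ZZ$ cannot be perfect on torsion classes. The correct criterion (the paper's \Proposition{detect-SW}, via \Lemma{qi} and \Lemma{perfect-pairing}) is that the pairing be perfect with coefficients in \emph{every field}; this is equivalent to the induced chain map being a quasi-isomorphism, hence to the slant product being an isomorphism integrally. The TQFT snake argument does give perfectness over each field, so your argument is repaired by running it with field coefficients, but as stated the integral claim is not what you should (or can) prove.
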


Before proving the theorem we recall a reformulation of the Spanier-Whitehead duality criterion, after some homological algebra.

\begin{lemma}\label{lem:qi}
  Let $C_*$ and $D_*$ be finitely-generated chain complexes of abelian groups and let $f\co C_*\to D_*$ be a chain map. Suppose that for any field $k$, the induced map $f\otimes \Id_k\co C_*\otimes k\to D_*\otimes k$ is a quasi-isomorphism. Then $f$ is a quasi-isomorphism.
\end{lemma}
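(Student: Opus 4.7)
The plan is to reduce the statement to a vanishing criterion for the homology of a single finitely-generated complex, and then apply the universal coefficient theorem prime by prime. First I would pass to the mapping cone: since $f\co C_*\to D_*$ is a quasi-isomorphism if and only if $\Cone(f)$ is acyclic, and since tensoring with a field is exact (so $\Cone(f)\otimes k\cong \Cone(f\otimes \Id_k)$), the hypothesis translates to saying that the finitely generated chain complex $E_*\defeq\Cone(f)$ has the property that $E_*\otimes k$ is acyclic for every field $k$. The goal then becomes: show that $H_*(E_*)=0$.

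Since $E_*$ is finitely generated in each degree, each $H_n(E_*)$ is a finitely generated abelian group, so by the structure theorem decomposes as $\ZZ^{r_n}\oplus T_n$ for some torsion group $T_n$. Taking $k=\QQ$ and using that $H_*(E_*\otimes \QQ)\cong H_*(E_*)\otimes\QQ$ (this direction needs only flatness of $\QQ$) gives $r_n=0$ for all $n$, so each $H_n(E_*)$ is pure torsion. Next, for each prime $p$, the universal coefficient theorem provides a (split) short exact sequence
\[
0\to H_n(E_*)\otimes \F_p \to H_n(E_*\otimes \F_p)\to \Tor(H_{n-1}(E_*),\F_p)\to 0.
\]
The vanishing of the middle term forces $H_n(E_*)\otimes \F_p=0$ for every $n$ and every prime $p$, which (combined with the fact that each $H_n$ is a finitely generated torsion group) rules out every cyclic summand $\ZZ/p^k$. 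Hence $H_n(E_*)=0$ for all $n$, so $\Cone(f)$ is acyclic and $f$ is a quasi-isomorphism.

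The argument is essentially routine homological algebra; the only subtlety is making sure the finitely-generated hypothesis is used correctly so that the structure theorem for finitely generated abelian groups applies to $H_n(E_*)$. No step should present a real obstacle.
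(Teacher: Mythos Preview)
Your proof is correct and follows essentially the same approach as the paper: pass to the mapping cone, observe $\Cone(f\otimes\Id_k)\cong\Cone(f)\otimes k$, and then use the universal coefficient theorem. The only minor difference is that you treat $\QQ$ separately to kill the free part before handling torsion, whereas the paper observes that $H_n(E_*)\otimes\F_p=0$ for all primes $p$ already forces the free rank to vanish (since $\ZZ^r\otimes\F_p\cong\F_p^r$), so the $\QQ$ step is redundant.
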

\begin{proof}
  {(Compare \cite[3.A.7]{Hatcher-top-book})} It suffices to prove that the mapping cone $\Cone(f)$ of $f$ is acyclic. Observe that $\Cone(f\otimes \Id_k)=\Cone(f)\otimes k$, and by assumption $\Cone(f\otimes \Id_k)$ is acyclic for any field $k$. It follows from the universal coefficient theorem that if $E_*\otimes \mathbb{F}_p$ is acyclic for all primes $p$ then $E_*$ is acyclic.
\end{proof}

\begin{lemma}\label{lem:perfect-pairing}
  Let $C_*$ and $D_*$ be finitely-generated chain complexes of free abelian groups and $F\co C_*\otimes D_*\to \ZZ$ a chain map. Write $D^*=\Hom(D_*,\ZZ)$ for the dual complex to $D_*$. Then the following conditions are equivalent:
  \begin{enumerate}
  \item\label{item:pp-1} The map $C_*\to D^*$ induced by $F$ is a quasi-isomorphism.
  \item\label{item:pp-2} For any field $k$, the map $H_*((C_*\otimes k))\otimes H_*((D_*\otimes k))\to k$ induced by $F$ is a perfect pairing.
  \end{enumerate}
\end{lemma}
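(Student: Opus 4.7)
The plan is to reduce to the case of chain complexes of finitely generated free abelian groups, then pass from $\ZZ$ to fields via Lemma~\ref{lem:qi}, and finally apply the universal coefficient theorem over a field.

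First I would observe that if $C_*$ and $D_*$ are replaced by bounded finitely generated free resolutions and $F$ is lifted accordingly, neither condition (1) nor condition (2) is altered; in the intended application (cellular chain complexes of finite CW spectra) the complexes are already of this form. So assume $C_*$ and $D_*$ are bounded chain complexes of finitely generated free abelian groups. Then $D^* = \Hom(D_*,\ZZ)$ is another such complex, and for every field $k$ there is a natural chain isomorphism $D^*\otimes k\cong (D_*\otimes k)^\vee$, where $(-)^\vee$ denotes $\Hom_k(-,k)$.

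Let $\phi\co C_*\to D^*$ denote the adjoint of $F$. By Lemma~\ref{lem:qi}, $\phi$ is a quasi-isomorphism if and only if $\phi\otimes \Id_k\co C_*\otimes k\to D^*\otimes k$ is a quasi-isomorphism for every field $k$. Under the isomorphism above, $\phi\otimes \Id_k$ is identified with the adjoint $\psi_k\co C_*\otimes k\to (D_*\otimes k)^\vee$ of the pairing $F\otimes \Id_k$, so it suffices to show that $\psi_k$ is a quasi-isomorphism if and only if the pairing in (2) is perfect. Since we are working with bounded chain complexes of finite-dimensional vector spaces, $\psi_k$ is a quasi-isomorphism if and only if it induces an isomorphism on homology. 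The universal coefficient theorem over $k$ provides a natural isomorphism $H_*((D_*\otimes k)^\vee)\cong H_*(D_*\otimes k)^\vee$, and under it $H_*(\psi_k)$ becomes the adjoint of the pairing $H_*(C_*\otimes k)\otimes H_*(D_*\otimes k)\to k$ induced by $F$. This adjoint is an isomorphism precisely when the pairing is perfect.

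The main obstacle will be the bookkeeping in the initial reduction to free complexes (checking that both conditions descend to and are reflected from a free resolution) and in verifying that the chain-level identifications are compatible with the homology-level naturality of the universal coefficient theorem. Once these compatibilities are in place, the equivalence of (1) and (2) is a direct consequence of the outline above.
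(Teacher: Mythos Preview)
Your proposal is correct and follows essentially the same strategy as the paper's proof: reduce to fields via Lemma~\ref{lem:qi}, identify $D^*\otimes k$ with $\Hom_k(D_*\otimes k,k)$, and then use that over a field the adjoint of the pairing on homology is an isomorphism exactly when the pairing is perfect. The only differences are cosmetic: the paper verifies the perfect-pairing $\Leftrightarrow$ adjoint-isomorphism step by explicit element chasing in each direction rather than invoking the universal coefficient theorem, and it silently uses the identification $D^*\otimes k=\Hom(D_*,k)$ without your preliminary reduction to free complexes (which is harmless in the application to cellular chains but which you are right to flag).
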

\begin{proof}
  (\ref{item:pp-1})$\implies$(\ref{item:pp-2}) Tensoring with $k$, the
  map $f\co C_*\otimes k\to D^*\otimes k = \Hom(D_*,k)$ induced by $F$
  is a quasi-isomorphism. Suppose $\alpha$ is a non-trivial element of
  $H_*(C_*\otimes k)$. Then $f_*(\alpha)\neq 0\in H_*(D^*\otimes
  k)=\Hom(H_*(D_*\otimes k),k)$. Let $\beta\in H_*(D_*\otimes k)$ be an element
  on which $f_*(\alpha)$ evaluates non-trivially. Then
  $F_*(\alpha\otimes \beta)=f_*(\alpha)(\beta)\neq 0$.

  Similarly, if $\beta\neq 0 \in H_*(D_*\otimes k)$ then there is an
  element $b\in H_*(D^*\otimes k)$ such that $b(\beta)\neq 0$. Since
  $f$ is a quasi-isomorphism there is an $\alpha\in H_*(C_*\otimes k)$
  so that $b=f(\alpha)$. Then $F_*(\alpha\otimes
  \beta)=f_*(\alpha)(\beta)=b(\beta)\neq 0$.

  (\ref{item:pp-2})$\implies$(\ref{item:pp-1}) We start by checking
  that for any field $k$, the map $f\co C_*\otimes k\to D^*\otimes k$
  is a quasi-isomorphism. To this end, suppose $\alpha\neq 0\in
  H_*(C_*\otimes k)$. Then there exists $\beta\in H_*(D_*\otimes k)$
  so that $F_*(\alpha\otimes \beta)\neq 0\in k$.  Then
  $f_*(\alpha)(\beta)=F_*(\alpha\otimes \beta)\neq 0$, so
  $f_*(\alpha)\neq 0$. Thus, $f_*$ is injective. Next, given any
  element $b\in H_*(D^*\otimes k)=\Hom(H_*(D_*\otimes k),k)$, since
  $F_*$ is a perfect pairing there is an element $\alpha\in
  H_*(C_*\otimes k)$ so that $b(\cdot)=F_*(\alpha\otimes
  \cdot)$. Since $F_*(\alpha\otimes \cdot)=f_*(\alpha)(\cdot)$, we get
  $b=f_*(\alpha)$. So, $f_*$ is surjective.

  Thus, the map $f\otimes\Id_k\co C_*\otimes k\to D^*\otimes k$ is a
  quasi-isomorphism for any field $k$. So, by \Lemma{qi}, the map
  $f\co C_*\to D^*$ induced by $F$ is a quasi-isomorphism, as desired.
\end{proof}

\begin{proposition}\label{prop:detect-SW}
  Let $X$ and $Y$ be finite CW complexes and let $\mu\co X\smas Y\to S^n$ be a continuous map. Then $\mu$ witnesses $S$-duality between $X$ and $\Sigma^{-n}Y$ if and only if for every field $k$ and integer $i$, the induced map
  \[
  \wt{H}_i(X;k)\otimes \wt{H}_{n-i}(Y;k)\to \wt{H}_n(S^n;k)=k
  \]
  is a perfect pairing.
\end{proposition}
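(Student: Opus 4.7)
The plan is to reduce the criterion for $\mu$ to witness $S$-duality to a chain-level quasi-isomorphism, and then apply Lemma~\ref{lem:perfect-pairing} directly, unpacking the indices carefully.

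First I would unpack the definition. By the definition of $S$-duality given above, $\mu$ witnesses $S$-duality between $X$ and $\Sigma^{-n}Y$ if and only if the slant product with $\mu^*(\gamma)\in \wt{H}^n(X\wedge Y;\ZZ)$ induces isomorphisms
\[
\mu^*(\gamma)/\cdot\colon \wt{H}_i(X;\ZZ)\xrightarrow{\cong}\wt{H}^{n-i}(Y;\ZZ)
\]
for every $i$, after identifying $\wt{H}^\ast(\Sigma^{-n}Y;\ZZ)$ with $\wt{H}^{\ast+n}(Y;\ZZ)$ via the suspension isomorphism.

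Next I would lift this to the cellular chain level. Choose a cellular approximation of $\mu$; composing the induced cellular map with an Eilenberg--Zilber chain map yields a chain map
\[
\Phi\colon \wt{C}_\ast(X)\otimes \wt{C}_\ast(Y)\longrightarrow \wt{C}_\ast(S^n)\cong \ZZ[n].
\]
Reindexing the second factor by setting $D_k=\wt{C}_{n-k}(Y)$ (with sign-adjusted differential so that $D_\ast$ is an honest chain complex) converts $\Phi$ into a chain map $\Phi\colon C_\ast\otimes D_\ast\to\ZZ$ with $C_\ast=\wt{C}_\ast(X)$. The adjoint of $\Phi$ is a chain map $C_\ast\to D^\ast$ whose $i$-th component $\wt{C}_i(X)\to \wt{C}^{n-i}(Y)$ is the cellular slant product with the cocycle representing $\mu^*(\gamma)$. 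Consequently, the maps of the previous paragraph are all isomorphisms if and only if this adjoint is a quasi-isomorphism.

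Finally I would invoke Lemma~\ref{lem:perfect-pairing}. Since $X$ and $Y$ are finite CW complexes, the complexes $C_\ast$ and $D_\ast$ are finitely generated, so the lemma applies to the pairing $\Phi$. It says that $C_\ast\to D^\ast$ is a quasi-isomorphism if and only if, for every field $k$, the induced pairing $H_\ast(C_\ast\otimes k)\otimes H_\ast(D_\ast\otimes k)\to k$ is perfect. Using $H_i(C_\ast\otimes k)=\wt{H}_i(X;k)$ and $H_i(D_\ast\otimes k)=\wt{H}_{n-i}(Y;k)$, this is precisely the condition that for every field $k$ and integer $i$ the pairing $\wt{H}_i(X;k)\otimes \wt{H}_{n-i}(Y;k)\to k$ induced by $\mu$ is perfect. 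Combining the three steps proves the proposition.

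The only obstacle I foresee is bookkeeping: verifying that the chain-level slant product really is the adjoint of $\Phi$, and tracking the reindexing/sign adjustments in passing from $\wt{C}_\ast(Y)$ to $D_\ast$. All of the serious homological algebra (in particular, the upgrade from field-coefficient quasi-isos to an integer quasi-iso via Lemma~\ref{lem:qi}) is already packaged inside Lemma~\ref{lem:perfect-pairing}, so once the identifications above are made the result drops out.
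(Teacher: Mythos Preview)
Your proposal is correct and follows essentially the same approach as the paper: both express the slant product map at the cellular chain level as the adjoint of a pairing $\wt{C}_i(X)\otimes \wt{C}_{n-i}(Y)\to\ZZ$ and then invoke \Lemma{perfect-pairing}. The paper is simply more terse, writing the chain-level slant product directly as $x\mapsto\bigl(y\mapsto \gamma(\mu_\#(x\smas y))\bigr)$ rather than naming the cellular approximation and Eilenberg--Zilber steps, but the substance is the same.
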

\begin{proof}
  Let $\cellC{*}(X)$ denote the reduced cellular chain complex of $X$ and $\cocellC{*}(X)$ the reduced cellular cochain complex. The slant product map of Formula~\eqref{eq:slant} is the map on homology induced by 
  \begin{align*}
    \cellC{*}(X)&\to \cocellC{n-*}(Y)=\Hom(\cellC{n-*}(Y),\ZZ)\\
    x&\mapsto \bigl(y\mapsto \gamma(\mu_\#(x\smas y))\bigr).\\
    \shortintertext{In other words, this is the map induced by the pairing }
    \cellC{*}(X)\otimes \cellC{n-*}(Y)&\to \cellC{n}(S^n)=\ZZ\\
    x\otimes y &\mapsto \gamma(\mu_\#(x\smas y)).
  \end{align*}
  So, the result is immediate from \Lemma{perfect-pairing}.
\end{proof}

The other ingredients in the proof of \Theorem{mirror} are some facts about functoriality of Khovanov homology and the Khovanov stable homotopy type.

Suppose that $F\subset [0,1]\times \RR^3$ is a link cobordism from $L_0$ to $L_1$. Associated to $F$ is a map $\Phi_F\co \Kh(L_0)\to \Kh(L_1)$, well-defined up to multiplication by $\pm 1$; see~\cite{Jac-kh-cobordisms,Kho-kh-cobordism,Bar-kh-tangle-cob}. The following properties of $\Phi$ are immediate from the definition:
\begin{enumerate}
\item\label{item:cob-isotopy} If $F$ and $F'$ are isotopic link
  cobordisms (relative boundary) then $\Phi_F=\pm\Phi_{F'}$.
\item\label{item:cob-identity} If $F=[0,1]\times L$ is the identity cobordism from $L$ to $L$ then $\Phi_F=\pm \Id\co \Kh(L)\to \Kh(L)$.
\item\label{item:cob-compose} If $F_1$ is a cobordism from $L_0$ to $L_1$ and $F_2$ is a cobordism from $L_1$ to $L_2$, and $F_2\circ F_1$ denotes the composition of $F_2$ and $F_1$, then
  \[
  \Phi_{F_2\circ F_1}=\pm \Phi_{F_2}\circ \Phi_{F_1}.
  \]
\item\label{item:cob-union} If $F$ is a cobordism from $L_0$ to $L_1$ and $F'$ is a cobordism from $L_0'$ to $L_1'$, and $F\amalg F'$ denotes the disjoint union of $F$ and $F'$, which is a cobordism from $L_0\amalg L_0'$ to $L_1\amalg L_1'$, then for any field $k$,
  \begin{align*}
  \Phi_{F\amalg F'}=\pm \Phi_F\otimes \Phi_{F'}\co \Kh(L_0;k)\otimes\Kh(L_0';k)&=\Kh(L_0\amalg L'_0;k)\\
  \to 
  \Kh(L_1\amalg L_1';k)&=\Kh(L_1;k)\otimes \Kh(L_1';k).
  \end{align*}
\end{enumerate}

Given a link $L$ there is a canonical, genus $0$ cobordism $F$ from $L\amalg \mirror{L}$ to the empty link.

\begin{proposition}\label{prop:Kh-perf-pair}
  For any field $k$, the map 
  \[
  \Phi_F\co \Kh(L;k)\otimes \Kh(\mirror{L};k)=\Kh(L\amalg \mirror{L};k)\to \Kh(\emptyset;k)=k
  \]
  associated to the canonical cobordism from $L\amalg \mirror{L}$ to the empty link is a perfect pairing.
\end{proposition}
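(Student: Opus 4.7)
The plan is to exhibit $(\Kh(L;k),\Kh(\mirror{L};k),\Phi_F)$ as a dual pair in a rigid monoidal structure inherited from link cobordisms, so that $\Phi_F$ will automatically be a perfect pairing once the appropriate zig-zag identities are verified. Since $\Kh(L;k)$ and $\Kh(\mirror{L};k)$ are finite-dimensional $k$-vector spaces, non-degeneracy on one side will suffice, but we will in fact establish both snake identities.

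The key geometric input will be a partner cobordism $G \subset [0,1] \times \RR^3$ from $\emptyset$ to $\mirror{L} \amalg L$, constructed as the time-reverse of $F$ with the roles of the two boundary components swapped so that the first factor is $\mirror{L}$ and the second is $L$. Topologically $G$ is a canonical genus-zero cobordism mirroring the construction of $F$. The central claim, to be verified by an explicit isotopy, is the \emph{cobordism-level snake identity}: the composition
\[
L \;\xrightarrow{\;\Id_L \amalg G\;}\; L \amalg \mirror{L} \amalg L \;\xrightarrow{\;F \amalg \Id_L\;}\; L
\]
is smoothly isotopic rel boundary, inside $[0,1] \times \RR^3$, to the identity cobordism $[0,1] \times L$. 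This is the standard ``pull a zig-zag straight'' move, performed strand by strand on $L$. A symmetric identity holds with the roles of $L$ and $\mirror{L}$ interchanged.

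Applying the three functoriality properties of $\Phi$ (identity cobordisms go to $\pm\Id$, composition goes to $\pm$-composition, and over the field $k$ disjoint union goes to tensor product) to these two isotopies will yield
\[
\pm\Id_{\Kh(L;k)} = (\Phi_F \otimes \Id) \circ (\Id \otimes \Phi_G), \qquad \pm\Id_{\Kh(\mirror{L};k)} = (\Id \otimes \Phi_F) \circ (\Phi_G \otimes \Id).
\]
Writing $\Phi_G(1) = \sum_i y_i \otimes x_i \in \Kh(\mirror{L};k) \otimes \Kh(L;k)$, these read $\pm x = \sum_i \Phi_F(x \otimes y_i)\, x_i$ and dually. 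Thus the map $\Kh(L;k)^* \to \Kh(\mirror{L};k)$ sending $\phi \mapsto \sum_i \phi(x_i)\, y_i$ is a two-sided inverse (up to an overall sign) of the map $\Kh(\mirror{L};k) \to \Kh(L;k)^*$ induced by $\Phi_F$, so the latter is an isomorphism and $\Phi_F$ is a perfect pairing. The main obstacle is verifying the snake identity for the specific canonical $F$; this is a purely topological fact requiring care in setting up the isotopy, but the sign ambiguity in $\Phi$ is harmless because perfectness of a pairing is insensitive to its overall sign.
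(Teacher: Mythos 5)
Your proposal is correct and is essentially the paper's own argument: the paper proves this proposition by invoking "the usual snake-straightening argument in topological field theory" (citing Quinn) together with the same three functoriality properties of $\Phi$, which is exactly the coevaluation cobordism $G$ and zig-zag identities you spell out. The extra detail you give (both snake identities, and the observation that the $\pm$ ambiguity is harmless) is a faithful expansion of the cited argument rather than a different route.
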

\begin{proof}
  This follows from Properties~(\ref{item:cob-isotopy}), (\ref{item:cob-identity}), (\ref{item:cob-compose}) and~(\ref{item:cob-union}) above via the usual snake-straightening argument in topological field theory (see, for instance,~\cite[Lecture 7]{Quinn-top-TQFT}).
\end{proof}

As noted above, the cohomology groups of $\KhSpace(L)$ are the
Khovanov homology of $L$. Since we are viewing Khovanov homology as
covariant in the cobordism, it is more convenient to work with the
homology groups of $\KhSpace(L)$. These can be understood as follows:
\begin{lemma}\label{lem:kh-homology}
  Let $L$ be a link diagram. Then the cellular chain complex for
  $\KhSpace(L)$ agrees with the Khovanov complex for $\mirror{L}$. In
  particular, $\wt{H}_i(\KhSpace^j(L))=\Kh^{-i,-j}(\mirror{L})$.
\end{lemma}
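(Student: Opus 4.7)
The plan is to compare the cellular chain complex with the cellular cochain complex by duality, and then to identify the dual of $\KhCx(L)$ with $\KhCx(\mirror{L})$ using the standard ``mirror equals dual'' observation for Khovanov homology.

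First, recall that by construction $\KhSpace^j(L)$ is a CW complex (up to formal desuspension) whose non-basepoint cells are in bijection with the Khovanov generators in quantum grading $j$, placed so that the dimension of the cell associated to $x$ equals its Khovanov homological grading. Moreover, property~\eqref{item:khsp-last} from the introduction says that the reduced cellular cochain complex $\cocellC{*}(\KhSpace^j(L))$ is isomorphic to $\KhCx^{*,j}(L)$, via an isomorphism identifying cells with Khovanov generators. Since each cellular chain group is a free abelian group on the set of cells of a given dimension, the reduced cellular chain complex $\cellC{*}(\KhSpace^j(L))$ is the $\ZZ$-linear dual of $\cocellC{*}(\KhSpace^j(L))$, with differential the transpose of the Khovanov differential.

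Second, I would recall the elementary identification $\KhCx^{-i,-j}(\mirror{L}) \cong \Hom_\ZZ(\KhCx^{i,j}(L),\ZZ)$ of chain complexes, which comes from three compatible pieces of data: (a) mirroring a link diagram exchanges $0$- and $1$-resolutions at each crossing, so the cube of resolutions for $\mirror{L}$ is $(\CCat{n})^{\op}$ applied to the cube for $L$, with vertex $v$ of $\mirror{L}$ matched with $\vec{1}-v$ of $L$ (negating $|v|$ and hence the homological grading); (b) the Frobenius algebra $\ZZ\langle x_+,x_-\rangle$ used in \Section{khovanov-basic} is self-dual under the pairing $\langle x_+,x_-\rangle=\langle x_-,x_+\rangle=1$, $\langle x_\pm,x_\pm\rangle=0$, which exchanges the merge $m$ with the dual of the split $S$, and negates the quantum grading of each generator; (c) the sign assignment for the mirror can be chosen to agree, up to overall signs on each $A(v)$, with the one obtained by transposing the sign assignment for $L$ under $v\leftrightarrow \vec{1}-v$, so that the differentials match on the nose (the standard verification of the unoriented skein-like mirror symmetry).

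Combining these two steps,
\[
\cellC{i}(\KhSpace^j(L))\cong\Hom_\ZZ(\KhCx^{i,j}(L),\ZZ)\cong \KhCx^{-i,-j}(\mirror{L}),
\]
as claimed, with the isomorphism intertwining the cellular chain differential $\del\co\cellC{i}\to\cellC{i-1}$ with the Khovanov differential $\diffKh\co\KhCx^{-i,-j}(\mirror{L})\to \KhCx^{-(i-1),-j}(\mirror{L})=\KhCx^{-i+1,-j}(\mirror{L})$. Taking homology then yields $\wt{H}_i(\KhSpace^j(L))\cong \Kh^{-i,-j}(\mirror{L})$.

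The only delicate point is checking that the sign assignment bookkeeping in (c) really does give an isomorphism of chain complexes rather than merely of graded groups; this is a well-known fact (essentially \cite[Proposition 32]{Kho-kh-categorification}, which was already invoked in the proof of \Lemma{coderived}), so I would simply invoke it rather than reverifying.
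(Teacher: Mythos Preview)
Your proof is correct and follows essentially the same approach as the paper: identify the cellular cochain complex with $\KhCx(L)$, dualize to get the cellular chain complex, and then invoke \cite[Proposition 32]{Kho-kh-categorification} to identify the dual with $\KhCx(\mirror{L})$; the paper even describes the explicit bijection $F_L(v)\leftrightarrow F_{\mirror{L}}(\vec{1}-v)$ swapping $x_+$ and $x_-$, just as you do. One small slip: you cite property~\ref{item:khsp-last} from the introduction, but that item is about invariance of the stable homotopy type; the property you want is the third item in that list (the one asserting $\cocellC{*}(\KhSpace^j(K))\cong\KhCx^{*,j}(K)$).
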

\begin{proof}
  This is immediate from the definitions. To wit, $\KhSpace(L)$ can be
  constructed as a CW complex whose reduced cellular cochain complex
  $\cocellC{*}(\KhSpace(L))$ is isomorphic to the Khovanov complex
  $\KhCx(L)$ from \Section{khovanov-basic}. Therefore, the reduced
  cellular chain complex $\cellC{*}(\KhSpace(L))$ is isomorphic to the
  dual complex $\Hom(\KhCx(L),\Z)$. However, the dual complex is
  isomorphic to $\KhCx(m(L))$~\cite[Proposition
  32]{Kho-kh-categorification}. In the language
  of \Section{khovanov-basic}, this isomorphism takes a Khovanov
  generator in $F_{L}(v)$ to a Khovanov generator in
  $F_{m(L)}(\vec{1}-v)$ by changing the labels on the circles of
  $\mc{P}_{L}(v)=\mc{P}_{m(L)}(\vec{1}-v)$ from $x_+$ to $x_-$ and
  vice versa. The gradings work out, and we get
  $\wt{H}_i(\KhSpace^j(L))=\Kh^{-i,-j}(\mirror{L})$.
\end{proof}

Functoriality for the Khovanov spectrum has not yet been verified, but in~\cite{RS-s-invariant} we did associate maps to elementary cobordisms:
\begin{proposition}\label{prop:cob-map-spectra}
  Let $L_1$ and $L_2$ be links in $\RR^3$ and $F$ a cobordism from $L_1$ to $L_2$. Then there is a map of spectra 
  \[
  \Psi_{\mirror{F}}\co \KhSpace(\mirror{L_1})\to \KhSpace(\mirror{L_2})
  \]
  so that the induced map $\Psi_{\mirror{F},*}\co \wt{H}_*(\KhSpace(\mirror{L_1}))=\Kh(L_1)\to \Kh(L_2)=\wt{H}_*(\KhSpace(\mirror{L_2}))$ agrees with the cobordism map $\Phi_F$ up to sign.
\end{proposition}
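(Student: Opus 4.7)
The plan is to decompose $\mirror{F}$ into elementary cobordisms and assemble $\Psi_{\mirror{F}}$ from maps of spectra already constructed in~\cite{RS-s-invariant} for each elementary piece.

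First, I fix a movie presentation for the mirror cobordism: a finite sequence of link diagrams $\mirror{L_1}=K_0,K_1,\dots,K_n=\mirror{L_2}$ in which each transition $K_{i-1}\rightsquigarrow K_i$ is either a planar isotopy, a Reidemeister move (R-I, R-II, or R-III), or a Morse move (birth of an unknot, death of an unknot, or an oriented saddle). The existence of such a movie is standard, via a generic projection of $\mirror{F}$ to $[0,1]\times S^2$ together with Reidemeister's theorem applied to the diagrams between critical levels.

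Second, for each elementary transition I take $\psi_i\co \KhSpace(K_{i-1})\to \KhSpace(K_i)$ to be the map of spectra constructed in~\cite{RS-s-invariant}. Explicitly: planar isotopies give the identity; Reidemeister moves give the zig-zag of stable equivalences coming from the subcomplex/quotient-complex identifications in~\cite[Section~6]{RS-khovanov}; a birth gives the map $\KhSpace(K_{i-1})\simeq \KhSpace(K_{i-1})\smas\SphereS\to \KhSpace(K_{i-1})\smas\KA{1}\simeq \KhSpace(K_i)$ induced by the unit $p_+\in\KA{1}$ via the $\KA{1}$-module structure of Section~\ref{sec:Kh-disj-union}, and dually for a death; a saddle gives the subcomplex inclusion (or quotient map) picking out the Khovanov spectrum corresponding to the relevant $0$- or $1$-resolution of the crossing at which the saddle is attached, as in~\cite{RS-s-invariant}. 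Set $\Psi_{\mirror{F}}=\psi_n\circ\cdots\circ\psi_1$.

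Third, I verify the homology condition. Under the identification $\wt{H}_*(\KhSpace(\mirror{L}))\cong \Kh(L)$ of Lemma~\ref{lem:kh-homology}, each induced map $\psi_{i,*}$ agrees up to sign with the Khovanov cobordism map associated to the corresponding elementary piece: for Reidemeister moves this is~\cite[Propositions~6.2--6.4]{RS-khovanov}, and for Morse moves it is a direct check using the Frobenius algebra structure on $\Kh(U)$, carried out in~\cite{RS-s-invariant}. Composing and using functoriality of $\Phi$~(\ref{item:cob-compose}) then yields $\Psi_{\mirror{F},*}=\pm\Phi_F$. The main bookkeeping obstacle is reconciling sign conventions between the Jacobsson-type choices underlying $\Phi_F$ and the stable-equivalence and inclusion/quotient choices defining the $\psi_i$; since the statement only demands agreement up to an overall sign, no further normalization is needed. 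I emphasize that this argument does not prove that $\Psi_{\mirror{F}}$ is independent of the chosen movie (i.e., that $\mirror{F}\mapsto \Psi_{\mirror{F}}$ refines to a functor on the link cobordism category); that is a considerably harder question not addressed here.
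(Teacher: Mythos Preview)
Your proposal is correct and takes essentially the same approach as the paper: both defer to \cite[Proposition~3.4, Lemma~3.6 and Lemma~3.7]{RS-s-invariant} for the elementary-cobordism maps of spectra and their induced maps on (co)homology, then invoke \Lemma{kh-homology} to pass to the homology statement. The paper's proof is simply terser, citing those results directly without narrating the movie decomposition; your more explicit description of the birth/death maps via the $\KA{1}$-module structure of \Section{Kh-disj-union} is a gloss on (rather than a literal transcription of) what \cite{RS-s-invariant} actually does, but this does not affect correctness.
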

\begin{proof}
  The corresponding statement in cohomology is immediate from~\cite[Proposition 3.4, Lemma 3.6 and Lemma 3.7]{RS-s-invariant}; the homology statement comes from dualizing~\cite[Proposition 3.4, Lemma 3.6 and Lemma 3.7]{RS-s-invariant} (cf.~\Lemma{kh-homology}).
\end{proof}

With these ingredients, we are now ready to verify the mirror theorem.

\begin{proof}[Proof of \Theorem{mirror}]
  The following argument was suggested to us by the referee for~\cite{RS-steenrod}.

  By \Proposition{detect-SW}, it suffices to construct a map $\KhSpace(\mirror{L})\smas\KhSpace(L)\to \SphereS$ so that for any field $k$, the induced map
  \[
  \wt{H}_i(\KhSpace(\mirror{L});k)\otimes   \wt{H}_{-i}(\KhSpace(L);k)\to \wt{H}_0(\SphereS)=k
  \]
  is a perfect pairing. By \Proposition{Kh-perf-pair}, applying \Proposition{cob-map-spectra} to the canonical cobordism from $L\amalg \mirror{L}$ to the empty link gives such a map.
\end{proof}


\begin{remark}
  \Theorem{mirror} (perhaps) gives an obstruction to knots being
  amphichiral: if $K$ is amphichiral then $\KhSpace^j(K)$ is
  Spanier-Whitehead dual to $\KhSpace^{-j}(K)$. Using
  KnotKit~\cite{KKI-kh-knotkit} (and the technique
  in~\cite{RS-steenrod}), it is possible to verify that this
  obstruction does not give any additional restrictions on
  amphichirality for prime knots up to $15$ crossings, beyond those implied
  by Khovanov homology itself. Indeed, the only Khovanov
  homology-symmetric knots with $15$ or fewer crossings for which
  $\Sq^2$ is non-vanishing are $14^n_{8440}$, $14^n_{9732}$,
  $14^n_{21794}$, $14^n_{22073}$, and $15^n_{139717}$. In each of
  these cases, the non-Moore space summands of $\KhSpace$ are copies
  of (various suspensions and desuspensions of) $\RR P^5/\RR P^2$
  and $\RR P^4/\RR P^1$, and these summands (with appropriate
  grading shifts) are exchanged by $(i,j)\to (-i,-j)$. 

  It remains open whether the stable homotopy type gives nontrivial
  restrictions for larger prime knots, although we expect that it
  does. As a proof of concept, look at \cite[Table~1]{Seed-Kh-square}
  and consider the link $L=14^n_{11196}\,\amalg\,
  m(14^n_{17546})$. Its Khovanov homology is symmetric since Khovanov
  homologies of $14^n_{11196}$ and $14^n_{17546}$ are isomorphic (as
  bigraded groups); the homology of either knot is supported on three
  adjacent diagonals, the only torsion present is $2$-torsion, and the
  lowest and highest quantum gradings where $2$-torsion appears are
  $-11$ and $5$. However, $14^n_{17546}$ has no $\Sq^2$, but
  $14^n_{11196}$ has a single $\Sq^2$ in quantum grading $-1$. Using
  Cartan formula and \Theorem{disjoint-union}, the lowest and highest
  quantum gradings where $\Kh(L)$ has non-trivial $\Sq^3$ are $-6$ and
  $10$, and therefore, its Khovanov homotopy type is not
  symmetric. (Of course this is not our recommended proof for showing
  $L$ is achiral!)
\end{remark}

\section{Applications}\label{sec:applications}
In this section we give an application of the K\"unneth theorem for
the Khovanov stable homotopy type to knot concordance. We begin with some
background, continuing from \Section{khovanov-basic}. Recall that
Rasmussen~\cite{Ras-kh-slice} used the Lee deformation of the Khovanov
complex~\cite{Lee-kh-endomorphism} (the specialization $(h,t)=(0,1)$
of \Definition{KhCx-diffKh}) to define a concordance invariant
$s_K^\QQ\in 2\ZZ$. As the notation suggests, to define $s^\QQ$,
Rasmussen used Khovanov homology with coefficients in $\QQ$, though
any field of characteristic different from $2$ would work as
well. Bar-Natan~\cite{Bar-kh-tangle-cob} gave an analogue
$\BNcx$ of the Lee
deformation that also works over $\F_2$ (see
also~\cite{Nao-kh-universal,Turner-kh-BNseq}). The Bar-Natan complex
$\BNcx$ is the specialization $(h,t)=(1,0)$ of the universal complex
$\Cx$ of \Definition{KhCx-diffKh}.  Let $s_K=s_K^{\F_2}$ denote the
corresponding Rasmussen-type invariant; see also~\cite[Section
2.2]{RS-s-invariant}.

Since the formal variable $h$ has quantum grading $-2$, the
differential in the Bar-Natan chain complex either preserves the
quantum grading or increases it. Let $\Filt_q\BNcx$ denote the
subcomplex of $\BNcx$ supported in quantum grading $q$ or higher. This
defines a filtration on the Bar-Natan complex, and the associated
graded object $\bigoplus_q\Filt_q\BNcx/\Filt_{q+2}\BNcx$ is the
Khovanov chain complex $\KhCx$.

The second Steenrod square $\Sq^2$ for the Khovanov spectrum
$\KhSpace(K)$ gives a map
\[
\Sq^2\from\Kh^{i,j}(K;\F_2)\to\Kh^{i+2,j}(K;\F_2)
\]
for each $i,j$.  Using this, in \cite{RS-s-invariant} we constructed
a refined $s$-invariant $s^{\Sq^2}_+(K)$, as follows:

\begin{definition}\cite[Definition 1.2 and Lemma 4.2]{RS-s-invariant}
  Consider configurations of the following form:
  \begin{equation}\label{eq:def-refined-s}
    \xymatrix@C=3ex{
      \langle\wt{a},\wt{b}\rangle\ar[r]\ar@{^(->}[d]&\langle\wh{a},\wh{b}\rangle\ar@{<-}[r]\ar@{^(->}[d]&
      \langle a,b\rangle\ar[r]\ar@{^(->}[d] & \langle\ol{a},\ol{b}\rangle\ar@{=}[d]\\
      \Kh^{-2,s_K-1}(K;\F_2)\ar[r]^-{\Sq^2}&\Kh^{0,s_K-1}(K;\F_2)\ar@{<-}[r]&
      H_0(\Filt_{s_K-1}\BNcx(K);\F_2)\ar[r] &H_0(\BNcx(K);\F_2).  }
  \end{equation}
  Define
  \[
    s^{\Sq^2}_+(K)=
    \begin{cases}
      s_K&\text{if there does not exist such a configuration,}\\
      s_K+2&\text{otherwise.}
    \end{cases}
  \]
\end{definition}
\begin{theorem}\cite[Theorem 1]{RS-s-invariant}
  The integer $s^{\Sq^2}_+(K)$ is a concordance invariant, and
  \[
    2g_4(K)\geq |s^{\Sq^2}_+(K)|
  \]
  where $g_4(K)$ is the $4$-ball genus of $K$.
\end{theorem}

\begin{lemma}\label{lem:connect-sum-s}
Let $K$ and $L$ be two knots such that the following holds:
\begin{enumerate}
\item $s^{\Sq^2}_+(K)=s_K+2$.
\item $\Kh^{0,s_L+1}(L;\F_2)\stackrel{\Sq^2}{\longrightarrow}\Kh^{2,s_L+1}(L;\F_2)$ is
the zero map.
\item Either
  $\Kh^{0,s_L+1}(L;\F_2)\stackrel{\Sq^1}{\longrightarrow}\Kh^{1,s_L+1}(L;\F_2)$
  is the zero map or
  $\Kh^{-2,s_K-1}(K;\F_2)\allowbreak\stackrel{\Sq^1}{\longrightarrow}\allowbreak\Kh^{-1,s_K-1}(K;\F_2)$
  is the zero map.
\end{enumerate}
Then $s^{\Sq^2}_+(K\# L)=s_{K\# L}+2=s_K+s_L+2$.
\end{lemma}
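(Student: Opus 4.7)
The classical identity $s_{K\#L}=s_K+s_L$ and the universal bound $s^{\Sq^2}_+\leq s+2$ reduce the claim to producing a configuration of the form \EquationCont{def-refined-s} for $K\#L$ in quantum grading $s_K+s_L-1$.

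Using hypothesis~(i), fix such a configuration $\tilde a,\tilde b,\hat a,\hat b,a,b,\bar a,\bar b$ for $K$. The Bar-Natan homology $H_0(\BNcx(L);\F_2)\cong\F_2^2$ is spanned by orientation classes lying in quantum filtrations $s_L\pm 1$; pick $y\in H_0(\Filt_{s_L+1}\BNcx(L);\F_2)$ lifting the orientation generator at filtration $s_L+1$, and let $\hat y\in \Kh^{0,s_L+1}(L;\F_2)$ be its $E_1$-page image. The desired configuration for $K\#L$ will be obtained by pairing everything with $y$ (respectively $\hat y$) via the connect-sum product.

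To build that pairing, start with the merge-saddle cobordism $F\co K\amalg L\to K\#L$. Applying \Proposition{cob-map-spectra} to $m(F)$ and dualizing via \Theorem{mirror} yields a spectrum map $\KhSpace(K\#L)\to\KhSpace(K\amalg L)$, and post-composing with the smash decomposition of \Theorem{disjoint-union} gives a map of spectra
\[
\mu\co \KhSpace(K\#L)\to \KhSpace(K)\smas\KhSpace(L).
\]
The induced pairing
\[
\mu^*\co \Kh^{i_1,j_1}(K;\F_2)\otimes\Kh^{i_2,j_2}(L;\F_2)\to \Kh^{i_1+i_2,j_1+j_2-1}(K\#L;\F_2)
\]
recovers the classical merge-saddle cobordism map (so it respects Bar-Natan filtrations), and since it is induced by a map of spectra it intertwines Steenrod operations. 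Setting $\tilde a'=\mu^*(\tilde a\otimes\hat y)$, $\hat a'=\mu^*(\hat a\otimes\hat y)$, $a'=\mu^*(a\otimes y)$, $\bar a'=\mu^*(\bar a\otimes\bar y)$ (and analogously for $\tilde b,\hat b,b,\bar b$), the Cartan formula
\[
\Sq^2(\tilde a\otimes\hat y)=\Sq^2(\tilde a)\otimes\hat y+\Sq^1(\tilde a)\otimes\Sq^1(\hat y)+\tilde a\otimes\Sq^2(\hat y)
\]
gives $\Sq^2\tilde a'=\hat a'$: the last summand vanishes by~(ii), the middle by either branch of~(iii), and the first contributes $\hat a\otimes\hat y$.

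The remaining point is that $\bar a',\bar b'$ span $H_0(\BNcx(K\#L);\F_2)$.  By the (standard) multiplicativity of the Lee--Bar-Natan orientation basis under a merge saddle, the classes $\bar a\cdot\bar y$ and $\bar b\cdot\bar y$ are images of two distinct orientation classes of $K\#L$; since $\bar a,\bar b$ form a basis of $H_0(\BNcx(K);\F_2)$, the images $\bar a',\bar b'$ are linearly independent, hence span $H_0(\BNcx(K\#L);\F_2)$. The principal technical obstacle in carrying out this plan is the construction of $\mu$ together with the verification of its behavior on the Bar-Natan filtration and its compatibility with the orientation generators; once these two compatibilities are in hand, everything else reduces to the Cartan formula and bookkeeping of quantum gradings.
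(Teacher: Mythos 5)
Your overall strategy is the same as the paper's: take the configuration for $K$ coming from hypothesis~(1), tensor it with a class for $L$ lying in Bar-Natan filtration $s_L+1$, push the result forward under the merge saddle $K\amalg L\to K\#L$ (which shifts quantum degree by at most one and, by the earlier results of~\cite{RS-s-invariant}, commutes with $\Sq^2$ after the smash-product identification of \Theorem{disjoint-union}), and kill the extra Cartan terms using hypotheses~(2) and~(3). That part of your plan is fine, and your detour through \Theorem{mirror} and Spanier--Whitehead duality to build the spectrum map is harmless, if more elaborate than simply citing the cobordism maps of~\cite{RS-s-invariant}.

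The genuine gap is your choice and use of the class on the $L$ side. A single orientation generator $g(o_L)$ has Bar-Natan filtration level exactly $s_L-1$; it does \emph{not} lift to $H_0(\Filt_{s_L+1}\BNcx(L);\F_2)$. The only nonzero class in $H_0(\BNcx(L);\F_2)$ admitting a cycle representative in filtration $s_L+1$ is the sum $g(o_L)+g(-o_L)$ (this is the paper's Fact~(3), quoting \cite[Proposition~2.6]{RS-s-invariant}). So your $\bar y$ is forced to be $g(o_L)+g(-o_L)$, not ``the orientation generator,'' and this breaks your final spanning argument as written: under the merge saddle only coherently oriented products survive, $g(o_K)\otimes g(o_L)\mapsto g(o_{K\#L})$ but $g(-o_K)\otimes g(o_L)\mapsto 0$, so if $\bar y$ really were a single orientation class the images $\bar a'$, $\bar b'$ would span only a line in $H_0(\BNcx(K\#L);\F_2)\cong\F_2^2$. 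Likewise, ``$\bar a,\bar b$ are a basis, hence $\bar a',\bar b'$ are independent'' is not a valid inference, since the merge map $H_0(\BNcx(K\amalg L);\F_2)\cong\F_2^4\to\F_2^2$ has a two-dimensional kernel. The repair is exactly what the paper does: take $\bar y=g(o_L)+g(-o_L)$, write $\bar a,\bar b$ in the orientation basis of $H_0(\BNcx(K);\F_2)$ (or change basis so $\bar a=g(o_K)$, $\bar b=g(-o_K)$), and use the explicit formula for the merge on orientation classes to see that $z\mapsto\Phi(z\otimes\bar y)$ carries $g(\pm o_K)$ to $g(\pm o_{K\#L})$, hence the pushed-forward classes span $H_0(\BNcx(K\#L);\F_2)$ and witness $s^{\Sq^2}_+(K\#L)=s_{K\#L}+2$.
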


\begin{proof}
  \begin{figure}
    \centering
    \includegraphics{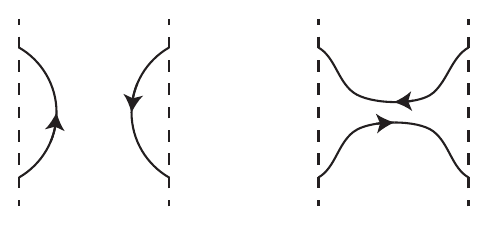}
    \caption{\textbf{Oriented connected sum.} Pieces of two knots and their connected sum, oriented coherently with the obvious saddle cobodism, are shown. }
    \label{fig:orient-connected-sum}
  \end{figure}
Consider the saddle cobordism from $K\amalg L$ to $K\# L$. Choose some
orientations $o_K$, $o_L$, and $o_{K\# L}$, of the knots $K$, $L$, and
$K\# L$, which are coherent with respect to the saddle cobordism (see
\Figure{orient-connected-sum}). Let $\{\pm o_K\}$, $\{\pm o_L\}$, and $\{\pm o_{K\#
L}\}$ denote the sets of orientations of $K$, $L$, and $K\#L$. For any
orientation $o$ there is a corresponding generator $g(o)$ of
$H_0(\BNcx;\F_2)$~\cite[Theorem 4.2]{Lee-kh-endomorphism},~\cite[Theorem 1]{Tur-kh-BN}.  
We will use the following facts.
\begin{enumerate}[labelindent=0pt,itemindent=40pt,labelwidth=0pt,leftmargin=0pt,
noitemsep, nolistsep]
\item\label{item:proof-identification} $\BNcx(K\amalg L)$ is canonically identified with
$\BNcx(K)\otimes\BNcx(L)$. The induced identification on the
associated graded objects, $\Kh(K\amalg L)\cong \Kh(K)\otimes\Kh(L)$, is
the one induced from the equivalence from \Theorem{disjoint-union}. (This is immediate from the definitions.)

\item\label{item:proof-basis} The sets $\{g(o_K),g(-o_K)\}$, $\{g(o_L),g(-o_L)\}$,
$\{g(o_{K\# L}),g(-o_{K\# L})\}$, and $\{g(o_K)\otimes
g(o_L),\allowbreak g(-o_K)\otimes g(o_L),\allowbreak g(o_K)\otimes g(-o_L),g(-o_K)\otimes
g(-o_L)\}$ form bases of $H_0(\BNcx(K))$, $H_0(\BNcx(L))$,
$H_0(\BNcx(K\# L))$, and $H_0(\BNcx(K\amalg L))$,
respectively~\cite[Section 4.4.3]{Lee-kh-endomorphism},~\cite[Theorem 1]{Tur-kh-BN}.

\item\label{item:proof-q-grading} $g(o_L)+g(-o_L)$ has a cycle
  representative in $\Filt_{s_L+1}\BNcx(L)$~\cite[Proposition 2.6]{RS-s-invariant}.

\item\label{item:proof-map-properties} The saddle cobordism map
  $\BNcx(K\amalg L)\to\BNcx(K\# L)$ preserves the homological grading
  and either increases the quantum grading or decreases it by exactly
  one.  The induced map on $\Kh$ commutes with the map
  $\Sq^2$~\cite[Theorem~4]{RS-s-invariant}. The induced map on
  $H_0(\BNcx;\F_2)$ is the following:
\begin{align*}
  g(o_K)\otimes g(o_L)&\mapsto g(o_{K\#L})& g(-o_K)\otimes g(o_L)&\mapsto
  0\\ 
  g(o_K)\otimes g(-o_L)&\mapsto 0& g(-o_K)\otimes g(-o_L)&\mapsto
  g(-o_{K\#L}).
\end{align*}
(This follows from the same argument as~\cite[Proposition
4.1]{Ras-kh-slice}, since Turner's change of basis diagonalizes the
Bar-Natan Frobenius algebra.
\end{enumerate}

Since $s^{\Sq^2}_+(K)=s_K+2$, there is a configuration as in Formula~(\ref{eq:def-refined-s}).
Since $\{g(o_K),g(-o_K)\}$ also
form a basis for $H_0(\BNcx(K);\F_2)$
(Fact~(\ref{item:proof-basis})), after performing a change of basis
if necessary, we may assume that $\ol{a}=g(o_K)$ and $\ol{b}=g(-o_K)$.

Using Fact~(\ref{item:proof-q-grading}), choose some configuration of
the following form
\[
\xymatrix@C=3ex{
  \langle\wh{c}\rangle\ar@{<-}[r]\ar@{^(->}[d]&
  \langle c\rangle\ar[r]\ar@{^(->}[d] & \langle g(o_L)+g(-o_L)\rangle\ar@{^(->}[d]\\
  \Kh^{0,s_L+1}(L;\F_2)\ar@{<-}[r]&
  H_0(\Filt_{s_L+1}\BNcx(L);\F_2)\ar[r] &H_0(\BNcx(L);\F_2).  }
\]

Combining the two configurations and using the identification from
Fact~(\ref{item:proof-identification}), we get the configuration
\[
\small
\xymatrix@C=3ex{
  \langle\wt{a}\otimes\wh{c},\wt{b}\otimes\wh{c}\rangle\ar[r]\ar@{^(->}[d]&\langle\wh{a}\otimes\wh{c},\wh{b}\otimes\wh{c} \rangle\ar@{<-}[r]\ar@{^(->}[d]& \langle
  a\otimes c,b\otimes c\rangle\ar[r]\ar@{^(->}[d] & \txt{$\langle
  g(o_K)\otimes (g(o_L)+g(-o_L)),$\\$g(-o_K)\otimes
  (g(o_L)+g(-o_L))\rangle$}\ar@{^(->}[d]\\ 
\Kh^{-2,s_K+s_L}(K\amalg
  L;\F_2)\ar[r]^-{\Sq^2}&\Kh^{0,s_K+s_L}(K\amalg L;\F_2)\ar@{<-}[r]&
  H_0(\Filt_{s_K+s_L}\BNcx(K\amalg L);\F_2)\ar[r]
  &H_0(\BNcx(K\amalg L);\F_2).
  }
\]
We should justify the leftmost horizontal arrow; that is, assuming
$\Sq^2(\wt{a})=\wh{a}$ and $\Sq^2(\wt{b})=\wh{b}$, we need to show
that $\Sq^2(\wt{a}\otimes\wh{c})=\wh{a}\otimes\wh{c}$ and
$\Sq^2(\wt{b}\otimes\wh{c})=\wh{b}\otimes\wh{c}$. Since the
identification $\Kh(K\amalg L)\cong\Kh(K)\otimes\Kh(L)$ is induced
from the identification $\KhSpace(K\amalg
L)\cong\KhSpace(K)\wedge\KhSpace(L)$ of \Theorem{disjoint-union},
cf.~Fact~(\ref{item:proof-identification}),
\begin{align*}
\Sq^2(\wt{a}\otimes\wh{c})&=\Sq^2(\wt{a})\otimes\wh{c}+\Sq^1(\wt{a})\otimes\Sq^1(\wh{c})+\wt{a}\otimes\Sq^2(\wh{c})\\
&=\wh{a}\otimes\wh{c}.
\end{align*}
The first equality is the Cartan formula; the second
uses the lemma's hypotheses.
Similarly, $\Sq^2(\wt{b}\otimes\wh{c})=\wh{b}\otimes\wh{c}$.

Now consider the image of this configuration under the saddle cobordism
map. Using Fact~(\ref{item:proof-map-properties}), we get a
configuration
\[
\small
\xymatrix@C=3ex{
  \langle\wt{p},\wt{q}\rangle\ar[r]\ar@{^(->}[d]&\langle\wh{p},\wh{q}\rangle\ar@{<-}[r]\ar@{^(->}[d]& \langle
  p,q\rangle\ar[r]\ar@{^(->}[d] & \langle g(o_{K\#L}),g(-o_{K\#L})\rangle\ar@{^(->}[d]\\ 
\Kh^{-2,s_{K\# L}-1}(K\#
  L;\F_2)\ar[r]^-{\Sq^2}&\Kh^{0,s_{K\# L}-1}(K\# L;\F_2)\ar@{<-}[r]&
  H_0(\Filt_{s_{K\# L}-1}\BNcx(K\# L);\F_2)\ar[r]
  &H_0(\BNcx(K\# L);\F_2).
  }
\]
By Fact~(\ref{item:proof-basis}), $\langle
g(o_{K\#L}),g(-o_{K\#L})\rangle = H_0(\BNcx(K\# L);\F_2)$; therefore,
$s^{\Sq^2}_+(K\# L)=s_{K\# L}+2$.
\end{proof}

We are almost ready to prove
\Corollary{top-appl}. First, we tabulate some invariants
of the knots $K$ that appear in its statement.

\begin{table}[ht]
  \centering
  \begin{tabular}{lccccccc}
    $K$&$\si(K)$&$s_K$&$\tau(K)$&$s^{Sq^2}_+(K)$&$g_4(K)$&$s^{\QQ}_K$&$u(K)$\\
    \toprule
    $9_{42}$&$-2$&$0$&$0$&$2$&$1$ &$0$& $1$\\
    $10_{136}$&$-2$&$0$&$0$&$2$&$1$&$0$& $1$\\
    $m(11^n_{19})$&$-4$&$2$&$1$&$4$&$2$ &$2$& $2$\\
    $m(11^n_{20})$&$-2$&$0$&$0$&$2$&$1$&$0$& $1$\\
    $11^n_{70}$&$-4$&$2$&$1$&$4$&$2$&$2$& $2$\\
    $11^n_{96}$&$-2$&$0$&$0$&$2$&$1$ &$0$& $1$
  \end{tabular}
  \caption{}\label{table:invariants}
\end{table}
There is some confusion in the nomenclature of knots regarding
mirrors, stemming primarily from the fact that Gauss codes do not
distinguish between a knot and its mirror. See \Figure{knot-diagrams}
for our convention. We have followed the convention from the Knot
Atlas \cite{KAT-kh-knotatlas}, using their planar diagram
presentations (which do detect chirality). The knots in
\Figure{knot-diagrams} are produced in Knotilus \cite{KIT-kh-knotilus}
using Gauss codes from the Knot Atlas; however, for
$11^n_{70}$, our knot diagram differs from the one produced by Knotilus
(as well as the one in the Knot Atlas)
by mirroring. Alternatively, one can deduce our convention from the
value of the signature $\si(K)$ in \Table{invariants} (which, for us,
is negative for positive knots).

The values of $s_K=s^{\F_2}_K$ and $s^{\Sq^2}_+(K)$ are imported
from~\cite{RS-steenrod}; $s_K$ can also be computed independently by
Knotkit \cite{KKI-kh-knotkit}. The values of the four-ball genus
$g_4(K)$ and the unknotting number $u(K)$ are extracted from Knotinfo
\cite{KIN-kh-knotinfo}. The absolute values of the signature $\si(K)$
and $s^{\QQ}_K$ are extracted from \cite{KAT-kh-knotatlas} and the
absolute value of Ozsv\'ath-Szab\'o's invariant $\tau(K)$ come
from~\cite{BG-hf-computations}. We deduce the signs of $\si(K)$,
$s^{\QQ}_K$, and $\tau(K)$ by noticing that all the unknotting
crossings in \Figure{knot-diagrams} are positive, and changing a positive
crossing to a negative one decreases $-\si(K)/2$, $s^{\QQ}/2$, and
$\tau(K)$ by $0$ or $1$. We list $\tau(K)$, $s^{\QQ}_K$, and $u(K)$
purely for the reader's interest.

Finally, we deduce \Corollary{top-appl} which, to recall, was the following:

\begin{repcorollary}{cor:top-appl}
  Let $K$ be one of the knots $9_{42}$, $10_{136}$, $m(11^n_{19})$,
  $m(11^n_{20})$, $11^n_{70}$, or $11^n_{96}$.  (Here $m$ denotes the
  mirror.)  Let $L$ be a knot which is the closure of a positive
  braid. Letting $g_4$ denote the four-ball genus, we have
  \[
  g_4(K\#L)=g_4(K)+g_4(L).
  \]
\end{repcorollary}
\begin{proof}
Certainly $g_4(K\# L)\leq g_4(K)+g_4(L)$, so we only need
to show $g_4(K\# L)\geq g_4(K)+g_4(L)$.

First consider the knot $K\in \{ 9_{42}, 10_{136}, m(11^n_{19}), m(11^n_{20}), 11^n_{70}, 11^n_{96}\}$.  In all cases
$s^{\Sq^2}_+(K)=s_K+2=2g_4(K)$ (see \Table{invariants}).

Next consider the knot $L$. Draw $L$ as the closure of a positive braid
with $n$ crossings and $m$ strands. We will classify the
generators of the Khovanov chain complex $\KhCx(L)$ in quantum
grading $n+2-m$ or less.

Let $v$ be some vertex in the cube of resolutions for $L$. Assume
there are $c_v$ circles in the resolution at $v$. The smallest quantum
grading over $v$ is achieved by the Khovanov generator which labels
all the $c_v$ circles by $x_-$. The value of this smallest quantum
grading is
\(
n+\card{v}-c_v.
\)

Since all the crossings of $L$ are positive, the resolution at the
zero vertex $\vect{0}$ is the oriented resolution; therefore
$c_{\vect{0}}=m$. If $u$ is some vertex of weight one, i.e.,
$\card{u}=1$, then the resolution at $u$ is obtained from the oriented
resolution by a merge; therefore $c_{u}=m-1$. The resolution at any
other vertex $v$ is obtained from some weight one vertex by
$\card{v}-1$ merges or splits; therefore $c_v\leq
m-1+\card{v}-1=m+\card{v}-2$, with equality holding if and only if the
resolution at $v$ can be obtained from some weight one resolution by
splits only.

It follows that the minimum quantum grading over any
vertex $v\neq\vect{0}$ is at least $n-m+2$; and over $\vect{0}$, the
minimum quantum grading is $n-m$, which is attained only by the
Khovanov generator that labels all the $m$ circles by $x_-$. Observe that
this is enough to compute the values of $s_L$, $g_4(L)$ and $g(L)$:
$s_L\geq n-m+1$, and Seifert's algorithm yields a surface of genus
$(n-m+1)/2$; therefore, the inequality
\begin{align*}
n-m+1\leq s_L\leq 2g_4(L)&\leq 2g(L)\leq n-m+1\\
\shortintertext{leads to the equality}
s_L= 2g_4(L)&= 2g(L)= n-m+1.
\end{align*}

Next, we compute the Khovanov homology in these quantum gradings. 
We have $\Kh^{*,q}(L;\Z)=0$ for all $q<n-m$ and
$\Kh^{*,n-m}(L;\Z)$ is $\Z$ supported in homological grading
zero. However, our interest lies in quantum grading $n-m+2$; we show
that $\Kh^{*,n-m+2}(L;\Z)$ is $\Z$ supported in homological grading
zero as well.

Number the $m$ strands in the braid diagram from left to right. For
$1\leq i<m$, let $n_i$ be the number of crossings in the diagram
between the $i\th$ and the $(i+1)\st$ strands, i.e., the number of times $\sigma_i$ occurs in the braid word.
(Then $n=\sum_i n_i$,
and since $L$ is a knot, $n_i\geq 1$ for all $i$.) Number the
$n_i$ crossings from top to bottom.  For $1\leq i\leq m$, let
$\ob{x}_i=(\vect{0},x_i)$ be the Khovanov generator where $x_i$ labels
the $i\th$ circle in the oriented resolution by $x_+$, and the rest by
$x_-$.  For $1\leq i<m$ and $\varnothing\neq J\subseteq \{1,\dots,n_i\}$,
let $u_{i,J}$ be the weight $\card{J}$ vertex in the cube of
resolutions where the $1$-resolution is taken only at the crossings
that appear in $J$
between the $i\th$ and $(i+1)\st$ strands; and let
$\ob{y}_{i,J}$ be the Khovanov generator living over $u_{i,J}$ where
all the circles are labeled by $x_-$. From the discussion above it is
clear that the Khovanov chain group $\KhCx^{*,n+2-m}(L)$ is generated
by these generators $\ob{x}_i$ and $\ob{y}_{i,J}$. The differential is
fairly straightforward:
\begin{align*}
\diff \ob{x}_i&=\begin{cases}
\sum_{j=1}^{n_1}\ob{y}_{1,\{j\}}&i=1\\
\sum_{j=1}^{n_i}\ob{y}_{i,\{j\}}+\sum_{j=1}^{n_{i-1}}\ob{y}_{i-1,\{j\}}&1< i<m\\
\sum_{j=1}^{n_{m-1}}\ob{y}_{m-1,\{j\}}&i=m
\end{cases}\\
\diff \ob{y}_{i,J}&=\sum_{\substack{J'\supset J\\\card{J'\setminus J}=1}} \pm\ob{y}_{i,J'}.
\end{align*}
Here we are using the standard sign assignment on the cube, and the
signs are determined by the ordering of the $n$ crossings.

Using the change of basis replacing $\ob{x}_k$ by
$\sum_{i=1}^k(-1)^i\ob{x}_i$ for all $1\leq k\leq m$, it is easy to
see that the chain complex $\KhCx^{*,n+2-m}(L)$ is isomorphic to the
following direct sum of cube complexes:
\[
\KhCx^{*,n+2-m}(L;\Z)\cong \Z\oplus(\mathop\oplus_{i=1}^{m-1}(\mathop\otimes_{j=1}^{n_i}(\Z\to\Z))).
\]
Therefore, $\Kh^{*,n+2-m}(L;\Z)\cong \Z$, supported in homological
grading zero (and is generated by the cycle
$\sum_{i=1}^m(-1)^i\ob{x}_i$).

As an immediate consequence, we have that both the maps
\begin{align*}
\Sq^1&\from\Kh^{0,s_L+1}(L;\F_2)\to\Kh^{1,s_L+1}(L;\F_2)=0\\\shortintertext{and}
\Sq^2&\from\Kh^{0,s_L+1}(L;\F_2)\to\Kh^{2,s_L+1}(L;\F_2)=0
\end{align*}
vanish. Therefore, $K$ and $L$ satisfy the conditions
of \Lemma{connect-sum-s}, so
\[
2g_4(K\# L)\geq s^{\Sq^2}_+(K\# L)=s_K+s_L+2=2g_4(K)+2g_4(L).\qedhere
\]
\end{proof}

\appendix

\section{Flow chart of sections}\label{app:flow-chart}

\noindent
\begin{tikzpicture}[yscale=-1,xscale=4]
  \node at (3,0) (cube) {\S\ref{sec:cube}: \textcolor{blue}{Cube cat.}};
  \node at (0,0) (khovanov-basic) {\S\ref{sec:khovanov-basic}: \textcolor{blue}{Khovanov constr.}};
  \node at (0,1) (manifold-corners) {\S\ref{sec:manifold-corners}: \textcolor{blue}{Corners}};
  \node at (1,1) (flow-cat) {\S\ref{sec:flow-cat}: \textcolor{blue}{Flow cats.}};
  \node at (2,1) (cube-flow-cat) {\S\ref{sec:cube-flow-cat}: \textcolor{blue}{Cube flow cat.}};
  \node at (0,2) (permutohedra) {\S\ref{sec:permutohedra}: \textcolor{blue}{Permutohedra}};
  \node at (0,3) (cubical-neat) {\S\ref{sec:cubical-neat}: Cubical neat emb.};
  \node at (0,4) (cubical-realize) {\S\ref{sec:cubical-realize}: Cubical realization};
  \node at (0,5) (cubical-CJS-same) {\S\ref{sec:cubical-CJS-same}: Cubical is CJS};  
  \node at (1,2) (cubical-flow-cat) {\S\ref{sec:cubical-flow-cat}: Cubical flow cat.};
  \node at (2,2) (Burnside) {\S\ref{sec:Burnside}: Burnside 2-cat.};
  \node at (3,3) (colimit) {\textcolor{blue}{\S\ref{sec:colimit}: Hocolim}};
  \node at (3,1) (cubical-to-burnside) {\S\ref{sec:cubical-to-burnside}: Cube to Burnside};
  \node at (3,2) (thicken) {\S\ref{sec:thicken}: Thickening};
  \node at (3,4) (subsec-realize) {\S\ref{sec:subsec-realize}: Thick realization};
  \node at (3,6) (thick-invariance) {\S\ref{sec:thick-invariance}: Thick invariance};
  \node at (3,5) (products) {\S\ref{sec:products}: Products};
  \node at (1,3) (box-maps) {\S\ref{sec:box-maps}: Box maps};
  \node at (1,4) (box-refinement) {\S\ref{sec:box-refinement}: Box refinements};
  \node at (2,4) (coherent-box-maps) {\S\ref{sec:coherent-box-maps}: Small realization};
  \node at (2,5) (small-is-big) {\S\ref{sec:small-is-big}: Small is big};
  \node at (1,5) (cubical-to-cubes) {\S\ref{sec:cubical-to-cubes}: Small is cubical};
  \node at (1,6) (Kh-htpy) {\S\ref{sec:Kh-htpy}: The Khovanov stable homotopy type and its many realizations};
  \node[align=center] at (0,7) (HKK) {\S\ref{sec:HKK}: HKK Khovanov htpy\\type agrees with ours};
  \node[align=center] at (2,7) (Kh-disj-union) {\S\ref{sec:Kh-disj-union}: Disjoint unions\\and connected sums};
  \node at (1,7) (mirror) {\S\ref{sec:mirror}: Mirrors};
  \node at (3,7) (applications) {\S\ref{sec:applications}: Applications};

  \draw[->] (cube) to (khovanov-basic);
  \draw[->] (manifold-corners) to (flow-cat);
  \draw[->] (manifold-corners) to (permutohedra);
  \draw[->] (permutohedra) to (cube-flow-cat);
  \draw[->] (flow-cat) to (cube-flow-cat);
  \draw[->] (cube-flow-cat) to (cubical-flow-cat);
  \draw[->] (cube) to (cubical-to-burnside);
  \draw[->] ($(cubical-to-burnside.south)+(-1ex,0)$) to (coherent-box-maps);
  \draw[->] (cubical-flow-cat) to (cubical-neat);
  \draw[->] (cubical-neat) to (cubical-realize);
  \draw[->] (cubical-realize) to (cubical-CJS-same);
  \draw[->] (Burnside) to (cubical-to-burnside);
  \draw[->] (cubical-flow-cat) to (cubical-to-burnside);
  \draw[->] (cubical-to-burnside) to (thicken);
  \draw[->] ($(thicken.south)+(2ex,0)$) to ($(subsec-realize.north)+(2ex,0)$);
  \draw[->] ($(subsec-realize.south)+(2ex,0)$) to ($(thick-invariance.north)+(2ex,0)$);
  \draw[->] (subsec-realize) to (products);
  \draw[->] (products) to (Kh-disj-union);
  \draw[->] (colimit) to (subsec-realize);
  \draw[->] (box-maps) to (box-refinement);
  \draw[->] (Burnside) to (box-refinement);
  \draw[->] (box-refinement) to (coherent-box-maps);
  \draw[->] (colimit) to (coherent-box-maps);
  \draw[->] (coherent-box-maps) to (small-is-big);  
  \draw[->] (subsec-realize) to (small-is-big);  
  \draw[->] (coherent-box-maps) to (cubical-to-cubes);
  \draw[->] (Kh-htpy) to (HKK);
  \draw[->] (Kh-htpy) to (Kh-disj-union);
  \draw[->] (Kh-disj-union) to (mirror);
  \draw[->] (Kh-disj-union) to (applications);
  \draw[->] (cubical-realize) to (cubical-to-cubes);
  \draw[->] (cubical-CJS-same) to (Kh-htpy);
  \draw[->] (small-is-big) to  (Kh-htpy);
  \draw[->] (cubical-to-cubes) to (Kh-htpy);
  \draw[->] (khovanov-basic.west) --++(-1ex,0) -- ($(khovanov-basic.west|-Kh-htpy)+(-1ex,0)$)-- (Kh-htpy.west);

\end{tikzpicture}

Background material is indicated by a {\color{blue}different
  shade}. Superficial dependencies (e.g., for examples or small pieces
of notation) have been omitted.

\bibliographystyle{amsalpha}
\bibliography{newbibfile}

\end{document}